\xapptocmd\bfseries{\boldmath}{}{}
\numberwithin{equation}{section}
\declaretheoremstyle[bodyfont=\itshape,qed=$\vartriangleleft$]{thm-qeded}
\declaretheoremstyle[bodyfont=\normalfont,qed=$\Diamond$]{def-qeded}
\declaretheoremstyle[bodyfont=\normalfont,qed={\hexagon}]{ex-qeded}
\declaretheoremstyle[headfont=\bfseries\itshape,bodyfont=\normalfont,qed={\wasylozenge}]{rem-qeded}
\declaretheorem[style=thm-qeded,numberwithin=section]{theorem}
\declaretheorem[style=thm-qeded,sibling=theorem]{corollary,lemma,proposition,conjecture}
\declaretheorem[style=thm-qeded,numbered=no]{claim,question}
\declaretheorem[style=def-qeded,sibling=theorem]{definition,construction}
\declaretheorem[style=ex-qeded,sibling=theorem]{example}
\declaretheorem[style=rem-qeded,sibling=theorem]{remark}
\newcommand{\A}{\mathcal{A}}
\newcommand{\Id}{\mathrm{Id}}
\newcommand{\Hom}[1]{\mathrm{Homeo}\left(#1\right)}
\newcommand{\HGpp}[1]{\mathrm{Homeo}_{\Gamma}^{++}\left(#1\right)}
\newcommand{\LS}[1]{\mathrm{LS}({#1})}
\newcommand{\CS}[1]{\mathrm{CS}({#1})}
\newcommand{\OLS}[2]{\mathrm{LS}_{#2}(#1)}
\newcommand{\OCS}[2]{\mathrm{CS}_{#2}(#1)}
\newcommand{\RG}[1]{\Gamma\left({#1}\right)}
\newcommand{\FG}[1]{\widehat{\Gamma}\left({#1}\right)}
\newcommand{\MCG}[1]{\mathcal{M}\left(#1\right)}
\newcommand{\PMCG}[1]{\mathcal{P}\left(#1\right)}
\newcommand{\Lin}[1]{\mathfrak{L}\left(#1\right)}
\newcommand{\Circ}[1]{\mathfrak{C}\left(#1\right)}
\newcommand{\OGE}[2]{\mathrm{E}_{#1}(#2)}
\newcommand{\Pres}[3]{P_{#1}\left(#2,#3\right)}
\newcommand{\MH}[1]{M(#1)}
\newcommand{\PI}{\mathcal{G}}
\newcommand{\DiffE}{\vec{\nabla}}
\newcommand{\DiffH}{\nabla}
\newcommand{\DiffS}[1]{\nabla_{#1}}
\newcommand{\TLG}[2]{\mathrm{TLG}(#1,#2)}
\newcommand{\hooklongrightarrow}{\lhook\joinrel\longrightarrow}
\renewcommand{\Re}{\operatorname{Re}}
\renewcommand{\hom}{\operatorname{Hom}}
\newcommand{\Ab}{\operatorname{Ab}}
\renewcommand{\epsilon}{\varepsilon}
\title{Homology inclusion of complex line arrangements}
\author{Adrien Rodau}
\address{Institut de Mathématiques de Marseille, Aix–Marseille Université, Site de Saint Charles, 3 place Victor Hugo, Case 19, 13331 Marseille Cedex 3, France}
\email{adrien.rodau@univ-amu.fr}
\date{}
\subjclass[2020]{32S22, 52C35, 57M05}
\begin{document}
\begin{abstract}
	We introduce a new topological invariant of line arrangements in the complex projective plane, derived from the interaction between their complement and the boundary of a regular neighbourhood. The motivation is to identify Zariski pairs which have the same combinatorics but different embeddings. Building on ideas developed by B.~Guerville-Ballé and W.~Cadegan-Schlieper, we consider the inclusion map of the boundary manifold to the exterior and its effect on homology classes. A careful study of the graph Waldhausen structure of the boundary manifold allows to identify specific generators of the homology. Their potential images are encoded in a group, the graph stabiliser, with a nice combinatorial presentation. The invariant related to the inclusion map is an element of this group. Using a computer implementation in \texttt{Sage}, we compute the invariant for some examples and exhibit new Zariski~pairs.
\end{abstract}
\maketitle
\section{Introduction}
\label{sec=intro}

The~study of the topology of plane algebraic curves was initiated by O.~Zariski. The~\emph{combinatorics} of a curve is the topological type of the pair formed by a tubular neighbourhood of the curve and the curve itself. These data are equivalent to the topological type of the singularities and the incidence relations between the components. O.~Zariski \cite{Zariski1931,Zariski1937} and E.~R.~van~Kampen \cite{Kampen1933} have shown that there exists pairs of curves with the same combinatorics but different embeddings in $\mathbb{CP}^2$, which were dubbed \emph{Zariski pairs} by E.~Artal in~\cite{ArtalBartolo1994}.

\emph{Line arrangements} are finite collections of complex lines in $\mathbb{CP}^2$, that is, plane algebraic curves whose irreducible components have degree~one. The~components are non-singular and the singularities all belong to a same class of simple type. The combinatorics of a line arrangement depend only on the incidence relations, which can be encoded in a graph $`G$ called the \emph{incidence~graph}. The study of line arrangements provides a favourable setting to create topological methods and invariants that could then be extended to algebraic curves in~general. It~also offers some interesting questions in~itself. The~first Zariski pair of line arrangements was constructed by G.~Rybnikov~\cite{Rybnikov1998} and studied in detail by E.~Artal, J.~Carmona, J.~I.~Cogolludo and M.~Á.~Marco in~\cite{ArtalBartolo2006} using the fundamental groups of the complements. The existence of this pair showed that the combinatorics does not determine the topological type of a curve even in the simplest case of line~arrangements. The~search for more Zariski pairs and a finer comprehension of the relationship between combinatorics and topology of curves and line arrangements has been a very active topic since the~2000s. Notably,~S.~Nazir, M.~Yoshinaga \cite{Nazir2012} and F.~Ye \cite{Ye2013} have completely determined the isotopy classes of line arrangements up to $9$ lines, and in particular that no Zariski pairs exist for these~values. We~also mention the works of B.~Guerville-Ballé \cite{GuervilleBalle2016} and J.~Viu~Sos \cite{GuervilleBalle2019} who discovered several new Zariski pairs with more than $11$~lines. Readers~can refer to \cite{ArtalBartolo2008,GuervilleBalle2022} for a more detailed review of the~subject.

A wide variety of common invariants from algebraic topology have been applied to the study of line arrangements and Zariski~pairs. Consider the exterior $E_{\A}$ of the arrangement $\A$ in $\mathbb{CP}^2$. A~direct comparison of the fundamental groups of the exteriors using the \emph{Zariski-van~Kampen~method}~\cite{Kampen1933} can sometimes give a Zariski pair, as for the original example of~G.~Rybnikov. However,~there are known examples of Zariski pairs where the fundamental groups of the exteriors are isomorphic (see \cite{Shimada2009,shirane19,triangular,GuervilleBalle2020}). It has since been shown that not even the characteristic varieties determine the topology of line arrangements in general, including for the subcategory of arrangements with real~equations.

A possible approach to build invariants of line arrangements is to consider the \emph{boundary manifold} $B_{\A} \vcentcolon= \partial E_{\A}$ of the arrangement. T.~Jiang, S.~S.-T.~Yau~\cite{Jiang} and E.~R.~Westlund \cite{Westlund1997} have shown that this boundary manifold has the structure of a \emph{graph manifold} as defined by F.~Waldhausen \cite{Waldhausen1967a,Waldhausen1967} and W.~D.~Neumann \cite{Neumann1981}. The incidence graph $`G$ provides a \enquote{blueprint} to reconstruct the boundary manifold by gluing together circle bundles (Seifert pieces) corresponding to the boundary of local neighbourhoods around each line component and each singularity of the~arrangement.

Our own interest lies in the study of the inclusion
\[i_{\A} : B_{\A} \hooklongrightarrow E_{\A}\]
of the boundary manifold inside the exterior of a line arrangement~$\A$. E.~Hironaka \cite{Hironaka2001} studied the morphism induced by the inclusion on the fundamental groups for the case of real line arrangements, using methods to compute the presentations of the groups due to W.~Arvola~\cite{Arvola1992} for the exterior and E.~R.~Westlund \cite{Westlund1997} for the boundary. This study was continued and generalised to complex line arrangements by V.~Florens, B.~Guerville-Ballé and M.~Á.~Marco in~\cite{Florens2013}. Their~results exposed that the study of the fundamental group \enquote{inclusion} is made difficult by the algebraic complexity of the morphisms~involved. Another~approach is to consider the morphism induced by the inclusion $i_{\A}$ on the \emph{first homology groups}~instead:
\[i_{\A}^* : H_1(B_{\A},\mathbb{Z}) \longrightarrow H_1(E_{\A},\mathbb{Z})\]
Unlike~what one might think, this is not a trivial~matter. The~study of that morphism was first considered by E.~Artal, V.~Florens and B.~Guerville-Ballé in \cite{Bartolo2014} in the form of the \emph{$\mathcal{I}$\nobreakdash-invariant}, which they used to obtain new Zariski pairs. This invariant was later generalised as the \emph{loop-linking number} of~W.~Cadegan-Schlieper~\cite{Cadegan2018} and further developed by B.~Guerville-Ballé in \cite{GuervilleBalle2022}.

We propose a new invariant that extends these constructions and fully exploits the homology \enquote{inclusion} of the boundary manifold of any line arrangement inside its~exterior. The~main principle of the construction is as~follows. The~first homology groups of the boundary manifold and the exterior are both combinatorially determined. However, the induced morphism $i_{\A}^*$ still contains significant topological~information. The~main difficulty of describing $i_{\A}^*$ lies in the ambiguity of defining a set of generators on the boundary. Such a set can be obtained from a specific class of embeddings of the graph inside the boundary~manifold. These \emph{graphed embeddings} depend on a \emph{graph ordering} $`W$. We then define the \emph{graph stabiliser} as a combinatorial group which computes the homological differences between all ordered graphed~embeddings. This allows to remove the ambiguity on the generators and is the main novelty of our construction. The~morphism~$i_{\A}^*$ induces an element of the graph~stabiliser called \emph{homology inclusion} which is a topological invariant of ordered oriented line arrangements. Using a computer program written in \texttt{Sage}~\cite{sagemath} in collaboration with B.~Guerville-Ballé and E.~Artal, we have computed homology inclusion values and obtained a new Zariski quadruplet of oriented line arrangements with $11$ lines. In particular, four pairs within the quadruplet are Zariski pairs.

\Cref{sec=comb} gives a presentation of line arrangements, with a specific focus on combinatorics and related concepts, in particular the orderings which are key in the construction. In \Cref{sec=bm} we recall the structure of the boundary manifold as a graph manifold. In~\cref{sec=star} we present the \emph{flat stars} which are the \enquote{elementary bricks} used to build the graphed embeddings, which are themselves defined in \cref{sec=oge}. In \Cref{sec=hom-incl}, we define the graph stabiliser group and the homology inclusion invariant. \cref{sec=gs} is dedicated to determining of a presentation of the graph stabiliser using results from \cref{sec=star,sec=oge}. This presentation is then used in \Cref{sec=lln} to establish that the homology inclusion extends the loop-linking number. Finally \cref{sec=calc} summarises the results of the computations made with the invariant and presents the new oriented Zariski quadruplet.

The computation of the homology inclusion makes use of specific results and algorithms which are not immediately necessary for its definition. We therefore delegate the presentation of the details of this computation to another publication.

In addition, we mention that the homology inclusion invariant represents a preliminary step in the study of the \emph{twisted} homology inclusion of complex line arrangements. Consider the morphism induced by the inclusion $i_{\A}$ between the \emph{twisted} homology groups of the boundary and exterior manifolds. To compute the twisted homology group of $B_{\A}$ one needs to restrict abelian representations of the fundamental group of the exterior to the boundary. The homology inclusion precisely allows to compute these values. The twisted homology inclusion invariant could potentially be used to discriminate equality cases of the homology inclusion invariant and thus detect more Zariski pairs. This new invariant will be the object of a future publication.

\subsection*{Acknowledgments}
The author would like to thank his former PhD advisors Enrique Artal and Vincent Florens for their support and reviewing of this research. Many thanks also to Benoît Guerville-Ballé for his advising and for his invaluable contribution to the computations of the invariant.
\section{Combinatorics of complex line arrangements}
\label{sec=comb}

\subsection{Generalities}
\label{ssec=la}

A \emph{complex line arrangement} $\A$ is a union of complex lines on the projective complex plane $\mathbb{CP}^2$.
The intersections are called the \emph{singular points}.

\begin{definition}
	\label{def=la-equiv}
	A \emph{topological equivalence} between two complex line arrangements $\mathcal{A}$ and $\mathcal{A}'$ in $\mathbb{CP}^2$ is an homeomorphism
	\[\begin{tikzcd}[cramped]
		`F : (\mathbb{CP}^2, \mathcal{A}) \rar["\sim"] & (\mathbb{CP}^2, \mathcal{A}')
	\end{tikzcd}\qedhere\]
\end{definition}
If $`F$ respects the orientation of all the components of $\A$ then the equivalence is \emph{positive}, or \emph{oriented}. A topological equivalence between two arrangements induces a bijection between the sets of lines and between the sets of singular points. This fact is made precise using the concept of \emph{combinatorics}.
\begin{definition}
	\label{def=comb}
	A \emph{line combinatorics} is a triple $C = (\mathcal{L},\mathcal{Q},`:)$ where $\mathcal{L}$ is a finite set, $\mathcal{Q}$ is a finite subset of $\mathcal{P}(\mathcal{L})$ and $`:$ is a relation from $\mathcal{Q}$ to $\mathcal{L}$ such that:
	\begin{enumerate}[(i)]
		\item For every element $P `: \mathcal{Q}$, there exist at least two distinct elements $L, L' `: \mathcal{L}$ such that $P `: L$ and $P `: L'$.
		\item For every pair $L,L' `: \mathcal{L}$ with $L \neq L'$ there exists a unique $P `: \mathcal{Q}$ such that $P `: L$ and~$P `: L'$. \qedhere
	\end{enumerate}
\end{definition}
For a line arrangement $\mathcal{A}$, there is a natural line combinatorics $C_{\mathcal{A}}$ associated with $\mathcal{A}$, given by the incidence of the lines. Combinatorics can also be encoded with graphs. Let $\mathcal{Q}^{>2}$ be the subset of singular points with multiplicity $> 2$.
\begin{definition}
	\label{def=graph-incid-red}
	The \emph{incidence graph} $\RG{C}$ of the combinatorics $C$ is defined by the following description:
	\begin{description}[font={\normalfont\itshape},labelindent=2em,labelsep=.4em,itemsep=3pt]
		\item[Vertices] one vertex for each element of $\mathcal{L} `U \mathcal{Q}^{>2}$.
		\item[Edges]\
		\begin{enumerate}[(i)]
			\item $L `: \mathcal{L}$ and $P `: \mathcal{Q}^{>2}$ are linked by an edge $e_{P,L}$ if and only if $P `: L$. \label{edge-norm}
			\item $L, L' `: \mathcal{L}$ are linked by an edge $e_{L,L'}$ if and only if $L \cap L'$ has multiplicity~$2$. \label{edge-exc}\qedhere
		\end{enumerate}
	\end{description}
\end{definition}
The combinatorics $C$ is equivalent to the data of the graph $\RG{C}$ up to every automorphism that respects the subsets $\mathcal{L}$ and $\mathcal{Q}^{>2}$.
\begin{remark}
	\label{rem:graph-full}
	We sometimes also consider the \emph{full} incidence graph $\FG{C}$, with vertex set $\mathcal{L} `U \mathcal{Q}$, and edge set given by the description: $L `: \mathcal{L}$ and $P `: \mathcal{Q}$ are linked by an edge $e_{P,L}$ if and only if~$P `: L$.
\end{remark}

Any topological equivalence $`F : (\mathbb{CP}^2, \mathcal{A}) "->" (\mathbb{CP}^2, \mathcal{A}')$ induces an automorphism
\begin{equation}
	\label{graph-aut-init}
	\begin{tikzcd}[cramped]
		G(`F) : \RG{C_{\A}} \rar["\sim"] & \RG{C_{\A'}}
	\end{tikzcd}
\end{equation}
see \cite{Jiang}. In particular, the combinatorics is a topological invariant of line arrangements. An~arrangement equivalence such that $G(`F) = \Id$ is called an \emph{ordered} equivalence.

If two arrangements with the same combinatorics are such that there exists no equivalence between them, they form a \emph{Zariski pair}. If there is no \emph{ordered} (resp. \emph{oriented}) equivalence, they form an \emph{ordered} (resp. \emph{oriented}) Zariski pair.

As an example, consider the two MacLane arrangements $\mathcal{M}^+$ and $\mathcal{M}^-$ with $8$ lines given by the equations:
\begin{align*}
	L_0 & : 0=z    & L_3 & : 0=y                 & L_6 & : 0=-x-`w^2 y+ z \\
	L_1 & : 0=-x+z & L_4 & : 0=`w^2 x + `w y + z & L_7 & : 0=`w y + z     \\
	L_2 & : 0=x    & L_5 & : 0=-x+y              &     &
\end{align*}
where $`w = e^{\frac{2i`p}{3}}$ for $\mathcal{M}^+$ and $`w = e^{-\frac{2i`p}{3}}$ for $\mathcal{M}^-$. They share a common combinatorics whose incidence graph is given in \Cref{fig=maclane-gr}. The two MacLane arrangements form the smallest possible \emph{ordered oriented} Zariski pair of line arrangements. However, there still exists an oriented equivalence between them that does not induce the identity on the graph, and in addition the complex conjugation in $\mathbb{CP}^2$ preserves the graph but not the orientation of the lines, see \cite{MacLane1936}. Therefore the MacLane arrangements do not form neither an oriented nor an ordered Zariski pair, and in particular not a Zariski pair.
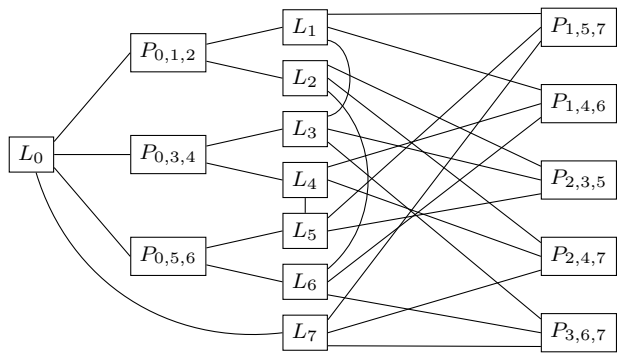
\begin{figure}[htb]
	\begin{tikzpicture}[scale=1.2,font=\footnotesize]
    \def\p{.2};
    \def\x{1}
    \def\y{.5}
    \begin{scope}[every node/.style={draw},scale=1.5,yscale=1.5]
        \node (L0) at (0,0) {$L_0$};
        \node (P012) at (\x,\y) {$P_{0,1,2}$};
        \node (P034) at (\x,0) {$P_{0,3,4}$};
        \node (P056) at (\x,-\y) {$P_{0,5,6}$};
        \foreach \i in {1,2,...,7}
        {
            \node (L\i) at (2*\x,1.75*\y-\i*\y/2) {$L_\i$};
        };
        \foreach[count=\i] \l in {{1,5,7},{1,4,6},{2,3,5},{2,4,7},{3,6,7}}
        {
            \node (P\i) at (4*\x,2*\y-.75*\i*\y) {$P_{\l}$};
        };
    \end{scope}
    \begin{scope}[nodes={circle,blue,fill=white,inner sep=1pt,font=\scriptsize}]
        \draw (L0.30) -- (P012.west) (L0.east) -- (P034.west) (L0.-30) -- (P056.west);
        \draw (P012) edge (L1.west) edge (L2.west);
        \draw (P034) edge (L3.west) edge (L4.west);
        \draw (P056) edge (L5.west) edge (L6.west);
        \draw (L0) edge[bend right=40] (L7.west);
        \draw (P1.160) -- (L1.30) (P1.west) -- (L5.30) (P1.-160) -- (L7.30);
        \draw (P2.160) -- (L1.east) (P2.west) -- (L4.30) (P2.-160) -- (L6.east);
        \draw (P3.160) -- (L2.30) (P3.west) -- (L3.east) (P3.-160) -- (L5.east);
        \draw (P4.160) -- (L2.east) (P4.west) -- (L4.east) (P4.-160) -- (L7.east);
        \draw (P5.160) -- (L3.-30) (P5.west) -- (L6.-30) (P5.-160) -- (L7.-30);
        \draw (L4) -- (L5);
        \draw (L2.-30) edge[bend left=50] (L6.30);
        \draw (L1.-30) edge[bend left=80] (L3.30);
    \end{scope}
\end{tikzpicture}
	\caption{Incidence graph of the MacLane combinatorics}
	\label{fig=maclane-gr}
\end{figure}

\subsection{Orderings}
\label{ssec=ord}

Let $W$ be a finite set with cardinal $m \geq 2$.  A \emph{linear order} on $W$ is a bijection $`q : W "->" \{1,\dots,m\}$. The set of all linear orders on $W$ is denoted by $\Lin{W}$. Consider the action of $\mathfrak{S}_m$ on the set~$\{1,\dots,m\}$. The cyclic subgroup generated by the circular permutation $(1 \cdots m)$ induces a left free action on the same set. This in turn induces a left free action of $\left<(1 \cdots m)\right>$ on $\Lin{W}$ given by, for all $`q `: \Lin{W}$ and $r `: \mathbb{Z}$:
\[{(1 \cdots m)}^r `. `q = {(1 \cdots m)}^r `o `q\]
The quotient set of $\Lin{W}$ by this action is denoted by $\Circ{W}$ and its elements are called \emph{circular orders} on~$W$. We denote the quotient map by
\begin{equation}
	\label{ord-circ}
	\begin{tikzcd}[cramped]
		c : \Lin{W} \rar[two heads] & \Circ{W}
	\end{tikzcd}
\end{equation}
For every $`w `: \Circ{W}$, the $m$ elements of $c^{-1}(`w)$ are called the \emph{linearisations} of the circular order~$`w$. The set $\Circ{W}$ has cardinal ${(m-1)!}$.

Now fix an element $`q_0 `: \Lin{W}$. For all $`q `: \Lin{W}$ there is a unique element $`s$ of the permutation group $\mathfrak{S}_m$ such that $`q = `s `o `q_0$. There is a free transitive right action of $\mathfrak{S}_m$ on $\Lin{W}$ given by, for all $`t `: \mathfrak{S}_m$:
\[`q `. `t = `s `o `t `o `q_0\]
\vspace{-\baselineskip}
\begin{lemma}
	\label{lem=circ-act}
	The right action of $\mathfrak{S}_m$ on $\Lin{W}$ respects the left quotient $c : \Lin{W} "->" \Circ{W}$.
\end{lemma}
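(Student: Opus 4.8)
The plan is to first unwind the phrase \enquote{respects the left quotient} into a concrete assertion: the right action descends to $\Circ{W}$, meaning that whenever $c(\theta) = c(\theta')$ we have $c(\theta \cdot \tau) = c(\theta' \cdot \tau)$ for every $\tau \in \mathfrak{S}_m$. Equivalently, right multiplication by each $\tau$ must send every fibre $c^{-1}(\omega)$ into a single fibre. It is enough to treat the generating case $\theta' = (1 \cdots m)^r \circ \theta$ and to show that $\theta' \cdot \tau$ lies in the same fibre as $\theta \cdot \tau$.

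The cleanest route is to transport both actions to $\mathfrak{S}_m$ through the fixed base order $\theta_0$. The assignment $\theta \mapsto \sigma := \theta \circ \theta_0^{-1}$ is a bijection $\Lin{W} \xrightarrow{\sim} \mathfrak{S}_m$, and I would read off how each action looks in these coordinates. The left cyclic action $(1 \cdots m)^r \cdot \theta = (1 \cdots m)^r \circ \theta$ becomes left multiplication $\sigma \mapsto (1 \cdots m)^r \sigma$, so the fibres of $c$ correspond precisely to the left cosets of $\langle (1 \cdots m) \rangle$ in $\mathfrak{S}_m$. From the definition $\theta \cdot \tau = \sigma \circ \tau \circ \theta_0$ one computes $(\theta \cdot \tau) \circ \theta_0^{-1} = \sigma \tau$, so the right action becomes plain right multiplication $\sigma \mapsto \sigma \tau$.

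With these two translations the statement collapses to associativity in $\mathfrak{S}_m$: for all $r$ and $\tau$ one has
\[\bigl((1 \cdots m)^r \sigma\bigr)\tau = (1 \cdots m)^r(\sigma\tau),\]
so left multiplication by $\langle (1 \cdots m)\rangle$ commutes with right multiplication by $\tau$. Hence right multiplication carries the coset $\langle (1 \cdots m)\rangle\sigma$ onto $\langle (1 \cdots m)\rangle\sigma\tau$, i.e.\ it permutes the fibres of $c$ as blocks. Unwinding the coordinates, this reads exactly $\theta' \cdot \tau = (1 \cdots m)^r \circ (\theta \cdot \tau)$ whenever $\theta' = (1 \cdots m)^r \circ \theta$, whence $c(\theta' \cdot \tau) = c(\theta \cdot \tau)$ and the right action descends to $\Circ{W}$.

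I do not anticipate a genuine obstacle, since the content is simply the commutation of left and right multiplication in $\mathfrak{S}_m$. The only point deserving care is the bookkeeping of the second paragraph, namely confirming that the right action---which is defined through the $\theta_0$-coordinate $\sigma$ rather than directly on $\theta$---really corresponds to right multiplication, so that the left-coset description of the fibres is genuinely preserved.
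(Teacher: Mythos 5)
Your proof is correct and is essentially the paper's own argument: the paper writes $\theta' \cdot \tau = \sigma' \circ \tau \circ \theta_0 = (1\cdots m)^r \circ \sigma \circ \tau \circ \theta_0 = (1\cdots m)^r \circ (\theta \cdot \tau)$, which is exactly your computation once the coordinate $\sigma = \theta \circ \theta_0^{-1}$ is introduced. Your explicit reformulation in terms of left cosets of $\langle (1\cdots m)\rangle$ being permuted by right multiplication is just a more structural packaging of the same one-line associativity argument.
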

\begin{proof}
	Let $`q, `q' `: \mathfrak{S}_m$ such that $`q' = {(1 \cdots m)}^r `o `q$ for some $r `: \mathbb{Z}$.
	Then for every $`t `: \mathfrak{S}_m$ one has again:
	\[`q' `. `t = `s' `o `t `o `q_0 = {(1 \cdots m)}^r `o `s `o `t `o `q_0 =  {(1 \cdots m)}^r `o  `q `. `t\qedhere\]
\end{proof}
There is therefore a transitive right action of $\mathfrak{S}_m$ on $\Circ{W}$. The stabiliser of an element $`w `: \Circ{W}$ is the subgroup $\mathfrak{S}_{`w}$ generated by the permutation $(`s(1) \cdots `s(m))$ where $`s `o `q_0$ lies in~$c^{-1}(`w)$.

Consider now a graph $`G$ with vertex set $V(`G)$. For any vertex $v `: V(`G)$, the \emph{neighbour set} $W_v$ is the subset of vertices that are connected to $v$ by an edge. The cardinal $m_v$ of $W_v$ is called the \emph{multiplicity} of the vertex.
\begin{definition}
	\label{def=graph-ord}
	A \emph{graph ordering} of $`G$ is a collection $`W = {{(`w_v)}_{v `: V(`G)}}$ where for every vertex $v$, $`w_v `: \Circ{W_v}$ is a circular order on the neighbour set $W_v$.
\end{definition}
\begin{lemma}
	\label{lem=ord-rest}
	Any graph ordering $\widehat{`W}$ of the full incidence graph $\FG{C_{\A}}$ can naturally be restricted to a graph ordering $`W$ of the incidence graph~$\RG{C_{A}}$.
\end{lemma}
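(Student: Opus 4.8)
The plan is to build, one vertex at a time, a natural identification between the neighbour set of each vertex of $\RG{C_{\A}}$ and its neighbour set in the full graph $\FG{C_{\A}}$, and then to transport the circular orders recorded by $\widehat{`W}$ across these identifications. Recall that $V(\RG{C_{\A}}) = \mathcal{L} `U \mathcal{Q}^{>2}$ is a subset of $V(\FG{C_{\A}}) = \mathcal{L} `U \mathcal{Q}$, so the restriction will forget the circular orders attached to the double points of $\mathcal{Q} \setminus \mathcal{Q}^{>2}$ and only has to rearrange those attached to lines and to points of multiplicity $> 2$. Throughout, write $W_v$ and $\widehat{W_v}$ for the neighbour set of $v$ in $\RG{C_{\A}}$ and in $\FG{C_{\A}}$ respectively.

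The case of a vertex $P `: \mathcal{Q}^{>2}$ is immediate: in both graphs the neighbours of $P$ are exactly the lines passing through $P$ (in $\RG{C_{\A}}$ these are the type~(i) edges), so $W_P = \widehat{W_P}$ as sets and I simply set $`w_P \vcentcolon= \widehat{`w}_P$. The content of the lemma sits at the line vertices $L `: \mathcal{L}$, where the two neighbour sets genuinely differ: $\widehat{W_L}$ is the set of all singular points lying on $L$, whereas $W_L$ consists of the points of $\mathcal{Q}^{>2}$ lying on $L$ together with the lines $L'$ that meet $L$ at a double point. I would define a map $\beta_L \colon \widehat{W_L} \to W_L$ which fixes every point of $\mathcal{Q}^{>2}$ on $L$ and sends every double point $P$ on $L$ to the unique other line through $P$. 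Transporting $\widehat{`w}_L$ along $\beta_L$ produces a circular order $`w_L `: \Circ{W_L}$, and the collection $`W = {(`w_v)}_{v `: V(\RG{C_{\A}})}$ is the graph ordering we seek.

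The only step requiring real attention, and the sole place where the axioms of \Cref{def=comb} enter, is verifying that $\beta_L$ is a well-defined bijection. Since a point of $\mathcal{Q} \setminus \mathcal{Q}^{>2}$ has multiplicity exactly two (at least two by axiom~(i), and no more since it avoids $\mathcal{Q}^{>2}$), the \enquote{other line} $L'$ through such a double point is unambiguous; axiom~(ii) then guarantees that two distinct double points on $L$ cannot determine the same second line, which gives injectivity on the double points, and surjectivity is read off directly from the description of $W_L$. Once $\beta_L$ is known to be a bijection, the transfer of orders is purely formal: a bijection of finite sets of common cardinal $m_L$ induces a bijection $\Lin{\widehat{W_L}} \to \Lin{W_L}$ by precomposition with $\beta_L^{-1}$, and since this commutes with the left cyclic action it descends to the quotients, yielding $\Circ{\widehat{W_L}} \to \Circ{W_L}$ exactly as in \Cref{lem=circ-act}. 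Finally I would observe that every ingredient of this recipe is dictated by the combinatorics alone, with no auxiliary choice involved, which is precisely the naturality asserted in the statement.
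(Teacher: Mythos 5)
Your proof is correct and follows essentially the same route as the paper: keep the circular orders at vertices of $\mathcal{Q}^{>2}$ unchanged, and at each line vertex $L$ transport $\widehat{`w}_L$ along the natural bijection $\widehat{W}_L \to W_L$ that replaces each double point $P = L \cap L'$ by the other line $L'$. Your write-up is in fact more careful than the paper's (which leaves the bijectivity check and the descent of the cyclic action implicit), so there is nothing to add.
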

\begin{proof}
	The set of vertices of $\RG{C_{A}}$ is $\mathcal{L} \cup \mathcal{Q}^{>2}$. For every point-vertex $P `: \mathcal{Q}^{>2}$ one has directly $W_P = \widehat{W}_P$. For every line-vertex $L$, there is a natural bijection $\widehat{W}_L "->" {W}_L$ that replaces any vertex $P = L \cap L' `: \mathcal{Q}^{>2}$ with $L'$ itself. The local circular order of $\widehat{`W}$ on $\widehat{W}_L$ thus extends to~${W}_L$.
\end{proof}

\section{Boundary manifold}
\label{sec=bm}

Let $\A$ be an arrangement in $\mathbb{CP}^2$. Denote by $N_{\A}$ a closed tubular neighbourhood of $\A$.
In~\cite{Cohen2006}, D.~Cohen and A.~Suciu give a geometrical construction of $N_{\mathcal{A}}$: write $\A = \{P_{\A} = 0\}$ where $P_{\A}$ is a defining homogenous polynomial of $\A$ in $\mathbb{CP}^2$. Let $`f : \mathbb{CP}^2 \rightarrow \mathbb{R}$ defined~by
\[`f(\mathbf{x}) = {\left[P_{\mathcal{A}}(\mathbf{x})\right]}^2 / {\left\|\mathbf{x}\right\|}^{2(n+1)}\]
Then $N_{\mathcal{A}} \vcentcolon= `f^{-1}([0, `d])$ does not depend on $`d > 0$ for $d$ sufficiently small.

The compact closed 3-manifold $B_{\A} \vcentcolon= \partial N_{\A}$ is called the \emph{boundary manifold} of~$\A$ and has the structure of a graph manifold, see \cite{Jiang}. The aim of this section is to describe this structure.

\subsection{Circle bundles}
\label{ssec=seif}

Let $`S$ be an oriented surface of genus $0$ and let $W$ be a set of $m \geq 0$ marked points on $`S$. Denote by $`S_W$ the surface obtained by removing from $`S$ an open neighbourhood around each point of $W$. A \emph{circle bundle} $(S, `S_W, p)$ is a fibre bundle $p: S \rightarrow `S_W$ with basis $`S_W$, such that the fibres $p^{-1}(*)$ are homeomorphic to $S^1$. For short, we often denote the circle bundle as simply $S$. An \emph{orientation} of $(S, `S_W, p)$ is given by the data of two of the following three:
\begin{enumerate}[(i)]
	\item an orientation on $S$.
	\item an orientation on all fibres of $p$.
	\item an orientation of $`S_W$. \qedhere
\end{enumerate}
Note that the choice of two of these fixes the third, while the choice of one leaves only two possibilities for the other two.

A homeomorphism $`f : S \rightarrow S'$ is \emph{fibrewise} if it commutes with $p$, and \emph{fibre-positive} if it preserves the orientation of the fibres. A circle bundle is a special case of a \emph{Seifert manifold}, namely one without singular fibres. The following results are standard, see for example~\cite{Jaco1979,Fomenko1997}.
\begin{theorem}
	\label{thm=cb-fibre}
	Let $S,S'$ be two Seifert manifolds with more than $3$ boundary components. Then~any homeomorphism $S "->" S'$ is isotopic to a fibrewise one.
\end{theorem}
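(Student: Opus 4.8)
The plan is to establish the essential uniqueness of the Seifert fibration and then deduce that every homeomorphism respects it up to isotopy. The argument has three stages: an algebraic identification of the fibre class, a promotion of homotopy to isotopy via Waldhausen's rigidity, and a cut-and-reassemble reduction to elementary fibred pieces.

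First I would identify the regular fibre intrinsically. The regular fibre $h$ generates an infinite cyclic subgroup of $\pi_1(S)$ which is central, since $h$ can be pushed around along the base. Because $S$ has more than three boundary components, its base orbifold is \enquote{large}: genus $0$ with at least four boundary circles forces $\pi_1^{\mathrm{orb}}$ to be a free product containing a nonabelian free group of rank $\geq 3$, hence with trivial center. In the central extension $1 \to \langle h\rangle \to \pi_1(S) \to \pi_1^{\mathrm{orb}} \to 1$ this makes $\langle h\rangle$ exactly the center of $\pi_1(S)$. Any homeomorphism $f : S \to S'$ induces an isomorphism $f_*$ carrying center to center, so $f_*\langle h\rangle = \langle h'\rangle$ where $h'$ is the fibre of $S'$. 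Thus $f$ sends fibres to loops freely homotopic to fibres, i.e.\ $f$ is homotopic to a fibrewise map.

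Next I would upgrade this homotopy statement to isotopy. The manifolds $S, S'$ are Haken (irreducible, with incompressible boundary tori), so by Waldhausen's theorem homotopic homeomorphisms between them are isotopic and homotopic incompressible surfaces are isotopic. Fix a system $\mathcal{V}'$ of vertical incompressible annuli in $S'$ cutting it into trivially fibred pieces $D^2 \times S^1$. The preimage $f^{-1}(\mathcal{V}')$ consists of incompressible, boundary-incompressible annuli, which the isotopy theory of essential surfaces in Haken manifolds allows me to isotope into vertical position. After this isotopy $f$ matches the two annular systems, and cutting along them reduces the problem to homeomorphisms $D^2 \times S^1 \to D^2 \times S^1$ that already carry the boundary fibration to the boundary fibration; such a map is isotopic rel boundary to the fibrewise product map. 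Reassembling these local fibrewise isotopies along the matched annuli produces a global fibrewise homeomorphism isotopic to $f$.

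The main obstacle is exactly this promotion from homotopy to isotopy, which amounts to the uniqueness of the Seifert fibration up to isotopy. This is where the hypothesis of more than three boundary components is indispensable: it rules out the small exceptional Seifert manifolds (the solid torus, $T^2 \times I$, the twisted $I$-bundle over the Klein bottle, lens spaces and the like) whose fibrations are genuinely non-unique, and for which the center of $\pi_1$ fails to pin down the fibre. The technical heart is the application of Waldhausen's incompressible-surface isotopy theorems to place $f^{-1}(\mathcal{V}')$ in vertical position, which is precisely what the cited references supply.
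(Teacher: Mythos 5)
The paper does not actually prove this statement: \cref{thm=cb-fibre} is quoted as a standard fact about Seifert manifolds, with the proof deferred to the references \cite{Jaco1979,Fomenko1997}. Your sketch is, in outline, exactly the argument those references supply (uniqueness of the Seifert fibration on \enquote{large} fibred manifolds, via essential vertical annuli and Waldhausen rigidity), so there is nothing in the paper to compare it against step by step; the only question is whether your sketch is sound, and it essentially is. Two remarks. First, the centrality of the fibre class $h$ holds only when the base surface is orientable: over a non-orientable base, conjugation along an orientation-reversing loop inverts $h$, the centre of $\pi_1(S)$ is then trivial, and one must replace \enquote{centre} by the canonical maximal normal infinite cyclic subgroup. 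This costs nothing in context --- the Seifert pieces to which the paper applies the theorem are circle bundles over planar surfaces, which matches your genus-$0$ hypothesis --- and in any case your first stage is never invoked by the later ones: the cut-and-reassemble argument is logically self-contained, so the proof does not depend on the algebraic identification of the fibre. Second, the step you rightly single out as the technical heart, isotoping $f^{-1}(\mathcal{V}')$ into vertical position, rests on the vertical/horizontal dichotomy for essential surfaces in Seifert fibred spaces together with the exclusion of horizontal annuli; it is worth making explicit that this exclusion is precisely where the hypothesis of more than three boundary components enters quantitatively, since it forces the base orbifold to have negative Euler characteristic, whereas an essential horizontal annulus would force that characteristic to vanish. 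With these glosses, your proposal is a faithful reconstruction of the standard proof that the paper's citation points to.
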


\begin{example}[Standard circle bundle]
	\label{ex=cb-stand}
	Let $`e `: \mathbb{Z}$ and let $D_W \subset `S_W$ be a disc with $m$ holes centred on the points of $W$. We denote by $\partial^{\infty} D_W$ the $(m+1)$-th boundary component of $D_W$. Consider the oriented product bundle
	$T_W \vcentcolon= D_W `* S^1$
	and an oriented solid torus
	$T_{\infty} \vcentcolon= D' `* S^1$.

	Fix a section $\check{s} : D_W "`->" T_W$, which intersects the boundary $\partial T_W$ on a collection $\check{`m}$ of simple closed curves, with~$\check{`m}^w \vcentcolon= \check{s}(\partial^w D_W)$. Consider a fibre $\check{`l}^{\infty}$ in $T_W$ transverse to~$\check{`m}^{\infty}$. Similarly, let $\check{s}'$ be a section of $T_{\infty}$ and consider a fibre $\check{`l}^{\infty'}$ transverse to $\check{`m}^{\infty'} \vcentcolon= \check{s}'(\partial D')$. Glue $\partial T_{\infty}$ to the toric boundary component $\partial^{\infty} D_W `* S^1$ of $T_W$ using the gluing map:
	\begin{equation*}
		g_{`e} : \left\{
		\begin{aligned}
			\check{`m}^{\infty} &\longmapsto - \check{`m}^{\infty'} - `e `. \check{`l}^{\infty'}\\
			\check{`l}^{\infty} &\longmapsto \check{`l}^{\infty'}
		\end{aligned}
		\right.
	\end{equation*}
	where the gluing respects the orientations of the fibres of $T_W$ and $T_{\infty}$.
	Then $S(W,\check{s},`e) \vcentcolon= T_W `U_{g_{`e}} T_{\infty}$ is a circle bundle over $`S_W$, and $`e$ is called its \emph{Euler number}. The definition obviously does not depend on the choice of $\check{s}'$.
\end{example}

Now let $S$ be any circle bundle and fix a collection of closed curves ${\left(`m^w\right)}_{w`: W}$ on $\partial S$ such that $`m^w \subset \partial^w S$ is transverse to the fibres of $S$. Such curves are called \emph{horizontal}.
\begin{theorem}
	\label{thm=cb-model}
	Let $(S,`S_W,p)$ be a circle bundle with $m$ boundary components and let ${\left(`m^w\right)}_{w`: W}$ be a collection of horizontal curves on the boundary of $S$. Then there exists a unique number $`e `: \mathbb{Z}$ and a positive fibrewise homeomorphism $S(W,\check{s},`e) \rightarrow S$ sending $\check{`m}^w$ to $`m^w$ for every~$w `: W$.
\end{theorem}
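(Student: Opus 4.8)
The plan is to read the statement as a relative classification result. Since $`S_W$ is a genus-$0$ surface with boundary, it has the homotopy type of a $1$-dimensional complex, so $H^2(`S_W;\mathbb{Z}) = 0$ and the oriented circle bundle $S$ is topologically trivial. Consequently the integer $`e$ is \emph{not} the Euler number of $S$ as a bundle (which vanishes) but a \emph{relative} Euler number, recording the position of the prescribed horizontal framing ${(`m^w)}_{w`:W}$ with respect to a global trivialisation. I would therefore prove existence by an explicit cut-and-paste construction and uniqueness by exhibiting $`e$ as a fibrewise-homeomorphism invariant of the framed bundle.

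For existence, I would decompose the base as $`S_W = D_W `U_{\partial^{\infty}} D'$, matching the decomposition of the standard model in \Cref{ex=cb-stand}. Over $D_W$ the bundle is trivial, so I fix a fibrewise identification $S|_{D_W} \cong T_W$. Because any two horizontal curves on a boundary torus are homologous up to an integral multiple of the fibre, and because a section over the planar surface $D_W$ can be modified independently near each of the $m$ inner boundary circles by an arbitrary fibre winding (the relevant classes generate $H^1(D_W;\mathbb{Z}) \cong \mathbb{Z}^m$ dually to the inner boundaries), this identification can be chosen so that $\check{s}$ has boundary trace exactly $`m^w$ on each component, i.e. $\check{`m}^w \mapsto `m^w$. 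Over the capping disc $D'$ the restriction $S|_{D'}$ is a solid torus, which I identify with $T_{\infty}$ via a section $\check{s}'$. On the overlapping torus the two trivialisations differ by a fibrewise, fibre-orientation-preserving self-homeomorphism of $T^2$; its action on $H_1$ fixes the fibre class $\check{`l}^{\infty}$ and sends $\check{`m}^{\infty}$ to $-\check{`m}^{\infty'} - `e\cdot\check{`l}^{\infty'}$ for a unique $`e `: \mathbb{Z}$, which is precisely the gluing $g_{`e}$. Reassembling yields a positive fibrewise homeomorphism $S(W,\check{s},`e) \to S$ carrying each $\check{`m}^w$ to $`m^w$.

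For uniqueness, I would show that $`e$ is intrinsic to the framed pair $\bigl(S,{(`m^w)}\bigr)$. The natural definition is as the obstruction to extending the $`m^w$-framed section of $S|_{D_W}$ across the meridian disc of the capping torus $T_{\infty}$; concretely $`e$ is the algebraic intersection number of the trace of this section on $\partial^{\infty}D_W `* S^1$ with the meridian $\check{`m}^{\infty'}$, read off from the $H_1$-class computed above. Any positive fibrewise homeomorphism matching the framings and respecting the fibre orientation preserves the fibre class, the meridian of $T_{\infty}$ up to fibrewise isotopy, and the framing traces, hence preserves this intersection number. Therefore two standard models $S(W,\check{s},`e)$ and $S(W,\check{s},`e')$ admitting framing-preserving positive fibrewise homeomorphisms onto $S$ must have $`e = `e'$, which gives uniqueness.

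I expect the main obstacle to be the orientation bookkeeping: verifying that the transition homeomorphism on the overlap torus is exactly $g_{`e}$, with the signs appearing in \Cref{ex=cb-stand}, requires keeping careful track of the three compatible orientations (base, fibre, total space) and of the opposite induced orientations on $\partial^{\infty}D_W$ and $\partial D'$. The second delicate point is making the relative Euler number rigorously well-defined and showing it is a \emph{complete} fibrewise invariant; here I would lean on the classification of self-homeomorphisms of $T^2$ by their action on $H_1$ together with the triviality of $S$ over the $1$-complex $`S_W$, and, where an arbitrary homeomorphism needs to be replaced by a fibrewise one, on \Cref{thm=cb-fibre}.
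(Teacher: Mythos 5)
There is nothing in the paper to compare your argument against: \cref{thm=cb-model} is stated without proof, being invoked as a standard fact about Seifert fibred manifolds via the sentence preceding \cref{thm=cb-fibre} (with references to Jaco and to Fomenko--Matveev). Judged on its own, your proposal is correct in outline and is indeed the standard argument one would use to reconstruct this statement from the model of \cref{ex=cb-stand}: existence by trivialising $S$ over $D_W$, composing with a gauge transformation $D_W \to S^1$ (using $H^1(D_W;\mathbb{Z}) \cong \mathbb{Z}^m$, dual to the inner boundary circles) so that the boundary traces of $\check{s}$ become the prescribed curves $\mu^w$, then observing that the transition with the capping solid torus is a fibrewise, fibre-orientation-preserving homeomorphism of the torus, hence fibrewise isotopic to $g_{\varepsilon}$ for a unique $\varepsilon \in \mathbb{Z}$; uniqueness by exhibiting $\varepsilon$ as an invariant of the framed bundle. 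This buys a self-contained proof, and it makes explicit the interpretation of $\varepsilon$ as a \emph{relative} Euler number of the framing, which the paper only hints at in the remark following the theorem.

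Two steps do need tightening. First, in the uniqueness part you assert that a positive fibrewise homeomorphism preserves \emph{the meridian of $T_{\infty}$} up to fibrewise isotopy; but the comparison homeomorphism $S(W,\check{s},\varepsilon) \to S(W,\check{s},\varepsilon')$ covers a homeomorphism of the base which need not carry the capping disc $D'$ to itself, so that meridian has no preferred image without an auxiliary disc-isotopy argument. The clean route is homological: in $H_1\bigl(S(W,\check{s},\varepsilon);\mathbb{Z}\bigr)$ one has, up to the sign conventions of \cref{ex=cb-stand}, the relation $\sum_{w \in W} [\check{\mu}^w] = \varepsilon\,[f]$, where $[f]$, the class of a fibre, has infinite order; any fibre-orientation-preserving fibrewise homeomorphism matching the framings preserves this relation, which forces $\varepsilon = \varepsilon'$. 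Second, your claim that any horizontal curve is homologous to the section class plus an integral multiple of the fibre uses that $\mu^w$ meets each fibre exactly once. The paper's definition of horizontal (transverse to the fibres) literally admits, say, a simple closed curve meeting every fibre twice, for which the theorem is false, since the image of $\check{\mu}^w$ under a fibrewise homeomorphism meets each fibre once. So horizontal must be read as a section over the boundary component, as your argument implicitly does; this is an imprecision of the statement rather than of your proof, but it should be said. With these two repairs your argument is complete.
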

By an abuse of language, we call the integer $`e `: \mathbb{Z}$ the \emph{Euler number} of the oriented circle bundle $S$, since the usual Euler number vanishes for bundles with non-empty boundary.

\subsection{Graph structure}
\label{ssec=gm}

Graph manifolds were first considered by F.~Waldhausen \cite{Waldhausen1967a,Waldhausen1967} and were further developed by W.~Neumann \cite{Neumann1981}. The boundary manifold of a line arrangement is a graph manifold, see \cite{Jiang}. Detailed proofs can also be found in \cite{Westlund1997,Cohen2006}.

A \emph{graph structure} on a closed oriented $3$-manifold $M$ is a set $`Q$ of pairwise disjoint \emph{joining} tori such that $M \smallsetminus `Q$ is a disjoint union of Seifert manifolds. A \emph{graph manifold} is a closed oriented $3$-manifold who admits a graph structure.
\begin{theorem}[\cite{Waldhausen1967}]
	\label{thm=mgs}
	Any graph manifold admits a unique graph structure with a minimal number of tori, except for those listed in \textnormal{\cite[Satz 8.1]{Waldhausen1967}}.
\end{theorem}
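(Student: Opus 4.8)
The plan is to separate the statement into an easy existence part and a hard uniqueness part. Existence of a minimal structure is immediate: the collection of graph structures on $M$ is non-empty by the definition of a graph manifold, and the number of tori in a structure is a non-negative integer, so a structure realising the minimum exists by the well-ordering of $\mathbb{N}$. All the content lies in uniqueness, which I would phrase as follows: any two graph structures $\mathcal{Q}$, $\mathcal{Q}'$ on $M$ with the minimal number of tori are carried to one another by an ambient isotopy of $M$. Throughout I may assume $M$ is irreducible, since the reducible graph manifolds (notably $S^2\times S^1$) fall among the Satz~8.1 exceptions.

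First I would reduce to incompressible tori. I claim that in a minimal structure each joining torus $T$ is incompressible and that the two Seifert pieces meeting along $T$ cannot be amalgamated into a single Seifert piece. Indeed, were $T$ to bound a compressing disc, surgering along it and merging or discarding the adjacent piece would produce a graph structure with strictly fewer tori, contradicting minimality; likewise, if the Seifert fibrations on the two sides of $T$ agreed up to an isotopy of $T$, the two pieces would combine into one Seifert piece and $T$ could be deleted. The exceptional manifolds of Satz~8.1 are precisely those where such a global amalgamation is forced (for instance when the total space is itself Seifert fibred in more than one way), so outside that list every minimal $T$ genuinely separates pieces carrying incompatible fibrations.

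The core is then an exchange argument between the two minimal incompressible systems. I would isotope $\mathcal{Q}'$ so that $\mathcal{Q}\cap\mathcal{Q}'$ is transverse and minimal; since both systems consist of incompressible tori in an irreducible $3$-manifold, a standard innermost-circle argument shows the intersection circles are essential in the tori on both sides (an inessential innermost circle could be removed by isotopy). Cutting $M$ along $\mathcal{Q}$ into Seifert pieces, the traces of $\mathcal{Q}'$ become incompressible, boundary-incompressible annuli and tori properly embedded in these pieces. Here I invoke the structure theory of incompressible surfaces in Seifert manifolds: up to isotopy each such surface is vertical (a union of fibres) or horizontal. A horizontal trace forces the ambient piece to be a surface bundle, which occurs only for the exceptional list; generically the traces are vertical, hence isotopic to unions of fibres, and I can push them off to empty $\mathcal{Q}\cap\mathcal{Q}'$ altogether. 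Once the two systems are disjoint, each component of $\mathcal{Q}'$ sits inside a single Seifert piece of the $\mathcal{Q}$-decomposition; being incompressible and, by minimality, non-boundary-parallel, it is isotopic to a vertical torus, and comparing minimal counts forces it to be isotopic to a component of $\mathcal{Q}$.

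The main obstacle I anticipate is the control of Seifert fibrations: a single Seifert piece can carry several non-isotopic fibrations, and pinning down exactly which manifolds admit a horizontal trace or an alternative fibration that makes a joining torus removable is what produces the Satz~8.1 exceptions. This requires the full classification of Seifert fibred spaces with non-unique fibrations (lens and prism manifolds, the twisted $I$-bundles over the torus and the Klein bottle, the $3$-torus, the two $S^2$-bundles over $S^1$, and the small Seifert pieces over the disc), together with the rigidity theorem asserting that every other Seifert piece has an essentially unique fibration up to isotopy --- the analogue, with singular fibres allowed, of the fibrewise rigidity recorded in \Cref{thm=cb-fibre}. The delicate bookkeeping lies in the annulus case, where a vertical trace joins two boundary tori of the same piece and one must track the matching of fibres across $\mathcal{Q}$, and in verifying that the exchange terminates; the innermost-curve reductions and the general-position set-up are otherwise routine.
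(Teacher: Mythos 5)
The paper contains no proof of this statement for you to be compared against: \cref{thm=mgs} is Waldhausen's theorem, quoted with a citation, and the list of exceptions is delegated wholesale to \cite[Satz 8.1]{Waldhausen1967}; the paper's only use of it is the remark that boundary manifolds of non-exceptional line arrangements avoid that list. Your sketch therefore has to be measured against Waldhausen's original argument (or its modern JSJ-style reworkings), and in outline it does follow that standard route: minimality forces incompressibility and non-matching fibrations across each joining torus, general position and innermost-curve arguments make two minimal systems disjoint, the vertical/horizontal dichotomy controls the traces in Seifert pieces, and non-uniqueness of Seifert fibrations is the source of the exceptions.

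As a proof, however, there are two genuine gaps. The decisive one is your final step: once $\mathcal{Q}$ and $\mathcal{Q}'$ are disjoint, you claim that \enquote{comparing minimal counts forces} each torus of $\mathcal{Q}'$ to be isotopic to a component of $\mathcal{Q}$. Equality of counts gives nothing here: a vertical torus in a Seifert piece $S_v$ of the $\mathcal{Q}$-decomposition lying over an essential curve of the base can be incompressible and non-boundary-parallel (the circle bundles over planar surfaces with four or more boundary components occurring in this very paper contain many such tori), and adjoining it to $\mathcal{Q}$ merely produces a non-minimal structure with one more torus --- no contradiction. The correct argument must show that such a torus $T'$ is \emph{removable from} $\mathcal{Q}'$: both sides of $T'$ inherit the same fibration from $S_v$, and the unique-fibration rigidity of the adjacent, non-exceptional $\mathcal{Q}'$-pieces then allows them to be amalgamated across $T'$, contradicting minimality of $\mathcal{Q}'$ rather than of $\mathcal{Q}$. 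This is exactly where the rigidity theorem does its work, and \enquote{comparing counts} conceals it. The second gap is that the exceptional list --- which is part of the conclusion of the theorem --- is never derived: you assume the classification of Seifert manifolds with non-unique fibrations, as well as the unverified claim that all reducible graph manifolds appear among the exceptions, and this classification is precisely the content of Waldhausen's Satz 8.1 and the bulk of his work. What you have is a correct road map of the standard strategy, not a proof.
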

Exceptional classes of graph manifolds arise as boundary manifolds of only two specific families of complex line arrangements, listed in \cite{Jiang}. We leave these two families outside the scope of our study. From now on we only consider non-exceptional line arrangements.

One can naturally associate to a graph structure $`Q$ of $M$ a graph $`G$ as follows: each vertex $v$ is associated to a Seifert manifold $S_v$ and is decorated by its Euler number and singular fibres. Each edge $e_{v,w}$ corresponds to a gluing torus~$T_{v,w} `: `Q$ with $T_{v,w} = S_v \cap S_w$ and is decorated by the gluing map. The graph manifold $M$ can be reconstructed from the graph by gluing together the Seifert pieces along the gluing tori. If $M$ is a boundary manifold, then the graph has no dead-ends (i.e. every vertex has at least two neighbours), and the Seifert manifolds $S_v$ are actually circle bundles of the form $S(W_v,\check{s}_v,`e_v)$ as described in \cref{ex=cb-stand}. In particular they have no singular fibres. For every edge $e_{v,w}$, the Seifert pieces $S_v$ and $S_w$ are glued along the torus $T_{v,w}$ with a gluing map sending the horizontal curve $\check{`m}^w_v \subset S_v$ to a fibre in $S_w$ and the horizontal curve $\check{`m}^v_w \subset S_w$ to a fibre in $S_v$. This gluing map is the same for every edge, therefore the edges of the graph do not need to be decorated. By \cref{thm=mgs}, $M$ admits a unique such \emph{minimal graph}, and we only consider this one from now on.

\begin{theorem}[\cite{Jiang}]
	\label{thm=bm-struct}
	The boundary manifold $B_{\A}$ of a non-exceptional line arrangement $\A$ is a graph manifold whose minimal graph coincides with the incidence graph $\RG{C_{\A}}$ decorated with the following Euler numbers:
	\begin{enumerate}[(i), font=\normalfont]
		\item $`e_{L} = 1-b(L)$ for every line $L `: \mathcal{L}$.
		\item $`e_P = -1$ for every singular point $P `: \mathcal{Q}^{>2}$.\qedhere
	\end{enumerate}
	where $b(L)$ is the number of singular points of $\mathcal{Q}^{>2}$ meeting $L$.
\end{theorem}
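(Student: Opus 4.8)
The plan is to realise $B_{\A}$ as a \emph{plumbing} coming from an embedded resolution of $\A$. Let $\pi \colon X \to \mathbb{CP}^2$ be the composition of the blow-ups at all the points of $\mathcal{Q}^{>2}$ (double points are already normal crossings and are left untouched). Write $\widetilde{L}$ for the strict transform of a line $L$ and $\mathcal{E}_P$ for the exceptional divisor over a point $P \in \mathcal{Q}^{>2}$, so that $D \vcentcolon= \bigcup_{L} \widetilde{L} \cup \bigcup_{P} \mathcal{E}_P$ is a normal crossing divisor in the smooth surface $X$. First I would show that the boundary manifold is unchanged under the resolution: since $\pi$ is a diffeomorphism over $\mathbb{CP}^2 \setminus \mathcal{Q}^{>2}$ and over a small ball around each point of $\mathcal{Q}^{>2}$ it merely replaces the cone point by a neighbourhood of $\mathcal{E}_P$, carrying the bounding sphere diffeomorphically onto its preimage, one gets $B_{\A} = \partial N_{\A} \cong \partial \widetilde{N}$, where $\widetilde{N}$ is a regular neighbourhood of $D$ in $X$.

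The second step is the plumbing calculus for a normal crossing divisor on a smooth surface (the standard description of the link of such a divisor, see \cite{Neumann1981}). Over each irreducible component $C$ of $D$, the regular neighbourhood $\widetilde{N}$ is the disc bundle of the normal bundle of $C$, whose Euler number is the self-intersection $C \cdot C$ in $X$; removing open tubular neighbourhoods of the nodes lying on $C$ turns its boundary into a circle bundle over the sphere $C$ with as many holes as there are nodes on $C$. These are exactly the standard circle bundles of \cref{ex=cb-stand}, the horizontal curves being furnished by a push-off of $C$ inside $X$, and by \cref{thm=cb-model} the associated Euler number is precisely $C \cdot C$. At each node the two incident pieces are glued along a torus by the plumbing map, which interchanges the fibre of one bundle with the horizontal boundary curve of the other; this is the gluing described in \cref{ssec=gm}. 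Hence $\partial \widetilde{N}$ is a graph manifold whose underlying graph is the dual graph of $D$.

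It then remains to identify this dual graph with $\RG{C_{\A}}$ and to read off the decorations. Its vertices are the components $\widetilde{L}$ and $\mathcal{E}_P$, in bijection with $\mathcal{L} \cup \mathcal{Q}^{>2}$, and two vertices are joined exactly when the corresponding components meet: $\widetilde{L} \cap \mathcal{E}_P \neq \varnothing$ iff $P \in L$, yielding the edges of type~(i), while after resolution the strict transforms $\widetilde{L}, \widetilde{L}'$ still meet transversally precisely at the double points of $\A$, yielding the edges of type~(ii); distinct exceptional divisors are disjoint. This matches \cref{def=graph-incid-red}. For the decorations, $\mathcal{E}_P \cdot \mathcal{E}_P = -1$ as the exceptional divisor of a single blow-up, giving $\epsilon_P = -1$; and since $L \cdot L = 1$ in $\mathbb{CP}^2$ while $\widetilde{L} = \pi^\ast L - \sum_{P \in L} \mathcal{E}_P$ (each of the $b(L)$ points of $\mathcal{Q}^{>2}$ on $L$ being smooth on $L$), each such blow-up drops the self-intersection of the strict transform by one, so $\widetilde{L} \cdot \widetilde{L} = 1 - b(L)$, that is $\epsilon_L = 1 - b(L)$.

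Finally, minimality: because every vertex has at least two neighbours and the plumbing gluing sends fibres to base curves, the Seifert fibrations of adjacent pieces never match across a joining torus, so no two pieces can be amalgamated and no torus is superfluous; the non-exceptional hypothesis is exactly what excludes the degenerate circle bundles (thickened or solid tori) and the alternative fibrings that would otherwise permit a further reduction, so by \cref{thm=mgs} the decomposition is the unique minimal one. I expect the genuine work to lie in the second step: carefully matching the abstract circle-bundle framework of \cref{ex=cb-stand,thm=cb-model}, where the Euler number is defined relative to a chosen family of horizontal curves, with the geometric plumbing, i.e.\ verifying that the push-off curves realise the self-intersection as the Euler number and that the plumbing maps are the fibre--base swaps, with all orientation conventions kept consistent.
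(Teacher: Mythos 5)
The paper contains no proof of \cref{thm=bm-struct}: it is imported from \cite{Jiang}, with detailed proofs deferred to \cite{Westlund1997,Cohen2006}. Your resolution-plus-plumbing argument is precisely the argument of those references, up to one harmless variant: the cited sources blow up \emph{all} singular points (producing the full graph $\FG{C_{\A}}$ and then absorbing the double-point pieces), whereas you blow up only $\mathcal{Q}^{>2}$ and land directly on $\RG{C_{\A}}$. Your first three steps are correct and standard: invariance of the boundary manifold under the resolution, the plumbing description of the boundary of a regular neighbourhood of a normal crossing divisor, the identification of the dual graph with $\RG{C_{\A}}$ as in \cref{def=graph-incid-red}, and the self-intersection computations $\mathcal{E}_P\cdot\mathcal{E}_P=-1$ and $\widetilde{L}\cdot\widetilde{L}=1-b(L)$.

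The genuine gap is the minimality step. Ruling out amalgamation is fine: since each gluing sends fibres to horizontal curves, no gluing map matches the Seifert fibrations of adjacent pieces. But in the Waldhausen--Neumann calculus a joining torus is also superfluous whenever an adjacent piece is a solid torus or a thickened torus $T^2\times I$, i.e.\ whenever some vertex of $\RG{C_{\A}}$ has degree $1$ or $2$: such a piece is a collar and can be absorbed into its neighbour, strictly decreasing the number of tori. Point vertices are safe (their degree equals the multiplicity, hence $\geq 3$), and degree-$1$ line vertices occur only for pencils; but for line vertices of degree $2$ the statement you need is that \emph{every line of $\A$ contains at least three singular points}, and this is exactly the content hidden in the hypothesis \enquote{non-exceptional}. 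Your proof never establishes this implication --- it only asserts that non-exceptionality \enquote{is exactly what excludes the degenerate circle bundles} --- and it is not a formal consequence of excluding pencils and near-pencils: for the five-line arrangement $\{z=0,\ y=0,\ y=z,\ x=0,\ x=z\}$ (two pencils of three lines sharing the line $z=0$), the line $z=0$ carries exactly two singular points, both triple, so its piece is a circle bundle over an annulus, i.e.\ $T^2\times I$, and the incidence-graph structure is \emph{not} minimal, although this arrangement is neither a pencil nor a near-pencil. So the minimality step requires identifying precisely the exceptional families of \cite{Jiang} and proving that outside them the degree bound holds, or else handling the residual degree-$2$ vertices by blow-down in the plumbing calculus (which changes the graph and the decorations). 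As written, the non-exceptional hypothesis is invoked as a black box at the only place in the proof where it must do real work.
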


Up to isotopy, homeomorphisms of a graph manifold always preserve the minimal graph structure.
\begin{theorem}[\cite{Waldhausen1967a}]
	\label{thm=wald}
	Let $M$ and $N$ be two graph manifolds with respective minimal graph structures ${`Q}_{M}$ and ${`Q}_{N}$. Let $`F : M \rightarrow N$ be a homeomorphism. Then $`F$ is isotopic to a homeomorphism $`F' : M \rightarrow N$ such that $`F'\left({`Q}_{M}\right) = `F'\left({`Q}_{N}\right)$.
\end{theorem}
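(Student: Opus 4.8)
The plan is to prove this through the theory of incompressible surfaces in Haken $3$-manifolds, exploiting the fact that the tori of a minimal graph structure are essential. Throughout I read the conclusion as $`F'(`Q_M) = `Q_N$, i.e.\ that $`F$ may be isotoped so as to carry the minimal graph structure of $M$ onto that of $N$.

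First I would record that the joining tori are incompressible. Each Seifert piece $S_v$ is a circle bundle over a surface with boundary (\cref{ex=cb-stand}), so its fundamental group injects and the boundary tori $T_{v,w}$ are $\pi_1$-injective in $M$; since $M$ is moreover irreducible, being a non-exceptional graph manifold, both $M$ and $N$ are closed Haken manifolds and $`Q_M$, $`Q_N$ are systems of essential tori. I can therefore put $`F(`Q_M)$ in general position with respect to $`Q_N$ and minimise, by an ambient isotopy, the number of circles in $`F(`Q_M) \cap `Q_N$.

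The core of the argument is the analysis of this minimal configuration. By standard innermost-disc and outermost-arc arguments, using incompressibility of both families together with irreducibility of $N$, no intersection circle can bound a disc on either torus system, and no component of $`F(`Q_M)$ cut along $`Q_N$ can be boundary-parallel; hence in minimal position every intersection circle is essential in both tori, and $`F(`Q_M)$ meets each Seifert piece of $N$ in a family of incompressible, $\partial$-incompressible annuli. The key Seifert-geometric input is that such an annulus in a circle bundle over a surface with boundary is isotopic to a vertical one, saturated by fibres; this lets me arrange each intersection circle to be a fibre of the fibration of $N$. Applying \cref{thm=cb-fibre} piecewise, I isotope $`F$ so that its restriction to each piece is fibrewise, whence $`F(`Q_M)$ becomes a union of vertical tori that project to disjoint essential curves in the base surfaces, in other words a graph structure on $N$.

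It then remains to identify this graph structure with $`Q_N$. Since $`F$ is a homeomorphism and minimality of a graph structure is intrinsic, the now vertical system $`F(`Q_M)$ is itself a minimal graph structure on $N$; by the uniqueness statement of \cref{thm=mgs}, valid precisely because we work with non-exceptional manifolds, it is isotopic to $`Q_N$. Composing $`F$ with the realising isotopies produces the desired $`F'$ with $`F'(`Q_M) = `Q_N$. The main obstacle is the middle step: establishing the vertical/horizontal dichotomy for essential annuli inside the Seifert pieces, ruling out the horizontal case, and carrying out the verticalisation without reintroducing intersection circles. The exceptional small Seifert fibrations, where this dichotomy or the uniqueness in \cref{thm=mgs} could fail, are exactly those excluded by hypothesis.
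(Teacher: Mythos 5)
The paper does not prove this statement at all: it is quoted directly from Waldhausen, and within the paper's own logic it is an immediate corollary of \cref{thm=mgs} once \enquote{unique} there is read, as in Waldhausen's Satz~8.1, as \emph{unique up to isotopy}. Indeed $\Phi(\Theta_M)$ is already a minimal graph structure on $N$ before any general-position work (it is the homeomorphic image of one, and $M \cong N$ have the same minimal number of joining tori), so isotopy-uniqueness provides an ambient isotopy carrying $\Phi(\Theta_M)$ onto $\Theta_N$, and composing it with $\Phi$ yields $\Phi'$. Your proposal ends by invoking exactly this uniqueness statement (\enquote{by the uniqueness statement of \cref{thm=mgs} \dots\ it is isotopic to $\Theta_N$}), but that invocation is equally available at the very start, which makes your entire Haken-theoretic middle superfluous; conversely, if your intention is to \emph{prove} the isotopy-uniqueness --- which is the only real content of the theorem --- then appealing to \cref{thm=mgs} at the end assumes precisely the conclusion. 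As written, the architecture is either redundant or circular.

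The middle section also contains a concrete gap. After verticalising the annuli of $\Phi(\Theta_M)$ cut along $\Theta_N$, the correct deduction is that the minimal-position intersection is \emph{empty}: an intersection circle lies on a torus $T \in \Theta_N$ and bounds essential annuli in \emph{both} adjacent Seifert pieces, so verticality on both sides forces the circle to be isotopic in $T$ to the fibre of each adjacent piece; the two fibrations would then match along $T$, and $T$ could be deleted, contradicting minimality of $\Theta_N$. You never draw this conclusion; instead you keep the intersection circles, declare them \enquote{fibres of $N$}, and then \enquote{apply \cref{thm=cb-fibre} piecewise} to $\Phi$. That step is ill-posed: $\Phi$ does not yet carry Seifert pieces of $M$ to Seifert pieces of $N$ --- that is the statement being proved --- so there is no pair of Seifert manifolds to which \cref{thm=cb-fibre} could be applied. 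The honest route is: incompressibility, minimal position, verticality of essential annuli (the horizontal case being excluded because the base surfaces have negative Euler characteristic), hence \emph{empty} intersection; then each torus of $\Phi(\Theta_M)$ lies in a single piece of $N$, is essential and vertical there, and a final fibre-matching argument shows that any such torus not parallel to a torus of $\Theta_N$ would again violate minimality, the resulting parallelism being bijective because both systems are minimal. That final argument \emph{is} Waldhausen's uniqueness theorem; it has to be carried out, not cited.
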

According to \cref{thm=wald}, any homeomorphism of a graph manifold $M$ acts on the graph structure $`G$  (eventually after isotopy). Therefore, there is a surjective morphism
\begin{equation}
	\label{graph-aut}
	\begin{tikzcd}[cramped]
		G: \mathrm{Homeo}(M) \rar[two heads] & \mathrm{Aut}(`G)
	\end{tikzcd}
\end{equation}
The kernel of $G$ is called the group of \emph{graphed homeomorphisms} of the graph manifold $M$ and is denoted $\mathrm{Homeo}_{`G}(M)$. In particular a graphed homeomorphism $`J$ restricts to an homeomorphism $`J_v `: \Hom{S_v}$ for every bundle piece $S_v$ of $M$, and $`J_v$ fixes the boundary of $S_v$ component-wise. Moreover, if every vertex of the incidence graph $`G$ has more than $3$ neighbours (which is the case for boundary manifolds of non-exceptional line arrangements), then \cref{thm=cb-fibre} applies and $`J_v$ is fibrewise up to isotopy.
\begin{definition}
	\label{def=pos-homeo}
	A graphed homeomorphism $\mathrm{Homeo}_{`G}(M)$ is \emph{positive} if it respects the global orientation of~$M$. It is \emph{strongly positive} if its restriction on every piece $S_v$ is fibre-positive.
\end{definition}
We denote by $\HGpp{M}$ the group of isotopy classes of graphed strongly positive homeomorphisms of $M$.

\begin{proposition}
	\label{thm=ord-zar-pair-bm}
	Let $`F : (\mathbb{CP}^2,\A) "->" (\mathbb{CP}^2,\A')$ be a positive ordered equivalence of line arrangements. Then $`F$ induces an homeomorphism $B_{\A} \xlongrightarrow{\sim} B_{\A'}$ which is an element of $\HGpp{B_{\A}}$, i.e. graphed and strongly-positive.
\end{proposition}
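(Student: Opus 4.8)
The plan is to verify the three defining properties in turn: that $`F$ descends to a homeomorphism of the boundary manifolds, that this homeomorphism is graphed, and that it is strongly positive. For the first, note that since $`F$ is a homeomorphism of pairs it carries $\A$ onto $\A'$, hence carries the closed tubular neighbourhood $N_{\A}$ onto a closed tubular neighbourhood of $\A'$. By uniqueness of tubular neighbourhoods up to ambient isotopy I may post-compose with an isotopy so that $`F(N_{\A}) = N_{\A'}$; restricting to the boundary then yields a homeomorphism $B_{\A} \xlongrightarrow{\sim} B_{\A'}$, which I continue to denote by $`F$.

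For the graphed property I would invoke \cref{thm=wald}: after a further isotopy the induced homeomorphism respects the minimal graph structures of $B_{\A}$ and $B_{\A'}$ and therefore induces an isomorphism between the associated graphs. By \cref{thm=bm-struct} these graphs are precisely the incidence graphs $\RG{C_{\A}}$ and $\RG{C_{\A'}}$, and the isomorphism induced on them coincides with the combinatorial automorphism $G(`F)$ of \eqref{graph-aut-init}, since both are dictated by the way $`F$ permutes the line components and the singular points. As $`F$ is an ordered equivalence we have $G(`F) = \Id$, so the induced boundary homeomorphism lies in the kernel of the morphism $G$ of \eqref{graph-aut}; that is, it is graphed. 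In particular, because every vertex of $\RG{C_{\A}}$ has more than three neighbours, \cref{thm=cb-fibre} applies and the restriction $`F_v$ to each piece $S_v$ may be taken fibrewise.

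It remains to prove strong positivity, i.e.\ that each $`F_v$ is fibre-positive; this is the delicate point, since the hypothesis controls the orientation of the line components while fibre-positivity concerns the transverse, meridional direction. The bridge is the ambient orientation together with linking numbers. First I would check that a positive equivalence preserves the orientation of $\mathbb{CP}^2$: at any double point the two complex lines meet with local intersection index $+1$, and since $`F$ preserves both line orientations this index gets multiplied by $\deg(`F)$, forcing $\deg(`F)=+1$. Next I would characterise the oriented fibres by linking numbers: the fibre over a line-vertex $L$ is the meridian of $L$, singled out up to orientation by having linking number $+1$ with $L$, while the fibre over a point-vertex $P$ is a Hopf fibre of the local link $S^3$, singled out by having linking number $+1$ with each line through $P$. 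Since $`F$ preserves the orientations of all the lines and of $\mathbb{CP}^2$, it preserves every such linking number, so it sends each positively oriented fibre to a positively oriented fibre of the corresponding piece; combined with the fibrewise normalisation above, this shows each $`F_v$ is fibre-positive. Hence $`F$ is graphed and strongly positive, i.e.\ an element of $\HGpp{B_{\A}}$.

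I expect the main obstacle to be the fibre-positivity over the point-vertices. Unlike the line case, the relevant fibres are the Hopf circles of the singularity links, and $`F$ is assumed only topological, so the complex structure near $P$ cannot be used directly to track the fibre orientation; the linking-number characterisation, valid for any orientation-preserving homeomorphism, is what lets the argument go through.
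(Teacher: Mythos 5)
Your proposal is correct, and for the first two steps it runs parallel to the paper: you obtain the boundary homeomorphism (with a more careful appeal to uniqueness of tubular neighbourhoods, which the paper leaves implicit), and you get graphedness from the ordered hypothesis via the coincidence of the graph actions \eqref{graph-aut} and \eqref{graph-aut-init} — your detour through \cref{thm=wald} is exactly what underlies \eqref{graph-aut} — together with \cref{thm=cb-fibre} to make each restriction $\Phi_v$ fibrewise. The genuine divergence is the strong-positivity step. The paper argues abstractly: each $\Phi_v$ preserves the orientation of $S_v$, so it reverses the fibres iff it reverses the base; since the gluings exchange fibres and horizontal curves, this dichotomy propagates across the graph, so $\Phi$ is fibre-positive on one piece iff on all; finally, if the restriction is not strongly positive, it is corrected by composing with the global fibre-reversing graphed homeomorphism $\nu$. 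You instead use the positivity (orientation) hypothesis directly: orientation-preservation of $\mathbb{CP}^2$ (your intersection-index argument should be run at an arbitrary intersection point of two lines rather than at a \emph{double} point, since nodes need not exist; alternatively no self-homeomorphism of $\mathbb{CP}^2$ reverses orientation, by the signature), plus the characterisation of the positively oriented fibres by intersection/linking numbers with the oriented lines, over line-vertices and point-vertices alike. Your route buys something real: as written, the paper's proof never invokes the hypothesis that $\Phi$ preserves the line orientations, and it only yields a strongly positive homeomorphism after possibly composing with $\nu$ — but $\nu_*$ acts as $-\mathrm{Id}$ on the meridians, so $\nu \circ \Phi|_{B_{\A}}$ loses the compatibility with $\Phi$ on the exterior that the proof of \cref{thm=la-istar-inv} needs. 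Your argument shows that the restriction of $\Phi$ itself is strongly positive and makes explicit where orientedness enters, consistently with \cref{rem=la-orient} (complex conjugation is positive on $B_{\A}$ but not strongly positive). Conversely, the paper's propagation argument is shorter and applies to any ordered equivalence, oriented or not; combining it with your meridian observation for a single line piece would give the most economical complete proof, since fibre-positivity on one $S_L$ then forces it on every piece.
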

\begin{proof}
	We write $B \vcentcolon= B_{\A} `~ B_{\A'}$. By \eqref{graph-aut}, $`F$~induces a transformation $G(`F)$ on the graph~$`G$, which coincides with the transformation of \eqref{graph-aut-init}. In particular since $`F$ is ordered then its restriction to the graph manifold $B$ is graphed and positive. The gluing map between the bundle pieces of $B$ respects their orientation. Therefore on every $S_v$ the restriction $`F_v$ of $`F$ is also positive. However, $`F_v$ is not necessarily fibre-positive. If $`F_v$ changes the orientation of the fibres of some $S_v$, then $`F_v$ must also change the orientation of the basis $`S_{W_v}$ since it preserves the global orientation of $S_v$. These changes propagate through the gluings between the pieces, so much that if $`F$ is fibre-positive on one $S_v$ then it must be on all of them, and vice versa. Let~$`n$ be the graphed positive homeomorphism of $B$ that reverse the orientation of the fibres (and therefore of the bases) of all~$S_v$'s. Then if $`F$ is not strongly-positive, $`n `o `F$ is.
\end{proof}

\section{Flat stars}
\label{sec=star}

A \emph{flat ordered star} is an ordered set of non-intersecting arcs drawn on a disc with boundary and starting at a common point. They are used in \cref{sec=oge} to define a way to embed all half-edges of the graph $`G$ with a common starting vertex $v `: V(`G)$ inside the corresponding circle bundle component~$S_v$ of the boundary manifold~$B_{\A}$. Depending on the situation, we consider flat stars ordered with a linear or circular~order, which are defined in \cref{ssec=star-def}. \cref{ssec=mcg,ssec=model} establish technical results which are then used in \cref{ssec=mcg-star} to prove the main goals of this section, namely \cref{prop=mcg-lstar,prop=mcg-cstar} that state that the pure mapping class group of surfaces acts transitively on flat stars.

\subsection{Linear and circular stars}
\label{ssec=star-def}

\begin{definition}
	\label{def=star}
	Let $M$ be a compact $3$-dimensional manifold and let $W$ be a subset of components of $\partial M$. A \emph{star} on $M$ is a collection of properly embedded simple arcs $`a = {\left(`a^w\right)}_{w `: W}$ in $M$ such that:
	\begin{enumerate}[(i)]
		\item for every $w `: W$, $`a^w(]0,1[) \subset \ring{M}$, and $`a^w(1) `: w$.
		\item there is a common point $b `: M$ such that for every $w `: W$, $`a^w(0) = b$.
		\item for every $w \neq w' `: W$, $`a^w \cap `a^{w'} = \{b\}$.
	\end{enumerate}
	The point $b$ is the \emph{centre} of the star $`a$ and the arcs $`a^w$ are the \emph{branches}.
\end{definition}
Reusing notations from \cref{ssec=seif}, let $D_W \subset `S_W$ be a disc (resp. sphere) with $m$ holes centred on the points of $W$. For each $w `: W$, the corresponding boundary component is denoted by $\partial^w D_W$ (resp. $\partial^w `S_W$). The last boundary component of $D_W$ is denoted by $\partial^{\infty} D_W$.
\begin{definition}
	\label{def=star-lin}
	Let $`q `: \Lin{W}$ be a linear order on $W$. A \emph{linear $`q$-star} drawn on $D_W$ is a star $\bar{`a} = {\left(\bar{`a}^w\right)}_{w `: W}$ in $D_W$ with centre $b$ such that:
	\begin{enumerate}[(i),start=4]
		\item $b `: \partial^{\infty} D_W$, and $\bar{`a}^w(1) `: \partial^w D_W$ for every $w `: W$.
		\item there exists a neighbourhood $U$ of $b$ and a positive local chart $`f : D_W \rightarrow \mathbb{R}^2$ that sends the pair ${(D_W \cap U, \bar{`a} \cap U)}$ to the model shown in~\Cref{fig=star-ord-lin} ordered by $`q$.
	\end{enumerate}
	The set of all isotopy classes of $`q$-linear stars on $D_W$ is denoted by $\OLS{`D_W}{`q}$.
\end{definition}
\begin{definition}
	\label{def=star-circ}
	Let $`w `: \Circ{W}$ be a circular order on $W$. A \emph{circular $`w$-star} drawn on $`S_W$ is a star $\tilde{`a} = {\left(\tilde{`a}^w\right)}_{w `: W}$ in $`S_W$ with centre $b$ such that:
	\begin{enumerate}[(i'),start=4]
		\item $b `: \ring{`S}_m$, and $\tilde{`a}^w(1) `: \partial^w `S_W$ for every $w `: W$.
		\item there exists a neighbourhood $U$ of $b$ and a positive local chart $`f : `S_W \rightarrow \mathbb{R}^2$ that sends the pair ${(`S_W \cap U, \tilde{`a} \cap U)}$ to the model shown in~\Cref{fig=star-ord-circ} ordered by $`w$.
	\end{enumerate}
	The set of all isotopy classes of $`w$-circular stars on $`S_W$ is denoted by $\OCS{`S_W}{`w}$.
\end{definition}
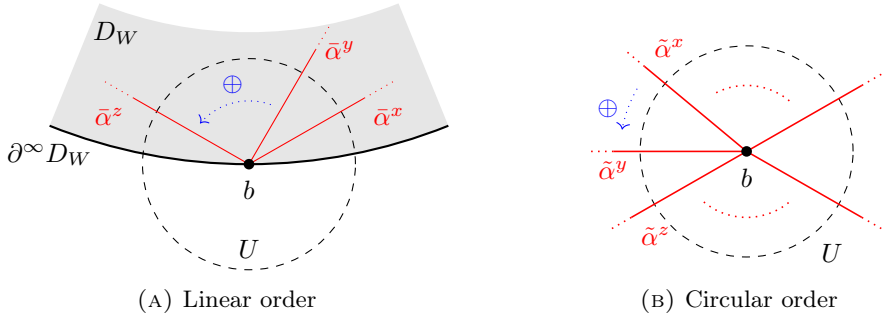
\begin{figure}[bht]
	\centering
	\begin{subfigure}{.45\linewidth}
		\centering
		\begin{tikzpicture}[scale=.7]
	\node[below=.1] (b0) at (0,0) {$b$};
	\fill[gray!20] (0,0) arc (-90:-112:10) -- ++ (-112:-2.5) node[shift=(-60:.45), black] {$D_W$} arc(-112:-68:7.5) -- ++ (-68:2.5) arc (-68:-90:10) ;
	\draw[dashed,->] (0,0) circle[radius=2];
	\node at (0,-1.6) {$U$};
	\draw[thick] (0,0) arc (-90:-112:10) node [below=.1] {$\partial^{\infty} D_W$};
	\draw[thick] (0,0) arc (-90:-68:10);
	\draw[red] (0,0) -- + (30:2.5) node[below right] {$\bar{\alpha}^{x}$};
	\draw[red,dotted] (30:2.5) -- (30:3.2);
	\draw[red] (0,0) -- + (60:2.5) node[right] {$\bar{\alpha}^{y}$};
	\draw[red,dotted] (60:2.5) -- (60:3.1);
	\draw[red] (0,0) -- + (150:2.5) node[below left] {$\bar{\alpha}^{z}$};
	\draw[red,dotted] (150:2.5) -- (150:3.2);
	\fill[black] (0,0) circle [radius=.1];
	\draw[blue,dotted,->] (70:1.2) arc (70:140:1.2) node[midway,above] {$\oplus$};
\end{tikzpicture}
		\caption{Linear order}
		\label{fig=star-ord-lin}
	\end{subfigure}
	\begin{subfigure}{.45\linewidth}
		\centering
		\begin{tikzpicture}[scale=.7]
	\def\r{2.5};
	\node[below=.1] (b0) at (0,0) {$b$};
	\draw[dashed,->] (0,0) circle[radius=.8*\r];
	\node at (-50:\r) {$U$};
	\begin{scope}[red,semithick]
		\draw (0,0) -- + (30:\r);
		\draw[dotted] (30:\r) -- (30:1.2*\r);
		\draw (0,0) -- + (140:\r) node[above right] {$\tilde{\alpha}^{x}$};
		\draw[dotted] (140:\r) -- (140:1.2*\r);
		\draw (0,0) -- + (180:\r) node[below] {$\tilde{\alpha}^{y}$};
		\draw[dotted] (180:\r) -- (180:1.2*\r);
		\draw (0,0) -- + (210:\r) node[below right] {$\tilde{\alpha}^{z}$};
		\draw[dotted] (210:\r) -- (210:1.2*\r);
		\draw (0,0) -- + (-30:\r);
		\draw[dotted] (-30:\r) -- (-30:1.2*\r);
		\draw[dotted] (50:.5*\r) arc (50:120:.5*\r);
		\draw[dotted] (230:.5*\r) arc (230:310:.5*\r);
	\end{scope}
	\fill[black] (0,0) circle [radius=.1];
	\draw[blue,dotted,->] (150:.95*\r) arc (150:170:.95*\r) node[midway,left] {$\oplus$};
\end{tikzpicture}
		\caption{Circular order}
		\label{fig=star-ord-circ}
	\end{subfigure}
	\caption{Star orderings}
	\label{fig=star-ord}
\end{figure}
\begin{figure}[bht]
	\centering
	\begin{subfigure}{.45\linewidth}
		\centering
		\begin{tikzpicture}[scale=.55,font=\scriptsize]
	\def\r{.37};
	\fill[gray!20,draw=black,thick] (0,0) circle [radius=4];
	\node[label={[below]$b$},coordinate] (b0) at (0,-4) {};
	\node at (50:4.7) {$\partial^{\infty} D_W$};
	\foreach \x/\l/\t in {1/v/x,2/w/y,3/x/z,4/z/v,5/y/w}
	{
		\node[circle,draw,fill=white,thick] (\x) at ($(4*\x*\r,0)-(4*3*\r,0)$) {$\l$};
		\draw[red,semithick] (b0) -- +(\x*30:2) node[coordinate] (m\x) {};
	};
	\begin{scope}[red,semithick]
		\draw (m5) to[out=5*30,in=-145]  node[midway,right] {$\bar{\alpha}^{w}$} (2);
		\draw (m4) to[out=4*30,in=-70] (-2*\r,.5) to[out=110,in=90] node[midway,above] {$\bar{\alpha}^{v}$} (1);
		\draw (m3) to[out=3*30,in=-120] node[midway,right] {$\bar{\alpha}^{z}$} (4);
		\draw (m2) to[out=2*30,in=-120] node[midway,right] {$\bar{\alpha}^{y}$} (5);
		\draw (m1) to[out=1*30,in=-80] (10*\r,.5) to[out=100,in=90] node[midway,above] {$\bar{\alpha}^{x}$} (3);
	\end{scope}
	\fill[black] (0,-4) circle [radius=.1];
	\draw[dashed] (0,-4)+(-10:1.7) arc [start angle=-10,end angle=190,radius=1.7];
\end{tikzpicture}
		\caption{Linear flat star}
		\label{fig=star-ex-lin}
	\end{subfigure}
	\begin{subfigure}{.45\linewidth}
		\centering
		\newcommand\pgfmathsinandcos[3]{%
  \pgfmathsetmacro#1{sin(#3)}%
  \pgfmathsetmacro#2{cos(#3)}%
}
\newcommand\LongitudePlane[3][current plane]{%
  \pgfmathsinandcos\sinEl\cosEl{#2} 
  \pgfmathsinandcos\sint\cost{#3} 
  \tikzset{#1/.style={cm={\cost,\sint*\sinEl,0,\cosEl,(0,0)}}}
}
\newcommand\LatitudePlane[3][current plane]{%
  \pgfmathsinandcos\sinEl\cosEl{#2} 
  \pgfmathsinandcos\sint\cost{#3} 
  \pgfmathsetmacro\yshift{\cosEl*\sint}
  \tikzset{#1/.style={cm={\cost,0,0,\cost*\sinEl,(0,\yshift)}}} %
}
\newcommand\DrawLongitudeCircle[2][1]{
  \LongitudePlane{\angEl}{#2}
  \tikzset{current plane/.prefix style={scale=#1}}
  \pgfmathsetmacro\angVis{atan(sin(#2)*cos(\angEl)/sin(\angEl))} %
  \draw[current plane] (\angVis:1) arc (\angVis:\angVis+180:1);
  \draw[current plane,dotted] (\angVis-180:1) arc (\angVis-180:\angVis:1);
}
\newcommand\DrawLatitudeCircle[2][1]{
  \LatitudePlane{\angEl}{#2}
  \tikzset{current plane/.prefix style={scale=#1}}
  \pgfmathsetmacro\sinVis{sin(#2)/cos(#2)*sin(\angEl)/cos(\angEl)}
  \pgfmathsetmacro\angVis{asin(min(1,max(\sinVis,-1)))}
  \draw[current plane] (\angVis:1) arc (\angVis:-\angVis-180:1);
  \draw[current plane,dotted] (180-\angVis:1) arc (180-\angVis:\angVis:1);
}

\begin{tikzpicture}[scale=.6,font=\footnotesize]
	\def\R{1.5};
	\def\Rb{1.9};

	\def\angEl{30} 

    \filldraw[gray!20,draw=black] (0,0) circle (2*\Rb);
    \DrawLatitudeCircle[2*\Rb]{0}

	\node[above=.1] (b0) at (0,0) {$b$};
	\draw[dashed,->] (0,0) circle[radius=.7*\R];
	\node at (-50:\R) {$U$};
	\begin{scope}[nodes={circle,draw,fill=white,thick}]
		\foreach \i/\t/\r in {x/30/1,w/110/.7,z/0/.8,y/-60/.8,v/-180/.9}{
			\node (\i) at (\t:2*\r*\R) {${\i}$};
		}
	\end{scope}
	\begin{scope}[red,semithick]
		\draw (0,0) -- ++ (50:\R) node[right] {$\tilde{\alpha}^{v}$} to[out=50, in=0] (100:2*\R) to[out=180,in=90] (v);
		\draw (0,0) -- ++ (140:\R) node[below left] {$\tilde{\alpha}^{w}$} to[out=140, in=-135] (w);
		\draw (0,0) -- ++ (220:\R) node[left] {$\tilde{\alpha}^{x}$} to[out=220, in=180] (-80:2*1*\R) to[out=0, in=-120] (-30:2*1.1*\R)  to[out=60,in=-60] (x);
		\draw (0,0) -- ++ (-90:\R) node[left] {$\tilde{\alpha}^{y}$} to[out=-90, in=200] (y);
		\draw (0,0) -- ++ (-10:\R) node[above] {$\tilde{\alpha}^{z}$} to[out=-10, in=-150] (z);
	\end{scope}
	\fill[black] (0,0) circle [radius=.1];
	\node at (50:4.5) {$\Sigma_W$};
\end{tikzpicture}
		\caption{Circular flat star}
		\label{fig=star-ex-circ}
	\end{subfigure}
	\caption{Examples of flat stars}
	\label{fig=star-ex}
\end{figure}
Examples of a linear star and of a circular star are shown on \Cref{fig=star-ex-lin,fig=star-ex-circ} respectively, with $W = \{x,y,z,v,w\}$ in this linear (and circular) order. We use the convention of denoting unordered stars as $`a$, linear stars as $\bar{`a}$ and circular stars as~$\tilde{`a}$.

We also make use of the set $\CS{`S_W}$ (resp. $\LS{`D_W}$) of all isotopy classes of circular stars on $`S_W$ (resp. linear stars on $D_W$).

\subsection{Mapping class groups}
\label{ssec=mcg}

We recall some results about the mapping class groups of surfaces, see \cite{Birman1975,Farb2011} for details.

Let $`S$ be a compact planar oriented surface. The mapping class group $\MCG{`S}$ is the group of isotopy classes of homeomorphisms of $`S$. The pure mapping class group $\PMCG{`S}$ is the subgroup of homeomorphisms that restrict to the identity on $\partial `S$.

Let $`S_W'$ be the surface obtained from $D_W$ by filling in the boundary component $\partial^{\infty} D_W$ with a disk punctured with one point $b_{\infty}$, and let $C : \PMCG{D_W} "->" \PMCG{`S_W'}$ be the map induced by the inclusion $f : D_W "`->" `S_W'$.
\begin{lemma}[Capping]
	\label{lem=cap}
	There is a short exact sequence:
	\[\begin{tikzcd}[cramped]
		1 \rar & \left< `D^2 \right> \rar & \PMCG{D_W} \rar["C"] & \PMCG{`S_W'} \rar & 1
	\end{tikzcd}\]
	where $`D^2$ is the full Dehn twist parallel to $\partial^{\infty} D_W$.
\end{lemma}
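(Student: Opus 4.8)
The plan is to recognise this as the standard \emph{capping} exact sequence (see \cite{Farb2011}) and to verify the three required facts directly: that $C$ is a well-defined homomorphism, that it is surjective, and that its kernel is exactly $\langle `D^2\rangle$. Exactness at $\langle `D^2\rangle$ is automatic, being the injectivity of a subgroup inclusion; I will only note that, since $m\geq 2$, the curve parallel to $\partial^{\infty}D_W$ is essential in $D_W$, so $`D^2$ has infinite order and $\langle `D^2\rangle\cong\mathbb Z$. The map $C$ is well defined because a representative $\varphi$ of a class in $\PMCG{D_W}$ fixes $\partial^{\infty}D_W$ pointwise, hence extends by the identity over the capping disc to a homeomorphism of $`S_W'$ fixing $\partial `S_W'$ and $b_{\infty}$; an isotopy rel $\partial D_W$ extends by the identity as well, so $C$ descends to isotopy classes and is clearly a homomorphism.

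For surjectivity, start from a homeomorphism $\psi$ representing a class of $\PMCG{`S_W'}$, so $\psi$ fixes $\partial `S_W'$ pointwise and fixes $b_{\infty}$. Writing $\mathcal D$ for the capping disc, which carries the single marked point $b_{\infty}$ and is bounded by the curve $\partial^{\infty}D_W$, I first isotope $\psi$ — rel $\partial `S_W'$ and $b_{\infty}$ — so that $\psi(\mathcal D)=\mathcal D$, using that any two disc neighbourhoods of $b_{\infty}$ are ambiently isotopic. A further isotopy supported in a collar of $\partial\mathcal D$ avoiding $b_{\infty}$ makes $\psi$ fix $\partial\mathcal D$ pointwise, and then the Alexander trick — the mapping class group of a once-marked disc rel its boundary is trivial — lets me isotope $\psi$ to the identity on all of $\mathcal D$. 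The resulting homeomorphism restricts to an element of $\PMCG{D_W}$ whose image under $C$ is the class of $\psi$.

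The inclusion $\langle `D^2\rangle\subseteq\ker C$ is immediate: in $`S_W'$ the curve parallel to $\partial^{\infty}D_W$ bounds the once-marked disc $\mathcal D$, and a Dehn twist about a curve bounding a disc with at most one marked point is isotopically trivial \cite{Farb2011}. Hence $C(`D^2)=1$.

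The substance of the lemma, and the step I expect to be the main obstacle, is the reverse inclusion $\ker C\subseteq\langle `D^2\rangle$. Here I would use the Alexander method. Choose a system of pairwise disjoint properly embedded arcs $\gamma_w$ in $D_W$, one from $\partial^{\infty}D_W$ to each $\partial^w D_W$, cutting $D_W$ into a disc; extending each $\gamma_w$ through $\mathcal D$ to $b_{\infty}$ produces a filling arc system $\delta_w$ of $`S_W'$. If $\varphi\in\ker C$, then its extension fixes each $\delta_w$ up to isotopy rel endpoints in $`S_W'$. Comparing this with isotopy rel $\partial D_W$ inside $D_W$, the discrepancy is recorded by the winding of the moving endpoint around $\partial^{\infty}D_W$, which shows that $\varphi(\gamma_w)$ and $(`D^2)^{k_w}(\gamma_w)$ agree rel $\partial D_W$ for some integers $k_w$. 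The crux is that, because $\mathcal D$ carries the \emph{single} marked point $b_{\infty}$, this winding is governed by one well-defined integer near the puncture, forcing $k_w=k$ independently of $w$; it is precisely here that the once-marked hypothesis is used, and where care is needed. Granting this, $\varphi$ and $(`D^2)^{k}$ act identically on the filling system, so the Alexander method gives $\varphi=(`D^2)^{k}\in\langle `D^2\rangle$. Alternatively, the same conclusion follows from the Birman exact sequence \cite{Birman1975} by analysing the point-pushing subgroup attached to $b_{\infty}$.
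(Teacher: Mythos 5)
The paper itself gives no proof of this lemma: it is recalled as a standard fact with a pointer to \cite{Birman1975,Farb2011} (it is Proposition~3.19 of Farb--Margalit), so your proposal can only be measured against the standard reference proof. Your treatment of the routine parts is correct and complete: $C$ is well defined because a homeomorphism, and an isotopy, that are the identity on $\partial^{\infty}D_W$ extend by the identity across the capping disc; surjectivity follows from your repositioning of the capping disc plus the Alexander trick (and indeed the coning isotopy $\varphi_t(x)=t\varphi(x/t)$ fixes the marked point throughout, so it is valid rel $b_{\infty}$); and $\Delta^2\in\ker C$ because a twist about a curve bounding a once-marked disc is isotopically trivial.

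The gap is the reverse inclusion $\ker C\subseteq\langle\Delta^2\rangle$, which is the entire content of the lemma and which your text explicitly assumes (\enquote{Granting this\dots}). Two separate claims are buried there, and neither is established. (a)~For each $w$, the arcs $\varphi(\gamma_w)$ and $\gamma_w$, which become isotopic rel endpoints in $\Sigma_W'$, differ in $D_W$ by \emph{some} power $(\Delta^2)^{k_w}$: to prove this, blow up $b_\infty$ into its circle of directions; this identifies arcs of $\Sigma_W'$ ending at $b_\infty$, up to isotopy rel endpoints, with arcs of $D_W$ whose endpoint on $\partial^{\infty}D_W$ is allowed to slide, and sliding that endpoint one full turn is exactly the effect of $\Delta^2$ --- this is where \enquote{winding is one well-defined integer} becomes a proof rather than a slogan. (b)~The exponents $k_w$ agree: your heuristic can be replaced by the observation that $\varphi$ is a homeomorphism fixing $\partial D_W$, so the arcs $\varphi(\gamma_w)$ are pairwise disjoint, whereas for $k\neq k'$ the classes $(\Delta^2)^{k}\gamma_w$ and $(\Delta^2)^{k'}\gamma_{w'}$ have geometric intersection number at least $|k-k'|>0$ (twist inequality); hence all $k_w$ coincide and the Alexander method applied to $(\Delta^2)^{-k}\varphi$ finishes. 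Finally, be careful with your proposed alternative: the Birman exact sequence as stated in the paper computes the kernel of $F:\PMCG{\Sigma_W'}\to\PMCG{\Sigma_W}$ (point-pushing), not the kernel of $C$, and it does not yield the capping kernel by itself; the reference proof instead uses the fibration of $\mathrm{Homeo}(\Sigma_W',\partial\Sigma_W'\cup\{b_\infty\})$ over the space of embeddings of the once-marked disc, whose total rotation generates a $\mathbb{Z}$ mapping onto $\langle\Delta^2\rangle$.
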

Let $F : \PMCG{`S_W'} "->" \PMCG{`S_W}$ be the map induced by the inclusion $g : `S_W' "`->" `S_W$, where we fill in the puncture $b_{\infty}$. There is a natural \enquote{pushing} map $P : `p_1(`S_W,b_{\infty}) "->" \PMCG{`S_W'}$ defined as such: for any simple closed curve $`d$ in $`S_W$ based on $b_{\infty}$, let $`d_+$ and $`d_-$ be two curves parallel to $`d$, which enclose an annulus centred on $`d$. Then $P(`d)$ is defined as the composition $T_{`d_+} `o T_{`d_-}^{-1}$ of the Dehn twists along $`d_+$ and $`d_-$ (see \cite[Section 4.2.2]{Farb2011} and \Cref{fig=mcg-push-init}).
\begin{lemma}[Birman exact sequence]
	\label{lem=birman}
	There is a short exact sequence:
	\[\begin{tikzcd}[cramped]
		1 \rar & `p_1(`S_W,b_{\infty}) \rar["P"] & \PMCG{`S_W'} \rar["F"] & \PMCG{`S_W} \rar & 1
	\end{tikzcd}\qedhere\]
\end{lemma}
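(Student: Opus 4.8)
The plan is to exhibit the sequence as a segment of the long exact homotopy sequence of an evaluation fibration, which is the classical route to the Birman sequence (compare \cite[Section~4.2]{Farb2011}). Endow the group $G \vcentcolon= \Hom{\Sigma_W, \partial\Sigma_W}$ of homeomorphisms fixing $\partial\Sigma_W$ pointwise with the compact--open topology, and consider the evaluation map
\[
\mathrm{ev}\colon G \longrightarrow \ring{\Sigma}_W, \qquad \varphi \longmapsto \varphi(b_{\infty}).
\]
Since $G$ acts transitively on the interior of the connected surface $\Sigma_W$, the map $\mathrm{ev}$ is surjective, and by the isotopy extension theorem it is a locally trivial fibration. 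Its fibre over $b_{\infty}$ is the subgroup $G_{b_{\infty}} \subset G$ of homeomorphisms that moreover fix $b_{\infty}$. By definition $\pi_0(G) = \PMCG{\Sigma_W}$, while $\pi_0(G_{b_{\infty}}) = \PMCG{\Sigma_W'}$, because fixing the interior marked point $b_{\infty}$ amounts to working on $\Sigma_W'$.

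Next I would read off the long exact homotopy sequence of the fibration $G_{b_{\infty}} \hookrightarrow G \xrightarrow{\mathrm{ev}} \ring{\Sigma}_W$:
\[
\pi_1(G,\Id) \longrightarrow \pi_1(\ring{\Sigma}_W, b_{\infty}) \xrightarrow{\ \partial\ } \pi_0(G_{b_{\infty}}) \longrightarrow \pi_0(G) \longrightarrow \pi_0(\ring{\Sigma}_W).
\]
As $\ring{\Sigma}_W$ is connected the last term vanishes, so the map $\pi_0(G_{b_{\infty}}) \to \pi_0(G)$, which is exactly $F$, is surjective; and $\pi_1(\ring{\Sigma}_W, b_{\infty}) \cong \pi_1(\Sigma_W, b_{\infty})$ since the inclusion of the interior is a homotopy equivalence. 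I would then identify the connecting homomorphism $\partial$ with the point-pushing map $P$: lifting a loop $\delta$ based at $b_{\infty}$ to a path in $G$ issuing from $\Id$ produces an ambient isotopy dragging $b_{\infty}$ once around $\delta$, whose time-one map is precisely the product of Dehn twists $T_{\delta_+}\circ T_{\delta_-}^{-1}$ of the construction preceding the statement. Exactness of the sequence at $\pi_0(G_{b_{\infty}})$ then reads $\operatorname{im}(P) = \ker(F)$.

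It remains to prove that $P$ is injective, and this is the only non-formal point. By exactness, $\ker(P)$ equals the image of $\pi_1(G,\Id)$ in $\pi_1(\Sigma_W, b_{\infty})$, so it suffices to show that the identity component $G_0$ of $G$ is simply connected; in fact it is contractible. The hard part is therefore this input on the topology of the homeomorphism group, namely Hamstrom's theorem: for a compact surface with non-empty boundary and negative Euler characteristic, the identity component of the group of boundary-fixing homeomorphisms is contractible (compare \cite[Section~4.2]{Farb2011}). The surfaces $\Sigma_W$ occurring here are planar with $m \geq 3$ boundary circles, whence $\chi(\Sigma_W) = 2 - m < 0$ and the hypothesis applies; this also excludes the degenerate disc and annulus cases where the sequence would fail. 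Consequently $\pi_1(G_0) = 1$, the map $\pi_1(G,\Id) \to \pi_1(\Sigma_W, b_{\infty})$ is trivial, and $P$ is injective. Granting this topological fact, exactness of the three-term sequence follows formally from the computation above.
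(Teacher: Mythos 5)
The paper offers no proof of this lemma: it is recalled as a standard fact, with the details deferred to the cited references \cite{Birman1975,Farb2011}. Your argument is exactly the canonical proof those sources give (Farb--Margalit, Section 4.2): the evaluation fibration $\mathrm{ev}\colon \mathrm{Homeo}(\Sigma_W,\partial\Sigma_W)\to \operatorname{int}(\Sigma_W)$, its long exact homotopy sequence, the identification of the connecting homomorphism with the point-pushing map $P$, and Hamstrom's contractibility theorem to kill $\pi_1$ of the homeomorphism group, which is precisely what injectivity of $P$ requires. So the proposal is correct, it isolates the one non-formal input correctly, and it supplies the very argument the paper's citation points to. One caveat: your parenthetical claim that the sequence \enquote{would fail} for the disc and the annulus is not accurate in the rel-boundary setting used in this paper. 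For the once-punctured disc rel $\partial$, Alexander's trick performed fixing the marked point shows both mapping class groups are trivial; for the annulus $A$ rel $\partial$ and its once-punctured version $A'$ one has $\PMCG{A}\cong\mathbb{Z}$, $\PMCG{A'}\cong\mathbb{Z}^2$, and $P$ sends the core loop to $T_{\delta_+}T_{\delta_-}^{-1}$, so the sequence is again exact. The genuine failures of the Birman sequence occur when the identity component of the relevant homeomorphism group has non-trivial $\pi_1$, e.g.\ for the closed torus, or for the disc and annulus when the boundary is \emph{not} fixed pointwise (rotations give essential loops); the paper's pure mapping class groups fix $\partial$ pointwise, which is what rules this out. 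This does not affect your proof where you apply it: restricting to $m\geq 3$, i.e.\ $\chi(\Sigma_W)<0$, covers every use of the lemma in the paper, although the lemma as stated carries no hypothesis on $m$, and for $m=2$ one would instead invoke the rel-boundary version of Hamstrom's theorem, under which the same fibration argument goes through verbatim.
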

\cref{lem=cap,lem=birman} also stand for non-pure mapping class groups.

Let $`D_W$ be a disc with $m$ \emph{punctures} placed at the points of the set $W$. Suppose that~$m \geq 2$. The braid group $\mathbb{B}_m$ on $m$ strands is generated by the braids $`s_{j,l}$ that performs a half twist on the strands $j$ and $l$. The pure braid group $\mathbb{P}_m$ is the subgroup generated by the braids $a_{j,l} \vcentcolon= `s_{j,l}^2$ that performs a \emph{full} twist on the strands $j$ and $l$. Any element $`q `: \Lin{W}$ induces isomorphisms:
\[\begin{tikzcd}[cramped]
	\mathbb{P}_m \rar["\sim"] & \PMCG{`D_W} & \mathbb{B}_m \rar["\sim"] & \MCG{`D_W}
\end{tikzcd}\]
The disc $`D_W$ can be obtained by capping every boundary components of $D_W$ except $\partial^{\infty} D_W$. Applying \cref{lem=cap} for each capping eventually gives the following result:
\begin{proposition}
	\label{thm=mcg-disc}
	Suppose that $m \geq 2$. Any element $`q `: \Lin{W}$ induces group isomorphisms
	\[\begin{tikzcd}[cramped]
		`r_{`q} : \mathbb{P}_m `* \mathbb{Z}^m \rar["\sim"] & \PMCG{D_W} & `r_{`q} : \mathbb{B}_m `* \mathbb{Z}^m \rar["\sim"] & \MCG{D_W}
	\end{tikzcd}\qedhere\]
\end{proposition}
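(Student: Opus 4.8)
The plan is to obtain $`D_W$ from $D_W$ by capping the $m$ inner boundary circles one at a time, feeding each capping into \cref{lem=cap}. First I recall the classical identifications $\PMCG{`D_W} `~ \mathbb{P}_m$ and $\MCG{`D_W} `~ \mathbb{B}_m$ induced by $`q$ that are already recorded above: the order $`q$ labels the $m$ punctures by $\{1,\dots,m\}$, so the half-twists of consecutively labelled punctures correspond to the standard braid generators. Topologically $D_W$ is obtained from $`D_W$ by replacing a small disc around each puncture with an annulus; reversing this, capping the inner boundary $\partial^w D_W$ with a once-punctured disc turns that hole back into a puncture.

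Applying \cref{lem=cap} to $\partial^w D_W$ in place of $\partial^\infty D_W$ gives, for each inner boundary, a short exact sequence whose kernel is generated by the boundary-parallel Dehn twist $T_w$. Composing the $m$ resulting capping epimorphisms yields
\[\begin{tikzcd}[cramped]
	1 \rar & \langle T_1,\dots,T_m\rangle \rar & \PMCG{D_W} \rar["c"] & \mathbb{P}_m \rar & 1
\end{tikzcd}\]
and its non-pure analogue with $\mathbb{B}_m$ in place of $\mathbb{P}_m$. The curves carrying the $T_w$ are pairwise disjoint, so the $T_w$ commute, and iterating the capping shows them to be independent of infinite order; hence $\langle T_1,\dots,T_m\rangle `~ \mathbb{Z}^m$. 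In the pure case each $T_w$ is a boundary-parallel twist and so is central in $\PMCG{D_W}$, making the extension central.

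It remains to split the sequence and to assemble $`r_{`q}$, which I would define on the two factors: the $w$-th generator of $\mathbb{Z}^m$ is sent to $T_w$, and $\mathbb{P}_m$ (resp. $\mathbb{B}_m$) is sent in by the geometric section $j$ coming from the blow-up. Concretely, represent a mapping class of $`D_W$ by a homeomorphism that, in fixed disc charts around the punctures, is standard (in the pure case, the identity), then extend it across the annuli replacing those discs to obtain a homeomorphism of $D_W$ fixing collars of the inner boundaries. The braid part and the twists then have disjoint supports, so they commute and $`r_{`q}$ is a homomorphism. By construction $c `o j = \Id$ and $c(T_w)=1$, so $c `o `r_{`q}$ is the projection to the first factor; a short diagram chase then gives bijectivity — surjectivity because $j$ sections $c$ while the $T_w$ generate $\ker c$, and injectivity because $`r_{`q}(\beta,n)=1$ forces $\beta = c(`r_{`q}(\beta,n)) = 1$ and then $\prod_w T_w^{n_w}=1$, whence $n=0$ by independence of the $T_w$.

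The crux, and the step demanding the most care, is the well-definedness of the section $j$: that mapping classes admit representatives that are standard near all punctures in a way compatible with composition and isotopy rel collars, equivalently that the extension above splits. Finally, in the non-pure case a homeomorphism permuting the punctures by $`s$ conjugates $T_w$ to $T_{`s(w)}$, so $\mathbb{B}_m$ acts on $\mathbb{Z}^m$ through $\mathbb{B}_m "->" \mathfrak{S}_m$; this action, trivial exactly on $\mathbb{P}_m$, is the structure carried by the product $\mathbb{B}_m `* \mathbb{Z}^m$ and degenerates to the genuine direct product $\mathbb{P}_m `* \mathbb{Z}^m$ in the pure case.
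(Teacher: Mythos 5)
Your proposal is correct and follows essentially the same route as the paper: the paper's entire argument for \cref{thm=mcg-disc} is the observation that $\Delta_W$ is obtained from $D_W$ by capping the inner boundary components, iterating \cref{lem=cap} once per hole, with the section described (implicitly, in the text after the statement) by sending $a_{\theta(v),\theta(w)}$ and $\sigma_{\theta(v),\theta(w)}$ to full and half twists along curves $\delta_{v,w}$ encircling two holes, and the generators of $\mathbb{Z}^m$ to the boundary-parallel twists $\delta_v$ --- exactly your $T_w$ and your section $j$. Two points of comparison are worth recording. First, you are more explicit than the paper about where the actual content lies: that the kernel $\langle T_1,\dots,T_m\rangle\cong\mathbb{Z}^m$ is central in the pure case and that the extension $1\to\mathbb{Z}^m\to\PMCG{D_W}\to\mathbb{P}_m\to 1$ \emph{splits}; the paper passes over the splitting in silence, so your acknowledged-but-sketched construction of $j$ is not a gap relative to the paper's own level of detail (to close it fully one realises $\mathbb{P}_m$, resp.\ $\mathbb{B}_m$, by motions of rigid subdiscs, or checks the braid relations among the twists $T_{\delta_{v,w}}$ and half-twists directly). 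Second, your closing remark is a genuine correction of the \emph{statement} rather than a defect of your proof: since a half-twist exchanging the holes $v$ and $w$ conjugates $T_v$ into $T_w$, the kernel is not central in $\MCG{D_W}$, and the non-pure group is the framed braid group $\mathbb{Z}^m\rtimes\mathbb{B}_m$ (permutation action), which for $m\geq 2$ is not abstractly isomorphic to $\mathbb{B}_m\times\mathbb{Z}^m$ --- their abelianizations are $\mathbb{Z}^2$ and $\mathbb{Z}^{m+1}$ respectively. So the direct-product form of the proposition is accurate only in the pure case; this is harmless for the rest of the paper, which only ever uses the section $\mathbb{B}_m\to\MCG{D_m}$ (in \cref{ssec=gs-trans}) and the subgroup of boundary twists, never the product structure of the non-pure group.
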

The generator $a_{`q(v),`q(w)} `: \mathbb{P}_m$ is sent by $`r_{` q}$ to a full Dehn twist along a curve $`d_{v,w}$ going around $\partial^v D_W$ and $\partial^w D_W$, as shown on \textnormal{\Cref{fig=mcg-init}}. Similarly, $`s_{`q(v),`q(w)} `: \mathbb{B}_m$ is sent to a half Dehn twist along the same curve. The generator $d_{`q(v)} = (0, \dots, 1, \dots,0) `: \mathbb{Z}^m$ is sent to the full Dehn twist around a curve $`d_v$ parallel to $\partial^v D_W$, as shown on \textnormal{\Cref{fig=mcg-dt}}. Note that the subgroup $\mathbb{Z}^m$ is isomorphic to the kernel of the capping map $C_W : \PMCG{D_W} "->" \PMCG{`D_W}$.

To simplify notations, we often write ${(`b,d)}_{`q} \vcentcolon= `r_{`q}(`b,d)$. However, when using this notation the \emph{left} action of the mapping class group becomes a \emph{right} action of the braid group.

\subsection{Model disc}
\label{ssec=model}

Let $`D_m$ be the unit disc in the complex plane $\mathbb{C}$ with $m \geq 2$ punctures $x^1,\dots, x^m$ aligned from right to left on the real axis, and let $D_m$ be the disc obtained by removing a small open disc $D^j$ around each puncture $x^j$ in $`D_m$. We define linear stars on $D_m$ and on $`D_m$ similarly to \cref{def=star-lin}, replacing the boundary components with the punctures in the latter case. The base point is always $-i$ and the linear order is always assumed to be the identity. The~objective of this section is to establish \cref{prop=mcg-model-fth} which states that the action of the pure mapping class group of $D_m$ on these flat stars is transitive. This result on $D_m$ will be used in \cref{ssec=mcg-star} to generalise it to linear and circular stars on $D_W$ and $`S_W$ respectively.

The proof of \cref{prop=mcg-model-fth} uses a series of reductions to smaller structures: first we establish a similar result in the disc $`D_m$ in \cref{prop=mcg-model}. The proof of this second result consists in turn in building a connection between linear stars on $`D_m$ and a construction due to \cite{Fenn1999} called \emph{curve diagrams}. We prove with \cref{prop=curve-star} that curve diagrams are in bijection with linear stars on $`D_m$, and then we use \cref{thm=curve-braid} that establishes that $\PMCG{`D_m}$ acts transitively on curve diagrams, and thus on linear stars on $`D_m$. We then extend the result on $`D_m$ to $D_m$. Because of this entanglement of proofs, we present these results and their proofs by going from the smallest structure (curve diagrams) to the largest (linear stars on~$D_m$).

A \emph{curve diagram} is a curve embedded in the interior of $`D_m$, starting from $1$ and ending at $-1$, and going through all $x^j$'s in order. Let $\mathcal{C}_m$ be the set of isotopy classes of curve diagrams in $`D_m$. We denote by $`g_0$ the \emph{standard curve diagram} shown on \Cref{fig=star-stand}.

\begin{theorem}[\cite{Fenn1999}]
	\label{thm=curve-braid}
	The natural action of $\PMCG{`D_m}$ on $\mathcal{C}_m$ is free and transitive. In~particular there is a bijection
	\[\begin{tikzcd}[row sep=0pt,/tikz/column 1/.append style={anchor=base east},/tikz/column 3/.append style={anchor=base west}]
		X : & [-2.5em] \mathcal{C}_m \rar["\sim"] & \PMCG{`D_m}                           \\
		    & `g \rar[maps to]                    & `f \text{ such that } `f `. `g_0 = `g
	\end{tikzcd}\qedhere\]
\end{theorem}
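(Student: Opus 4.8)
The plan is to prove the two assertions---that the action is \emph{transitive} and that it is \emph{free}---separately, and then to observe that together they furnish the bijection $X$: transitivity gives, for each $`g `: \mathcal{C}_m$, at least one $`f `: \PMCG{`D_m}$ with $`f `. `g_0 = `g$, while freeness guarantees its uniqueness, so that $`g \mapsto `f$ is well defined and inverse to the orbit map $`f \mapsto `f `. `g_0$. Throughout I would fill in the punctures and work in the unit disc $D$ with marked points $x^1,\dots,x^m$, translating statements back to $`D_m$ at the end while keeping track of the constraint that every homeomorphism and every isotopy must fix $\partial D$ pointwise and fix each marked point, so as to represent elements of the \emph{pure} group.

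For transitivity, the key observation is that a curve diagram is an embedded arc from $1$ to $-1$ through the $x^j$ in order, so it separates $D$ into exactly two discs, an \enquote{upper} and a \enquote{lower} piece, whose closures are discs bounded by the arc together with one of the two semicircular arcs of $\partial D$ between $1$ and $-1$. Given an arbitrary $`g `: \mathcal{C}_m$, I would construct a homeomorphism carrying $`g_0$ to $`g$ by the change-of-coordinates principle: first prescribe a homeomorphism between the boundary circles of the upper pieces that is the identity on the $\partial D$-arc and carries $`g_0$ to $`g$ matching the marked points in their common order; extend it over the discs by coning (the Alexander trick); do the same for the lower pieces; and glue the two extensions along $`g_0 \mapsto `g$ and along $\partial D$. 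The result is an orientation-preserving homeomorphism of $D$ fixing $\partial D$ and each marked point with $`f(`g_0)=`g$; deleting the marked points exhibits it as the required element of $\PMCG{`D_m}$.

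For freeness I would show that the stabiliser of $`g_0$ is trivial. Suppose $`f `. `g_0 = `g_0$ in $\mathcal{C}_m$, that is, $`f(`g_0)$ is isotopic to $`g_0$ through curve diagrams. By the isotopy extension theorem I may post-compose $`f$ with an ambient isotopy, supported away from $\partial D$ and fixing the marked points, so as to arrange $`f(`g_0)=`g_0$ \emph{setwise}. Since $`f$ fixes $1$, $-1$ and every marked point and preserves orientation, its restriction to the arc $`g_0$ is isotopic rel these points to the identity; extending this to a further ambient isotopy in $\PMCG{`D_m}$, I may assume $`f$ fixes $`g_0$ \emph{pointwise}. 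Now cut $D$ along $`g_0$: as $`g_0$ is a boundary-to-boundary arc, the two complementary pieces are discs, each invariant under $`f$, and $`f$ restricts to a self-homeomorphism of each closed disc fixing its entire boundary pointwise. The Alexander trick makes each restriction isotopic rel boundary to the identity, and the two isotopies agree along $`g_0$ (where everything is already fixed), so they glue to an ambient isotopy---respecting $\partial D$ and the marked points---from $`f$ to $\Id$. Hence $`f$ is trivial in $\PMCG{`D_m}$ and the action is free.

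The technical heart, and the step I expect to be the main obstacle, is the upgrade from \enquote{isotopic} to \enquote{equal pointwise} in the freeness argument: one must apply the isotopy extension theorem in the punctured surface and verify at each stage that the ambient isotopies produced keep $\partial D$ fixed and permute no marked point, so that they genuinely realise relations in the \emph{pure} mapping class group rather than merely in $\MCG{`D_m}$. Once $`g_0$ has been pinned down pointwise the Alexander trick on the complementary discs is routine; it is the bookkeeping ensuring that every move stays inside $\PMCG{`D_m}$ where the care lies.
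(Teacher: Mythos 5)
The paper does not prove this statement at all: it is imported verbatim from \cite{Fenn1999} (the curve-diagram technique of Fenn--Greene--Rolfsen--Rourke--Wiest) and used as a black box, so there is no internal argument to compare yours against. Judged on its own, your proof is correct, and it is essentially the standard argument for this kind of result. The two pillars both hold up: for transitivity, a curve diagram $\gamma$ separates the disc into two closed discs (Jordan--Schoenflies), each containing one semicircle of $\partial D$ and \emph{no marked points in its interior} --- all the $x^j$ lie on the diagram itself --- so the boundary homeomorphism that is the identity on $\partial D$ and matches $\gamma_0$ to $\gamma$ marked-point-by-marked-point extends by coning over each piece, and the glued map is an element of $\PMCG{\Delta_m}$ carrying $\gamma_0$ to $\gamma$. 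For freeness, the same observation is what makes the cutting step clean: once you have pinned $\gamma_0$ pointwise (the isotopy-extension bookkeeping you flag is genuinely needed, but works because the arc isotopies can be taken stationary at the endpoints and at the $x^j$, so the ambient extensions automatically fix $\partial D$ and the marked points), the two complementary pieces are unmarked discs on which the Alexander trick applies rel boundary, and the two isotopies glue along $\gamma_0$ into an isotopy to the identity inside the pure group. One small point you leave implicit but which is easy to supply: after $\phi$ fixes $\gamma_0$ pointwise, it preserves each complementary piece because it fixes the two semicircles of $\partial D$, which lie in different pieces. So the proposal is a complete and correct substitute for the citation.
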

Denote by $\LS{`D_m}$ the set of isotopy classes of linear stars on $`D_m$ (remember that they are all ordered by the identity).
\begin{proposition}
	\label{prop=mcg-model-def}
	The natural action of $\PMCG{`D_m}$ on $`D_m$ induces an action on $\LS{`D_m}$ given by
	\[\begin{tikzcd}[row sep=2pt]
		\PMCG{`D_m} `* \LS{`D_m} \rar & \LS{`D_m}\\
		\left(`f, {(\bar{`a}^j)}_{j}\right) \rar[maps to] & {(`f `. \bar{`a}^j)}_{j}
	\end{tikzcd} \qedhere\]
\end{proposition}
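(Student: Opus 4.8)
The plan is to verify that the stated formula defines a well-defined map $\PMCG{`D_m} `* \LS{`D_m} "->" \LS{`D_m}$ and then that it satisfies the two axioms of a left group action. Throughout, fix a homeomorphism, still denoted $`f$, representing a class of $\PMCG{`D_m}$. Under the identification $\PMCG{`D_m} \cong \mathbb{P}_m$, such a representative fixes $\partial `D_m$ pointwise and fixes each puncture $x^j$; being the identity on the outer boundary circle it is in particular orientation-preserving. These are the only three properties I will use.

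First I would check that $`f `. \bar{`a} \vcentcolon= {(`f(\bar{`a}^j))}_j$ is again a linear star ordered by the identity. Conditions (i)--(iii) of \cref{def=star} pass through any homeomorphism: the arcs $`f(\bar{`a}^j)$ are properly embedded and simple, they share the single common point $`f(b) = b = -i$, and they are otherwise disjoint by injectivity of $`f$. Condition (iv) of \cref{def=star-lin} holds because $`f$ fixes each puncture, so $`f(\bar{`a}^j)$ still terminates at $x^j$; this is precisely where purity is used, and it is what forces the \emph{labelling} --- hence the identity order --- to be preserved rather than merely cyclically permuted. The only delicate point is the local model (v): since $`f$ is orientation-preserving, fixes $b$, and is the identity on a boundary arc through $b$, it preserves the order in which the branches leave the centre. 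Thus the branches of $`f `. \bar{`a}$ emanate from $b$ in the identity order, and after an isotopy supported in a small neighbourhood of $b$ --- which does not change the isotopy class --- the pair is brought to the model of \Cref{fig=star-ord-lin}. Hence $`f `. \bar{`a} `: \LS{`D_m}$.

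Next I would show the operation descends to isotopy classes in each variable. If $`f$ and $`f'$ represent the same class of $\PMCG{`D_m}$, choose a path $H_t$ of homeomorphisms from $`f$ to $`f'$ fixing $\partial `D_m$; then $t \mapsto H_t(\bar{`a})$ is an isotopy of stars from $`f `. \bar{`a}$ to $`f' `. \bar{`a}$, so the output depends only on the class of $`f$. Symmetrically, if $G_t$ is an isotopy from $\bar{`a}$ to an isotopic star $\bar{`a}'$, then $t \mapsto `f(G_t)$ is an isotopy from $`f `. \bar{`a}$ to $`f `. \bar{`a}'$, each stage being a star by the previous paragraph. Together these yield a well-defined map on $\PMCG{`D_m} `* \LS{`D_m}$.

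The two action axioms are then immediate from the pointwise definition, applied branch by branch: the identity class acts trivially, and $(`f `o `g) `. \bar{`a}^j = `f(`g(\bar{`a}^j)) = `f `. (`g `. \bar{`a}^j)$ because homeomorphisms compose. The main --- and essentially only non-formal --- obstacle is the verification of condition (v) in the second paragraph: one must argue that the angular order of the branches at the centre is a homeomorphism invariant, which relies on orientation-preservation, while purity pins each branch $`f(\bar{`a}^j)$ to the puncture $x^j$ so that the preserved order is the \emph{linear} identity order and not an arbitrary cyclic reordering.
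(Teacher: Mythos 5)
Your proof is correct, but it handles the single non-trivial point --- preservation of the local ordering condition (v) of \cref{def=star-lin} --- by a different mechanism than the paper. The paper's proof uses the isomorphism $\mathbb{P}_m \cong \PMCG{\Delta_m}$ from \cref{ssec=mcg} to choose a convenient representative: every pure mapping class is a product of full Dehn twists along curves $\delta_{j,l}$ encircling pairs of punctures, and these curves can be isotoped off the neighbourhood $U$ of the centre where the ordering is defined, so the representative is literally the identity on $U$ and condition (v) is preserved for free; nothing at all needs to be said about how homeomorphisms act near $b$. You instead take an arbitrary representative and argue that a homeomorphism fixing $\partial\Delta_m$ pointwise (hence a boundary arc through $b=-i$, and hence orientation-preserving) and fixing each puncture preserves both the endpoint labels of the branches (purity) and the angular order of the branch germs at $b$. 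That germ-order invariance is true, and your stated reason is the right one --- it can be made precise by checking which germ bounds the complementary region adjacent to the fixed boundary arc, an intrinsic description that any boundary-fixing homeomorphism must respect --- but it is the one step you assert rather than prove, whereas the paper's choice of representative renders it vacuous. In exchange, your route is more elementary and self-contained (it never uses the Dehn-twist generation of $\mathbb{P}_m$), it would apply verbatim to any subgroup of boundary- and puncture-fixing mapping classes, and you are more careful than the paper about what it leaves implicit: descent of the operation to isotopy classes in both variables and the verification of the two action axioms. Both arguments are sound; the paper's is shorter because the supported-away-from-$U$ trick concentrates the whole content of the proposition into a statement about representatives.
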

\begin{proof}
	Remember from \cref{ssec=mcg} that there is a group isomorphism $\mathbb{P}_m "->" \PMCG{`D_m}$.
	Take~$\bar{`a} `: \LS{D_m}$ and $`b `: \mathbb{P}_m$. The associated isomorphism is denoted by $`b_{\Id}$. It is obvious that $`b_{\Id}$ preserves the properties of \cref{def=star}, so $\bar{`a} `. `b_{\Id}$ is still a star. The diffeomorphism $`b_{\Id}$ is a product of Dehn twists along curves $`d_{j,l}$ circling $x^j$ and $x^l$. Up to isotopy, one can always suppose that all $`d_{j,l}$ do not intersect with the neighbourhood $U$ where the ordering of $\bar{`a}$ is defined. Therefore $\bar{`a} `. `b_{\Id}$ still respects the \cref{def=star-lin} of a linear star on $`D_m$.
\end{proof}

\begin{figure}[htb]
	\centering
	\begin{tikzpicture}[scale=.7,font=\scriptsize]
	\def\r{3};
	\fill[gray!20,draw=black,thick] (0,0) circle [radius=\r];
	\node[below] (b0) at (0,-\r) {$-i$};
	\node at (50:1.2*\r) {$\Delta_m$};
	\draw[semithick,blue] (-\r,0) -- (\r,0);
	\node[above,blue] at (-.25*\r,0) {$\gamma_0$};
	\foreach \x in {-.75,.25,.75}
	{
		\draw[red,semithick] (\x*\r,0) -- (0,-\r);
		\fill[black] (\x*\r,0) circle [radius=.1];
	};
	\node[left] at (-\r,0) {$-1$};
	\node[right] at (\r,0) {$1$};
	\fill[black] (-\r,0) circle [radius=.1];
	\fill[black] (\r,0) circle [radius=.1];
	\fill[black] (0,-\r) circle [radius=.1];
	\node[above] at (-.75*\r,0) {$x^{m}$};
	\node[above] at (.25*\r,0) {$x^2$};
	\node[above] at (.75*\r,0) {$x^1$};
	\begin{scope}[red]
		\path (-.75*\r,0) -- (0,-\r) node[pos=.25,left] {$\bar{\alpha}_0^{m}$};
		\path (.25*\r,0) -- (0,-\r) node[pos=.25,shift=(0:.25)] {$\bar{\alpha}_0^2$};
		\path (.75*\r,0) -- (0,-\r) node[pos=.25,right] {$\bar{\alpha}_0^1$};
	\end{scope}
	\draw[dotted,thick,red] (-.5*\r,-.2*\r) -- (.1*\r,-.2*\r);
\end{tikzpicture}
	\caption{Standard linear star $\bar{`a}_0$ and curve diagram $`g_0$ on $`D_m$}
	\label{fig=star-stand}
\end{figure}
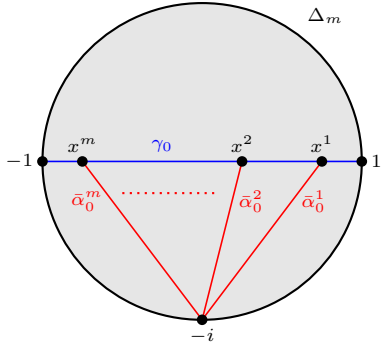
\begin{proposition}
	\label{prop=curve-star}
	There exists a bijection $\LS{`D_m} `~ \mathcal{C}_m$ that respects the actions of~$\PMCG{`D_m}$ on each set.
\end{proposition}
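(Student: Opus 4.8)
The plan is to construct the bijection geometrically and to read off equivariance from its naturality, so that the argument does \emph{not} presuppose any transitivity statement for stars; transitivity on $\LS{`D_m}$ is instead to be deduced afterwards by transporting \cref{thm=curve-braid} through this very bijection. Accordingly, I would exhibit two explicit maps and show they are mutually inverse and $\PMCG{`D_m}$-equivariant.

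First I would build a map $\Psi \colon \mathcal{C}_m \to \LS{`D_m}$ by \emph{cutting}. A curve diagram $`g$ is a properly embedded arc from $1$ to $-1$, so it separates $`D_m$ into two discs; let $L_{`g}$ be the one whose boundary contains the base point $-i$. Since $`g$ passes through the punctures in order, the points $x^1,\dots,x^m$ all lie on $\partial L_{`g}$, occurring in that order along $`g$. Inside the disc $L_{`g}$ there is, up to isotopy rel boundary, a unique collection of pairwise disjoint embedded arcs joining $-i$ to $x^1,\dots,x^m$; reading these arcs back inside $`D_m$ yields a star $\Psi(`g)$. The order of the punctures along $\partial L_{`g}$ forces the fan at $-i$ to meet them in the order $1,\dots,m$, so that $\Psi(`g)$ satisfies the local model of \cref{def=star-lin} for the identity order, i.e. $\Psi(`g) `: \LS{`D_m}$. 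That $\Psi$ is well defined on isotopy classes is clear, since an ambient isotopy moving $`g$ carries $L_{`g}$ and its fan along with it.

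Next I would produce the inverse $\Phi$ by \emph{thickening}. Writing $`l$ for the lower boundary arc of $\partial`D_m$ running from $1$ to $-1$ through $-i$, I take a regular neighbourhood $N$ of $\bar{`a} `U `l$ in $`D_m$; its frontier in the interior is a single arc from $1$ to $-1$, which I isotope so that it passes exactly once through each puncture, in the order prescribed by the fan of $\bar{`a}$ at $-i$, producing a curve diagram $\Phi(\bar{`a})$. Because $N$ deformation retracts onto $\bar{`a} `U `l$, cutting along $\Phi(\bar{`a})$ recovers $N$ as the lower region $L_{\Phi(\bar{`a})}$ and the fan from $-i$ recovers $\bar{`a}$; conversely, thickening $\Psi(`g)$ reproduces $`g$. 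Hence $\Phi$ and $\Psi$ are mutually inverse and $\Psi$ is a bijection, with $\Phi(\bar{`a}_0) = `g_0$ matching the standard objects of \cref{fig=star-stand}. Equivariance is then immediate from naturality: every $`f `: \PMCG{`D_m}$ restricts to the identity on $\partial`D_m$ and fixes each puncture, hence fixes $1$, $-1$, $-i$ and the arc $`l$, so $`f$ carries $L_{`g}$ to $L_{`f `. `g}$ and the fan in $L_{`g}$ to the fan in $L_{`f `. `g}$; this gives $\Psi(`f `. `g) = `f `. \Psi(`g)$ with the action of \cref{prop=mcg-model-def}.

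The main obstacle I expect is not the existence of the two maps but the \emph{bookkeeping of conventions}: verifying that the frontier arc in the thickening construction can be pushed through the punctures so as to land in the intended isotopy class, that $\Phi$ and $\Psi$ are genuinely inverse \emph{up to isotopy} rather than merely up to homeomorphism, and that the orientation convention $\oplus$ of the local models in \cref{def=star-lin} agrees with the order in which $`g$ visits the punctures. Once these compatibilities are pinned down, the remaining steps are routine consequences of the uniqueness of a fan of disjoint arcs in a disc with marked boundary points and of the naturality of regular neighbourhoods.
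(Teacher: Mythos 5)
Your proof is correct, but it takes a genuinely different route from the paper's on one key point. The paper also constructs the star--to--diagram direction geometrically: its map $V$ (built from the stripe $N_{\bar\alpha}$ and a wedge collection, \cref{fig=star-ngbh}) is essentially your thickening map $\Phi$, and its verification that $S \circ V = \Id$ uses exactly your uniqueness-of-a-fan-of-disjoint-arcs argument. Where you diverge is the other direction: the paper does \emph{not} construct the diagram--to--star map geometrically; it defines $S(\gamma) \vcentcolon= X(\gamma)\cdot\bar\alpha_0$, where $X(\gamma)$ is the unique mapping class with $X(\gamma)\cdot\gamma_0=\gamma$ supplied by the free and transitive action of \cref{thm=curve-braid}. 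That choice makes equivariance and the composite $V \circ S = \Id$ automatic, at the price of making \cref{prop=curve-star} logically dependent on that theorem. Your cutting construction $\Psi$ (pass to the lower disc $L_\gamma$ and take the unique fan from $-i$ to the punctures, whose order at $-i$ is forced by the order of the punctures along $\partial L_\gamma$) removes that dependency entirely: the proposition becomes a statement of elementary surface topology (uniqueness of fans in a disc and of regular neighbourhoods), with \cref{thm=curve-braid} needed only afterwards to transport freeness and transitivity to $\LS{\Delta_m}$ --- which matches how the paper deduces \cref{prop=mcg-model}, so there is no circularity in your deferral. The cost of your route is that both composites and the well-definedness of both maps must be checked by hand, including the ``push the frontier through the punctures'' normalisation in $\Phi$ (the paper needs this only for $V$, where it is packaged in the wedge-collection formalism) and the orientation convention at the base point; these are exactly the bookkeeping items you flag, they are genuine, and they all go through.
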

\begin{proof}
	We build a map
	\[S : \mathcal{C}_m \longrightarrow \LS{`D_m}\]
	as follows: Let $`g `: \mathcal{C}_m$ be any curve diagram. By \cref{thm=curve-braid}, there exists a diffeomorphism $X(`g) `: \PMCG{`D_m}$ such that $X(`g) `. `g_0 = `g$. Let $\bar{`a}_0$ be the \emph{standard linear star} shown on \Cref{fig=star-stand}. We~then define
	\begin{equation}
		\label{def=curve-star}
		S(`g) \vcentcolon= X(`g) `. \bar{`a}_0 `: \LS{`D_m}
	\end{equation}
	Conversely, we build another map
	\[V : \LS{`D_m} \longrightarrow \mathcal{C}_m\]
	as follows: let $\bar{`a} `: \LS{`D_m}$ be a linear star and consider a regular neighbourhood $N$ of $\bar{`a}$ inside the complex plane, see \Cref{fig=star-ngbh-emb} for an example. Define $N_{\bar{`a}} \vcentcolon= N \cap `D_m \smallsetminus \bar{`a}$ as the \emph{stripe} of~$\bar{`a}$. It~can be obtained by cutting $N \cap `D_m$ along each branch of $`a$. The boundary $\partial N_{\bar{`a}}$ can be separated in two parts: one corresponds to $\partial N \cap `D_m$, and the other corresponds to the cutting of the star $\bar{`a}$ with the $m$ points $x^j$ marked in order. Each time we cut along a branch of $\bar{`a}$ the base point $-i$ is \enquote{split}. There are thus $m+1$ \enquote{copies} of $-i$ on $\partial N_{\bar{a}}$ intertwined with the $x^j$'s, as shown on \Cref{fig=star-ngbh-cut}. A \emph{wedge collection} $(V^j)_j$ is a set of curves embedded in $N_{\bar{`a}}$ such that each $V^j$ goes from $\partial N \cap `D_m$ to $x^j$ and back to $\partial N \cap `D_m$, and such that $V^j \cap V^k = \varnothing$ if~$j \neq k$. Then~$V(\bar{`a})$ is defined as the blue curve shown on \Cref{fig=star-ngbh-cut}, embedded in $`D_m$.
	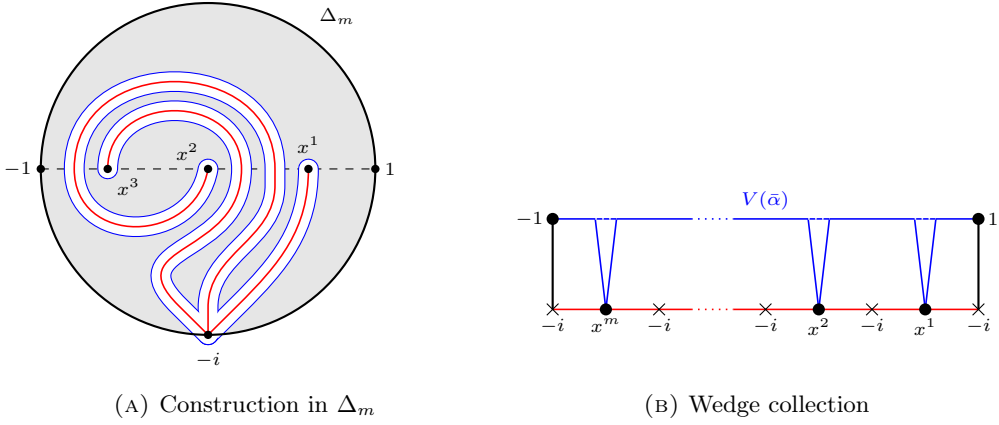
\begin{figure}[t]
		\centering
		\begin{subfigure}{.45\linewidth}
			\begin{tikzpicture}[scale=.55,font=\scriptsize]
	\def\r{4};
	\def\t{7pt}
	\fill[gray!20,thick] (0,0) circle [radius=\r];
	\node[below] (b0) at (0,-1.05*\r) {$-i$};
	\node at (50:1.2*\r) {$\Delta_m$};
	\draw[dashed] (-\r,0) -- (\r,0);
	\draw[blue,double=white,double distance=\t,line cap=round] (0,-\r) -- ++(45:\r/4) to[out=45,in=-90] (.6*\r,0) node[above=\t/3,black] {$x^1$};
	\draw[blue,double=white,double distance=\t,line cap=round] (0,-\r) -- ++(90:\r/6) to[out=90,in=-90] (.4*\r,-.1*\r) -- (.4*\r,0*\r) to[out=90,in=90,looseness=1.5] (-.8*\r,0*\r) to[out=-90,in=-100,looseness=1.5] (0,0) node[above left,black] {$x^2$};
	\draw[blue,double=white,double distance=\t,line cap=round] (0,-\r) -- ++(135:\r/4) to[out=135,in=-90,looseness=1.5] (.2*\r,0) to[out=90,in=90,looseness=1.5] (-.6*\r,0) node[below right,black] {$x^3$};
	\foreach \x in {45,90,135}
	{
		\draw[white,line width=.95*\t] (0,-\r) -- ++(\x:\r/6);
	};
	\begin{scope}[red,semithick]
		\draw (0,-\r) -- ++(45:\r/4) to[out=45,in=-90] (.6*\r,0);
		\draw (0,-\r) -- ++(90:\r/6) to[out=90,in=-90] (.4*\r,-.1*\r) -- (.4*\r,0*\r) to[out=90,in=90,looseness=1.5] (-.8*\r,0*\r) to[out=-90,in=-100,looseness=1.5] (0,0);
		\draw (0,-\r) -- ++(135:\r/4) to[out=135,in=-90,looseness=1.5] (.2*\r,0) to[out=90,in=90,looseness=1.5] (-.6*\r,0);
	\end{scope}
	\draw[thick] (0,0) circle [radius=\r];
	\foreach \x in {-.6,0,.6}
	{
		\fill[black] (\x*\r,0) circle [radius=.1];
	};
	\node[left] at (-\r,0) {$-1$};
	\node[right] at (\r,0) {$1$};
	\fill[black] (-\r,0) circle [radius=.1] (\r,0) circle [radius=.1] (0,-\r) circle [radius=.1];
\end{tikzpicture}
			\caption{Construction in $`D_m$}
			\label{fig=star-ngbh-emb}
		\end{subfigure}
		\begin{subfigure}{.45\linewidth}
			\begin{tikzpicture}[scale=.8,font=\scriptsize]
	\def\x{3.5};
	\def\y{1.5};
	\draw[thick] (-\x,0) -- (-\x,\y) (\x,\y) -- (\x,0);
	\draw[blue,semithick] (-\x,\y) -- (-.35*\x,\y) (-.15*\x,\y) -- (\x,\y);
	\draw[blue,semithick,dotted] (-.35*\x,\y) -- (-.15*\x,\y);
	\draw[red,semithick] (-\x,0) -- (-.35*\x,0) (-.15*\x,0) -- (\x,0);
	\draw[red,semithick,dotted] (-.35*\x,0) -- (-.15*\x,0);
	\foreach \k in {-.75,.25,.75}
	{
		\draw[blue,semithick] (\k*\x,0) -- (\k*\x-.05*\x,\y);
		\draw[blue,semithick] (\k*\x,0) -- (\k*\x+.05*\x,\y);
		\draw[white,semithick,dotted] (\k*\x-.05*\x,\y) -- (\k*\x+.05*\x,\y);
		\fill[black] (\k*\x,0) circle (.1);
	};
	\fill[black] (-\x,\y) circle (.1) (\x,\y) circle (.1);
	\foreach \k in {-1,-.5,0,.5,1}
	{
		\node[cross out,draw,inner sep=2pt] at (\k*\x,0) {};
		\node[below] at (\k*\x,0) {$-i$};
	};
	\node[left] at (-\x,\y) {$-1$};
	\node[right] at (\x,\y) {$1$};
	\node[below] at (-.75*\x,0) {$x^m$};
	\node[below] at (.25*\x,0) {$x^2$};
	\node[below] at (.75*\x,0) {$x^1$};
	\node[above,blue] at (0,\y) {$V(\bar{\alpha})$};
\end{tikzpicture}
			\caption{Wedge collection}
			\label{fig=star-ngbh-cut}
		\end{subfigure}
		\caption{Stripe $N_{\bar{`a}}$ of a star $\bar{`a}$}
		\label{fig=star-ngbh}
	\end{figure}

	Let us prove that the map $V$ is well-defined.

	Let $\bar{`a}, \bar{`a}'$ be two linear stars in $`D_m$ such that there exists an isotopy $I_t$ of $`D_m$ sending $\bar{`a}$ to $\bar{`a}'$. Then $I_t$ also sends the pair $(N_{\bar{`a}},\partial N_{\bar{`a}})$ to $(N_{\bar{`a}'},\partial N_{\bar{`a}'})$. Since $I_t$ fixes $\partial `D_m$, it also preserves the separation of $\partial N_{\bar{`a}}$ shown on \Cref{fig=star-ngbh-cut}, and in particular $I_t$ fixes all the $x^j$'s. It is clear that one can always build an isotopy $J_t$ of $N_{\bar{`a}}$ sending any wedge collection to any other. Extend $J_t$ by the identity outside of $N_{\bar{`a}}$. Then the composition $J_t `o I_t$ is an isotopy of $`D_m$ sending $V(\bar{`a})$ to~$V(\bar{`a}')$.

	We now prove that the maps $S$ and $V$ are inverse to each other.

	Let $`g `: \mathcal{C}_m$ and let $\bar{`a} \vcentcolon= S(`g)$. We want to show that $V(\bar{`a}) = `g$. It is clear from the definition of $V$ that for the standard linear star we have $V(\bar{`a}_0) = `g_0$. Consider $N_{\bar{`a}_0}$ the stripe of~$\bar{`a}_0$. By~\eqref{def=curve-star} we have $\bar{`a} = X(`g) `. \bar{`a}_0$. The diffeomorphism $X(`g)$ preserves the regular neighbourhoods up to isotopy and therefore sends the pair $(N_{\bar{`a}_0},\partial N_{\bar{`a}_0})$ to $(N_{\bar{`a}}, \partial N_{\bar{`a}})$. In particular:
	\[V(\bar{`a}) = X(`g) `. V(\bar{`a}_0) = X(`g) `. `g_0 = `g\]
	Therefore $V `o S = \Id_{\mathcal{C}_m}$. Reciprocally, let $\bar{`a} `: \LS{`D_m}$ and let $`g \vcentcolon= V(\bar{`a})$. We want to show that~$S(`g) = \bar{`a}$. By~\eqref{def=curve-star}, we have $S(`g) = X(`g) `. \bar{`a}_0$. Let $\bar{`a}' = {X(`g)}^{-1} `. \bar{`a}$. We show that $\bar{`a}' = \bar{`a}_0$. The curve $`g$ separates $`D_m$ in two halves and by construction $\bar{`a}$ is contained entirely within one~half. The diffeomorphism ${X(`g)}^{-1}$ preserves that separation, which implies that $\bar{`a}'$ is contained entirely within the lower half $`D_m \cap \{\Re(z) \leq 0\}$, just like $\bar{`a}_0$. By \cref{def=star-lin}, the stars $\bar{`a}'$ and $\bar{`a}_0$ are isotopic inside a neighbourhood $U$ of their common base point $-i$. We are now reduced to examining the \enquote{rectangle} $R \vcentcolon= `D_m \cap \{\Re(z) \leq 0\} \smallsetminus U$. The boundary $\partial R$ can be divided in four~parts. The top part corresponds to $`g_0$ with the $x^j$'s marked. The bottom part corresponds to $\partial U \cap `D_m$ and has also $m$ points marked, namely the $y^j \vcentcolon= \partial U \cap \bar{`a}^{\prime j} = \partial U \cap \bar{`a}_0^j$. The points $(x^j)$ and $(y^j)$ are ordered oppositely on their respective parts of $\partial R$ with respect to the the orientation of~$`D_m$. Inside $R$, the star $\bar{`a}'$ (and $\bar{`a}_0$) is a set of simple non-intersecting arcs joining each $y^j$ to the corresponding $x^j$. It is clear that there is only one isotopy type of this construction. The~stars $\bar{`a}'$ and $\bar{`a}_0$ are therefore isotopic inside both $R$ and $U$, so $\bar{`a}' = \bar{`a}_0$. Therefore $S `o V = \Id_{\LS{`D_m}}$, which completes the proof.
\end{proof}

\begin{proposition}
	\label{prop=mcg-model}
	The action of \cref{prop=mcg-model-def} is free and transitive.
\end{proposition}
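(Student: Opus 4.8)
The plan is to obtain freeness and transitivity essentially for free, by transporting the corresponding properties of the action on curve diagrams across the equivariant bijection of \cref{prop=curve-star}. That proposition provides mutually inverse bijections $S : \mathcal{C}_m \to \LS{`D_m}$ and $V : \LS{`D_m} \to \mathcal{C}_m$ which commute with the two $\PMCG{`D_m}$-actions, i.e. $S(`f `. `g) = `f `. S(`g)$ for all $`f `: \PMCG{`D_m}$ and $`g `: \mathcal{C}_m$ (equivalently $V(`f `. \bar{`a}) = `f `. V(\bar{`a})$). On the other side, \cref{thm=curve-braid} tells us that the natural action of $\PMCG{`D_m}$ on $\mathcal{C}_m$ is free and transitive. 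Since a free (resp.\ transitive) action pulls back to a free (resp.\ transitive) action along any equivariant bijection, the action on $\LS{`D_m}$ inherits both properties.

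Concretely, for transitivity I would take two linear stars $\bar{`a}, \bar{`a}' `: \LS{`D_m}$, set $`g \vcentcolon= V(\bar{`a})$ and $`g' \vcentcolon= V(\bar{`a}')$, and use transitivity on $\mathcal{C}_m$ to find $`f `: \PMCG{`D_m}$ with $`f `. `g = `g'$; equivariance of $S$ then yields $`f `. \bar{`a} = `f `. S(`g) = S(`f `. `g) = S(`g') = \bar{`a}'$. For freeness I would suppose $`f `. \bar{`a} = \bar{`a}$ for some $`f$, apply $V$ to get $`f `. V(\bar{`a}) = V(`f `. \bar{`a}) = V(\bar{`a})$, and invoke freeness of the action on $\mathcal{C}_m$ to conclude $`f = \Id$.

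There is no genuine obstacle here: the statement is a formal transport of structure, and all the substantive work has already been carried out in \cref{prop=curve-star} (the equivariant identification of linear stars with curve diagrams) and in \cref{thm=curve-braid} (the freeness and transitivity of the braid action on curve diagrams). The only point deserving attention is to confirm that the bijection of \cref{prop=curve-star} is equivariant in precisely the sense required above, which is exactly what that proposition asserts.
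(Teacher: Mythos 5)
Your proof is correct and is essentially the paper's own argument: the paper likewise deduces the result immediately from \cref{thm=curve-braid} together with the equivariance of the bijection in \cref{prop=curve-star}. You have merely spelled out the transport of freeness and transitivity explicitly, which the paper leaves implicit.
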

\begin{proof}
	This is immediate from \cref{thm=curve-braid} and the fact that the bijection of \cref{prop=curve-star} commutes with the natural action of $\PMCG{`D_m}$ on $`D_m$ by construction.
\end{proof}

We can now go back to $D_m$.

\begin{proposition}
	\label{prop=mcg-model-fth}
	There is a natural action of $\PMCG{D_m}$ on $\LS{D_m}$. This action is transitive.
\end{proposition}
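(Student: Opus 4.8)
The plan is to reduce transitivity on $\LS{D_m}$ to the already established transitivity on the punctured disc $`D_m$ of \cref{prop=mcg-model}, using the capping operation of \cref{thm=mcg-disc}. First I would define the action of $\PMCG{D_m}$ on $\LS{D_m}$ exactly as in \cref{prop=mcg-model-def}. Via the isomorphism $\PMCG{D_m} `~ \mathbb{P}_m `* \mathbb{Z}^m$ of \cref{thm=mcg-disc}, an element is a product of Dehn twists along the curves $`d_{j,l}$ (encircling two holes) and along the boundary-parallel curves $`d_j$; up to isotopy all of these may be pushed off the neighbourhood $U$ in which the linear order of a star is recorded. Hence such a homeomorphism preserves the local model of \cref{def=star-lin} at the centre $-i$ and carries simple non-intersecting branches to simple non-intersecting branches, so the image of a linear star is again a linear star and the action is well defined.

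Next I would introduce two compatible capping maps. Filling each inner boundary component $\partial^w D_m$ with a once-punctured disc recovers $`D_m$ and induces the surjective morphism $C : \PMCG{D_m} "->" \PMCG{`D_m}$ of \cref{thm=mcg-disc}, whose kernel is the subgroup $\mathbb{Z}^m$ generated by the boundary-parallel Dehn twists $`d_1,\dots,`d_m$. The same filling, followed by extending each branch $\bar{`a}^w$ radially from its endpoint on $\partial^w D_m$ to the new puncture $x^w$, defines a map $c_* : \LS{D_m} "->" \LS{`D_m}$ on isotopy classes. This map respects the identity linear order, and by construction it is equivariant: $c_*(`f `. \bar{`a}) = C(`f) `. c_*(\bar{`a})$ for every $`f `: \PMCG{D_m}$.

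Transitivity would then follow in two steps. Given $\bar{`a}, \bar{`a}' `: \LS{D_m}$, \cref{prop=mcg-model} provides $`f' `: \PMCG{`D_m}$ with $`f' `. c_*(\bar{`a}) = c_*(\bar{`a}')$; lifting $`f'$ through the surjection $C$ to some $`f `: \PMCG{D_m}$ and using equivariance, I may replace $\bar{`a}$ by $`f `. \bar{`a}$ and thereby assume $c_*(\bar{`a}) = c_*(\bar{`a}')$. It then remains to show that two linear stars on $D_m$ with the same capped image lie in a single $\PMCG{D_m}$-orbit. The point is that, once the branches become isotopic after filling, they can differ inside $D_m$ only in how many times each branch $\bar{`a}^w$ spirals around the corresponding component $\partial^w D_m$; this spiralling is precisely what the boundary-parallel twists $`d_w `: \ker C$ record, so an appropriate product of the $`d_w$ carries $\bar{`a}$ to $\bar{`a}'$.

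The main obstacle is this last local analysis near the inner boundary circles: making precise, up to isotopy, that the only remaining freedom in a branch with a prescribed capped image is its winding number around $\partial^w D_m$, and that this winding is realised by the Dehn twist $`d_w$. I would argue this by cutting $D_m$ along the branches as in the proof of \cref{prop=curve-star}, reducing the comparison of $\bar{`a}$ and $\bar{`a}'$ to a collar neighbourhood of each $\partial^w D_m$, where the two branches, having isotopic images in $`D_m$, differ by an integral number of twists. Once this is secured, transitivity of the $\PMCG{D_m}$-action on $\LS{D_m}$ is immediate; note that no freeness is asserted, which is consistent with the boundary-parallel twists possibly acting non-faithfully on the stars.
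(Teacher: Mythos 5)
Your proposal is correct and follows the same route as the paper: cap the inner boundary components to return to $\Delta_m$, transport stars along the induced map $\LS{D_m} \to \LS{\Delta_m}$ (your $c_*$ is exactly the paper's map $J$), use its $C$-equivariance together with the surjectivity of $C$ and \cref{prop=mcg-model} to reduce to two stars with the same capped image, and finally absorb the fibres of the capping map into $\ker C = \rho_{\Id}(\mathbb{Z}^m)$. The single point of divergence is the step you yourself flag as the main obstacle. Where you propose an explicit geometric analysis (cutting along the branches, then comparing winding numbers of arcs in collars of the $\partial^w D_m$ and realising them by the boundary-parallel twists), the paper settles this step algebraically by exploiting the \emph{freeness}, not just the transitivity, of the $\PMCG{\Delta_m}$-action on $\LS{\Delta_m}$ inherited from \cref{thm=curve-braid}: if $J(\bar{\alpha}) = J(\bar{\alpha}')$, the ambient isotopy of $\Delta_m$ realising this equality restricts, after adjustment near the capping discs, to a homeomorphism $\phi$ of $D_m$ with $\phi \cdot \bar{\alpha} = \bar{\alpha}'$; equivariance then gives $C(\phi) \cdot J(\bar{\alpha}) = J(\bar{\alpha})$, and freeness forces $C(\phi) = \Id$, hence $\phi \in \ker C = \rho_{\Id}(\mathbb{Z}^m)$. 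This produces the required element of $\ker C$ without ever identifying winding numbers, so the collar analysis you describe --- correct, but the most delicate part of your write-up --- can be replaced by one line. Note that both arguments still need the same geometric input (promoting the isotopy of $\Delta_m$ to a homeomorphism of $D_m$ carrying one star to the other); the freeness trick only spares the subsequent bookkeeping.
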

\begin{proof}
	The disc $`D_m$ can be obtained from $D_m$ by \enquote{capping} each boundary component of $D_m$ (save for $\partial^{\infty} D_m$) with a disc $D^j$ with one puncture at $x^j$. The capping operation induces a map
	\[J : \LS{D_m} \longrightarrow \LS{`D_m}\]
	described as follows: consider $\bar{`a}$ in $D_m \subset `D_m$. Extend each branch $\bar{`a}^j$ inside $D^j$ by an arc joining the end point on $\partial D^j$ to the puncture $x^j$. There is only one isotopy type of such an arc inside~$D^j$. This extended star is~$J(\bar{`a})$. An isotopy between two stars in $D_m$ can be extended to each~$D^j$, giving isotopic extended stars in~$`D_j$. Therefore $J$ is well-defined.

	Define an equivalence relation on $\LS{`D_m}$ by
	\begin{equation*}
		`A\: \bar{`a}, {\bar{`a}'} `:  \LS{`D_m} : \quad \bar{`a} `~ {\bar{`a}'} \Longleftrightarrow J(\bar{`a}) = J({\bar{`a}'})
	\end{equation*}
	We first prove that the subgroup $`r_{\Id}(\mathbb{P}_m)$ acts freely and transitively on the set of equivalence classes of $\LS{`D_m}$.
	For every $`f `: \PMCG{D_m}$ and $\bar{`a} `: \LS{`D_m}$, we have:
	\begin{equation}
		\label{eq=star-cap}
		J(`f `. \bar{`a}) = C(`f) `. J(\bar{`a})
	\end{equation}
	where $C : \PMCG{D_m} "->" \PMCG{`D_m}$ is the capping map obtained from applying \cref{lem=cap} on all boundary components $\partial^j D_m$. The map $J$ is clearly surjective, and so is the map $C$ by \cref{lem=cap}. The result then follows from the freeness and transitivity of the action of $\PMCG{`D_m}$ on $\LS{`D_m}$ by \cref{prop=mcg-model}.

	We now prove that the orbits of the action of the subgroup~$`r_{\Id}(\mathbb{Z}^m)$ coincide with the equivalence classes of $\LS{`D_m}$.

	Let $\bar{`a}, \bar{`a}' `:  \LS{`D_m}$. Suppose that there exists $`f `: `r_{\Id}(\mathbb{Z}^m)$ such that $\bar{`a} = `f `. \bar{`a}'$. Then by \cref{lem=cap}, $`f `: \ker C$ so by \eqref{eq=star-cap}, $J(\bar{`a}) = J(\bar{`a}')$. Reciprocally, suppose that $\bar{`a}$ is equivalent to $\bar{`a}'$. There is an isotopy of $`D_m$ sending $J(\bar{`a})$ to $J(\bar{`a}')$. By definition of $J$, this isotopy restricts to an homeomorphism $`f$ of $D_m$ sending $\bar{`a}$ to $\bar{`a}'$. We then have:
	\[J(\bar{`a}')  = C(`f) `. J(\bar{`a}) = J(\bar{`a})\]
	But the action of $\PMCG{`D_m}$ on $\LS{`D_m}$ is free. Therefore, $`f `: \ker C = `r_{\Id}(\mathbb{Z}^m)$.
\end{proof}

\subsection{Action on flat stars}
\label{ssec=mcg-star}
In this section we prove that the action of the pure mapping class group of surfaces on the sets of linear and circular stars is transitive in both cases. The case of linear stars on $D_W$ follows directly from the results of \cref{ssec=model}. The case of circular stars on $`S_W$ can eventually be reduced to the previous one thanks to \cref{lem=mcg-cstar-lin} which explains how to particularise a circular star into a linear one.

\begin{proposition}
	\label{prop=mcg-lstar}
	Let $`q$ be a linear order on $W$. The natural action of $\PMCG{D_W}$ on $D_W$ induces a well-defined action on $\OLS{D_W}{`q}$ given by
	\[\begin{tikzcd}[row sep=2pt]
		\PMCG{D_W} `* \OLS{D_W}{`q} \rar & \OLS{D_W}{`q}\\
		\left({(`b,d)}_{`q}, {(\bar{`a}^w)}_{w `: W}\right) \rar[maps to] & {(\bar{`a}^w `. {(`b,d)}_{`q})}_{w `: W}
	\end{tikzcd}\]
	This action is transitive.
\end{proposition}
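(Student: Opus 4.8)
The plan is to reduce this statement about linear stars on the holed disc $D_W$ to the corresponding result on the model disc $D_m$, namely \cref{prop=mcg-model-fth}, by transporting everything through the identification $`r_{`q}$ of \cref{thm=mcg-disc}. First I would address well-definedness of the action. An element of $\PMCG{D_W}$ is written ${(`b,d)}_{`q} = `r_{`q}(`b,d)$ for a unique $(`b,d) `: \mathbb{P}_m `* \mathbb{Z}^m$, so the linear order $`q$ provides a canonical homeomorphism $D_W "->" D_m$ sending $\partial^w D_W$ to a neighbourhood of $x^{`q(w)}$ and matching base points. Under this identification a $`q$-linear star on $D_W$ becomes a linear star on $D_m$ (ordered by the identity, as required in \cref{ssec=model}), and a homeomorphism fixing $\partial D_W$ pointwise becomes one fixing $\partial D_m$. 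I would check that the formula $\bar{`a}^w `. {(`b,d)}_{`q}$ genuinely produces another $`q$-linear star: the branches remain simple, disjoint except at the centre, and still meet the model of \cref{fig=star-ord-lin}. This is exactly the argument already used in the proof of \cref{prop=mcg-model-def}, where one isotopes the twisting curves $`d_{j,l}$ away from the neighbourhood $U$ in which the ordering is prescribed; the full Dehn twists $`d_v$ parallel to $\partial^v D_W$ coming from the $\mathbb{Z}^m$-factor likewise can be supported away from $U$. So the action preserves $\OLS{D_W}{`q}$ and descends to isotopy classes.

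For transitivity, the key observation is that the identification above sets up a bijection $\OLS{D_W}{`q} `~ \LS{D_m}$ which is equivariant for the group isomorphism $`r_{`q} : \mathbb{P}_m `* \mathbb{Z}^m "->" \PMCG{D_W}$ against the natural $\PMCG{D_m}$-action on $\LS{D_m}$. Concretely, capping the $\infty$-boundary appropriately and straightening identifies the two surfaces and their stars; because $`q$ is fixed, every $`q$-linear star on $D_W$ corresponds to a linear star on $D_m$ with the identity order, and conversely. I would then simply invoke \cref{prop=mcg-model-fth}: given two stars $\bar{`a}, \bar{`a}' `: \OLS{D_W}{`q}$, transport them to $\LS{D_m}$, find $`f `: \PMCG{D_m}$ with $`f `. (\text{image of } \bar{`a}) = \text{image of } \bar{`a}'$, and pull $`f$ back through $`r_{`q}$ to obtain an element of $\PMCG{D_W}$ carrying $\bar{`a}$ to $\bar{`a}'$. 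Transitivity on $\LS{D_m}$ transports to transitivity on $\OLS{D_W}{`q}$.

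The main obstacle I expect is not transitivity itself but the bookkeeping of orientations and the precise compatibility of the two mapping-class-group actions under $`r_{`q}$. One must verify that the homeomorphism $D_W "->" D_m$ can be chosen \emph{orientation-preserving} and respecting the distinguished boundary component $\partial^{\infty}$, so that the prescribed local model near the centre (the $\oplus$ orientation in \cref{fig=star-ord-lin}) is genuinely preserved and the $`q$-order on $W$ matches the identity order $x^1,\dots,x^m$ on $D_m$. A subtle point is that, as noted at the end of \cref{ssec=mcg}, writing elements as ${(`b,d)}_{`q}$ converts the left mapping-class action into a right braid action; I would make sure the equivariance statement is phrased consistently with this convention so that the bijection $\OLS{D_W}{`q} `~ \LS{D_m}$ really does intertwine the two actions rather than composing them in the wrong order. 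Once these identifications are pinned down, the proof is a short transport-of-structure argument resting entirely on \cref{prop=mcg-model-fth,thm=mcg-disc}.
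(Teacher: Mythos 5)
Your proof is correct and is essentially the paper's own argument: the paper likewise observes that the order $`q$ induces a homeomorphism $M_{`q} : D_m "->" D_W$, hence a bijection $\LS{D_m} `~ \OLS{D_W}{`q}$ compatible with the identification of \cref{thm=mcg-disc}, and transports \cref{prop=mcg-model-fth} through it. The well-definedness check and the orientation and left/right-action bookkeeping you flag are sound but routine, and the paper's proof leaves them implicit.
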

\begin{proof}
	The choice of the ordering $`q$ on $W$ induces an homeomorphism ${M_{`q} : D_m "->" D_W}$, which in turn induces a bijection $\LS{D_m} `~ \OLS{D_W}{`q}$. The results of \cref{prop=mcg-model-fth} are thus passed on to $D_W$ and $\OLS{D_W}{`q}$.
\end{proof}
\begin{proposition}
	\label{prop=mcg-cstar}
	Let $`w$ be a circular order on $W$. The natural action of $\PMCG{`S_W}$ on $`S_W$ induces a well-defined action on $\OCS{`S_W}{`w}$. This action is transitive.
\end{proposition}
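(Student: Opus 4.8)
The plan is to establish well-definedness directly and then reduce transitivity to the linear case already settled in \cref{prop=mcg-lstar}. For well-definedness I would take $`f `: \PMCG{`S_W}$ and a $`w$-circular star $\tilde{`a}$ with centre $b$. Since $`f$ fixes $\partial `S_W$ pointwise, each branch $`f `. \tilde{`a}^w$ remains a properly embedded simple arc ending on the same component $\partial^w `S_W$, the branches still meet only at the common centre, and this centre $`f(b)$ stays interior; thus the conditions of \cref{def=star} together with (iv') of \cref{def=star-circ} are preserved. The only delicate point is the local model (v'): a self-homeomorphism fixing a boundary circle pointwise is orientation-preserving, so $`f$ preserves the cyclic order of the branch germs around the centre, whence $`f `. \tilde{`a}$ is again a $`w$-circular star. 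Post-composing isotopies then shows the assignment descends to isotopy classes, exactly as for \cref{prop=mcg-model-def}.

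For transitivity I would fix a linearisation $`q `: \Lin{W}$ of the circular order $`w$ (an element of $c^{-1}(`w)$) and exploit that $`S_W$ is recovered from $D_W$ by capping the distinguished boundary component $\partial^{\infty} D_W$ with a disc $\Delta$. Under this capping the centre of a linear star, which sits on $\partial^{\infty} D_W$, becomes an interior point, and the linear order at the centre closes up into its circularisation. I would then record two compatible maps. First, a capping homomorphism $C : \PMCG{D_W} "->" \PMCG{`S_W}$ sending $`f$ to its extension by the identity on $\Delta$; this is well-defined because every element of $\PMCG{D_W}$ fixes $\partial^{\infty} D_W$ pointwise. Second, an induced map on stars $\kappa : \OLS{D_W}{`q} "->" \OCS{`S_W}{`w}$ that includes a $`q$-linear star into $`S_W$, its image being a circular star of order $c(`q) = `w$. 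By construction these are equivariant, $\kappa(`f `. \bar{`a}) = C(`f) `. \kappa(\bar{`a})$ for all $`f `: \PMCG{D_W}$, since $C(`f)$ acts as $`f$ on $D_W \supset \kappa(\bar{`a})$ and trivially on $\Delta$.

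The crux is then supplied by \cref{lem=mcg-cstar-lin}: every $`w$-circular star particularises to a $`q$-linear star, that is, the map $\kappa$ is surjective. Granting this, transitivity becomes formal. Given $\tilde{`a}, \tilde{`a}' `: \OCS{`S_W}{`w}$, I would pick preimages $\bar{`a}, \bar{`a}' `: \OLS{D_W}{`q}$ with $\kappa(\bar{`a}) = \tilde{`a}$ and $\kappa(\bar{`a}') = \tilde{`a}'$. By \cref{prop=mcg-lstar} the group $\PMCG{D_W}$ acts transitively on $\OLS{D_W}{`q}$, so there is $`f `: \PMCG{D_W}$ with $`f `. \bar{`a} = \bar{`a}'$. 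Applying $\kappa$ and the equivariance relation yields $C(`f) `. \tilde{`a} = C(`f) `. \kappa(\bar{`a}) = \kappa(`f `. \bar{`a}) = \kappa(\bar{`a}') = \tilde{`a}'$, so the element $C(`f) `: \PMCG{`S_W}$ realises the required transition.

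I expect the entire difficulty to be concentrated in \cref{lem=mcg-cstar-lin}, namely in the particularisation step. There one must show that an arbitrary $`w$-circular star, whose branches may wind around $`S_W$ in a complicated way, can always be isotoped into a standard position relative to a fixed cutting circle $\partial^{\infty} D_W$ bounding a hole-free disc, with the gap of the chosen linearisation $`q$ placed on the cap side, so that cutting along this circle returns an honest $`q$-linear star on $D_W$. Once that geometric input is available, the remainder is a purely formal transport of \cref{prop=mcg-lstar} along the equivariant capping maps, with no subtlety beyond the orientation bookkeeping already used for well-definedness.
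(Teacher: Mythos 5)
Your proposal is correct, and for the transitivity half it follows essentially the paper's route: reduce to the linear case of \cref{prop=mcg-lstar} by cutting $`S_W$ along a disc, handle the linearisation ambiguity with \cref{lem=mcg-cstar-lin}, and transport the resulting mapping class through the capping homomorphism $\PMCG{D_W} \to \PMCG{`S_W}$ using equivariance. Two differences are worth recording. First, for well-definedness the paper does not argue directly: it lifts $`f `: \PMCG{`S_W}$ to some $\bar{`f} `: \PMCG{D_W}$ via the surjectivity of $F `o C$ (from \cref{lem=cap,lem=birman}) and then invokes \cref{prop=mcg-lstar}; your direct argument --- an element of $\PMCG{`S_W}$ fixes $\partial `S_W$ pointwise, hence is orientation-preserving, hence preserves both the labelling of branch endpoints and the cyclic order of the branch germs in a positive local chart --- is valid, more elementary, and avoids the capping machinery at this stage. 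Second, your reading of \cref{lem=mcg-cstar-lin} is slightly off: the lemma does not state that $\kappa : \OLS{D_W}{`q} \to \OCS{`S_W}{`w}$ is surjective; it states that two linearisations $`q,`q'$ of $`w$ have the same image under $i$, i.e.\ a $`q$-linear star can be traded for a $`q'$-linear star with the same circular image. Surjectivity of $\kappa$ for a \emph{fixed} $`q$ follows only after combining the lemma with the preliminary geometric fact, asserted in the paper just before it, that the unrestricted map $i : \LS{D_W} \to \CS{`S_W}$ is onto --- that is, any circular star can be isotoped to meet a disc $D_{\infty}$ only at its centre and then cut into a linear star for \emph{some} linearisation of $`w$. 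The paper's transitivity proof performs exactly this decomposition, choosing one disc avoiding both stars, cutting to obtain two linear stars with possibly different linearisations, and then equalising them via the lemma; your formulation through an equivariant surjection $\kappa$ with one fixed linearisation is equivalent once this bookkeeping is separated out, so there is no genuine gap, only a misattribution of where the cutting step is established.
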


Recall the circularisation map $c : \Lin{W} "->>" \Circ{W}$ from \eqref{ord-circ}. It is clear from \cref{def=star-lin,def=star-lin} that the inclusion $i : D_W "`->" `S_W$ induces a surjective map
\[\begin{tikzcd}
	i : \LS{D_W} \rar[two heads] & \CS{`S_W}
\end{tikzcd}\]
In particular, for every $`q `: \Lin{W}$, the map restricts to:
\[i : \OLS{D_W}{`q} \longrightarrow \OCS{`S_W}{c(`q)}\]
\vspace{-\baselineskip}
\begin{lemma}
	\label{lem=mcg-cstar-lin}
	Let $`q,`q' `: \Lin{W}$ such that $c(`q)=c(`q')=`w$. Then for every linear star $\bar{`a} `: \OLS{D_W}{`q}$ there exists another linear star $\bar{`a}' `: \OLS{D_W}{`q'}$ such that $i(\bar{`a}) = i(\bar{`a}') `: \OCS{`S_W}{`w}$.
\end{lemma}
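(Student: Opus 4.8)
The plan is to reduce the statement to a single cyclic step and then realise that step by a purely local re-routing of one branch through a collar of $\partial^{\infty} D_W$, a modification that is essential in $D_W$ but becomes inessential once that boundary is capped to produce $`S_W$.

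First I would invoke the description of the fibres of the circularisation map $c$ recalled after \eqref{ord-circ}: two linearisations of the same circular order $`w$ differ by a power of $(1 \cdots m)$ acting on the left, so $`q' = (1 \cdots m)^r \circ `q$ for some $r `: \mathbb{Z}$. It therefore suffices to treat the case $r = 1$, that is, when $`q'$ is obtained from $`q$ by one cyclic shift sending the $`q$-maximal element of $W$ to the $`q'$-minimal position. The general case follows by applying the $r = 1$ construction $r$ times in succession, since each step will preserve the image in $\OCS{`S_W}{`w}$; re-routing in the opposite direction realises $(1 \cdots m)^{-1}$, so both generators of $\left< (1 \cdots m)\right>$ are available.

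For the single step, write $w_* = `q^{-1}(m)$ for the extremal element and consider its branch $\bar{`a}^{w_*}$, which leaves the centre $b `: \partial^{\infty} D_W$ at one end of the fan prescribed by \cref{def=star-lin}. I would define $\bar{`a}'$ by keeping every other branch and the terminal part of $\bar{`a}^{w_*}$ fixed, and replacing only a short initial segment of $\bar{`a}^{w_*}$ near $b$ by an arc $`h$ that emerges from $b$ at the opposite end of the fan and then runs along a thin collar of $\partial^{\infty} D_W$ to rejoin $\bar{`a}^{w_*}$. The point making this legitimate is that, by \cref{def=star}, the branches meet $\partial^{\infty} D_W$ only at $b$; hence a sufficiently thin collar of $\partial^{\infty} D_W$, minus a neighbourhood of $b$, is disjoint from all branches and from every hole $\partial^{w} D_W$, so $`h$ can be taken embedded and disjoint from the other branches. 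By construction the cyclic datum at $b$ is unchanged while its linear reading is shifted by one, so $\bar{`a}'$ is a genuine star with local order $`q'$, i.e. $\bar{`a}' `: \OLS{D_W}{`q'}$.

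Finally I would check $i(\bar{`a}) = i(\bar{`a}')$ in $\OCS{`S_W}{`w}$. Passing to $`S_W$ caps $\partial^{\infty} D_W$ with a disc carrying no marked point, and the detour $`h$ lies entirely in the union of the collar and this cap, a region free of holes; the arc $`h$ and the original initial segment of $\bar{`a}^{w_*}$ are therefore isotopic rel endpoints within $`S_W$ through this puncture-free region, by an isotopy that can be taken to fix all other branches. I expect the main obstacle to be precisely this interplay made rigorous: the detour genuinely changes the linear order and cannot be undone inside $D_W$, because $\partial^{\infty} D_W$ obstructs sweeping it away, yet it becomes contractible in $`S_W$ once the cap removes that obstruction. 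Care is needed to confirm that the collar can be chosen thin enough to avoid every branch and every hole (using that branches touch $\partial^{\infty} D_W$ only at $b$), that the re-routed branch stays embedded and non-crossing, and that the contracting isotopy is supported away from the other branches, so that the whole star, and not merely the single branch, is carried onto $i(\bar{`a})$.
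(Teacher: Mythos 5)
Your proof is correct, and it realises the same geometric mechanism as the paper---the cyclic shift is produced by routing the extremal branch around the region that gets capped off in $\Sigma_W$---but by genuinely different means. The paper does not build the detour by hand: it takes a loop $\delta$ based at the capping puncture $b_{\infty}$ crossing only the last branch, applies the point-pushing map $P(\delta)$ from the Birman exact sequence (\cref{lem=birman}), checks that the resulting star is linearly ordered by $\theta'$, and obtains the equality of images in $\OCS{\Sigma_W}{\omega}$ formally from exactness, since $F \circ P$ is trivial. Your collar detour plus disc-swap isotopy is the bare-hands version of exactly this point-pushing, and what it buys is elementarity: no mapping class group input, only a Jordan--Schoenflies argument. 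Two remarks. First, your justification of the final isotopy is looser than what is actually needed: it is not sufficient that $\eta$ and the original initial segment both lie in the hole-free region (collar $\cup$ cap), because the other branches \emph{do} cross that region near $b$; what makes the isotopy work rel the other branches is that the two arcs cobound a disc whose interior contains the cap and is disjoint from the fan (no branch can cross either arc), so the moving arc's direction at $b$ rotates through the cap side rather than through the fan. You flag this as the point needing care, and it does go through. Second, the paper's choice of formalism is not gratuitous: \cref{lem=mcg-cstar-lift} later lifts precisely the transformation $P(\delta)$, supported in a disc of the base, to an isotopy of the circle bundle $S_v$; if your proof were substituted, one would need to record the analogous fact that your disc-swap isotopy is supported in a disc over which the bundle trivialises, so that the lift still exists---true, but an extra statement your write-up would have to carry.
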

\begin{proof}
	It is enough to prove the result when $`q' = {(1\cdots m)} `o `q$. Fix $b_{\infty} `: `S_W \smallsetminus D_W$ and define $`S_W' = `S_W \smallsetminus \{b_{\infty}\}$. We write
	\[\begin{tikzcd}[cramped]
		D_W \rar[hook, "f"] & `S_W' \rar[hook, "g"] & `S_W
	\end{tikzcd}\]
	with $i = g `o f$. Let $\bar{`a} `: \OLS{D_W}{`q}$ drawn on $D_W$ and let $z `: W$ with $`q(z) = m$. Consider a loop $`d \subset `S_W$ based on $b_{\infty}$ that circles the entire star $i(\bar{`a})$ except the last branch (in linear order) $\bar{`a}^z$, which it crosses once. Such a loop can always be constructed as follows: starting from the centre $b$ of the star, take a path parallel to the left of $\bar{`a}^z$, then turn around $\partial^z D_W$ and return to $b$ with a path parallel to the right of $\bar{`a}^z$. Repeat the process for all other branches in reverse linear order. Then inside the neighbourhood $U$ of $b$, shortcut the loop around $\partial^z D_W$. Finally, deform the loop in $`S_W \smallsetminus D_W$ to make it go around $b_{\infty}$. Now recall from \cref{ssec=mcg} that $`d$ induces a \enquote{pushing} transformation $P(`d) `: \PMCG{`S_W'}$. Its action on $f(\bar{`a})$ is shown on \Cref{fig=mcg-push-init}. By construction, only the last strand $\bar{`a}^z$ is affected. Let $\hat{`a}' = P(`g) `. f(\bar{`a})$. By dragging the modified branch $\hat{`a}^{\prime z}$ all across $`S_W' \smallsetminus D_W$, the star $\hat{`a}'$ can be isotoped inside $`S_W'$ to the star shown on \Cref{fig=mcg-push-end}, where $\hat{`a}^{\prime z}$ is now the first branch in linear order inside $D_W$. Thus $\bar{`a}' \vcentcolon= f^{-1}(\hat{`a}') `: \OLS{`D_W}{`q'}$. Finally, by \cref{lem=birman}, we have
	\[i(\bar{`a}') = g(\hat{`a}') = g\left(P(`d) `. f(\bar{`a})\right) = F(P(`d)) `. i(\bar{`a}) = i(\bar{`a})\]
	since $F `o P = \Id$.
\end{proof}
\begin{figure}[tbh]
	\centering
	\begin{subfigure}{.45\linewidth}
		\centering
		\newcommand\pgfmathsinandcos[3]{%
  \pgfmathsetmacro#1{sin(#3)}%
  \pgfmathsetmacro#2{cos(#3)}%
}
\newcommand\LongitudePlane[3][current plane]{%
  \pgfmathsinandcos\sinEl\cosEl{#2} 
  \pgfmathsinandcos\sint\cost{#3} 
  \tikzset{#1/.style={cm={\cost,\sint*\sinEl,0,\cosEl,(0,0)}}}
}
\newcommand\LatitudePlane[3][current plane]{%
  \pgfmathsinandcos\sinEl\cosEl{#2} 
  \pgfmathsinandcos\sint\cost{#3} 
  \pgfmathsetmacro\yshift{\cosEl*\sint}
  \tikzset{#1/.style={cm={\cost,0,0,\cost*\sinEl,(0,\yshift)}}} %
}
\newcommand\DrawLongitudeCircle[2][1]{
  \LongitudePlane{\angEl}{#2}
  \tikzset{current plane/.prefix style={scale=#1}}
  \pgfmathsetmacro\angVis{atan(sin(#2)*cos(\angEl)/sin(\angEl))} %
  \draw[current plane] (\angVis:1) arc (\angVis:\angVis+180:1);
  \draw[current plane,dotted] (\angVis-180:1) arc (\angVis-180:\angVis:1);
}
\newcommand\DrawLatitudeCircle[2][1]{
  \LatitudePlane{\angEl}{#2}
  \tikzset{current plane/.prefix style={scale=#1}}
  \pgfmathsetmacro\sinVis{sin(#2)/cos(#2)*sin(\angEl)/cos(\angEl)}
  \pgfmathsetmacro\angVis{asin(min(1,max(\sinVis,-1)))}
  \draw[current plane] (\angVis:1) arc (\angVis:-\angVis-180:1);
  \draw[current plane,dotted] (180-\angVis:1) arc (180-\angVis:\angVis:1);
  \node[label={[below]$b$},coordinate] (b0) at (\angVis-90:1) {};
}

\begin{tikzpicture}[scale=.6,xscale=-1,font=\footnotesize]
  \def\R{1.5};
  \def\Rb{1.9};

  \def\angEl{15} 

  \filldraw[ball color=white] (0,0) circle (2*\Rb);
  \DrawLatitudeCircle[2*\Rb]{0}

  \begin{scope}[nodes={circle,draw,fill=white,thick}]
    \foreach \i/\t/\r in {w/140/.6,z/30/1.1,v/95/.6}{
        \node (\i) at ($(b0)+(\t:2*\r*\R)$) {${\i}$};
    }
  \end{scope}
  \begin{scope}[red,semithick]
    \draw (b0) -- (v) (b0) -- (w);
    \node[coordinate] at ($(b0)+(30:.58*2*\R)$) (z1) {};
    \node[coordinate] at ($(b0)+(30:.78*2*\R)$) (z2) {};
    \draw (b0) -- (z1);
    \draw[dotted] (z1) -- (z2);
    \draw (z2) -- (z);
  \end{scope}

  \begin{scope}[blue,dashed,rotate=30]
    \draw (b0)+(0,.5*\R) ellipse [x radius=1.1*\R,y radius=1.6*\R];
    \draw (b0)+(0,.5*\R) ellipse [x radius=1.7*\R,y radius=2.2*\R];
  \end{scope}
  \begin{scope}[red,semithick,rotate=30]
    \draw (z1) arc [x radius=1.3*\R,y radius=1.7*\R,start angle=-15,end angle=-95] node[coordinate] (zz1) {};
    \draw (z2) arc [x radius=1.6*\R,y radius=2.1*\R,start angle=-15,end angle=-95] node[coordinate] (zz2) {};
    \draw (zz1) to[out=-170,in=-170] (zz2);
  \end{scope}
  \node (p) at ($(b0)+(-60:1.4*\R)$) {};
  \node (pl) at ($(b0)+(-85:1.4*\R)$) {$b_{\infty}$};

  \fill[black] (b0) circle [radius=.1] (p) circle [radius=.08];
\end{tikzpicture}
		\caption{Double Dehn twist}
		\label{fig=mcg-push-init}
	\end{subfigure}
	\begin{subfigure}{.45\linewidth}
		\centering
		\newcommand\pgfmathsinandcos[3]{%
  \pgfmathsetmacro#1{sin(#3)}%
  \pgfmathsetmacro#2{cos(#3)}%
}
\newcommand\LongitudePlane[3][current plane]{%
  \pgfmathsinandcos\sinEl\cosEl{#2} 
  \pgfmathsinandcos\sint\cost{#3} 
  \tikzset{#1/.style={cm={\cost,\sint*\sinEl,0,\cosEl,(0,0)}}}
}
\newcommand\LatitudePlane[3][current plane]{%
  \pgfmathsinandcos\sinEl\cosEl{#2} 
  \pgfmathsinandcos\sint\cost{#3} 
  \pgfmathsetmacro\yshift{\cosEl*\sint}
  \tikzset{#1/.style={cm={\cost,0,0,\cost*\sinEl,(0,\yshift)}}} %
}
\newcommand\DrawLongitudeCircle[2][1]{
  \LongitudePlane{\angEl}{#2}
  \tikzset{current plane/.prefix style={scale=#1}}
  \pgfmathsetmacro\angVis{atan(sin(#2)*cos(\angEl)/sin(\angEl))} %
  \draw[current plane] (\angVis:1) arc (\angVis:\angVis+180:1);
  \draw[current plane,dotted] (\angVis-180:1) arc (\angVis-180:\angVis:1);
}
\newcommand\DrawLatitudeCircle[2][1]{
  \LatitudePlane{\angEl}{#2}
  \tikzset{current plane/.prefix style={scale=#1}}
  \pgfmathsetmacro\sinVis{sin(#2)/cos(#2)*sin(\angEl)/cos(\angEl)}
  \pgfmathsetmacro\angVis{asin(min(1,max(\sinVis,-1)))}
  \draw[current plane] (\angVis:1) arc (\angVis:-\angVis-180:1);
  \draw[current plane,dotted] (180-\angVis:1) arc (180-\angVis:\angVis:1);
  \node[label={[below]$b$},coordinate] (b0) at (\angVis-90:1) {};
}

\begin{tikzpicture}[scale=.6,xscale=-1,font=\footnotesize]
  \def\R{1.5};
  \def\Rb{1.9};

  \def\angEl{15} 

  \filldraw[ball color=white] (0,0) circle (2*\Rb);
  \DrawLatitudeCircle[2*\Rb]{0}

  \begin{scope}[nodes={circle,draw,fill=white,thick}]
    \foreach \i/\t/\r in {w/140/.6,z/30/1.1,v/95/.6}{
        \node (\i) at ($(b0)+(\t:2*\r*\R)$) {${\i}$};
    }
  \end{scope}
  \begin{scope}[red,semithick]
    \draw (b0) -- (v) (b0) -- (w);
    \draw[dotted] (b0) -- (z);
    \draw (b0) -- ++(165:1.5*\R) to[out=165,in=200] ($(b0)+(90:2.3*\R)$) to[out=20,in=110] ($(b0)+(30:2.65*\R)$) to[out=-70,in=-80,looseness=2.5] (z.-30);
  \end{scope}

  \node (p) at ($(b0)+(-60:1.4*\R)$) {};
  \node (pl) at ($(b0)+(-85:1.4*\R)$) {$b_{\infty}$};

  \fill[black] (b0) circle [radius=.1] (p) circle [radius=.08];
\end{tikzpicture}
		\caption{New linear star in $D_W \subset `S_W'$}
		\label{fig=mcg-push-end}
	\end{subfigure}
	\caption{Pushing map action on $`S_W'$}
	\label{fig=mcg-push}
\end{figure}

\begin{proof}[Proof of \cref{prop=mcg-cstar}]
	Let $`w `: \Circ{W}$ and $\tilde{`a} `: \OCS{`S_W}{`w}$. Let $D_{\infty}$ be a disc on $`S_W$ that does not meet $\tilde{`a}$ except on its centre $b$. Removing $\ring{D}_{\infty}$ from $`S_W$, we obtain $D_W$. Consider the inclusion $i : D_W "`->" `S_W$. There exists a linear star $\bar{`a} `: \LS{D_W}$ such that $i(\bar{`a}) = \tilde{`a}$. By \cref{lem=cap,lem=birman}, the map $i$ induces a surjective map $F`o C : \PMCG{D_W} "->>" \PMCG{`S_W}$. Let $\bar{`f} `: \PMCG{D_W}$ such that $`f = (F`o C)(\bar{`f})$. Then:
	\[`f `. \tilde{`a} = `f `. i(\bar{`a}) = (F `o C) (\bar{`f}) `. i(\bar{`a}) = i(\bar{`f} `. \bar{`a})\]
	By \cref{thm=mcg-disc}, there exists $(`b,d) `: \mathbb{P}_m `* \mathbb{Z}^m$ such that $\bar{`f} = `r_{`q}(`b,d)$. Then by \cref{prop=mcg-lstar}, $\bar{`f}(\bar{`a})$ is again a $`q$-linear star. Its image by $i$ is thus again a $`w$-circular star on $`S_W$.

	We now prove the transitivity of the action.
	Let $\tilde{`a}, {\tilde{`a}'} `: \OCS{`S_W}{`w}$ be two $`w$-circular stars on~$`S_W$. Up to to isotopy we can suppose that the centre $b$ of $\tilde{`a}$ does not lie on $\tilde{`a}'$ and vice versa. Let $D_{\infty}$ be a disc that does not meet neither $\tilde{`a}$ nor $\tilde{`a}'$. Up to isotopy one can always move the centres $b$ and $b'$ to $\partial D_{\infty}$. Similarly to the proof of \cref{prop=mcg-cstar}, by removing $\ring{D}_{\infty}$ from $`S_W$, we obtain $D_W$ and two linear stars $\bar{`a}, \bar{`a}'$ ordered by two linearisations $`q, `q' `: c^{-1}(`w)$ respectively. By \cref{lem=mcg-cstar-lin}, up to an isotopy of $`S_W$ we can actually suppose that $`q = `q'$. Then~by \cref{prop=mcg-lstar} there exists $\bar{`f} `: \PMCG{D_W}$ such that $\bar{`f} `. \bar{`a} = \bar{`a}'$. By \cref{lem=cap,lem=birman} we~have
	\[\tilde{`a}' = i(\bar{`a}') = i(\bar{`f} `. \bar{`a}) = (F `o C) (\bar{`f}) `. \tilde{`a}\]
	with  $(F `o C) (\bar{`f}) `: \PMCG{`S_W}$.
\end{proof}

\section{Graphed embeddings}
\label{sec=oge}

The first homology and fundamental group of the boundary manifold $B_{\A}$ have two types of generators: one corresponding to the meridians of the generators and the other to the cycles of the graph. To properly define the cycle generators, it is necessary to associate them with actual curves drawn on $B_{\A}$ itself. We therefore define a type of morphism
\[\begin{tikzcd}[cramped]
	`g : \RG{C_{\A}} \rar[hook] & B_{\A}
\end{tikzcd}\]
called \emph{graphed embeddings} that respects the graph structure of $B_{\A}$. Graphed embeddings are built by tying together \emph{ordered stars}, one for each circle bundle component of the graph manifold~$B_{\A}$.

Recall from \cref{thm=bm-struct} that $B_{\A}$ is a graph manifold over the graph $`G = \RG{C_{\A}}$.
\begin{definition}
	\label{def=ge}
	Let $`W = {(`w_v)}_{v `: V(`G)}$ be a graph ordering of $`G$. An \emph{ordered graphed embedding} with respect to $`W$ is a map $`g :  `G \hookrightarrow B_{\A}$ described as such: for every vertex $v `: V(`G)$, $`g \cap S_v = `a_v$ is a star drawn on the circle bundle $p_v : S_v "->" `S_{W_v}$ such that $p_v(`a_v)$ is a circular star on $`S_{W_v}$ ordered by $`w_v$.
\end{definition}
Note that the branches of the star $`a_v$ inside $S_v$ can travel along the fibres but two branches can never meet the same fibre, except over the centre $b_v$.

The set of isotopy classes of ordered graphed embeddings with respect to a graph ordering~$`W$ is denoted by $\OGE{`G}{`W}$.

\begin{proposition}
	\label{prop=act-h++-ord}
	There is a well-defined action of the group of \emph{graphed} strongly positive homeomorphisms $\HGpp{B_{\A}}$ on~$\OGE{`G}{`W}$.
\end{proposition}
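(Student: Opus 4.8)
The plan is to let $\HGpp{B_{\A}}$ act by post-composition: for a graphed strongly positive homeomorphism $`J$ and an ordered graphed embedding $`g `: \OGE{`G}{`W}$, set $`J `. `g \vcentcolon= `J `o `g$. The only substantial point is to verify that $`J `o `g$ is again an ordered graphed embedding \emph{for the same ordering} $`W$; granting this, the group axioms and the descent to isotopy classes are formal. I would carry out the verification vertex by vertex.

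Fix a vertex $v `: V(`G)$. Since $`J$ is graphed it induces the identity on $`G$, hence maps the bundle component $S_v$ onto itself and restricts to a homeomorphism $`J_v `: \Hom{S_v}$ that fixes each boundary torus component-wise. Because every vertex of $`G$ has more than three neighbours, $`J_v$ is fibrewise up to isotopy by \cref{thm=cb-fibre}, so it descends to a homeomorphism $\bar{`J}_v$ of the base surface $`S_{W_v}$ with $p_v `o `J_v = \bar{`J}_v `o p_v$. As $`J$ is positive it preserves the orientation of $S_v$, and as it is strongly positive $`J_v$ preserves the orientation of the fibres (\cref{def=pos-homeo}); since two of these three orientations determine the third (see \cref{ssec=seif}), $\bar{`J}_v$ preserves the orientation of $`S_{W_v}$. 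Moreover $\bar{`J}_v$ fixes every boundary component of $`S_{W_v}$, because $`J_v$ does so on $S_v$. An orientation-preserving homeomorphism fixing each boundary component is isotopic to one that is the identity on the boundary, so up to isotopy $\bar{`J}_v `: \PMCG{`S_{W_v}}$.

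Now write $`a_v = `g \cap S_v$, whose projection $p_v(`a_v)$ is a $`w_v$-circular star by definition of $`g$. The relation $p_v `o `J_v = \bar{`J}_v `o p_v$ gives $p_v\bigl(`J_v `o `a_v\bigr) = \bar{`J}_v `. p_v(`a_v)$, and the well-definedness half of \cref{prop=mcg-cstar} says precisely that $\PMCG{`S_{W_v}}$ preserves $\OCS{`S_{W_v}}{`w_v}$; hence $p_v(`J_v `o `a_v)$ is again a $`w_v$-circular star. Since $`J_v$ is a fibrewise homeomorphism it permutes fibres bijectively over $\bar{`J}_v$, so two branches of $`J_v `o `a_v$ meet a common fibre only over the centre, as required. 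Finally $`J `o `g$ is an embedding of $`G$ because $`J$ is a homeomorphism of $B_{\A}$ and $`g$ is an embedding, and the pieces glue consistently along the joining tori since $`J$ is globally defined; together with the piecewise analysis this shows $`J `o `g `: \OGE{`G}{`W}$.

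It remains to check that this passes to isotopy classes and defines a left action. If $`J_t$ is an isotopy through graphed strongly positive homeomorphisms between two representatives of the same class, then $`J_t `o `g$ is an isotopy inside $\OGE{`G}{`W}$ by the previous paragraphs, so $`J `. `g$ depends only on the class of $`J$; applying the same reasoning to an isotopy $`g_t$ of ordered graphed embeddings shows independence of the representative of $`g$. The identities $\Id `. `g = `g$ and $(`J' `o `J) `. `g = `J' `. (`J `. `g)$ follow from associativity of composition. The main obstacle is the per-vertex step: one must know that the descended base homeomorphism $\bar{`J}_v$ lands (up to isotopy) in $\PMCG{`S_{W_v}}$ and in particular is orientation-preserving, which is exactly where the positivity and strong positivity hypotheses, together with the fibrewise reduction of \cref{thm=cb-fibre}, are needed to bring \cref{prop=mcg-cstar} into play.
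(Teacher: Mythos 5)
Your proof is correct and follows essentially the same route as the paper's: both reduce to each bundle piece $S_v$, use that a graphed strongly positive homeomorphism restricts to a fibre-positive homeomorphism of $S_v$ fixing its boundary componentwise, descend it to an element of $\PMCG{\Sigma_{W_v}}$ commuting with the projection $p_v$, and invoke \cref{prop=mcg-cstar} to conclude that the projected star is again a circular star for the same order $\omega_v$. You merely make explicit a few points the paper leaves implicit (the orientation bookkeeping forcing the descended map into the pure mapping class group, the fibrewise reduction via \cref{thm=cb-fibre}, and the passage to isotopy classes and the action axioms).
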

\begin{proof}
	Consider $`g `: \OGE{`G}{`W}$. For every $v `: V(`G)$, let $`a_v = `g \cap S_v$ be the corresponding star. Then~$p_v(`a_v) `: \OCS{`S_{W_v}}{`w_v}$. Let ${`J `: \HGpp{M}}$. Since $`J$ is graphed then for every $v `: V(`G)$ it restricts to an homeomorphism $`J_v$ of $S_v$, which is fibre-positive and fixes the boundary of $S_v$ componentwise. In particular $`J_v$ induces an element $\widetilde{`J}_v `: \PMCG{`S_{W_v}}$ on the basis $`S_{W_v}$ of the fibre bundle $S_v$, such that the following diagram commutes:
	\begin{equation*}
		\label{eq=seif-bas-homeo}
		\begin{tikzcd}[cramped]
			S_v \rar["`J_v"] \dar["p_v",two heads] & S_v \dar["p_v",two heads] \\
			`S_{W_v} \rar["\widetilde{`J}_v"] & `S_{W_v}
		\end{tikzcd}
	\end{equation*}
	By~\cref{prop=mcg-cstar}, $\widetilde{`J}_v `. p_v(`a_v)$ is again a circular star on $`S_{W_v}$ for the same order $`w_v$. Since
	\[p_v (`J_v `. `a_v) = \widetilde{`J}_v `. p_v(`a_v)\]
	then $`J_v `. `a_v$ respects the conditions of \cref{def=ge}. Since this is true for all $v `: V(`G)$ then ${`J `. `a `: \OGE{`G}{`W}}$.
\end{proof}

The following fundamental \enquote{flattening} lemma allows to consider a graphed embedding as a union of linear stars rather than circular ones, which makes the action easier to handle.
Recall~from \cref{thm=mgs} that for every circle bundle $S_v$ there is a model $`c_v : S(m_v,\check{s}_v,`e_v) "->" S_v$ with $S(m_v,\check{s}_v,`e_v) = T_{m_{v}} \cup T_{\infty}$, and where $\check{s}_v$ is a fixed section of $T_{m_v}$.
\begin{lemma}
	\label{lem=star-flat}
	Let $`g = {(`a_v)}_{v `: V(`G)} `: \OGE{`G}{`W}$. Let $v `: V(`G)$ be a fixed vertex. There exists a linear star $\bar{`a}_v `: \OLS{D_{W_v}}{`q_v}$, where $`q_v `: \Lin{W_v}$ is a linearisation of $`w_v$, and such that $`a_v$ can be isotoped to $\check{s}_v(\bar{`a}_v)$ inside $B_{\A}$.
\end{lemma}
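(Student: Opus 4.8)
The plan is to reduce the statement to a comparison, inside the product region $T_{m_v} = D_{W_v} `* S^1$ of the bundle $S_v$, between the star $`a_v$ and the section-lift of a suitable linearisation of its projection, and then to remove the remaining difference by a purely vertical (fibrewise) isotopy. The two ingredients are \cref{prop=mcg-cstar}'s circularisation picture for the base and the triviality of $S_v$ over $D_{W_v}$.

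First I would linearise the base. The projection $\tilde{`a}_v \vcentcolon= p_v(`a_v)$ is a circular star on $`S_{W_v}$ for the order $`w_v$, with centre $b = p_v(b_v)$ in the interior of $`S_{W_v}$. Choose an embedded disc $D_\infty \subset `S_{W_v}$ meeting $\tilde{`a}_v$ only at $b$, with $b `: \partial D_\infty$; then $`S_{W_v} \smallsetminus \ring{D}_\infty = D_{W_v}$ and $\partial D_\infty = \partial^\infty D_{W_v}$. By the surjectivity of the map $i : \LS{D_{W_v}} \twoheadrightarrow \CS{`S_{W_v}}$ recalled just before \cref{lem=mcg-cstar-lin}, the restriction $\tilde{`a}_v \cap D_{W_v}$ is a linear star $\bar{`a}_v `: \OLS{D_{W_v}}{`q_v}$ all of whose branches emanate from $b `: \partial^\infty D_{W_v}$, whose order $`q_v$ is the linearisation of $`w_v$ determined by the cut, and which satisfies $i(\bar{`a}_v) = \tilde{`a}_v$.

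Correspondingly I decompose $S_v = T_{m_v} \cup_{g_{`e_v}} T_\infty$ as in \cref{ex=cb-stand}, with $T_\infty = p_v^{-1}(D_\infty)$ the solid torus and $T_{m_v} = p_v^{-1}(D_{W_v}) = D_{W_v} `* S^1$ carrying the section $\check{s}_v$. Since $\tilde{`a}_v$ meets $D_\infty$ only at $b$, each branch of $`a_v$ meets $T_\infty$ only over $b$, issues from the common centre $b_v$, and immediately moves into $T_{m_v}$. Hence, after a small isotopy pushing the branches off the central fibre near $b_v$, I may assume that $`a_v$ lies entirely in $T_{m_v}$ with its centre placed at $\check{s}_v(b)$ on the gluing torus $\partial^\infty D_{W_v} `* S^1$. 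The section-lift $\check{s}_v(\bar{`a}_v)$ likewise lies in $T_{m_v}$, projects onto $\bar{`a}_v$, and has centre $\check{s}_v(b)$. Thus $`a_v$ and $\check{s}_v(\bar{`a}_v)$ are now two stars in $T_{m_v}$ with the same projection and the same centre, differing only in the fibre coordinate.

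It remains to straighten this fibre difference, and this is where I expect the main difficulty to lie. The projected branches $\bar{`a}_v^w$ are disjoint away from $b$, so away from the centre the branches of $`a_v$ sit in disjoint solid tori $N(\bar{`a}_v^w) `* S^1$, each also containing $\check{s}_v(\bar{`a}_v^w)$; the condition of \cref{def=ge} that no non-central fibre meets two branches keeps the branches apart inside these tori. In each such solid torus I compare the arc $`a_v^w$ with the section arc $\check{s}_v(\bar{`a}_v^w)$: both run from $\check{s}_v(b)$ to a point of the boundary torus $\partial^w D_{W_v} `* S^1$, and, because the star definition leaves the far endpoint $`a_v^w(1)$ free on that torus, the fibrewise winding of $`a_v^w$ can be unwound by sliding the far endpoint around its $S^1$ fibre; the only possible obstruction lives in $`p_1(S^1)$ and is killed by this freedom. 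Realising these deformations as ambient isotopies with disjoint supports away from the centre, and matching them up on the small central region where the branches share $\check{s}_v(b)$, yields an isotopy of $`a_v$ onto $\check{s}_v(\bar{`a}_v)$ inside $S_v \subset B_{\A}$. The delicate points to check are precisely that freeing each boundary endpoint genuinely trivialises the winding of its branch (so that homotopy upgrades to isotopy in the irreducible manifold $D_{W_v}`* S^1$), that the individual straightenings can be performed disjointly, and that they can be made to agree at the common centre, so that the final isotopy acts on the whole star rather than on its branches separately.
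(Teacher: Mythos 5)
You take essentially the same route as the paper's own proof. The paper also reduces to the trivially fibred part $T_{m_v}$ carrying the section $\check{s}_v$, and its key step for killing the fibrewise winding is the same as yours: since the gluing of $B_{\A}$ identifies fibres of $\partial^{w} S_v$ with horizontal curves of $S_w$, the fibre loops of a branch can be pushed across the gluing torus by an ambient isotopy of $B_{\A}$. Your move of sliding the far endpoint of $\alpha_v^{w}$ around its fibre is exactly this isotopy seen from the $S_v$ side (it drags the adjacent branch of the embedding around a horizontal loop of $S_w$, which the lemma does not need to control, and which is the paper's remark that the loops reappear as horizontal loops on the other side). Likewise, your preliminary cut along $D_{\infty}$ through the centre is the paper's closing step of pushing the centre to $\partial^{\infty}\check{s}_v$, which particularises $\omega_v$ into a linearisation $\theta_v$; the only organisational difference is that the paper argues in two cases (if no branch loops around a fibre, it chooses a section $s'$ disjoint from the whole star, cuts along it and retracts fibrewise onto $\check{s}_v$; otherwise it first pushes the loops across the gluings), whereas you straighten branch by branch.

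The one point that needs repair is precisely the delicate point you flag, because the justification you suggest for it would fail. Irreducibility of $D_{W_v}\times S^1$ does not upgrade homotopy of arcs to isotopy: an arc from an interior point to the boundary can be locally knotted already in a $3$-ball, and such an arc is homotopic, with free far endpoint, to an unknotted one without being isotopic to it. What rescues the step is information you already invoke, but only for disjointness: by \cref{def=ge} the projection $p_v(\alpha_v)$ is a circular star, so each branch $\alpha_v^{w}$ projects onto the corresponding base branch and therefore lies in the \emph{annulus} $p_v^{-1}\left(p_v(\alpha_v^{w})\right)$, which also contains the section arc $\check{s}_v(\bar{\alpha}_v^{w})$. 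Embedded arcs in an annulus joining its two boundary circles, with one endpoint fixed and the other free, form a single isotopy class (a Dehn twist is isotopic to the identity once the boundary is not fixed pointwise), so no knotting can occur: the straightening is a two-dimensional statement, and it extends to an ambient isotopy of $B_{\A}$ by isotopy extension. Since these annuli meet only along the fibre over $b$, the supports can be taken disjoint away from a small central region, which also disposes of your disjointness and centre-matching concerns. Finally, note that setting $T_{\infty}=p_v^{-1}(D_{\infty})$ silently re-chooses the model of \cref{ex=cb-stand}: either first isotope the star off the model solid torus (as the paper does), or observe that changing the trivialisation alters the section arcs only by fibre winding, which the same sliding move removes.
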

\begin{proof}
	Up to isotopy, the star $`a_v$ can be moved entirely within the trivially fibred torus $T_{m_v}$. If~no branch of $`a_v$ makes a full loop around a fibre of $T_{m_v}$ then there exist a section $s'$ of $T_{m_v}$ such that $s' \cap `a_v = \varnothing$. Cut $T_{m_v}$ along $s'$ to get a cylinder $S_{m_v} `* I$, then retract the star $`a_v$ on the section $\check{s}_v$ along the fibres to get the result. Now suppose that for some $w `: W_v$ the branch $`a^w_v$ makes at least one full loop around a fibre. Up to isotopy one can always move the loops at the extremity of the branch, i.e. on the boundary torus $\partial^w S_v$. Inside the graph manifold $B_{\A}$, the fibres of $\partial^w S_v$ are isotopic to horizontal loops inside $T_{m_w} \subset S_w$. One can thus \enquote{push} the loops to the other side of the gluing with an isotopy of $B_{\A}$, so that the branch $`a_v^w$ has no more fibre loops. Finally, push the centre $b$ of $`a_v$ to the boundary $\partial^{\infty} \check{s}_v$. This has the effect of particularising the circular order $`w_v$ to give a flat $`q_v$-linear star on $\check{s}_v$.
\end{proof}
\begin{lemma}
	\label{lem=mcg-cstar-lift}
	For any $v `: V(`G)$, the isotopy of $`S_{W_v}$ described in the proof of \cref{lem=mcg-cstar-lin} can be lifted to an isotopy of $S_v$.
\end{lemma}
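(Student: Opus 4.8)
The plan is to lift $H_t$ through the circle bundle $p_v : S_v \to `S_{W_v}$ by the usual isotopy lifting for fibrations, made explicit via a connection. First I would pin down what $H_t$ is. In the proof of \cref{lem=mcg-cstar-lin} the equality $i(\bar{`a}) = i(\bar{`a}')$ in $\OCS{`S_{W_v}}{`w_v}$ comes from $F(P(`d)) = \Id$, i.e.\ the point-pushing mapping class $P(`d)$ becomes isotopic to the identity once the puncture $b_{\infty}$ is filled in. Concretely this furnishes an ambient isotopy $H_t$ of $`S_{W_v}$ with $H_0 = \Id$ and $H_1$ sending $i(\bar{`a})$ to $i(\bar{`a}')$, which drags a disc neighbourhood of $b_{\infty}$ once around the loop $`d$ and then drags the affected branch across. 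By the isotopy extension theorem we may realise $H_t$ as the flow of a compactly supported time-dependent vector field $X_t$ on $`S_{W_v}$ which vanishes near each boundary circle $\partial^w `S_{W_v}$, $w `: W_v$, since $`d$ and the whole swept region remain in the interior and avoid these circles.

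Next I would lift $X_t$ horizontally. The key structural point is that $p_v : S_v \to `S_{W_v}$ is a genuine fibration with globally defined projection: although $S_v = T_{W_v} `U_{g_{`e_v}} T_{\infty}$ is only a product over each of the two pieces, the gluing $g_{`e_v}$ sends fibres to fibres (it acts by $\check{`l}^{\infty} \mapsto \check{`l}^{\infty'}$), so the fibration is defined across the joining torus $\partial^{\infty} D_{W_v} `* S^1$. Hence $S_v$ carries an Ehresmann connection, a horizontal distribution complementary to the fibres and compatible with their orientation, as any smooth oriented circle bundle does. Let $\tilde{X}_t$ be the horizontal lift of $X_t$ and $\tilde{H}_t$ its flow. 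Then $\tilde{H}_0 = \Id$, each $\tilde{H}_t$ is a homeomorphism of $S_v$, and since horizontal lifts project to $X_t$ one gets $p_v `o \tilde{H}_t = H_t `o p_v$. Thus $\tilde{H}_t$ is an isotopy of $S_v$ covering $H_t$: it maps fibres to fibres preserving their orientation, and because $\tilde{X}_t$ vanishes near the boundary tori $\partial^w S_v$ it fixes them, so it is a legitimate isotopy of the piece (extendable by the identity to all of $B_{\A}$).

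The main obstacle is exactly the portion of $H_t$ sweeping across the joining torus: dragging the neighbourhood of $b_{\infty}$ crosses the curve $\partial^{\infty} D_{W_v}$ separating $T_{W_v}$ from $T_{\infty}$, where the two product trivialisations disagree by the twist $`e_v$, so a naive fibrewise product lift would fail to match up there. The connection-based lift sidesteps this entirely, since the horizontal distribution is global. One should still check that $\tilde{H}_t$ transports the star $`a_v$ correctly where its branches run along fibres, but this is automatic: $\tilde{H}_t$ is a homeomorphism of $S_v$ and the only property needed afterwards is $p_v(\tilde{H}_t(`a_v)) = H_t(p_v(`a_v)) = H_t(i(\bar{`a}))$, which equals $i(\bar{`a}')$ at $t=1$.
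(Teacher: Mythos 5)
Your proof is correct and proves exactly what the lemma asserts: an isotopy $\tilde H_t$ of $S_v$ with $p_v \circ \tilde H_t = H_t \circ p_v$ which preserves the fibre structure and is stationary near the boundary tori, hence extendable to $B_{\A}$. Your identification of the isotopy to be lifted (the trivialisation of the point-push $P(\delta)$ once $b_\infty$ is filled in, i.e.\ $F\circ P$ trivial in the Birman sequence, followed by the dragging of the affected branch — all supported in the interior of $\Sigma_{W_v}$) also matches the paper. The route, however, is genuinely different. The paper stays in the topological category and localises: it observes that $P(\delta)$ is the identity outside the region $D_{\delta_-}$ (a disc with one hole containing $\partial^z\Sigma_{W_v}$ and the filled puncture), that the restriction of the circle bundle over this planar subsurface is trivial — even though $D_{\delta_-}$ straddles the joining curve $\partial^{\infty}D_{W_v}$ — and lifts the isotopy as a product in that trivialisation, extending by the identity elsewhere. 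You instead generate $H_t$ by a time-dependent vector field and lift it horizontally through a global Ehresmann connection. Both devices answer the same worry you correctly single out (the two product trivialisations of $S_v = T_{W_v}\cup T_{\infty}$ disagree across the joining torus): the paper by choosing a trivialisable region adapted to the support of the isotopy, you by dispensing with trivialisations altogether. What the paper's version buys is elementarity — no smooth structures, connections or flows — and a lift that is by construction supported in $p_v^{-1}(D_{\delta_-})$; what yours buys is generality, since it lifts any base isotopy stationary near $\partial\Sigma_{W_v}$ without having to exhibit a trivialisable support region, and the fibre-preservation is built in. The only point you should make explicit is that the vector-field/flow argument lives in the smooth category, so one should either remark that the bundle, the isotopy and the star can be smoothed, or replace the connection by the covering-isotopy property of fibre bundles; this is routine and does not affect the argument.
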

\begin{proof}
	Let $z = `q_v^{-1}(m_v) `: W_v$ and let $`d$ be a loop on $`S_{W_v}$ that separates $\partial^z `S_W$ from the other boundary components, and let $`d_+$ and $`d_-$ be two curves parallel to $`d$ as shown on \Cref{fig=mcg-push}. Then~we have
	\[`S_{W_v} = E_{`d_-} \cup_{`d_-} D_{`d_-}\]
	with $E_{`d_+} `~ D_{W_v \smallsetminus \{z\}}$ and $D_{`d_-} `~ D_{\{z\}}$. The \enquote{pushing} transformation $P(`d)$ is the identity outside of $D_{`d_-}$. Using a trivialisation $T_{`d}$ of the fibre bundle over $D_{`d_-}$, one can thus lift $P(`d)$ to a partial isotopy $I$ of $S_v$, which can then be extended by the identity to the entirety of $S_v$. By construction, $p_v `o I = P(`d)$ inside $T_{`d}$.
\end{proof}
\begin{proposition}
	\label{prop=act-h++-tr}
	The action of \cref{prop=act-h++-ord} is transitive.
\end{proposition}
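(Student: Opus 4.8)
The plan is to reduce the problem vertex by vertex to the transitivity statements already proved for stars, and then to glue the resulting piecewise homeomorphisms along the joining tori. Let $`g = {(`a_v)}_{v `: V(`G)}$ and $`g' = {(`a'_v)}_{v `: V(`G)}$ be two elements of $\OGE{`G}{`W}$. Since both $\HGpp{B_{\A}}$ and $\OGE{`G}{`W}$ consist of isotopy classes, we may freely replace $`g$ and $`g'$ by isotopic representatives. First I would apply the flattening \cref{lem=star-flat} at every vertex of both embeddings, so that on each circle bundle $S_v$ the stars take the form $`a_v = \check{s}_v(\bar{`a}_v)$ and $`a'_v = \check{s}_v(\bar{`a}'_v)$ for linear stars $\bar{`a}_v, \bar{`a}'_v `: \LS{D_{W_v}}$ ordered by linearisations $`q_v, `q'_v `: c^{-1}(`w_v)$. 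Using \cref{lem=mcg-cstar-lin} together with its lift \cref{lem=mcg-cstar-lift}, I then apply a further isotopy of $B_{\A}$, lifted from $`S_{W_v}$, to replace $\bar{`a}_v$ so that $`q_v = `q'_v$ at every vertex; write $`q_v$ for this common linearisation. After these reductions the two embeddings restrict on each $S_v$ to sections of linear stars sharing the same linear order.

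Next I fix a vertex $v$. By \cref{prop=mcg-lstar} the action of $\PMCG{D_{W_v}}$ on $\OLS{D_{W_v}}{`q_v}$ is transitive, so there is $\bar{`f}_v `: \PMCG{D_{W_v}}$ with $\bar{`f}_v `. \bar{`a}_v = \bar{`a}'_v$. Composing with the surjection $F `o C : \PMCG{D_{W_v}} "->>" \PMCG{`S_{W_v}}$ of \cref{lem=cap,lem=birman} yields $`f_v \vcentcolon= (F `o C)(\bar{`f}_v) `: \PMCG{`S_{W_v}}$, which by the compatibility established in the proof of \cref{prop=mcg-cstar} sends the circular star $p_v(`a_v)$ to $p_v(`a'_v)$. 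Because $S_v$ is a circle bundle over the planar surface $`S_{W_v}$ with non-empty boundary, it is trivial, and the trivialisation induced by the section $\check{s}_v$ lets me lift $`f_v$ to the fibrewise, fibre-positive homeomorphism $`J_v `: \Hom{S_v}$ given in coordinates by $(x,`q) \mapsto (`f_v(x),`q)$. This lift carries $\check{s}_v(\bar{`a}_v)$ to $\check{s}_v(\bar{`a}'_v) = `a'_v$ up to isotopy, the flattening guaranteeing that no branch winds around a fibre, so that no spurious fibre component is created. Moreover, since $`f_v `: \PMCG{`S_{W_v}}$ fixes $\partial `S_{W_v}$ pointwise, the lift $`J_v$ is the identity on every boundary torus of $S_v$.

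Finally I would assemble the local homeomorphisms into a global one. Each $`J_v$ is the identity on all boundary tori $\partial^w S_v$, so for every edge $e_{v,w}$ the restrictions of $`J_v$ and $`J_w$ to the joining torus $T_{v,w} = S_v \cap S_w$ both equal the identity and hence agree. Consequently the $`J_v$ glue to a well-defined homeomorphism $`J$ of $B_{\A}$, which is graphed by construction and strongly positive since each $`J_v$ is fibre-positive; moreover $`J `. `a_v = `a'_v$ for all $v$, so $`J `. `g = `g'$, giving transitivity. The main obstacle is exactly this gluing step: one must arrange each fibrewise lift to fix the joining tori so that the pieces match, and this is what the purity of $`f_v$ — inherited from the boundary-pointwise condition on $\PMCG{D_{W_v}}$ — provides. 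The flattening \cref{lem=star-flat} is indispensable here, since without it a branch could wind around a fibre and the naive product lift would fail to send $`a_v$ to $`a'_v$.
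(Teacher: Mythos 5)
Your strategy is the same as the paper's: flatten both embeddings with \cref{lem=star-flat}, equalise the linearisations with \cref{lem=mcg-cstar-lin,lem=mcg-cstar-lift}, apply the transitivity of \cref{prop=mcg-lstar} at each vertex, lift the resulting mapping class to a fibre-positive homeomorphism of $S_v$ fixing $\partial S_v$, and glue the pieces along the joining tori. The gluing paragraph is correct as far as it goes. The gap is in the lifting step, and it is a genuine one.

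After obtaining $\bar{\varphi}_v \in \PMCG{D_{W_v}}$ you push it down to $\varphi_v = (F \circ C)(\bar{\varphi}_v) \in \PMCG{\Sigma_{W_v}}$ and lift \emph{that}, in coordinates $(x,\theta) \mapsto (\varphi_v(x),\theta)$, using \enquote{the trivialisation induced by the section $\check{s}_v$}. No such trivialisation exists in general: $\check{s}_v$ is a section only of the product piece $T_{W_v} = D_{W_v} \times S^1$ of \cref{ex=cb-stand}, and the product structure it induces on $S_v$ over $D_{W_v}$ extends across the remaining solid torus $T_{\infty}$ if and only if $\epsilon_v = 0$; the Euler number is precisely this obstruction, since the gluing map $g_{\epsilon_v}$ sends the boundary curve of $\check{s}_v$ to a curve that bounds a section of $T_{\infty}$ only when $\epsilon_v = 0$. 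By \cref{thm=bm-struct} every point vertex has $\epsilon_P = -1$, so for boundary manifolds of line arrangements the claimed trivialisation essentially never exists. You may instead take an arbitrary global trivialisation (one exists because $\partial \Sigma_{W_v} \neq \varnothing$), and its product lift of $\varphi_v$ is indeed fibrewise, fibre-positive and the identity on $\partial S_v$; but that trivialisation is then necessarily incompatible with $\check{s}_v$, so the lift does not preserve the section, and all it guarantees is an isotopy of the projections, $p_v(\Psi_v \cdot \alpha_v) \simeq p_v(\alpha'_v)$. Upstairs the two stars may still differ by windings of branches around fibres; these cannot be undone by an isotopy of $S_v$ rel boundary, and pushing them across the joining tori (as in \cref{lem=star-flat}) modifies the stars in the neighbouring pieces, so the conclusion $\Psi \cdot \gamma = \gamma'$ does not follow. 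This is exactly why the paper never descends to $\Sigma_{W_v}$: it lifts $\bar{\varphi}_v$ itself over $D_{W_v}$, where the trivialisation containing $\check{s}_v$ genuinely exists, so that the lift fixes $\check{s}_v$ and carries $\check{s}_v(\bar{\alpha}_v)$ to $\check{s}_v(\bar{\alpha}'_v)$ on the nose, and then extends it by the identity over the rest of $S_v$, which is possible because $\bar{\varphi}_v$ is pure, hence the identity near $\partial^{\infty} D_{W_v}$; this also yields the identity on all boundary tori, which is what your gluing step needs. Since you already have $\bar{\varphi}_v$ in hand, the repair is simply to lift it directly and delete the detour through $\PMCG{\Sigma_{W_v}}$.
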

\begin{proof}
	Let $`g,`g' `: \OGE{`G}{`W}$ be two ordered graphed embeddings. Fix $v `: W$ and write $`a_v = `g \cap S_v$ and $`a_v' = `g' \cap S_v$. By \cref{lem=star-flat} up to isotopy one can suppose that $`a_v = \check{s}_v(\bar{`a}_v)$ (resp. $`a_v = \check{s}_v(\bar{`a}'_v)$) with $\bar{`a}_v `: \OLS{D_{W_v}}{`q_v}$ (resp. $\bar{`a}_v' `: \OLS{D_{W_v}}{`q_v'}$), where $`q_v, `q_v' `: c^{-1}({`w_v})$. Projecting on the basis $`S_{W_v}$ of $S_v$, we have $p(\check{s}_v) `~ D_{W_v}$. Write
	\[`S_{W_v} = p(\check{s}_v) \cup_{\partial^{\infty}} D_0\]
	where $D_0 `~ D^2$. The situation is similar to \Cref{fig=mcg-push} with $p(\check{s}_v)$ and $D_0$ as the upper and lower hemispheres respectively.
	By \cref{lem=mcg-cstar-lift}, we can suppose that $`q_v' = `q_v$. Then by \cref{prop=mcg-lstar} there exists $`j_v `: \PMCG{D_{W_v}}$ such that $`j_v `. \bar{`a}_v = \bar{`a}_v'$. Using the trivialisation of $S_v$ over $D_{W_v}$ and extending by the identity on the rest of $S_v$ we can lift $`j_v$ to a fibre-positive homeomorphism $`J_v$ of $S_v$. Then $`J_v `. `a_v = `a_v'$. Moreover $`J_v$ restricts to the identity on $\partial S_v$. One can then define an homeomorphism $`J$ of $B_{\A}$ as the reunion of all $`J_v$ acting on their respective $S_v$ for every~$v `: V(`G)$. By construction, $`J `: \HGpp{B_{\A}}$ and $`J `. `g = `g'$.
\end{proof}

\section{Homology inclusion}
\label{sec=hom-incl}

In this section we present the construction of our new ordered oriented line arrangements invariant. Consider the inclusion map
\[i_{\A} : B_{\A} \hooklongrightarrow E_{\A}\]
and the induced map on the first homology groups
\[i_{\A}^* : H_1(B_{\A},\mathbb{Z}) \longrightarrow H_1(E_{\A},\mathbb{Z})\]
In \cref{ssec=bm-hom} we give a presentation of these homology groups. The group $H_1(E_{\A},\mathbb{Z})$ is generated by the meridians of the lines of $\A$ (modulo the sum of all of them), corresponding to the fibres of the associated circle bundle components of $B_{\A}$. Meridians of the singular points can also be added as redundant generators. On the other hand $H_1(B_{\A},\mathbb{Z})$ is generated by the same meridians plus the cycles of the graph. \cref{thm=gm-hom} presents how the choice of an ordered graphed embedding allows to define the cycle generators. With that setting, \cref{ssec=hom-incl-def} gives the definition of the invariant. The restriction of $i_{\A}^*$ on the cycle generators depends on the choice of a graph ordering $`W$ and of an ordered graphed embedding $`g `: \OGE{`G}{`W}$. In order to remove that last dependency, we define the \emph{graph stabiliser} $\PI_{`G}$ as the quotient of all potential images of the cycles as sums of meridians by the differences between every two graphed embeddings. Then the \emph{homology inclusion} $\mathcal{J}_{`W}(\A)$ is given in \cref{def=hom-incl} as the class of the restriction of $i_{\A}^*$ inside the graph stabiliser. \cref{thm=la-istar-inv} establishes that $\mathcal{J}_{`W}$ is an invariant of ordered oriented line arrangements.

\subsection{Presentations of the homology groups}
\label{ssec=bm-hom}

For the boundary manifold the presentation is well known since the works of Westlund \cite{Westlund1997}, see also \cite{Mumford:61, Bartolo2019, kollar-nemethi}. For~the exterior the presentation is obtained by abelianising the fundamental group, which is itself computed using the Zariski-van Kampen method. We simply reformulate these results using our own notations and the new notion of \emph{ordered} graphed embeddings.

For this entire section, $`g `: \OGE{`G}{`W}$ is a fixed ordered graphed embedding. Two types of generators are involved: the \emph{meridians} which correspond to the fibres of each circle bundle component of $B_{\A}$, and the cycles of the graph $`G$.

We start with the exterior manifold $E_{\A}$. Let $C_0(`G)$ be the free abelian group generated by the set $V(`G)$ of vertices of the graph $`G$. Recall also that every vertex $v `: V(`G)$ is decorated with the Euler number $`e_v `: \mathbb{Z}$ of the corresponding $S_v$.
\begin{definition}
	\label{def=hom-mer}
	The \emph{meridian homology} $\MH{`G}$ of a graph $`G$ is the quotient of $C_0(`G)$ by the normally generated subgroup
	\[R(`G) \vcentcolon= \Big<\displaystyle`e_v `. v + \sum_{w `: W_v} w,\  v `: V(`G) \Big>\]
	with the exact sequence:
	\[\begin{tikzcd}[cramped]
		0 \rar & R(`G) \rar & C_0(`G) \rar["`h"] & \MH{`G} \rar & 0
	\end{tikzcd}\qedhere\]
\end{definition}
For every $x `: C_0(`G)$ we note $\bar{x}$ its image in $\MH{`G}$.
\begin{proposition}
	\label{prop=hom-ext}
	There is a natural isomorphism $\MH{`G(C_{\A})} \xlongrightarrow{\sim} H_1(E_{\A})$ that sends the class $\bar{v}$ to the homology class $f_v$ of the fibres of the circle bundle $S_v$ for every vertex $v `: V(`G)$.
\end{proposition}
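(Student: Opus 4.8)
The plan is to define the homomorphism directly on the free group $C_0(\Gamma)$, check that it annihilates the relation subgroup $R(\Gamma)$ of \cref{def=hom-mer}, and then match both sides with the classical presentation of $H_1(E_{\A})$. I first recall that, since every irreducible component of $\A$ has degree one, abelianising the Zariski--van~Kampen presentation yields
\[
H_1(E_{\A},\mathbb{Z}) \;\cong\; \mathbb{Z}\langle\,\mu_L : L\in\mathcal{L}\,\rangle \Big/ \Big\langle \textstyle\sum_{L\in\mathcal{L}}\mu_L \Big\rangle,
\]
where $\mu_L$ is the meridian of $L$, which is exactly the fibre class $f_L$ of the piece $S_L$. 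I also record the local computation that the fibre class $f_P$ of a point-piece $S_P$ equals $\sum_{L\ni P}\mu_L$: the piece $S_P$ is the link of a point of multiplicity $m_P$ with the lines removed, and its regular fibre has linking number one with each line through $P$ and zero with every other line.

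With this in hand I define $\phi\colon C_0(\Gamma)\to H_1(E_{\A})$ on generators by $\phi(v)=f_v$, i.e. $\phi(L)=\mu_L$ and $\phi(P)=\sum_{L\ni P}\mu_L$, and the heart of the argument is to verify $\phi(R(\Gamma))=0$. For a point-vertex the generator $-P+\sum_{L\in W_P}L$ maps to $-\sum_{L\ni P}\mu_L+\sum_{L\ni P}\mu_L=0$ by the very definition of $f_P$, since $\epsilon_P=-1$ and $W_P=\{L:L\ni P\}$. For a line-vertex $L$ the generator reads $(1-b(L))\,L+\sum_{P\in\mathcal{Q}^{>2}\cap L}P+\sum_{L\cap L'\text{ double}}L'$ by \cref{thm=bm-struct,def=graph-incid-red}; applying $\phi$ and expanding each $\phi(P)$, the $b(L)$ copies of $\mu_L$ contributed by the point terms cancel the coefficient $-b(L)$, leaving $\mu_L$ together with the meridians of every line meeting $L$ at a point of $\mathcal{Q}^{>2}$ and the meridians of every line meeting $L$ at a double point. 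Invoking axiom~(ii) of \cref{def=comb}, each $L'\neq L$ meets $L$ in exactly one point, which is either double or lies in $\mathcal{Q}^{>2}$; hence these two families partition $\mathcal{L}\smallsetminus\{L\}$ and the image collapses to $\sum_{L'\in\mathcal{L}}\mu_{L'}=0$. Thus $\phi$ descends to $\bar\phi\colon M(\Gamma)\to H_1(E_{\A})$, which is surjective because the $\mu_L=\bar\phi(\bar L)$ already generate the target.

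For injectivity I compute $M(\Gamma)$ intrinsically. The point-relations rewrite as $P=\sum_{L\ni P}L$, which eliminate every generator $P\in\mathcal{Q}^{>2}$, so $M(\Gamma)$ is generated by the classes $\bar L$; substituting these expressions into the line-relations and running the identical partition argument turns each line-relation into $\sum_{L\in\mathcal{L}}L=0$. Therefore $M(\Gamma)\cong\mathbb{Z}\langle\mathcal{L}\rangle/\langle\sum_{L}L\rangle$, and under this presentation $\bar\phi$ becomes the tautological generator-matching map, hence an isomorphism. I expect the only genuinely delicate point to be the bookkeeping inside the line-relation --- verifying that the high-multiplicity neighbours and the double-point neighbours of $L$ together account for each remaining line exactly once --- which is precisely where the incidence axioms of \cref{def=comb} enter.
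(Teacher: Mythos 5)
Your proof is correct and follows essentially the same route as the paper: both rest on the known meridian presentation of $H_1(E_{\A})$, the elimination of the point generators via the relations $P = \sum_{L \ni P} L$, and the observation that the high-multiplicity neighbours and double-point neighbours of a line $L$ partition $\mathcal{L} \smallsetminus \{L\}$, collapsing each line relation to $\sum_{L'} \mu_{L'} = 0$. The only differences are organisational: you construct the map on $C_0(\Gamma)$ and check it annihilates $R(\Gamma)$ before computing $M(\Gamma)$ intrinsically, and you make explicit the topological identity $f_P = \sum_{L \ni P} \mu_L$ for point-vertices (via linking numbers), a fact the paper uses only implicitly when asserting that the isomorphism sends $\bar{v}$ to $f_v$ for \emph{every} vertex $v$.
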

\begin{proof}
	Is it known that:
	\[H_1(E_{\A}) = \left< f_L,\ L `: \mathcal{L} \ \middle| \ \sum_{L `: \mathcal{L}} f_L \right>\]
	where $f_L$ is the line meridian of $L$, that is the homological class of the fibres of $S_L$. Now using \cref{thm=bm-struct}, the relations of $\MH{`G(C_{\A})}$ become
	\begin{align*}
		`A\: L `: \mathcal{L}: \quad (b(L)-1) `. L &= \sum_{P `: W_L} P + \sum_{L' `: W_L} L' & `A\: P `: \mathcal{Q}^{>2}: \quad P &= \sum_{L `: W_P} L
	\end{align*}
	Replacing every $P$ in the first relation allows to remove them all as generators and yields
	\begin{gather*}
		`A\: L `: \mathcal{L}: \quad (b(L)-1) `. L = \sum_{P `: W_L} \sum_{L'' `: W_P} L'' + \sum_{L' `: W_L} L'
		\intertext{There are exactly $b(L)$ singular points in $W_L$ and each has $L$ once amongst their neighbours. We~can thus simplify the relations to~get:}
		`A\: L `: \mathcal{L}: \quad 0 = L + \sum_{P `: W_L} \sum_{\substack{L'' `: W_P\\L'' \neq L}} L'' + \sum_{L' `: W_L} L'
	\end{gather*}
	where $L''$ (resp. $L'$) are all the lines meeting $L$ at singular points with multiplicity $>2$ (resp.~$2$). But since $L$ meets every other line of $\mathcal{L}$ exactly once, we have in fact
	\[`A\: L `: \mathcal{L}: \quad 0 = L + \sum_{\substack{L' `: \mathcal{L}\\L' \neq L}} L' = \sum_{L' `: \mathcal{L}} L'\]
	Identifying the formal generator $L$ with $f_L$, this is exactly the defining relation of $H_1(E_{\A})$ repeated for every line.
\end{proof}
Note that \cref{prop=hom-ext} implies that $H_1(E_{\A})$ is combinatorially determined. We use the name \enquote{meridian homology} and notation $\MH{`G}$ to designate the corresponding formal combinatorial group. This distinction is necessary because $\MH{`G}$ also appears as an important subgroup of the first homology group of the boundary manifold $H_1(B_{\A})$, as we shall see now.

The fundamental group $`p_1(B_{\A})$ has two types of generators: the meridians, which must be connected to a common base point, and the cycles of the graph. A graphed embedding is required to properly draw both types of generators on the boundary manifold $B_{\A}$. One also needs additional combinatorial data to obtain a presentation of $`p_1(B_{\A})$.

Fix a vertex $r `: V(`G)$. Seeing the graph $`G$ as a topological space, one can define the fundamental group $`p_1(`G,r)$, which is freely generated by the cycles of the graph based in $r$. Note that there is no canonical set of generators. Let $\mathcal{T}$ be a \emph{spanning tree} on $`G$, that is a connected acyclic subgraph of $`G$ containing all vertices. For every pair of vertices $v,w `: V(`G)$ there always exist a unique path from $v$ to $w$ inside $\mathcal{T}$. Then for every edge $e_{v,w} `: `G \smallsetminus \mathcal{T}$, there is a unique cycle $c_{v,w}$ drawn on $`G$ going through $e_{v,w}$ and the root $r$. In other words, the choice of $\mathcal{T}$ gives a cycle basis. We then write~$`g_{v,w} \vcentcolon= `g(c_{v,w})$.

Let $\mathbb{F}_V$ be the group freely generated by the vertex set $V$. For every vertex $v `: V(`G)$, let $f_v$ be the fibre curve of the circle bundle $S_v$ over the point $`g(v)$, and let $c_v$ be the unique path from the root $r$ to $v$ in $\mathcal{T}$. Then define the \emph{meridian} of $v$ as the curve $`m_v \vcentcolon= `g(c_v) `. f_v `. {`g(c_v)}^{-1}$ on $B_{\A}$.

Finally, let also $`d$ be an orientation on the graph $`G$, that is a function assigning a sign $`d_{v,w} = `+ 1$ to each edge $e_{v,w}$, with $`d_{v,w} = - `d_{w,v}$.
\begin{theorem}
	\label{thm=gm-pi1-pres}
	Write $b \vcentcolon= `g(r)$. Then there is an exact sequence
	\[\begin{tikzcd}[cramped]
		0 \rar & \Pres{`G}{\mathcal{T}}{`g} \rar & \mathbb{F}_V * `p_1(`G,r) \rar["{`g}_{\mathcal{T}}"] & `p_1(B_{\A},b) \rar & 0
	\end{tikzcd}\]
	where $\Pres{`G}{\mathcal{T}}{`g}$ is the subgroup of relations normally generated by
	\begin{enumerate}[(i), font=\normalfont, labelsep=.5em, itemsep=1em]
		\item $`A\ v `: V(`G) : \quad {`m_v}^{`e_v} `. \displaystyle\prod_{k=1}^{m_v} {`m_{{`q_v^{-1}}(k)}}^{u(v,`q_v^{-1}(k))}$
		\item $`A\ e_{v,w} `: `G: \quad \left[`m_v, {`m_{w}}^{u(v,w)}\right]$
		\[\text{where } u(v,w) \vcentcolon=
		\begin{dcases*}
			{(`g_{v,w})}^{`d_{v,w}} & if $e_{v,w} `: `G \smallsetminus \mathcal{T}$.\\
			1 & if $e_{v,w} `: \mathcal{T}$.
		\end{dcases*} \qedhere\]
	\end{enumerate}
\end{theorem}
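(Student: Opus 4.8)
The plan is to compute $\pi_1(B_{\A},b)$ by a graph-of-groups (van~Kampen) argument built on the Waldhausen decomposition recalled in \cref{sec=bm}, using the spanning tree $\mathcal{T}$ and the fixed ordered graphed embedding $\gamma$ to pin down all base points and conjugating factors. First I would set $b=\gamma(r)$ inside the root piece $S_r$ and, for each vertex $v$, use the tree path $\gamma(c_v)$ to join the local base point of $S_v$ to $b$; this is exactly what promotes the fibre class $f_v\in\pi_1(S_v)$ to the meridian $\mu_v=\gamma(c_v)\,f_v\,\gamma(c_v)^{-1}$. Under these conventions the generator $v\in\mathbb{F}_V$ is sent to $\mu_v$ and the free generator $c_{v,w}$ of $\pi_1(\Gamma,r)$ to the embedded cycle $\gamma_{v,w}=\gamma(c_{v,w})$, which defines $\gamma_{\mathcal{T}}$. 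Surjectivity follows from van~Kampen: each vertex group $\pi_1(S_v)$ is generated by its fibre $f_v$ together with its horizontal boundary classes, the latter being fibres of the adjacent pieces, while the loops crossing the gluing tori are read off the graph, so the meridians $\mu_v$ and the cycle generators $\gamma_{v,w}$ generate $\pi_1(B_{\A},b)$.

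Next I would identify the contribution of each piece. By \cref{thm=bm-struct} every $S_v$ is the circle bundle $S(W_v,\check{s}_v,\varepsilon_v)$ over a genus-$0$ surface with $m_v$ holes, and classically its fibre $f_v$ is central in $\pi_1(S_v)$. Since the section $\check{s}_v$ realises the planar base $D_{W_v}$, the product of all its boundary sections is trivial there; capping the $\partial^\infty$ component with the solid torus $T_\infty$ of Euler number $\varepsilon_v$ (\cref{ex=cb-stand}) identifies the corresponding section with $f_v^{-\varepsilon_v}$, so that reading the remaining product in the cyclic order given by a linearisation $\theta_v$ of $\omega_v$ yields the single defining relation $f_v^{\varepsilon_v}\prod_{k=1}^{m_v}\check{\mu}_v^{\theta_v^{-1}(k)}=1$ of the vertex group. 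I would then invoke the canonical gluing of the minimal graph structure (\cref{ssec=gm}): along $T_{v,w}$ the horizontal section $\check{\mu}_v^w$ is identified with the fibre $f_w$. Substituting $\check{\mu}_v^w=f_w$ and transporting to $b$ turns this into relation (i), where $u(v,w)$ is precisely the element measuring the difference between reaching $w$ through the edge $e_{v,w}$ and reaching it through $\mathcal{T}$: it is trivial when $e_{v,w}\in\mathcal{T}$ (then $c_w=c_v\cdot e_{v,w}$) and equals $\gamma_{v,w}^{\delta_{v,w}}$ when $e_{v,w}\notin\mathcal{T}$.

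For the edge tori, each $\pi_1(T_{v,w})\cong\mathbb{Z}^2$ is generated by $f_v$ and $\check{\mu}_v^w=f_w$ and is abelian, so the two boundary inclusions force $[\mu_v,\mu_w^{u(v,w)}]=1$, which is relation (ii); the antisymmetry $\delta_{v,w}=-\delta_{w,v}$ makes this relation independent of the chosen orientation of the edge. Assembling by Bass--Serre theory with respect to $\mathcal{T}$, the tree edges contribute amalgamations (no new generator, $u=1$) and each non-tree edge contributes an HNN extension whose stable letter is exactly the cycle generator $\gamma_{v,w}$. The resulting presentation has generating set the vertices (meridians $\mu_v$) together with the non-tree cycles, i.e.\ $\mathbb{F}_V*\pi_1(\Gamma,r)$, and its complete set of relations is the vertex relations (i) and the edge relations (ii). Hence $\ker\gamma_{\mathcal{T}}$ is normally generated by these two families, which is $\Pres{\Gamma}{\mathcal{T}}{\gamma}$, and the sequence is exact.

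The main obstacle is not the topology of any single circle bundle, which is standard, but the systematic bookkeeping of base points, orientations and conjugations. Concretely, one must check that the stable letters produced by the HNN structure coincide with the homotopy classes $\gamma_{v,w}$ of the embedding, that the linearisation $\theta_v$ really reproduces the stated order of the factors in relation (i), and that the signs $\delta$ render relations (i) and (ii) well defined independently of how each edge is oriented. Establishing exactness on the left --- that no relations beyond (i) and (ii) survive --- then reduces to confirming that the graph-of-groups presentation is neither over- nor under-determined, where the centrality of each $f_v$ and the acyclicity of $\mathcal{T}$ do the work.
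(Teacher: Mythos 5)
Your proposal is correct, but there is nothing in the paper to compare it against step by step: the paper states \cref{thm=gm-pi1-pres} without proof, presenting it at the start of \cref{ssec=bm-hom} as a reformulation --- in the language of ordered graphed embeddings --- of the presentation of the fundamental group of the boundary manifold known since Westlund \cite{Westlund1997}. Your graph-of-groups argument is precisely the standard proof underlying that citation, and its key steps are sound: each vertex group is computed from the model of \cref{ex=cb-stand} as
\[\pi_1(S_v)\cong\Bigl\langle \check{\mu}_v^w\ (w\in W_v),\ f_v \Bigm| [\check{\mu}_v^w,f_v],\ f_v^{\varepsilon_v}\prod_{k=1}^{m_v} \check{\mu}_v^{\theta_v^{-1}(k)}\Bigr\rangle\]
(the fibre is central because the bundle is trivial over the holed sphere, and the Euler number enters only through the capping solid torus); the minimal graph structure of \cref{thm=bm-struct} identifies $\check{\mu}_v^w$ with a fibre of $S_w$ across $T_{v,w}$; the spanning tree together with the star branches of the graphed embedding supplies all conjugating paths, so that $u(v,w)$ is the class of $\gamma(c_v\cdot e_{v,w}\cdot c_w^{-1})$, trivial for tree edges and equal to $\gamma_{v,w}^{\delta_{v,w}}$ otherwise; and the Bass--Serre/van Kampen assembly, after Tietze elimination of the horizontal generators via the edge identifications, gives both surjectivity of $\gamma_{\mathcal{T}}$ and the fact that relations (i) and (ii) normally generate its kernel.

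Two harmless imprecisions you should fix when writing this up. First, for a tree edge $e_{v,w}$ your parenthetical claim $c_w=c_v\cdot e_{v,w}$ holds only when $w$ is the child of $v$ in the tree rooted at $r$ (in the other case $c_v=c_w\cdot e_{w,v}$); what matters is that in either case $c_v\cdot e_{v,w}\cdot c_w^{-1}$ is a loop contained in $\mathcal{T}$, hence nullhomotopic, so $u(v,w)=1$ still follows. Second, what you call \enquote{exactness on the left} is really exactness at the middle term, i.e.\ $\ker\gamma_{\mathcal{T}}=\Pres{\Gamma}{\mathcal{T}}{\gamma}$: injectivity of the left-hand map is automatic, since $\Pres{\Gamma}{\mathcal{T}}{\gamma}$ is by definition a subgroup of $\mathbb{F}_V * \pi_1(\Gamma,r)$.
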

For $x,y `: `p_1(B_{\A},b)$ the notations are $[x,y] = x y x^{-1} y^{-1}$ and $x^y = y^{-1} x y$.
\begin{remark}
	This presentation can often be simplified, in particular when some Euler numbers $`e_v$ are equal to 0, 1 or $-1$.
\end{remark}
The first homology group $H_1(`G)$ is a free abelian group generated by the cycles of the graph $`G$. Note that again there is no canonical basis of cycle generators.

\begin{theorem}
	\label{thm=gm-hom}
	Let $`g `: \OGE{`G}{`W}$ be a graphed embedding. Then $`g$ induces a group isomorphism
	\[\begin{tikzcd}[cramped]
		`g_* : \MH{`G} `(+) H_1(`G) \rar["\sim"] & H_1(B_{\A})
	\end{tikzcd}\qedhere\]
\end{theorem}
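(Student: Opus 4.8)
The plan is to obtain the stated decomposition by abelianising the presentation of $`p_1(B_{\A},b)$ provided by \cref{thm=gm-pi1-pres}, using the identification $H_1(B_{\A}) = \Ab\left(`p_1(B_{\A},b)\right)$. Recall the general fact that abelianising a presentation $F/N$ yields $\Ab(F)$ modulo the subgroup generated by the images of the normal generators of $N$ (in an abelian target the normal closure collapses to an ordinary subgroup). So I would first compute the abelianisation of the free group $F = \mathbb{F}_V * `p_1(`G,r)$, and then determine what each relator of $\Pres{`G}{\mathcal{T}}{`g}$ becomes.

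First I would note that $\Ab(\mathbb{F}_V) = C_0(`G)$, the free abelian group on the vertices, while $\Ab\left(`p_1(`G,r)\right) = H_1(`G)$ because $`p_1(`G,r)$ is free on the cycles of $`G$. Since the abelianisation of a free product is the direct sum of the abelianisations,
\[
\Ab(F) = C_0(`G) `(+) H_1(`G),
\]
where the generator $v `: V(`G)$ maps to $\bar v$ and a cycle to its homology class; under the induced surjection onto $H_1(B_{\A})$ one has $\bar v \longmapsto f_v$ (the fibre class, equal to the meridian class of $`m_v$) and a cycle $c \longmapsto `g(c)$.

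Next I would abelianise the two families of relators of \cref{thm=gm-pi1-pres}. The commutator relators (ii), of the form $[`m_v,{`m_w}^{u(v,w)}]$, map to $0$ in any abelian quotient, so they impose no condition at all; in particular they place no relation on the $H_1(`G)$ summand. For the relators (i) the crucial observation is that the conjugating exponents $u(v,`q_v^{-1}(k))$, which encode all the cycle data $`g_{v,w}$, disappear under abelianisation, since $x^{y} = y^{-1}xy$ and $x$ have the same image in $\Ab(F)$. Consequently the relator (i) attached to $v$ maps to
\[
`e_v `. \bar v + \sum_{w `: W_v} \bar w,
\]
using that $`q_v^{-1}$ enumerates $W_v$ as $k$ runs over $1,\dots,m_v$. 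These are exactly the generators of the subgroup $R(`G)$ of \cref{def=hom-mer}.

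Finally I would assemble the quotient. The two points above show that the relation subgroup is precisely $R(`G)$, sitting inside the summand $C_0(`G)$, with no relation meeting the $H_1(`G)$ summand. Since $R(`G) \subseteq C_0(`G)$, the quotient splits:
\[
H_1(B_{\A}) = \left(C_0(`G) `(+) H_1(`G)\right)/R(`G) = \left(C_0(`G)/R(`G)\right) `(+) H_1(`G) = \MH{`G} `(+) H_1(`G).
\]
By the identification of generators recorded above, this isomorphism sends $\bar v$ to the fibre class $f_v$ and a cycle $c$ to $`g(c)$, which is exactly the map $`g_*$ induced by the graphed embedding. The step I expect to require the most care is the abelianisation of family (i): one must check rigorously that the conjugations collapse, that the resulting relators land entirely in $C_0(`G)$, and that they reproduce $R(`G)$ without disturbing the free summand $H_1(`G)$. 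Everything else is the routine bookkeeping of abelianising a group presentation.
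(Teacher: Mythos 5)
Your proposal is correct and is essentially the paper's own argument: the paper likewise abelianises the exact sequence of \cref{thm=gm-pi1-pres} and identifies the resulting abelianised sequence with the defining sequence of $\MH{\Gamma} \oplus H_1(\Gamma)$ from \cref{def=hom-mer}. The details you spell out---that the conjugations in the relators of type (i) collapse so that these relators map exactly onto the generators of $R(\Gamma)$ inside $C_0(\Gamma)$, and that the commutator relators (ii) vanish, leaving the $H_1(\Gamma)$ summand free---are precisely the steps the paper leaves implicit.
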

\begin{proof}
	Abelianising the exact sequence of \cref{thm=gm-pi1-pres} yields a new exact sequence
	\[\begin{tikzcd}[cramped]
		0 \rar & \Pres{`G}{\mathcal{T}}{`g} \rar \dar["\Ab"] & \mathbb{F}_V * `p_1(`G,r) \rar["{`g}_{\mathcal{T}}"] \dar["\Ab"] & `p_1(M,b) \rar \dar["\Ab"] & 0\\
		0 \rar & R({`G}) \rar & C_0(`G) `(+) H_1(`G) \rar["\Ab(`g)"] & H_1(M,\mathbb{Z}) \rar & 0
	\end{tikzcd}\]
	which is exactly the defining exact sequence of $\MH{`G} `(+) H_1(`G)$. The morphism $`g_*$ is defined as the morphism induced by $\Ab(`g)$ on the quotient, which does not depend on the choice of the parameter $\mathcal{T}$.
\end{proof}
\begin{proposition}
	\label{prop=ge-diff-mer}
	For every pair of graphed embeddings $`g, `g' `: \OGE{`G}{`W}$,	we have
	\[{\left({`g_*}^{-1} `o `g_*'\right)}_{|\MH{`G}} = \Id_{\MH{`G}} \qedhere\]
\end{proposition}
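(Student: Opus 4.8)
The plan is to show that the two isomorphisms $`g_*$ and $`g_*'$ restrict to the \emph{same} homomorphism on the summand $\MH{`G}$; this is exactly equivalent to the stated identity, since $`g_*^{-1} `o `g_*'$ then fixes $\MH{`G}$ pointwise. As $\MH{`G}$ is generated by the vertex classes $\bar{v}$, I would immediately reduce the problem to checking the equality $`g_*(\bar{v}) = `g_*'(\bar{v})$ in $H_1(B_{\A})$ for every vertex $v `: V(`G)$.

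Next I would unwind the definition of $`g_*$ on the meridian summand. From the proof of \cref{thm=gm-hom}, $`g_*$ is the map induced by $\Ab(`g)$, and on $\MH{`G}$ it sends $\bar{v}$ to the homology class of the meridian $`m_v = `g(c_v) `. f_v `. {`g(c_v)}^{-1}$. Because $H_1(B_{\A})$ is abelian, the conjugating path $`g(c_v)$ contributes nothing, so $`g_*(\bar{v})$ is simply the class of the fibre $f_v$ of the circle bundle $S_v$ taken over the point $`g(v)$.

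The heart of the argument --- and the one step that is genuinely geometric rather than formal --- is the observation that this fibre class is intrinsic to the bundle piece $S_v$ and is insensitive to the choice of embedding. Since the base $`S_{W_v}$ of $p_v : S_v "->" `S_{W_v}$ is connected, any two fibres of $S_v$ are freely homotopic inside $S_v \subset B_{\A}$ and therefore homologous. In particular the fibre over $`g(v)$ and the fibre over $`g'(v)$ determine the same class in $H_1(B_{\A})$, notwithstanding that the base points $`g(v)$ and $`g'(v)$ usually differ.

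Combining these observations yields $`g_*(\bar{v}) = [f_v] = `g_*'(\bar{v})$ for every $v$, so the two homomorphisms restrict to the same map on $\MH{`G}$ and the claimed identity follows. I do not expect a serious obstacle here: once the meridians are recognised as conjugates of fibres and the fibre class is seen to be embedding-independent, the statement is essentially forced. The only point requiring care is the identification of the two fibre classes, which I would phrase via free homotopy inside a single bundle piece $S_v$ rather than relative to any fixed section $\check{s}_v$, so as to avoid spurious dependence on the flattening data.
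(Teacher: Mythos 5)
Your proposal is correct and follows essentially the same route as the paper's own proof: identify $`g_*(\bar{v})$ with the class of the fibre over $`g(v)$ (the conjugating path dying in homology), then use the fact that all fibres of $S_v$ are homologous to conclude independence of the embedding. The extra justifications you supply (reduction to vertex generators, free homotopy of fibres over a connected base) are exactly the details the paper leaves implicit.
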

\begin{proof}
	For every $v `: V(`G)$, $`g_*(\bar{v})$ is equal to the homological class of the meridian $`m_v$ in $H_1(B_{\A})$, which by construction is equal to the class of the fibre $f_v$ over $`g(v)$ in $H_1(S_v)$. Since all fibres of $S_v$ have the same homological class, the image $`g_*(\bar{v})$ does not depend on the choice of $`g$.
\end{proof}

\subsection{Definition of the invariant}
\label{ssec=hom-incl-def}

Let $\A$ be a line arrangement and write $`G \vcentcolon= \widetilde{`G}(C_{\A})$ its reduced incidence graph. Consider the inclusion map
\[i_{\A} : B_{\A} \hooklongrightarrow E_{\A}\]
and the induced map on the first homology groups
\[i_{\A}^* : H_1(B_{\A},\mathbb{Z}) \longrightarrow H_1(E_{\A},\mathbb{Z})\]
Let $`W$ be an ordering of the graph $`G$. By \cref{thm=gm-hom}, every graphed embedding $`g `: \OGE{`G}{`W}$ induces an isomorphism
\[\begin{tikzcd}[cramped]
	`g_* : \MH{`G} `(+) H_1(`G) \rar["\sim"] & H_1(B_{\A},\mathbb{Z})
\end{tikzcd}\]
By \cref{prop=hom-ext}, there is a natural group~isomorphism $H_1(E_{\A},\mathbb{Z}) `~ \MH{`G}$. Therefore, we have a map
\[i_{\A}^* `o `g_* :  \MH{`G} `(+) H_1(`G) \longrightarrow \MH{`G}\]
\vspace{-\baselineskip}
\begin{lemma}
	\label{lem=la-incl-V}
	For every graphed embedding $`g `: \OGE{`G}{`W}$, we have
	\[{\left(i_{\A}^* `o `g_*\right)}_{|\MH{`G}} = \Id_{\MH{`G}} \qedhere\]
\end{lemma}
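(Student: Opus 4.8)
The plan is to verify the identity on the generating set $\{\bar v \mid v `: V(`G)\}$ of $\MH{`G}$ and then simply chase the two identifications through which the composite is defined. First I would recall, from the proof of \cref{prop=ge-diff-mer}, that $`g_*(\bar v)$ is the homology class in $H_1(B_{\A})$ of the meridian $`m_v$, which by construction is the class of a fibre $f_v$ of the circle bundle $S_v$. Applying $i_{\A}^*$ then amounts to viewing this fibre as a cycle inside $E_{\A}$: the inclusion $i_{\A}$ carries the curve $f_v \subset S_v \subset B_{\A}$ to the same curve regarded in $E_{\A}$, so that $\left(i_{\A}^* `o `g_*\right)(\bar v)$ is precisely the class $[f_v] `: H_1(E_{\A})$.

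Next I would invoke \cref{prop=hom-ext}, whose natural isomorphism $\MH{`G} \xrightarrow{\sim} H_1(E_{\A})$ is exactly the one sending $\bar v$ to the class of the fibre $f_v$. Read backwards, this says that the class $[f_v]$ corresponds to $\bar v$ under the identification $H_1(E_{\A}) `~ \MH{`G}$. Hence the composite sends $\bar v$ to $\bar v$ for every vertex $v$, and since the $\bar v$ generate $\MH{`G}$ we conclude ${\left(i_{\A}^* `o `g_*\right)}_{|\MH{`G}} = \Id_{\MH{`G}}$.

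The only genuine content — and the step I expect to be the main obstacle — is the compatibility of the fibre class $f_v$ across \cref{prop=hom-ext,prop=ge-diff-mer}, equivalently the assertion that \cref{prop=hom-ext} realises its isomorphism through $i_{\A}^*$ applied to fibres. This is the point I would spell out vertex type by vertex type. For a line-vertex $L$ the fibre $f_L$ is the meridian of the line $L$, which maps under $i_{\A}^*$ to the generator of $H_1(E_{\A})$ corresponding to $\bar L$. For a point-vertex $P `: \mathcal{Q}^{>2}$ the fibre $f_P$ is a small loop linking the singular point, whose class in $H_1(E_{\A})$ is the sum $\sum_{L `: W_P} f_L$ of the meridians of the lines through $P$; matching this with $\bar P$ uses exactly the defining relation $`e_P `. \bar P + \sum_{L `: W_P} \bar L = 0$ of $\MH{`G}$ with $`e_P = -1$, that is, the very relation exploited in the proof of \cref{prop=hom-ext}. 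Once one checks that the geometric fibre and the formal generator are glued consistently in this way, everything else is a formal unravelling of the definitions.
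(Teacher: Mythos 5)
Your proposal is correct and takes essentially the same approach as the paper: check the identity on the meridian generators, identify $\left(i_{\A}^* \circ \gamma_*\right)(\bar{v})$ with the class of the fibre $f_v$, and conclude via \cref{prop=hom-ext}. The only difference is that the paper verifies line-vertices alone, the point classes being redundant generators since $\bar{P} = \sum_{L \in W_P} \bar{L}$ in $\MH{\Gamma}$, whereas you additionally carry out the geometric check for point-vertices (the fibre $f_P$ being homologous to $\sum_{L \in W_P} f_L$ in $H_1(E_{\A})$), which is valid but unnecessary.
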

\begin{proof}
	For every line $L$ of $\A$, $i_{\A}^* `o `g_*(\bar{L})$ is equal to the homology class of the fibre $f_L$ over $`g(L)$ inside $S_L$, which is exactly the generator of $H_1(E_{\A})$ corresponding to $\bar{L}$ by \cref{prop=hom-ext}.
\end{proof}
Now consider the restriction
\[{\left(i_{\A}^* `o `g_*\right)}_{|H_1(`G)} `: \hom(H_1(`G),\MH{`G})\]
which we simply write $i_{\A}^* `o `g_*$ for~short. This morphism obviously depends on the choice of the graphed embedding $`g$. The \emph{graph stabiliser} is a quotient designed to express the morphism $i_{\A}^*$ in a way that no longer depends on the choice of $`g `: \OGE{`G}{`W}$ but only on the choice of the graph ordering $`W$ itself.
\begin{definition}
	\label{def=gs}
	The \emph{graph stabiliser} $\PI_{`G}(`W)$ is defined as the quotient of $\hom(H_1(`G),\MH{`G})$ by the subgroup generated by all elements $`f `o {(`g_* - `g_*')}_{|H_1(`G)}$, where
	\begin{enumerate}[(i)]
		\item $`g,`g' `: \OGE{`G}{`W}$.
		\item \label{gs-cond} $`f `: \hom(H_1(B_{\A}),\MH{`G})$ is such that ${(`f `o `g_*)}_{|\MH{`G}} = \Id_{\MH{`G}}$ for all $`g `: \OGE{`G}{`W}$. \qedhere
	\end{enumerate}
\end{definition}
All objects involved in \cref{def=gs}, including $H_1(B_{\A})$ by \cref{thm=gm-hom}, depend only on the the combinatorics $`G$ of $\A$ and the choice of a graph ordering $`W$. However, this last dependency can actually be lifted.
\begin{theorem}
	\label{cor=gs-ordered-model}
	The group isomorphism type of the graph stabiliser $\PI_{`G}(`W)$ is the same for all graph orderings~$`W$ on~$`G$.
\end{theorem}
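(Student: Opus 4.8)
The plan is to prove the stronger statement that the relation subgroup quotiented out in \cref{def=gs} is \emph{literally the same} subgroup of $\hom(H_1(`G),\MH{`G})$ for every graph ordering, so that $\PI_{`G}(`W)$ is an honestly ordering-independent group, not merely one of constant isomorphism type. First I would fix $`W$ and a base embedding $`g_0 `: \OGE{`G}{`W}$, and use the isomorphism ${(`g_0)}_* : \MH{`G} `(+) H_1(`G) \to H_1(B_{\A})$ of \cref{thm=gm-hom} to set $M \vcentcolon= {(`g_0)}_*(\MH{`G})$ and $\iota \vcentcolon= {(`g_0)}_*|_{\MH{`G}}$. The key point is that $M$ and $\iota$ are \emph{intrinsic}, i.e.\ independent of $`W$ and of $`g_0$: by \cref{prop=hom-ext,prop=ge-diff-mer} the class ${(`g_0)}_*(\bar v)$ is always the fibre class $f_v$ of the bundle $S_v$, which does not depend on the star. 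Dually, collapsing each Seifert piece of $B_{\A}$ to its vertex defines a projection $\operatorname{pr} : H_1(B_{\A}) \to H_1(`G)$ with $\ker \operatorname{pr} = M$, for which ${(`g_0)}_*|_{H_1(`G)}$ is a section; any two such sections differ by a homomorphism with image in $M$.

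The heart of the argument is to pin down how $\HGpp{B_{\A}}$ acts on $H_1(B_{\A})$. For every $`J `: \HGpp{B_{\A}}$ I would establish: \textbf{(a)} $`J_*$ fixes $M$ pointwise, since $`J$ is strongly positive and hence each restriction $`J_v$ preserves the oriented fibre class $f_v$; and \textbf{(b)} $`J_*$ induces the identity on $H_1(`G)$, i.e.\ $\operatorname{pr} `o `J_* = \operatorname{pr}$, since $`J$ is graphed and therefore covers the identity automorphism of $`G$ through the collapse map. Properties (a) and (b) together say that $`J_* - \Id$ annihilates $M$ and has image contained in $M$; consequently $\Theta(`J) \vcentcolon= \iota^{-1} `o (`J_* - \Id) `o {(`g_0)}_*|_{H_1(`G)}$ is a well-defined element of $\hom(H_1(`G),\MH{`G})$, and because $`J_* - \Id$ kills $M$ it is unchanged when the section is replaced by any other, so $\Theta(`J)$ is intrinsic. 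A direct computation, using that $`J_*$ fixes $M \supseteq \operatorname{Im}(`J'_* - \Id)$, gives $(`J `o `J')_* - \Id = (`J_* - \Id) + (`J'_* - \Id)$, so $\Theta : \HGpp{B_{\A}} \to \hom(H_1(`G),\MH{`G})$ is a group homomorphism into an abelian group.

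Next I would identify the relation subgroup of \cref{def=gs} with $\operatorname{Im}(\Theta)$. By the transitivity in \cref{prop=act-h++-tr}, any $`g,`g' `: \OGE{`G}{`W}$ are of the form $`g = `J `. `g_0$ and $`g' = `J' `. `g_0$, whence ${(`g_* - `g_*')}|_{H_1(`G)} = (`J_* - `J'_*) `o {(`g_0)}_*|_{H_1(`G)}$, whose image lies in $M$ by (a)--(b). Moreover \cref{prop=ge-diff-mer} forces every admissible $`f$ in \cref{def=gs} to satisfy $`f|_M = \iota^{-1}$; since the difference already lands in $M$, the generator $`f `o {(`g_* - `g_*')}|_{H_1(`G)}$ equals $\Theta(`J) - \Theta(`J') = \Theta(`J `o {`J'}^{-1})$, with no dependence on the free part of $`f$. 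Taking $`J' = \Id$ (and any $`f$ restricting to $\iota^{-1}$ on $M$) realises every element of $\operatorname{Im}(\Theta)$, while every generator is visibly of this shape; hence the relation subgroup is exactly $\operatorname{Im}(\Theta)$. As the latter is intrinsic, $\PI_{`G}(`W) = \hom(H_1(`G),\MH{`G})/\operatorname{Im}(\Theta)$ is the same group for all $`W$, which in particular gives the stated constancy of the isomorphism type.

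The step I expect to be the main obstacle is the rigorous proof of (b): one must verify that a graphed homeomorphism genuinely descends to the identity on $H_1(`G)$ through the collapse $B_{\A} \to `G$, compatibly with the splitting of \cref{thm=gm-hom}, so that the only freedom left on homology is the unipotent meridian shift recorded by $\Theta$. This rests on the graph-manifold description of \cref{thm=bm-struct} and on the fact that a graphed homeomorphism fixes each gluing torus and restricts to each Seifert piece fixing its boundary components. By contrast, the meridian statement (a) is immediate from strong positivity, and the homomorphism property of $\Theta$ together with its independence of the section is a short formal manipulation.
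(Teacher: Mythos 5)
Your proposal is correct, and it proves the statement by a genuinely different route than the paper. The paper obtains \cref{cor=gs-ordered-model} as an immediate corollary of the explicit presentation \cref{thm=gs-pres}: there, the relation subgroup of \cref{def=gs} is rewritten via the difference maps of \cref{ssec=diff-maps} and \cref{thm=gs-diff}, reduced by the transitivity result \cref{prop=act-h++-tr} to per-vertex star differences in the model disc, and then computed explicitly by induction on words in the braid and twist generators (\cref{lem=hom-diff-model}, which rests on the curve-diagram machinery of \cref{sec=star}); since the resulting generators \ref{gs-gen-1} and \ref{gs-gen-2} involve only the graph and not the ordering, the quotient is the same for all orderings. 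You bypass this entire computation: using the same transitivity input (\cref{prop=act-h++-ord,prop=act-h++-tr}) together with the ordering-independence of the meridian part (\cref{prop=hom-ext,prop=ge-diff-mer}), you identify the relation subgroup abstractly as the image of an intrinsically defined homomorphism $\Theta : \HGpp{B_{\A}} \to \hom(H_1(\Gamma),\MH{\Gamma})$, which manifestly does not see the ordering. Your two key facts are both true and provable with the paper's tools: for (a), note that you need not only fibre-positivity but also that each restriction $\Psi_v$ is fibrewise up to isotopy, which is \cref{thm=cb-fibre} (available here since every vertex has enough neighbours); for (b), a graphed homeomorphism preserves each Seifert piece and each joining torus, so after an isotopy adapting it to collar neighbourhoods of the tori it commutes up to homotopy with the collapse $B_{\A} \to \Gamma$, and since fibre classes and all curves on the joining tori die under this collapse one gets $\operatorname{pr} \circ \Psi_* = \operatorname{pr}$; this is the homotopy-theoretic argument you anticipate, and it does go through. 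The trade-off between the two approaches: yours is shorter and more conceptual, and (like the paper's) yields the stronger conclusion that the graph stabiliser is literally one fixed quotient group rather than a constant isomorphism type; but it produces no presentation, whereas \cref{thm=gs-pres} is precisely what makes the invariant effectively computable (\cref{sec=calc}) and is reused in \cref{lem=gs-tlg} for the comparison with the loop-linking number and in \cref{prop=gs-trans-def} for the change of ordering, so your argument can replace the paper's proof of this particular theorem but not the surrounding section.
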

The proof of \cref{cor=gs-ordered-model} is obtained by establishing a presentation of the graph stabiliser, which is the object of \cref{sec=gs}. From now on we simply write $\PI_{`G}$.

We are now ready to define the homology inclusion invariant.
\begin{definition}
	\label{def=hom-incl}
	Let $\A$ be an ordered line arrangement and let $`G \vcentcolon= \RG{C_\A}$ be its incidence graph. Fix an ordering $`W$ of the graph. Then for any graphed embedding $`g `: \OGE{`G}{`W}$ the class
	\[\mathcal{J}_{`W}(\A) \vcentcolon= \left[ i_{\A}^* `o `g_* \right] `: \PI_{`G}\]
	is called the \emph{homology inclusion} of the ordered line arrangement.
\end{definition}
\cref{lem=la-incl-V}~ensures that $i_{\A}^*$ respects condition \ref{gs-cond} of \cref{def=gs}, so by construction of $\PI_{`G}$ the class $\left[ i_{\A}^* `o `g_* \right]$ does not depend on the choice of~$`g `: \OGE{`G}{`W}$. Therefore $\mathcal{J}_{`W}(\A)$ is well-defined. Our~main result states that this class element of the graph stabiliser $\PI_{`G}$ is a topological invariant of ordered oriented line~arrangements.
\begin{theorem}
	\label{thm=la-istar-inv}
	Let $\A, \A' \subset \mathbb{CP}^2$ be two non-exceptional line arrangements with the same combinatorics $C$. If there exists a positive ordered equivalence between $\A$ and $\A'$ then for any graph ordering $`W$ of the incidence graph $`G$ we have $\mathcal{J}_{`W}(\A) = \mathcal{J}_{`W}(\A')$ inside the graph stabiliser~$\PI_{`G}$.
\end{theorem}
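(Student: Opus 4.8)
The plan is to realise the equivalence simultaneously on the boundary manifolds and the exteriors, transport a single graphed embedding across it, and then read off the equality of invariants from functoriality together with the independence of $\mathcal{J}_{`W}(\A')$ from the chosen graphed embedding. Concretely, after an ambient isotopy using the uniqueness of regular neighbourhoods I would arrange that $`F(N_\A) = N_{\A'}$, so that $`F$ restricts to homeomorphisms $`F_B \vcentcolon= `F|_{B_\A} : B_\A "->" B_{\A'}$ and $`F_E \vcentcolon= `F|_{E_\A} : E_\A "->" E_{\A'}$ satisfying $i_{\A'} `o `F_B = `F_E `o i_\A$. Here $`F_B$ is exactly the homeomorphism furnished by \cref{thm=ord-zar-pair-bm}, hence graphed and strongly positive; and since $`F$ is ordered it induces the identity on the common graph $`G$, so $`F_B(S_v) = S_v$ for every vertex. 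I would then fix any $`g `: \OGE{`G}{`W}$ and set $`g' \vcentcolon= `F_B `o `g$; rerunning the argument of \cref{prop=act-h++-ord} for the graphed strongly positive map $`F_B$ shows that $`g'$ is a graphed embedding of $B_{\A'}$ for the \emph{same} ordering $`W$. Since $\mathcal{J}_{`W}(\A')$ does not depend on the graphed embedding (\cref{def=hom-incl}), I am free to compute it with $`g'$.

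The computation is then short. By functoriality of the construction of \cref{thm=gm-hom} one has $`g'_* = {`F_B}^* `o `g_*$, so the commutative square gives
\[i_{\A'}^* `o `g'_* = i_{\A'}^* `o {`F_B}^* `o `g_* = {`F_E}^* `o i_\A^* `o `g_*.\]
It remains to see that ${`F_E}^*$ is the identity of $\MH{`G}$ under the identifications $H_1(E_\A) `~ \MH{`G} `~ H_1(E_{\A'})$ of \cref{prop=hom-ext}. This is where strong positivity is used: $`F_B$ carries $S_v \subset B_\A$ to $S_v \subset B_{\A'}$ preserving the oriented fibre, so ${`F_B}^*(f_v) = f_v$ (the fibre classes of $S_v$ in $B_\A$ and $B_{\A'}$); evaluating the square on $f_v$ and using that $i_\A$ sends the boundary fibre of $S_v$ to the meridian class $\bar v$ (\cref{prop=hom-ext}) yields ${`F_E}^*(\bar v) = \bar v$ for every vertex $v$. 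Consequently $i_{\A'}^* `o `g'_* = i_\A^* `o `g_*$ as morphisms $\MH{`G} `(+) H_1(`G) "->" \MH{`G}$; restricting to $H_1(`G)$ and passing to classes in $\PI_{`G}$ gives $\mathcal{J}_{`W}(\A') = \mathcal{J}_{`W}(\A)$.

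I expect the crux to be the identification ${`F_E}^* = \Id_{\MH{`G}}$, for this is precisely where the orientation hypothesis is indispensable: a homeomorphism reversing the fibre orientation on some $S_v$ would send $\bar v \mapsto -\bar v$ and break the equality. The only other delicate point is checking that $`g' = `F_B `o `g$ genuinely lies in $\OGE{`G}{`W}$ for the identical ordering, which rests jointly on $`F$ being \emph{ordered} (so that $`F_B(S_v) = S_v$) and on the argument of \cref{prop=act-h++-ord} showing that a graphed strongly positive map preserves the circular order of each local star. Everything else is bookkeeping together with the already-established independence of $\mathcal{J}_{`W}$ from the choice of graphed embedding.
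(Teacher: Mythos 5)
Correct, and essentially the paper's own argument: both proofs transport a single graphed embedding across the equivalence (using \cref{thm=ord-zar-pair-bm} to get a graphed strongly positive homeomorphism of boundary manifolds and \cref{prop=act-h++-ord} to see that the image is again an ordered graphed embedding for the same ordering), both use the commutative square of inclusions, both reduce to the induced map on exterior homology being the identity of $\MH{\Gamma}$, and both conclude from the quotient defining $\PI_{\Gamma}$ together with the independence of the class from the chosen embedding. The only difference is cosmetic: you verify that the exterior map induces $\Id_{\MH{\Gamma}}$ by evaluating the square on fibre classes and invoking strong positivity, whereas the paper deduces it in one line from the equivalence being positive and ordered, hence the identity on the oriented line meridians.
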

\begin{proof}
	The topological equivalence between $\A$ and $\A'$ induces
	homeomorphisms $`J$ and $`F$ such that the following diagram commutes:
	\[\begin{tikzcd}
		B_{\A'} \rar["`J","\sim"'] \dar["i_{\A'}"',hook] & B_{\A} \dar["i_{\A}",hook] \\
		E_{\A'} \rar["`F"',"\sim"]                      & E_{\A}
	\end{tikzcd}\]
	Write $B = B_{\A'} `~ B_{\A}$. By \cref{thm=ord-zar-pair-bm}, we can always suppose that ${`J `: \HGpp{B}}$. Let $`g `: \OGE{`G}{`W}$ be an ordered graph embedding. By~\cref{prop=act-h++-ord}, the image $`J(`g)$ is again an element of~$\OGE{`G}{`W}$. The~map $`J$ induces a group automorphism $`J_* : H_1(B,\mathbb{Z}) \rightarrow H_1(B,\mathbb{Z})$. By~construction we have ${`J (`g)}_* = `J_* `o `g_*$ and ${(i_{\A} `o `J)}_* = i_{\A}^* `o `J_*$. Similarly, the map $`F$ induces $`F_* : H_1(E_{\A'},\mathbb{Z}) \rightarrow H_1(E_{\A},\mathbb{Z})$. By \cref{prop=hom-ext} these last two groups are both naturally isomorphic to $\MH{`G}$ generated by the line meridians. Since $`F$ is an ordered equivalence it induces the identity between the sets of lines of $\A$ and $\A'$, therefore $`F_* = \Id_{\MH{`G}}$. Then~inside $\hom(H_1(`G),\MH{`G})$ we~have
	\begin{equation*}
		i_{\A'}^* `o `g_* = `F_*^{-1} `o i_{\A}^* `o `J_* `o `g_* = i_{\A}^* `o {`J (`g)}_*
	\end{equation*}
	By \cref{def=gs} of the graph stabiliser $\PI_{`G}$, this implies that inside the quotient:
	\[0 = \left[i_{\A}^* `o \left({`J (`g)}_* - `g_*\right) \right] = \left[i_{\A'}^* `o `g_*\right] - \left[i_{\A}^* `o `g_*\right] = \mathcal{J}_{`W}(\A') - \mathcal{J}_{`W}(\A)\qedhere\]
\end{proof}
\begin{remark}
	\label{rem=graph-ord}
	By definition of the graph stabiliser, the class $\left[ i_{\A}^* `o `g_* \right] `: \PI_{`G}$ does not depend on the graphed embedding~$`g_* `: \OGE{`G}{`W}$. However,~it does depend on the graph ordering $`W$ which is preserved only by ordered equivalences. The~homology inclusion is therefore an \emph{ordered} line arrangement~invariant. Still,~some combinatorics have trivial automorphism groups. The~restriction of the homology inclusion to this subclass of line arrangements becomes an unordered topological~invariant.
\end{remark}
\begin{remark}
	\label{rem=la-orient}
	The graph stabiliser does not quotient the homology differences caused by the application of the complex conjugation inside $\mathbb{CP}^2$, since it is not a positive homeomorphism of the boundary manifold. A line arrangement $\A$ and its conjugate $\overline{\A}$ may thus have different homology inclusion values in general.
\end{remark}

\section{Presentation of the graph stabiliser}
\label{sec=gs}

The graph stabiliser $\PI_{`G}$ is the underlying group where the homology inclusion invariant is defined. In order to make this invariant effective, one needs to be able to not only compute the invariant itself, but also the graph stabiliser. The objective of this section is to establish \cref{thm=gs-pres} which gives an explicit finite presentation of the graph stabiliser. To obtain this result we analyse further the objects involved in \cref{def=gs} of the graph stabiliser in order to actually compute the homological difference between every two graphed embeddings. The crucial step to achieve this is \cref{prop=act-h++-tr} which allows to reduce this difference computation to a finite number of cases.

In addition, the presentation of the graph stabiliser is the main tool used in \cref{sec=lln} to establish the connection between the homology inclusion and the pre-existing loop-linking number invariant.

Since the graph stabiliser is a combinatorial object, in this section we fix a graph $`G$. For any line arrangement $\A$ with graph~$`G$, the boundary manifold $B_{\A}$ as a topological object only depends on the choice of $`G$, see \cref{thm=bm-struct}.

We reuse all notations introduced at the beginning of \cref{ssec=bm-hom}. Recall that $`G$ has a natural structure of a CW-complex generated by the vertices and the edges. The first homology group $H_1(`G)$ is the kernel of the boundary map $\partial_1 : C_1(`G) \rightarrow C_0(`G)$. There is an inclusion homomorphism
\[`z : H_1(`G) \longrightarrow C_1(`G)\]
which decomposes every cycle into the sum of its oriented edges.

Let us denote by $\vec{e}_{v,w}$ the \emph{half-edge} of $`G$ starting from $v$ and going towards $w$ (and reciprocally for $\vec{e}_{w,v}$). Let $\vec{C}_1(`G)$ be the free abelian group generated by all half-edges. Fix an orientation $`d$ of~$`G$. There is a natural map $`c_{`d} : C_1(`G) "->" \vec{C}_1(`G)$ defined by $`c_{`d}(e_{v,w}) \vcentcolon= `d_{v,w} \left(\vec{e}_{v,w} - \vec{e}_{w,v}\right)$. There is also a decomposition
\[\vec{C}_1(`G) = \bigoplus_{v `: V(`G)} \left<\vec{W}_v\right>\]
where $\vec{W}_v \vcentcolon= \left\{\vec{e}_{v,w} \mid w `: W_v\right\} `~ W_v$.

Finally, if $A,B$ are two free abelian groups, we denote by $A^\dagger$ the dual $\mathbb{Z}$-module of $A$. We~make the natural identification $\hom(A,B) `~ A^\dagger `(`*) B$, and for every homomorphism $`f : A "->" B$ we denote by $`f^\dagger : B^\dagger "->" A^\dagger$ its dual homomorphism. In addition, if we fix a basis $\mathcal{B}$ of the free module~$A$, then for every element $a `: A$ we denote by $a^\ddagger `: A^\dagger$ its dual element, which decomposes with the same coefficients in the dual basis of $\mathcal{B}$. In particular the free modules $C_0(`G), C_1(`G), \vec{C}_1(`G)$ (and the modules of morphisms between them) all have obvious canonical bases which we use to define their dual elements.

\subsection{Difference maps}
\label{ssec=diff-maps}

The difference maps are used to compute the homological difference between two ordered graphed~embeddings. This~difference lies exclusively on the cycle generators of $H_1(B_{\A})$ since by \cref{prop=ge-diff-mer} the homological values of two ordered graphed embeddings always coincide on the meridian~generators. We build the difference maps step by step, by decomposing both the graph manifold and the graphed embeddings into their elementary parts and then computing the homological difference at every level.

Two stars $`a,`a'$ in a manifold $M$ are called \emph{joined} if their centres coincide, and if for every $w `: W$ the endpoint of their corresponding branches $`a^w$ and ${`a'}^w$ also coincide.
\begin{definition}
	\label{def=diff-map-star}
	Let $\bar{`a}, \bar{`a}' `: \OLS{D_W}{`q}$ be two \emph{joined} linear stars on the disc $D_W$. The \emph{star difference map} is the map
	\[\DiffS{W}(\bar{`a},\bar{`a}') : W \longrightarrow H_1(D_W)\]
	that sends every $w `: W$ to the class of the closed curve
	\[\bar{`l}^w \vcentcolon= \bar{`a}^w `. {\left(\bar{`a}^{\prime w}\right)}^{-1}\qedhere\]
\end{definition}
Consider now two ordered graphed embeddings $`g,`g' `: \OGE{`G}{`W}$ and fix $v `: V(`G)$. Write~$`a_v = `g \cap S_v$ and $`a_v' = `g' \cap S_v$ the corresponding stars on $S_v$. By \cref{lem=star-flat,lem=mcg-cstar-lift}, we have $`a_v = \check{s}_v(\bar{`a}_v)$ and $`a_v' = \check{s}_v(\bar{`a}_v')$ with $\bar{`a}_v, \bar{`a}_v' `: \OLS{D_{W_v}}{`q_v}$.
\begin{lemma}
	\label{lem=diff-map-clos}
	There exists an isotopy of $B_{\A}$ that makes $`a_v$ and $`a_v'$ joined inside $S_v$.
\end{lemma}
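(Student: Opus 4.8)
The plan is to exploit that, after the normalisations of \cref{lem=star-flat,lem=mcg-cstar-lift}, both stars already lie in the single fixed section $\check{s}_v(D_{W_v}) \subset T_{m_v} \subset S_v$: we have $`a_v = \check{s}_v(\bar{`a}_v)$ and $`a_v' = \check{s}_v(\bar{`a}_v')$ with $\bar{`a}_v,\bar{`a}_v' `: \OLS{D_{W_v}}{`q_v}$ sharing the \emph{same} linearisation $`q_v$. The key observation I would use is that this forces the centres and the corresponding endpoints of the two stars to lie on a common collection of simple closed curves. Indeed, since $\bar{`a}_v$ and $\bar{`a}_v'$ are linear stars on $D_{W_v}$, condition (iv) of \cref{def=star-lin} places both centres on $\partial^{\infty} D_{W_v}$ and, for every $w `: W_v$, the endpoint of the $w$-branch of each star on $\partial^w D_{W_v}$. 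Applying the common section $\check{s}_v$, the two centres both lie on the curve $\check{`m}^{\infty}_v = \check{s}_v(\partial^{\infty} D_{W_v})$, and for every $w `: W_v$ the two $w$-endpoints both lie on the horizontal curve $\check{`m}^w_v = \check{s}_v(\partial^w D_{W_v}) \subset \partial^w S_v$.

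It then suffices to slide the centre and the endpoints of $`a_v'$ along these common curves until they meet those of $`a_v$, keeping $`a_v$ in place. First I would drag the centre of $`a_v'$ along $\check{`m}^{\infty}_v$ until it coincides with the centre of $`a_v$, carrying the branches of $`a_v'$ along; this is realised by an isotopy of $B_{\A}$ supported in a collar neighbourhood of $\check{`m}^{\infty}_v$. Next, for each $w `: W_v$, I would drag the endpoint of the branch $`a_v^{\prime w}$ along the horizontal curve $\check{`m}^w_v$ until it coincides with the endpoint of $`a_v^w$; since the tori $\partial^w S_v$ are pairwise disjoint, these isotopies can be taken with disjoint supports, each contained in a bicollar of $\partial^w S_v$ in $B_{\A}$, and performed simultaneously. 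After these moves the two stars share their centre and all their endpoints, which is exactly what it means for $`a_v$ and $`a_v'$ to be joined inside $S_v$. No constraint is imposed on the interiors of the branches, so the two stars may freely intersect and $`a_v'$ stays an embedded star throughout the process.

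The only point requiring care is the one already isolated above, namely that the two centres and, for each $w$, the two $w$-endpoints genuinely lie on the \emph{same} curves $\check{`m}^{\infty}_v$ and $\check{`m}^w_v$. This is precisely what is secured by the preceding reductions: \cref{lem=star-flat} flattens both stars into the common section $\check{s}_v$ and pushes both centres onto $\partial^{\infty}\check{s}_v = \check{`m}^{\infty}_v$, while \cref{lem=mcg-cstar-lift} allows the two linearisations to be taken equal to a single $`q_v$, so that the $w$-branch of each star terminates on the same boundary curve $\check{`m}^w_v$. Once this alignment of the ambient curves is granted, the required slides along the circles $\check{`m}^{\infty}_v$ and $\check{`m}^w_v$ are routine and present no further obstruction.
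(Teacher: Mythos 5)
Your proof is correct, but it joins the endpoints by a genuinely different mechanism than the paper. For the centres the two arguments agree (a slide along $\check{s}_v(\partial^{\infty} D_{W_v})$ supported near that curve). For the endpoints, you slide the endpoint of each branch $\alpha_v^{\prime\, w}$ along the horizontal curve $\check{\mu}^w_v$ directly onto the endpoint of $\alpha_v^w$, using ambient isotopies with disjoint supports in bicollars of the tori $\partial^w S_v$; this is elementary and entirely sufficient for the statement, which concerns only the fixed vertex $v$. The paper instead applies \cref{lem=star-flat} a second time, in each adjacent piece $S_w$, flattening $\gamma \cap S_w$ and $\gamma' \cap S_w$ onto the section $\check{s}_w$: since the gluing map exchanges meridians and longitudes, $\check{s}_v \cap \check{s}_w$ is a single point $t_{v,w} \in T_{v,w}$, so this second flattening, extended across $T_{v,w}$ into $S_v$, forces \emph{all} endpoints on that torus (of both embeddings) to the same canonical point $t_{v,w}$. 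What the paper's route buys is canonicality and coherence over the whole graph: the endpoints of $\gamma$ and $\gamma'$ land at the distinguished points $t_{v,w}$ on every joining torus, so the stars become joined at $v$ and at all neighbouring vertices simultaneously, which is what the global decomposition in the proof of \cref{prop=ge-diff-class} implicitly relies on; it also carries the bookkeeping of the full fibre loops such isotopies can create, which are pushed across the gluings. Note in this respect that your slides run along $\check{\mu}^w_v$, which under the gluing is a fibre direction of $S_w$, so they drag the adjacent branch of $\gamma'$ partly around a fibre of $S_w$ and may un-flatten it there; this is harmless for the lemma as stated, but would need to be controlled (exactly as the paper does) if one wants all pieces flattened and joined at once.
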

\begin{proof}
	Fix $w `: W_v$. Since the gluing map between $S_v$ and $S_w$ permutes meridians and longitudes, $\check{s}_v \cap \check{s}_w \subset B_{\A}$ is reduced to a single point $t_{v,w} `: T_{v,w}$. Applying \cref{lem=star-flat} in $S_v$ gives an isotopy of $B_{\A}$ that projects the two stars $`a_v, `a_v'$ on $\check{s}_v$. Applying \cref{lem=star-flat} now in $S_w$ gives another isotopy of $B_{\A}$ that projects $`g \cap S_w$ and $`g' \cap S_w$ to $\check{s}_w$. This second isotopy acts on $T_{v,w}$ and therefore extends to $S_v$ where it moves $`a_v$ and $`a_v'$ inside $\check{s}_v$ to make them meet $t_{v,w}$. Note that any of these isotopies might introduce full loops around a fibre in $S_v$ or $S_w$. These~loops can always be \enquote{pushed} to the other side of the gluing, where they will become loops in $\check{s}_w$ or $\check{s}_v$. Repeat~the process for all $w `: W_v$. Separately, the centres of $`a_v$ and $`a_w$ both lie on~$\partial^{\infty} D_{W_v}$. Using an isotopy of $S_v$ in the neighbourhood of $\partial^{\infty} D_{W_v}$ we can always bring one centre to the~other.
\end{proof}
\cref{lem=diff-map-clos} also implies that $\bar{`a}_v$ and $\bar{`a}_v'$ are joined inside $\check{s}_v `~ D_{W_v}$.
\begin{definition}
	\label{def=disc-hom-map}
	Let $v `: V(`G)$ be a vertex of the graph $`G$. Let $\left<W_v\right>$ be the submodule of $C_0(`G)$ freely generated by the neighbour set $W_v$. The \emph{homological neighbour~map} is the group isomorphism
	\[h_v : \left<W_v\right> \longrightarrow H_1\left(D_{W_v}\right)\]
	that sends every neighbour vertex $w `: W_v$ to the class of the curve $\partial^{w} D_{W_v}$.
\end{definition}
\begin{definition}
	\label{def=ge-diff-edge}
	The \emph{half-edge difference map} is the morphism
	\[\DiffE : {\OGE{`G}{`W}}^2 \longrightarrow \hom\left(\vec{C}_1(`G),C_0(`G)\right)\]
	defined on every generating subset $\vec{W}_v$ of $\vec{C}_1(`G)$ by:
	\[\DiffE(`g,`g')_{|\vec{W}_v} \vcentcolon= h_{v}^{-1} `o \DiffS{W_v}(\bar{`a}_v,\bar{`a}_v')\qedhere\]
\end{definition}
\begin{proposition}
	\label{prop=ge-diff-edge-props}
	For every $`g, `g', `g'' `: \OGE{`G}{`W}$, the half-edge difference map $\DiffE$ verifies the following properties:
	\begin{enumerate}[(i),font=\normalfont]
		\item $\DiffE(`g,`g)$ is the trivial homomorphism.
		\item $\DiffE(`g,`g') = - \DiffE(`g',`g)$.
		\item $\DiffE(`g,`g'') = \DiffE(`g,`g') + \DiffE(`g',`g'')$. \qedhere
	\end{enumerate}
\end{proposition}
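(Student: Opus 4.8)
The plan is to reduce all three identities to statements about the star difference map $\DiffS{W_v}$ and then to read them off from the elementary behaviour of the loops $\bar{`l}^w$ under reversal and concatenation. Since by \cref{def=ge-diff-edge} we have $\DiffE(`g,`g')_{|\vec{W}_v} = h_v^{-1} `o \DiffS{W_v}(\bar{`a}_v,\bar{`a}_v')$ and $h_v^{-1}$ is a fixed group isomorphism (\cref{def=disc-hom-map}), it is enough to prove, for every vertex $v$ and every neighbour $w `: W_v$, the three analogous relations between the homology classes $[\bar{`l}^w] `: H_1(D_{W_v})$ of \cref{def=diff-map-star}, where $\bar{`l}^w = \bar{`a}^w `. (\bar{`a}^{\prime w})^{-1}$ is the loop based at the common centre of two joined stars.

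Properties (i) and (ii) I would dispatch first, as they are formal. For (i), when $`g = `g'$ one takes $\bar{`a}_v = \bar{`a}_v'$, so each $\bar{`l}^w = \bar{`a}^w `. (\bar{`a}^w)^{-1}$ is a path followed by its own reverse, hence nullhomotopic and zero in $H_1(D_{W_v})$; thus $\DiffS{W_v} = 0$ and $\DiffE(`g,`g) = 0$. For (ii), the loop attached to $(`g',`g)$ is exactly the reversal of the one attached to $(`g,`g')$, since $\bar{`a}^{\prime w} `. (\bar{`a}^w)^{-1} = \bigl(\bar{`a}^w `. (\bar{`a}^{\prime w})^{-1}\bigr)^{-1}$; reversing a loop negates its class, which gives $\DiffE(`g,`g') = - \DiffE(`g',`g)$ after applying $h_v^{-1}$.

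The real content is additivity (iii). The first step is to produce, for a fixed vertex $v$, representatives of the three flattened stars $\bar{`a}_v, \bar{`a}_v', \bar{`a}_v''$ coming from $`g, `g', `g''$ that are \emph{simultaneously} joined: all sharing one centre $b$ on $\partial^{\infty}D_{W_v}$ and, for each $w$, one common endpoint on $\partial^w D_{W_v}$. I would obtain this by running the argument of \cref{lem=diff-map-clos} on the three embeddings at once, after flattening them onto $\check{s}_v$ with a common linearisation $`q_v$ of $`w_v$ (as in \cref{def=ge-diff-edge}, using \cref{lem=star-flat,lem=mcg-cstar-lift}): one drags all three centres to a single point of $\partial^{\infty}D_{W_v}$ and, for each $w$, all three endpoints to a single point of $\partial^w D_{W_v}$. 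The delicate point, which I expect to be the main obstacle, is to check that this normalisation does not perturb the individual difference classes, so that the value of $\DiffS{W_v}$ computed here agrees with the one used in each pairwise $\DiffE$. This reduces to the observation that sliding a common endpoint along $\partial^w D_{W_v}$ by an arc $`b$ replaces $\bar{`a}^w$ and $\bar{`a}^{\prime w}$ by $\bar{`a}^w `. `b$ and $\bar{`a}^{\prime w} `. `b$, so that $\bar{`l}^w$ changes only by the nullhomotopic insertion $`b `. `b^{-1}$ and its class is unaffected.

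With simultaneously joined representatives in hand, additivity becomes a one-line homotopy: for every $w$,
\[\bar{`a}^w `. (\bar{`a}^{\prime\prime w})^{-1} \ \simeq\ \left(\bar{`a}^w `. (\bar{`a}^{\prime w})^{-1}\right) `. \left(\bar{`a}^{\prime w} `. (\bar{`a}^{\prime\prime w})^{-1}\right),\]
the intermediate segment $(\bar{`a}^{\prime w})^{-1} `. \bar{`a}^{\prime w}$ being nullhomotopic. Passing to classes in the abelian group $H_1(D_{W_v})$ yields $[\bar{`l}^w_{(`g,`g'')}] = [\bar{`l}^w_{(`g,`g')}] + [\bar{`l}^w_{(`g',`g'')}]$, and applying the fixed isomorphism $h_v^{-1}$ componentwise over all $w `: W_v$ and all $v `: V(`G)$ gives (iii). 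Thus once the simultaneous-joining step is secured, the three algebraic identities follow formally.
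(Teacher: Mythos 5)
The paper offers no proof of \cref{prop=ge-diff-edge-props} at all: the proposition is stated with a qed mark and treated as immediate from the definitions, so there is no argument of the paper's to compare yours against step by step. Your proof is correct and supplies exactly the details the paper leaves implicit. Points (i) and (ii) are, as you say, formal consequences of \cref{def=diff-map-star}: a path concatenated with its own reverse is nullhomotopic, and reversing a loop negates its class in $H_1(D_{W_v})$, so both follow after applying the fixed isomorphism $h_v^{-1}$. You also correctly isolate the only genuine content in (iii): since $\DiffS{W_v}$ is defined only on \emph{joined} pairs of stars, the three pairwise differences must be evaluated on representatives of $\gamma$, $\gamma'$, $\gamma''$ that are joined \emph{simultaneously}, and your extension of the argument of \cref{lem=diff-map-clos} to three stars at once, together with the observation that sliding a common endpoint along $\partial^w D_{W_v}$ changes each difference loop only by a nullhomotopic insertion, handles this; the cancellation of the middle segment $(\bar{\alpha}^{\prime w})^{-1}\cdot\bar{\alpha}^{\prime w}$ then gives additivity in the abelian group $H_1(D_{W_v})$. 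One caveat worth flagging: your argument (exactly like the paper's own \cref{def=ge-diff-edge}) implicitly relies on $\DiffE(\gamma,\gamma')$ being independent of the choice of flattened joined representatives --- for instance of the flattening of \cref{lem=star-flat}, where two choices may differ by fibre loops pushed across gluing tori, not merely by endpoint slides along the boundary. The paper never establishes this well-definedness either, so your proof is no less rigorous than the text it completes, but a fully self-contained treatment would still owe an argument for that independence.
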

\begin{definition}
	\label{def=ge-diff}
	Let $`d$ be an orientation of $`G$. The \emph{cycle difference map} is the map
	\[\DiffH : {\OGE{`G}{`W}}^2 \longrightarrow \hom\left(H_1(`G),\MH{`G}\right)\]
	defined by
	\begin{equation*}
		\DiffH(`g,`g') \vcentcolon= ((`c_{`d} `o `z)^\dagger `(`*) `h) `o \DiffE(`g,`g')
	\end{equation*}
	where $`h : C_0(`G) \twoheadrightarrow \MH{`G}$ is the projection from \textnormal{\cref{def=hom-mer}}.
\end{definition}
\begin{remark}
	The cycle difference map $\DiffH$ verifies properties similar to \cref{prop=ge-diff-edge-props} which are induced by the properties of the half-edge difference map $\DiffE$.
\end{remark}

As announced, the cycle difference map gives a first reformulation of the definition of the graph stabiliser.
\begin{proposition}
	\label{prop=ge-diff-class}
	Let $`g, `g' `: \OGE{`G}{`W}$. Then
	\[`g_* `o \DiffH(`g,`g') = {(`g_* - `g_*')}_{|H_1(`G)} `: \hom(H_1(`G),H_1(B_{\A})) \qedhere\]
\end{proposition}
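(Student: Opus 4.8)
The plan is to check the identity on a single simple cycle $c `: H_1(`G)$, which suffices because both $`g_* `o \DiffH(`g,`g')$ and ${(`g_* - `g_*')}_{|H_1(`G)}$ are $\mathbb{Z}$-linear and $H_1(`G)$ is generated by simple cycles. By \cref{thm=gm-hom} the right-hand side sends $c$ to $[`g(c)] - [`g'(c)] `: H_1(B_{\A})$, so the aim is to realise this difference of two embedded cycles as $`g_*$ applied to an explicit class of $\MH{`G}$.

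First I would put the two embeddings in a common normal form: using \cref{lem=star-flat} I flatten each star $`a_v = `g \cap S_v$ and $`a_v' = `g' \cap S_v$ onto the section $\check{s}_v `~ D_{W_v}$, and then, applying \cref{lem=diff-map-clos} at every vertex met by $c$, I arrange that $`a_v$ and $`a_v'$ are joined, so that $`g(c)$ and $`g'(c)$ cross each joining torus $T_{v,w}$ at the same point. Writing $u(v)$ and $w(v)$ for the predecessor and successor of $v$ along $c$, the portion of $`g(c)$ inside $S_v$ is the arc ${(\bar{`a}_v^{u(v)})}^{-1} `. \bar{`a}_v^{w(v)}$, and likewise for $`g'$. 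Since joined branches share their endpoints, a chain-level computation collapses the arcs into closed loops and gives
\[`g(c) - `g'(c) = \sum_v \left( \bar{`l}_v^{w(v)} - \bar{`l}_v^{u(v)} \right) `: H_1(B_{\A}),\]
where $\bar{`l}_v^x \vcentcolon= \bar{`a}_v^x `. {(\bar{`a}_v^{\prime x})}^{-1}$ is exactly the loop whose $H_1(D_{W_v})$-class is recorded by $\DiffS{W_v}(\bar{`a}_v,\bar{`a}_v')(x)$.

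Next I would unwind the combinatorial side. Under the identification $\hom(A,B) `~ A^\dagger `(`*) B$, the cycle difference map is the composite $\DiffH(`g,`g') = `h `o \DiffE(`g,`g') `o `c_{`d} `o `z$. Decomposing $c$ into its directed steps and checking that the orientation $`d$ cancels, one finds that $`c_{`d}(`z(c))$ restricts on each block $\vec{W}_v$ to $\vec{e}_{v,w(v)} - \vec{e}_{v,u(v)}$. Feeding this into \cref{def=ge-diff-edge} yields
\[\DiffE(`g,`g')\big(`c_{`d}(`z(c))\big) = \sum_v \left( h_v^{-1}\big([\bar{`l}_v^{w(v)}]\big) - h_v^{-1}\big([\bar{`l}_v^{u(v)}]\big)\right) `: C_0(`G),\]
so the decomposition of $\DiffH(`g,`g')(c)$ is indexed in exactly the same way as that of $`g(c) - `g'(c)$.

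The crux is the geometric step that turns an $H_1(D_{W_v})$-class into fibre classes of $H_1(B_{\A})$. For each neighbour $w'$ the boundary circle $\partial^{w'} D_{W_v}$ is the horizontal section curve $\check{`m}_v^{w'}$ on $T_{v,w'}$, which the graph-manifold gluing of \cref{sec=bm} carries to a fibre of $S_{w'}$; hence $[\partial^{w'} D_{W_v}] = [f_{w'}] = `g_*(\bar{w'})$ in $H_1(B_{\A})$. Since $h_v$ sends $w'$ to $[\partial^{w'} D_{W_v}]$, this gives $`g_* `o `h `o h_v^{-1}([\bar{`l}_v^x]) = [\bar{`l}_v^x]$ in $H_1(B_{\A})$ for every branch, whence applying $`g_* `o `h$ to the second display reproduces the first one term by term:
\[`g_*\big(\DiffH(`g,`g')(c)\big) = [`g(c)] - [`g'(c)] = {(`g_* - `g_*')}(c).\]
I expect the main obstacle to be precisely this last identification together with the sign bookkeeping: one must confirm that the purely combinatorial coefficients produced by $h_v$ and $`h$ agree with the genuine homology class of each difference loop $\bar{`l}_v^x$ in $B_{\A}$, and that the conventions in $`c_{`d} `o `z$ reproduce the predecessor/successor signs coming from traversing $c$.
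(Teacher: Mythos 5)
Your proposal is correct and follows essentially the same route as the paper's proof: flatten and join the stars via \cref{lem=star-flat,lem=diff-map-clos}, decompose $\gamma(c)-\gamma'(c)$ at chain level into the closed difference loops lying in the sections $\check{s}_v$, and convert each loop's $H_1(D_{W_v})$-class into meridian classes using the fact that the curves $\partial^{w}\check{s}_v$ are carried by the gluing to fibres of $S_w$ --- which is exactly the paper's \cref{lem=hom-vertex}, re-derived inline in your last step. The only cosmetic difference is that you index the difference loops by the vertices of the cycle (predecessor/successor) whereas the paper indexes them by its edges; the resulting sums coincide.
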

\begin{theorem}
	\label{thm=gs-diff}
	There is a natural identification
	\[\PI_{`G}(`W) `~ \faktor{\hom(H_1(`G),\MH{`G})}{\left<\DiffH(`g,`g') \mid `g,`g' `: \OGE{`G}{`W} \right>} \qedhere\]
\end{theorem}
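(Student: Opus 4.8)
The plan is to prove the identification by showing that the two subgroups of $\hom(H_1(`G),\MH{`G})$ being divided out are \emph{literally equal}, so that the asserted isomorphism is the one induced by the identity of $\hom(H_1(`G),\MH{`G})$. Write $N$ for the subgroup appearing in \cref{def=gs}, generated by the homomorphisms $`f `o {(`g_* - `g_*')}_{|H_1(`G)}$ with $`g,`g' `: \OGE{`G}{`W}$ and $`f$ \emph{admissible}, i.e. ${(`f `o `g_*)}_{|\MH{`G}} = \Id_{\MH{`G}}$; and write $D \vcentcolon= \left<\DiffH(`g,`g') \mid `g,`g' `: \OGE{`G}{`W}\right>$ for the subgroup in the statement. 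The whole argument will rest on the factorisation supplied by \cref{prop=ge-diff-class} together with the admissibility condition built into \cref{def=gs}; all the genuinely geometric content has already been packaged into the construction of $\DiffH$.

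I fix the canonical inclusion $\iota : \MH{`G} "`->" \MH{`G} `(+) H_1(`G)$ and projection $\pi : \MH{`G} `(+) H_1(`G) "->>" \MH{`G}$, so that \cref{prop=ge-diff-class} reads ${(`g_* - `g_*')}_{|H_1(`G)} = `g_* `o \iota `o \DiffH(`g,`g')$. First I would prove $D \subseteq N$: for fixed $`g,`g'$, take the particular homomorphism $`f \vcentcolon= \pi `o {`g_*}^{-1} : H_1(B_{\A}) "->" \MH{`G}$. It is admissible, since ${(`f `o `g_*)} `o \iota = \pi `o {`g_*}^{-1} `o `g_* `o \iota = \pi `o \iota = \Id_{\MH{`G}}$, and substituting the factorisation gives
\[`f `o {(`g_* - `g_*')}_{|H_1(`G)} = \pi `o {`g_*}^{-1} `o `g_* `o \iota `o \DiffH(`g,`g') = \DiffH(`g,`g').\]
Hence every generator of $D$ is a generator of $N$, so $D \subseteq N$.

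For the reverse inclusion $N \subseteq D$ I would start from an arbitrary generator $`f `o {(`g_* - `g_*')}_{|H_1(`G)}$ of $N$, where $`f$ is admissible, i.e. $(`f `o `g_*) `o \iota = \Id_{\MH{`G}}$. The same factorisation, followed at once by admissibility, yields
\[`f `o {(`g_* - `g_*')}_{|H_1(`G)} = `f `o `g_* `o \iota `o \DiffH(`g,`g') = \Id_{\MH{`G}} `o \DiffH(`g,`g') = \DiffH(`g,`g').\]
So every generator of $N$ is itself one of the generators of $D$, giving $N \subseteq D$. Combining the two inclusions yields $N = D$, whence $\PI_{`G}(`W) = \faktor{\hom(H_1(`G),\MH{`G})}{N} = \faktor{\hom(H_1(`G),\MH{`G})}{D}$, which is the claimed identification.

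I do not expect a genuine obstacle at this final step, precisely because \cref{prop=ge-diff-class} has already absorbed the hard work (flattening the stars via \cref{lem=star-flat}, making two embeddings joined via \cref{lem=diff-map-clos}, and evaluating the difference bundle by bundle). The only point requiring care is the typing of the composites: $\DiffH(`g,`g')$ takes values in the meridian summand $\MH{`G}$, so the expression $`g_* `o \DiffH(`g,`g')$ of \cref{prop=ge-diff-class} must be read as $`g_* `o \iota `o \DiffH(`g,`g')$, consistently with \cref{prop=ge-diff-mer}, which guarantees that two graphed embeddings never differ on the meridian generators. Viewed this way, the admissibility condition ${(`f `o `g_*)}_{|\MH{`G}} = \Id_{\MH{`G}}$ is exactly what collapses $`f `o `g_* `o \iota$ to $\Id_{\MH{`G}}$, so that the two \emph{a priori} different generating families agree term by term.
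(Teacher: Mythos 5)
Your proof is correct and follows essentially the same route as the paper's: both rest on the factorisation of \cref{prop=ge-diff-class} combined with the admissibility condition ${(`f `o `g_*)}_{|\MH{`G}} = \Id_{\MH{`G}}$ to identify the two generating families term by term. The only difference is that you make explicit the reverse inclusion (exhibiting the admissible $`f = \pi `o {`g_*}^{-1}$), which the paper's one-line computation leaves implicit since $`g_*$ is an isomorphism by \cref{thm=gm-hom}.
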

\begin{proof}
	Let $`g, `g' `: \OGE{`G}{`W}$ and $`f `: \hom(H_1(B_{\A}),\MH{`G})$ such that ${(`f `o `g_*)}_{|\MH{`G}} = \Id_{\MH{`G}}$ as in \cref{def=gs} of $\PI_{`G}(`W)$. Then by \cref{prop=ge-diff-class}:
	\[`f `o {(`g_* - `g_*')}_{|H_1(`G)} = `f `o `g_* `o \DiffH(`g,`g') = \DiffH(`g,`g') \qedhere\]
\end{proof}

The remainder of this subsection is dedicated to the proof of \cref{prop=ge-diff-class}. Let $e_{v,w}$ be an edge. Reusing previous notations, we write:
\begin{align*}
	`g(\vec{e}_{v,w}) &= \check{s}_v(\bar{`a}_v^{w}) & `g(\vec{e}_{w,v}) &= \check{s}_w(\bar{`a}_w^{v})
\end{align*}
and similarly for $`g'$.
\begin{lemma}
	\label{lem=hom-vertex}
	Let $i_v : S_v "`->" B_{\A}$ be the inclusion. The following diagram commutes
	\[\begin{tikzcd}
		H_1\left(D_{W_v}\right) \rar["i_v^* `o \check{s}_v^*"] & H_1(B_{\A})\\
		\left<W_v\right> \rar["`h"'] \uar["h_v"] & \MH{`G} \uar["`g_*"']
	\end{tikzcd} \qedhere\]
\end{lemma}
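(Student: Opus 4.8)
The plan is to exploit that $\langle W_v\rangle$ is free abelian on the basis $W_v$, so that it suffices to verify the identity $i_v^*\circ\check{s}_v^*\circ h_v(w)=g_*\circ\eta(w)$ for each neighbour $w\in W_v$; both composites are homomorphisms out of a free module, and agreement on a basis forces agreement everywhere.

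First I would chase $w$ along the left-then-top route. By \cref{def=disc-hom-map}, $h_v(w)=[\partial^wD_{W_v}]$. The section $\check{s}_v$ carries $\partial^wD_{W_v}$ to the horizontal curve $\check{\mu}_v^w=\check{s}_v(\partial^wD_{W_v})$ sitting on the boundary component $\partial^wS_v$, exactly as set up in \cref{ex=cb-stand}. Hence $\check{s}_v^*(h_v(w))=[\check{\mu}_v^w]\in H_1(S_v)$, and pushing forward through the inclusion gives $i_v^*\check{s}_v^*(h_v(w))=[\check{\mu}_v^w]\in H_1(B_{\A})$. Along the bottom-then-right route, $\eta(w)=\bar{w}$ by \cref{def=hom-mer}, and by \cref{prop=hom-ext} (made explicit in the proof of \cref{prop=ge-diff-mer}) the isomorphism $g_*$ sends $\bar{w}$ to the class $f_w$ of the fibres of the circle bundle $S_w$.

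The heart of the matter is then the identity $[\check{\mu}_v^w]=f_w$ in $H_1(B_{\A})$, and the key observation is that this is precisely the gluing prescription recorded in \cref{ssec=gm}: along the joining torus $T_{v,w}=S_v\cap S_w$ the gluing map sends the horizontal curve $\check{\mu}_v^w\subset\partial^wS_v$ to a fibre of $S_w$. Thus the two curves are literally identified in the quotient $B_{\A}$ and carry the same homology class. This closes the square on each generator $w$, hence on all of $\langle W_v\rangle$.

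I expect the only delicate point to be orientation bookkeeping rather than any genuine obstacle: one must confirm that the orientations fixed on $h_v$ (via the boundary curves $\partial^wD_{W_v}$), on the section $\check{s}_v$, and on $g_*$ (through the meridian/fibre identification of \cref{prop=ge-diff-mer}) are mutually compatible, so that $[\check{\mu}_v^w]=f_w$ holds on the nose and not merely up to sign. Granted the global orientation conventions for the circle bundles fixed in \cref{ex=cb-stand,thm=cb-model}, this compatibility is built in, and the verification goes through directly.
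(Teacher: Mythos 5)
Your proof is correct and follows essentially the same route as the paper: both reduce to checking the square on the generators $w \in W_v$, identify the image of $h_v(w)$ under $i_v^* \circ \check{s}_v^*$ with the class of the horizontal curve $\check{s}_v(\partial^w D_{W_v})$, invoke the gluing prescription along $T_{v,w}$ (horizontal curves glued to fibres) to conclude this class equals $f_w$, and match it with $`g_*(\bar{w}) = f_w$ from the construction of $`g_*$ in \cref{thm=gm-hom}. The only difference is cosmetic: you cite \cref{prop=hom-ext} and \cref{prop=ge-diff-mer} for the meridian identification where the paper cites \cref{thm=gm-hom} directly, and you flag the orientation bookkeeping explicitly, which the paper leaves implicit in its conventions.
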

\begin{proof}
	For any $w `: W_v$, the closed curve $\partial^{w} \check{s}_v$ inside $S_v$ is sent by the gluing map along the torus $T_{v,w}$ to a fibre of $S_w$. Its homological class inside $H_1(B_{\A})$ is thus equal to $f_w$. This precisely means that $i_v^* `o \check{s}_v^* `o h_v (w) = f_w$. But by \cref{thm=gm-hom}, we also have $`g_*(\overline{w}) = f_w$ by construction of~$`g_*$.
\end{proof}
\begin{figure}[hbt]
	\centering
	\begin{tikzpicture}[font=\scriptsize,3d view={0}{115},scale=1,>=Stealth]
	\tikzset{-<-/.style={decoration={markings, mark=at position #1 with {\arrow{<}}},postaction={decorate}}}
	\tikzset{->-/.style={decoration={markings, mark=at position #1 with {\arrow{>}}},postaction={decorate}}}
	\def\t{1.7};
	\def\ar{.8};
    \def\xv{2.5};
    \def\hd{2.7};
    \def\vd{3};
    \clip (-2.1*\hd,-.7*\vd) rectangle (2.1*\hd,.7*\vd);
	\foreach \x/\s/\col/\colg/\colc/\cold [count=\i] in {-\hd/1/green/black/black/red,\hd/-1/red/black/black/green}
	{
		\begin{scope}[canvas is xy plane at z=0,shift={(\x,0,0)},xscale=\s]
			\draw[dashed] (-\xv,0) arc [radius=2*\xv,start angle=0, end angle=-20];
			\draw[dashed,semithick,\col] (.4*\xv,0) arc [radius=2.4*\xv,start angle=0, end angle=-19];
			\draw[dotted] (.8*\xv,0) arc [radius=2.8*\xv,start angle=0, end angle=-16];
		\end{scope}
		\begin{scope}[canvas is xz plane at y=0,shift={(\x,0,0)},xscale=\s]
				\fill[gray!15,semitransparent,even odd rule]
					(0,0) circle [radius=\xv]
					(-.6*\xv,0) circle [radius=.2*\xv]
					(0,0) circle [radius=.2*\xv]
					(.6*\xv,0) circle [radius=.2*\xv];
				\fill[white] (-1.5*\xv,2*\xv) rectangle (\xv,2.8*\xv);
				\draw
					(0,0) circle [radius=\xv]
					(-.6*\xv,0) circle [radius=.2*\xv]
					(0,0) circle [radius=.2*\xv];
				\begin{scope}
					\draw[\colg,-<-=\ar] (-.6*\xv,0) circle [radius=.2*\xv];
					\draw[\colc,-<-=\ar] (0,0) circle [radius=.2*\xv];
					\draw[dashed,\cold,-<-=\ar,thick] (.6*\xv,0) circle [radius=.2*\xv];
				\end{scope}
				\node[coordinate] (b\i) at (0,\xv) {\i};
				\node[coordinate] (t\i) at (.4*\xv,0) {};
		\end{scope}
		\begin{scope}[canvas is xy plane at z=0,shift={(\x,0,0)},xscale=\s]
			\draw[dashed,-<-=.7] (-\xv,0) arc [radius=2*\xv,start angle=0, end angle=20];
			\draw[dashed,semithick,\col,-<-=.7] (.4*\xv,0) arc [radius=2.4*\xv,start angle=0, end angle=19];
			\draw[dotted] (.8*\xv,0) arc [radius=2.8*\xv,start angle=0, end angle=16];
			\fill[white] (-1.5*\xv,\xv) rectangle (\xv,1.3*\xv);
		\end{scope}
	};
	\begin{scope}[canvas is xz plane at y=0,shift={(-\hd,0,0)},xscale=1]
		\draw[thick,blue,-<-=.6] (b1) cos (t1);
		\draw[thick,teal,-<-=.7] (b1) .. controls (-.6*\xv,.8*\xv) and (-.3*\xv,-1.5*\xv) .. (t1);

		\fill[yellow] (t1) circle [radius=.1];
		\fill[black] (b1) circle [radius=.1];
	\end{scope}
	\begin{scope}[canvas is xz plane at y=0,shift={(\hd,0,0)},xscale=-1]
		\draw[thick,teal,->-=.7] (b2) sin (t2);
		\draw[thick,blue] (b2) edge[out=-30,in=75,->-=.5] (t2);

		\fill[yellow] (t2) circle [radius=.1];
		\fill[black] (b2) circle [radius=.1];
	\end{scope}
	\node at (-1.6*\hd,.5*\vd) {$S_v$};
	\node at (1.6*\hd,.5*\vd) {$S_w$};
	\node[teal] at (-1.1*\hd,-.5*\vd) {$\bar{\alpha}_v^w$};
	\node[teal] at (1.1*\hd,-.5*\vd) {$\bar{\alpha}_w^v$};
	\node[blue] at (-.9*\hd,-.5*\vd) {$\bar{\alpha}_v^{w \prime}$};
	\node[blue] at (.9*\hd,-.5*\vd) {$\bar{\alpha}_w^{v \prime}$};
\end{tikzpicture}
	\caption{Difference between two graphed embeddings}
	\label{fig=ge-diff}
\end{figure}
\begin{proof}[Proof of \cref{prop=ge-diff-class}]
	Let $c `: H_1(`G)$ and write $`z(c) = \sum_{k} {e_{v_k,v_{k+1}}}$. By construction of the embedding $`g `: \OGE{`G}{`W}$ we have
	\[`g(`z(c)) = \prod_k {`g(e_{v_k,v_{k+1}})}\]
	Inside $H_1(M)$, $(`g_* - `g_*')(c)$ can be seen as the class of the closed curve
	\[`g(`z(c)) `. {`g'(`z(c))}^{-1}\]
	As shown on \Cref{fig=ge-diff}, a 2-chain bordering that curve inside $M$ can be decomposed into a sum of 2-chains bordering each of the closed curves
	\begin{align*}
		`l_{v_k}^{v_{k+1}} &\vcentcolon= `a_{v_k}^{v_{k+1}} `. {\left(`a_{v_k}^{\prime\, v_{k+1}}\right)}^{-1} & `l^{v_k}_{v_{k+1}} &\vcentcolon= `a^{v_k}_{v_{k+1}} `. {\left(`a_{v_{k+1}}^{\prime\, v_k}\right)}^{-1}
	\end{align*}
	lying in $\check{s}_{v_k}$ and $\check{s}_{v_{k+1}}$ respectively. Given an orientation $`d$, this yields the equation inside~$H_1(B_{\A})$:
	\[(`g_* - `g_*')(c) = \sum_k `d_k \left({[`l_{v_k}^{v_{k+1}}]} - {[`l_{v_{k+1}}^{v_{k}}]}\right)\]
	where $[\,`.\,]$ designates the class of the closed curve inside $H_1(B_{\A})$ and~$`d_k \vcentcolon= `d_{v_k,v_{k+1}}$. Applying successively \cref{lem=hom-vertex} and \cref{def=ge-diff-edge} of the half-edge difference map $\DiffE(`g,`g')$, we get the following equality:
	\begin{align*}
		{[`l_{v_k}^{v_{k+1}}]} - {[`l_{v_{k+1}}^{v_{k}}]} &= i_{v_k}^* `o \check{s}_{v_k}^* `o h_{v_k} `o h_{v_k}^{-1} (\bar{`l}_{v_k}^{v_{k+1}}) - i_{v_{k+1}}^* `o \check{s}_{v_{k+1}}^* `o h_{v_{k+1}} `o h_{v_{k+1}}^{-1} (\bar{`l}_{v_{k+1}}^{v_{k}})\\
			&=`g_* `o `h `o \left(\DiffE(`g,`g')(\vec{e}_{v_k,v_{k+1}}) - \DiffE(`g,`g')(\vec{e}_{v_{k+1},v_k})\right)
	\end{align*}
	Replacing in the sum yields:
	\begin{align*}
		(`g_* - `g_*')(c) &= \sum_k `d_k `. `g_* `o `h `o \left(\DiffE(`g,`g')(\vec{e}_{v_k,v_{k+1}}) - \DiffE(`g,`g')(\vec{e}_{v_{k+1},v_k})\right)\\
		&= `g_* `o \left((`c_{`d} `o `z)^\dagger `(`*) `h\right) `o \DiffE(`g,`g') (c)\\
		&= `g_* `o \DiffH(`g,`g') (c) \qedhere
	\end{align*}
\end{proof}

\subsection{Relations of the graph stabiliser}
\label{ssec=gs-pres}

The results obtained in \cref{ssec=mcg-star} allows us to compute explicitly the image of the cycle difference map $\DiffH$. Combining this with \cref{thm=gs-diff} gives an explicit combinatorial presentation of the graph stabiliser $\PI_{`G}(`W)$.
\begin{theorem}
	\label{thm=gs-pres}
	The group $\PI_{`G}(`W)$ is finitely presented and admits the presentation:
	\begin{description}[font={\normalfont},labelindent=2em,labelsep=2em,itemsep=3pt]
		\item[Generators] $c `(`*) \overline{v}$ for every $c `: H^1(`G)$ and $v `: V(`G)$.
		\item[Relations] the images by $`z^\dagger `(`*) `h : \hom(C_1(`G),C_0(`G)) \to \hom(H_1(`G),\MH{`G})$ of:
		\begin{enumerate}[(GS1),font=\normalfont,labelsep=1em,itemsep=3pt]
			\item \label{gs-gen-1} $e_{v,x}^\ddagger `(`*) v$ and $e_{v,x}^\ddagger `(`*) x$ for every edge $e_{v,x}$.
			\item \label{gs-gen-2} $e_{v,y}^\ddagger `(`*) z + `d_{v,y}`d_{v,z} `. e_{v,z}^\ddagger `(`*) y$ for every pair $(e_{v,y},e_{v,z})$ of adjacent edges in~$`G$.\qedhere
		\end{enumerate}
	\end{description}
\end{theorem}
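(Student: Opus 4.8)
The plan is to start from \cref{thm=gs-diff}, which identifies $\PI_{`G}(`W)$ with the quotient of $\hom(H_1(`G),\MH{`G})$ by the subgroup generated by the values of the cycle difference map $\DiffH$. Everything thus reduces to computing that subgroup and matching it with the images of \ref{gs-gen-1} and \ref{gs-gen-2}. Unwinding \cref{def=ge-diff} and using $(`c_{`d} `o `z)^\dagger = `z^\dagger `o `c_{`d}^\dagger$, one has $\DiffH = (`z^\dagger `(`*) `h) `o (`c_{`d}^\dagger `(`*) \Id) `o \DiffE$; since the outer factor $`z^\dagger `(`*) `h$ is fixed, it is enough to identify the subgroup of $\hom(\vec{C}_1(`G),C_0(`G))$ generated by the values of the half-edge difference map $\DiffE$ and then transport it through $`c_{`d}^\dagger `(`*) \Id$.

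First I would make this computation finite and local. The key is \cref{prop=act-h++-tr}: since $\HGpp{B_{\A}}$ acts transitively on $\OGE{`G}{`W}$, every value $\DiffE(`g,`g')$ is realised by a graphed strongly positive homeomorphism which, by the construction in that proof, restricts on each bundle $S_v$ to an independently chosen lift of some element of $\PMCG{D_{W_v}}$. As the star difference at $v$ (\cref{def=diff-map-star,def=ge-diff-edge}) only records the relative position of the two flattened stars $\bar{`a}_v,\bar{`a}_v' `: \OLS{D_{W_v}}{`q_v}$ inside $\check{s}_v `~ D_{W_v}$ (\cref{lem=star-flat}), the additivity in \cref{prop=ge-diff-edge-props} shows that the subgroup generated by the values of $\DiffE$ is the direct sum over $v$ of the local subgroups generated by the maps $\DiffE_{|\vec{W}_v}$, each landing in $\hom(\vec{W}_v,\langle W_v\rangle)$. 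By \cref{thm=mcg-disc}, $\PMCG{D_{W_v}} `~ \mathbb{P}_{m_v} `* \mathbb{Z}^{m_v}$ is generated by the full Dehn twists $a_{w,w'}$ around pairs of boundary components and the Dehn twists $d_w$ around single ones, so each local subgroup is generated by the star differences produced by these finitely many generators.

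The geometric heart is the evaluation of those two elementary differences. For $d_w$ only the branch ending on $\partial^w D_{W_v}$ is altered, gaining one loop parallel to $\partial^w$, so through $h_v^{-1}$ its difference is $\vec{e}_{v,w}^\ddagger `(`*) w$. For the pair twist along a curve $`d_{w,w'}$ encircling $\partial^w$ and $\partial^{w'}$, both reaching branches cross $`d_{w,w'}$ once and, starting from the common centre on $\partial^{\infty} D_{W_v}$, with the same sign; hence each difference loop is homologous to $[`d_{w,w'}]=[\partial^w]+[\partial^{w'}]$, giving $\vec{e}_{v,w}^\ddagger `(`*)(w+w') + \vec{e}_{v,w'}^\ddagger `(`*)(w+w')$. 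Subtracting the two single-twist elements (legitimate inside the generated subgroup) leaves the symmetric term $\vec{e}_{v,w}^\ddagger `(`*) w' + \vec{e}_{v,w'}^\ddagger `(`*) w$. Applying $`c_{`d}^\dagger$, which sends $\vec{e}_{v,w}^\ddagger$ to $`d_{v,w}\,e_{v,w}^\ddagger$, turns the $d_w$-term into $`d_{v,w}\,e_{v,w}^\ddagger `(`*) w$ and the reduced pair term into $`d_{v,w}\,e_{v,w}^\ddagger `(`*) w' + `d_{v,w'}\,e_{v,w'}^\ddagger `(`*) w$; up to the global unit $`d_{v,w}$ these are exactly \ref{gs-gen-2} and the $e_{v,x}^\ddagger `(`*) x$ half of \ref{gs-gen-1}. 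The missing half $e_{v,x}^\ddagger `(`*) v$ is supplied by the second half-edge of the same edge: the twist $d_v$ performed at the opposite vertex $x$ yields $\vec{e}_{x,v}^\ddagger `(`*) v$, which $`c_{`d}^\dagger$ sends to $`d_{x,v}\,e_{v,x}^\ddagger `(`*) v$. Ranging over all vertices and all generators therefore produces precisely the families \ref{gs-gen-1} and \ref{gs-gen-2} up to units, so the image of $(`c_{`d}^\dagger `(`*)\Id)`o\DiffE$ generates exactly the subgroup they span.

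Finally I would assemble the presentation. Applying $`z^\dagger `(`*) `h$ and \cref{thm=gs-diff} shows the relation subgroup of $\PI_{`G}(`W)$ is generated by the images of \ref{gs-gen-1} and \ref{gs-gen-2}; since $\hom(H_1(`G),\MH{`G})=H^1(`G) `(`*) \MH{`G}$ is generated by the $c `(`*) \overline{v}$ and $\MH{`G}$ is finitely presented (\cref{def=hom-mer}), the presentation is finite, and as neither the generators nor the relations mention $`W$ this simultaneously yields \cref{cor=gs-ordered-model}. I expect the main obstacle to be the honest pair-twist computation: one must check that the two affected branches meet $`d_{w,w'}$ with equal orientation, so that the surviving combination is symmetric and produces the coefficient $`d_{v,y}`d_{v,z}$ of \ref{gs-gen-2} rather than an antisymmetric one --- this is precisely where the orientation $`d$ enters and must be tracked through $`c_{`d}^\dagger$.
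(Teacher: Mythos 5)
Your proposal follows the same overall architecture as the paper's proof: reduction via \cref{thm=gs-diff}, factorisation of $\DiffH$ through $(\chi_{\delta}\circ\zeta)^\dagger\otimes\eta$, localisation at each vertex using \cref{prop=act-h++-tr} and \cref{prop=ge-diff-edge-props}, evaluation of the elementary twist differences, and the final push-forward by $\chi_{\delta}^\dagger$ (including the remark that the resulting presentation is $\Omega$-independent, giving \cref{cor=gs-ordered-model}). However, there is a genuine gap at the pivotal step, namely the sentence ``so each local subgroup is generated by the star differences produced by these finitely many generators.'' The assignment $\varphi\mapsto\DiffS{W_v}(\bar{\alpha}_0,\bar{\alpha}_0\cdot\varphi)$ is \emph{not} a homomorphism: writing $\varphi=g_1\cdots g_r$ as a word in the generators $a_{k,l},d_j$ of $\PMCG{D_{W_v}}$ and applying the cocycle identity of \cref{prop=ge-diff-edge-props} expands the difference into a telescoping sum of terms $\DiffS{W_v}(\bar{\beta},\bar{\beta}\cdot g_i)$ in which $\bar{\beta}$ is an \emph{arbitrary} intermediate star, not the standard one. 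Your pictorial evaluation of the elementary differences (``both reaching branches cross $\delta_{w,w'}$ once \dots{} with the same sign'') is valid precisely for a star in minimal position with respect to the twisting curve; you never verify that this configuration persists along the word, nor that $\DiffS{W_v}(\bar{\beta},\bar{\beta}\cdot g_i)$ is independent of $\bar{\beta}$. This is exactly where the paper concentrates its technical effort: the proof of \cref{lem=hom-diff-model} is a double induction whose hypothesis \eqref{eq=diff-gen} carries, in addition to membership in $\langle R_1\sqcup R_2\rangle$, the assertion that the image star is again in \emph{standard position} with respect to all twist curves; the figures can be reused at each step only because of that second clause. Without this (or a substitute), your localisation step does not follow from \cref{thm=mcg-disc} plus additivity.

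The gap is fillable, and in a way arguably cleaner than the paper's induction, by a purely homological invariance argument that you gesture at but do not make. In the planar surface $D_{W_v}$ every simple closed curve is separating, so every closed loop has zero algebraic intersection number with $\delta_j$ and $\delta_{k,l}$; consequently the algebraic intersection number of a branch with such a curve depends only on the position of its endpoints relative to the curve, not on the branch itself. Since for a Dehn twist one has $[T_{\delta}(\gamma)]-[\gamma]=\langle\gamma,\delta\rangle\,[\delta]$ for any properly embedded arc $\gamma$ with endpoints off $\delta$, the difference $\DiffS{W_v}(\bar{\beta},\bar{\beta}\cdot g_i)$ is independent of the intermediate star $\bar{\beta}$ and equals the value you computed on the standard star: $(x^j)^{\ddagger}\otimes x^j$ for $d_j$, and $(x^k)^{\ddagger}\otimes(x^k+x^l)+(x^l)^{\ddagger}\otimes(x^k+x^l)$ for $a_{k,l}$. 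Adding this argument (or reproducing the paper's standard-position induction) closes the gap; the remainder of your proof, including the recovery of both halves of \ref{gs-gen-1} from the two half-edges of each edge and the sign bookkeeping through $\chi_{\delta}^\dagger$ yielding \ref{gs-gen-2}, agrees with the paper.
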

Note that \cref{thm=gs-pres} directly implies \cref{cor=gs-ordered-model}.
\begin{proof}
	We reuse the notations from \cref{sec=star,ssec=diff-maps}. By \cref{thm=gs-diff} we need to compute
	\[\left<\DiffH(`g,`g') \mid `g,`g' `: \OGE{`G}{`W} \right> \subset H^1(`G) `(`*) \MH{`G}\]
	Applying \cref{def=ge-diff}, this subgroup is isomorphic to the image by $(`c_{`d} `o `z)^\dagger `(`*) `h$ of the subgroup
	\[\left<\DiffE(`g,`g') \mid `g `: \OGE{`G}{`W}, `J `: \HGpp{M} \right> \subset \vec{C}^1(`G) `(`*) C_0(`G)\]
	By \cref{prop=act-h++-tr}, this subgroup is in turn isomorphic to
	\[\vec{G}_{`G} \vcentcolon= \left<\DiffE(`g, `J `. `g) \mid `g `: \OGE{`G}{`W}, `J `: \HGpp{M}\right>\]
	Therefore, we need to determine a generating set of $\vec{G}_{`G}$ as a submodule of $\hom(\vec{C}_1(`G),C_0(`G))$.

	Let $`J `: \HGpp{M}$ and $`g `: \OGE{`G}{`W}$. Fix $v `: V(`G)$ and write $`a_v = `g \cap S_v$. Up to isotopy, $`J$ restricts to a fibre-positive homeomorphism $`J_v$ of $S_v$ and we have $`J `. `g \cap S_v = `J_v `. `a_v$. By~\cref{lem=star-flat}, we can also suppose that $`g \cap S_v = \check{s}_v (\bar{`a}_v)$ where $\bar{`a}_v `: \OLS{D_{W_v}}{`q_v}$. Since $`J_v$ is fibrewise and fixes $\partial S_v$ componentwise, its action on $\check{s}_v$ is completely determined by an element $`j_v `: \PMCG{D_{W_v}}$, and we have
	\[`J `. `g \cap S_v = \check{s}_v \left(`j_v `. \bar{`a}_v\right)\]
	We can then rewrite $\vec{G}_{`G} = {\bigoplus}_{v `: V(`G)} \vec{G}_{`G}^v$ with
	\begin{equation*}
		\vec{G}_{`G}^v \vcentcolon= \left<h_v^{-1} `o \DiffS{W_v}(\bar{`a}_v, `j_v `. \bar{`a}_v) \mid \bar{`a}_v `: \OLS{D_{W_v}}{`q_v}, `j_v `: \PMCG{D_{W_v}}\right>
	\end{equation*}
	where the domain $W_v$ of $\DiffS{W_v}(\bar{`a}_v, `j_v `. \bar{`a}_v)$ is viewed as the subset $\vec{W}_v$ of $\vec{C}_1(`G)$.

	To compute a presentation of $\vec{G}_{`G}^v$ we will reuse the model disc $D_m$ from \cref{ssec=model}. Denote~by $\DiffS{m}$ the star difference map on $D_m$, and let
	\[G_{m} \vcentcolon= \left<h^{-1} `o \DiffS{m}(\bar{`a}, `j `. \bar{`a}) \mid \bar{`a} `: \LS{D_m}, `j `: \PMCG{D_{m}}\right>\]
	where $h: \left<x^1, \dots, x^m\right> "->" H_1(D_m)$ is the natural group isomorphism.
	By \cref{prop=mcg-model-fth}, the action of $\PMCG{D_m}$ on $\LS{D_m}$ is transitive. Using the standard star $\bar{`a}_0$ from \Cref{fig=star-stand}, we can therefore rewrite
	\[G_{m} \vcentcolon= \left<h^{-1} `o \DiffS{m}(\bar{`a}_0, `j `. \bar{`a}_0) \mid `j `: \PMCG{D_{m}}\right>\]
	\vspace{-\baselineskip}
	\begin{lemma}
		\label{lem=hom-diff-model}
		The group $G_m$ is freely generated by the union $R_1 \sqcup R_2$, where
		\begin{align*}
			R_1 &\vcentcolon= \left\{{(x^j)}^{\ddagger} `(`*) x^j \mid 1 \leq j \leq m \right\}\\
			R_2 &\vcentcolon= \left\{{(x^k)}^{\ddagger} `(`*) (x^k + x^l) + {(x^l)}^{\ddagger} `(`*) (x^k + x^l) \mid 1 \leq k < l \leq m \right\} \qedhere
		\end{align*}
	\end{lemma}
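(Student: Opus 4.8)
The plan is to recognise $G_m$ as the image of a single homomorphism and then evaluate that homomorphism on the standard generators of $\PMCG{D_m}$. Since \cref{prop=mcg-model-fth} has already been used to fix the standard star $\bar{\alpha}_0$, I set $F(\psi) \vcentcolon= \DiffS{m}(\bar{\alpha}_0, \psi\cdot\bar{\alpha}_0)$, regarded as an element of $\hom(\langle x^1,\dots,x^m\rangle, H_1(D_m))$. First I would check that $F$ is a group homomorphism. The key observation is that every $\psi \in \PMCG{D_m}$ fixes $\partial D_m$ pointwise and hence acts trivially on $H_1(D_m)$, which is free on the boundary classes $[\partial^j D_m]$. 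Together with the naturality $\DiffS{m}(\psi\cdot\bar{\alpha}, \psi\cdot\bar{\alpha}') = \psi_* \circ \DiffS{m}(\bar{\alpha}, \bar{\alpha}')$, immediate from \cref{def=diff-map-star}, this yields the full equivariance $\DiffS{m}(\psi\cdot\bar{\alpha}, \psi\cdot\bar{\alpha}') = \DiffS{m}(\bar{\alpha}, \bar{\alpha}')$; combined with the cocycle identity for the star difference map (the analogue of \cref{prop=ge-diff-edge-props}) applied at the intermediate star $\psi\cdot\bar{\alpha}_0$, one obtains $F(\psi\psi') = F(\psi) + F(\psi')$. Consequently $G_m = \langle h^{-1}\circ F(\psi)\rangle$ is generated by the values of $h^{-1}\circ F$ on any generating set of $\PMCG{D_m}$, and by \cref{thm=mcg-disc} such a set is furnished by the Dehn twists $d_j$ around the holes $\partial^j D_m$ and the full twists $a_{k,l}$ about a curve $\delta_{k,l}$ enclosing $\partial^k D_m$ and $\partial^l D_m$.

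Next I would evaluate these two families geometrically. For $d_j$ only the branch $\bar{\alpha}_0^j$ meets the twisting annulus, and since its endpoint lies on the fixed boundary the twist winds it once around the hole, so the difference class is $\pm[\partial^j D_m] = \pm h(x^j)$ and $h^{-1}\circ F(d_j) = \pm (x^j)^\ddagger \otimes x^j \in R_1$. For $a_{k,l}$ the enclosing curve has class $[\delta_{k,l}] = h(x^k+x^l)$; the two branches $\bar{\alpha}_0^k$ and $\bar{\alpha}_0^l$ each run from the centre $-i$, which lies outside $\delta_{k,l}$, to a hole inside it, and so cross it once, whereas $\delta_{k,l}$ can be chosen disjoint from every intermediate branch (each ending at a hole outside it). The full twist then splices one parallel copy of $\delta_{k,l}$ into each of these two branches, with the same orientation since both cross from outside to inside, and fixes the rest, giving $h^{-1}\circ F(a_{k,l}) = \pm\big((x^k)^\ddagger\otimes(x^k+x^l) + (x^l)^\ddagger\otimes(x^k+x^l)\big) \in R_2$. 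As signs are irrelevant for the generated subgroup, this already shows $G_m = \langle R_1 \sqcup R_2\rangle$.

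Finally I would verify that $R_1 \sqcup R_2$ is a free basis. Writing each generator in the canonical basis $\{(x^i)^\ddagger\otimes x^p\}$ of $\hom(\langle x^1,\dots,x^m\rangle,\langle x^1,\dots,x^m\rangle)$, the $R_2$-generator indexed by $(k,l)$ is the only one whose expansion contains the off-diagonal coordinate $(x^k)^\ddagger\otimes x^l$ for $k<l$; reading off these coordinates forces all $R_2$-coefficients to vanish in any integral relation, after which the diagonal coordinates force the $R_1$-coefficients to vanish. Hence the $m + \binom{m}{2}$ listed elements are $\mathbb{Z}$-independent, so they form a free basis of $G_m$.

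The main obstacle I anticipate is the evaluation $F(a_{k,l})$: one must pin down the isotopy class of the twisting curve $\delta_{k,l}$ attached to the pure-braid generator, confirm that it can be realised disjointly from the intermediate branches of $\bar{\alpha}_0$ so that exactly the two branches $k$ and $l$ are affected, and track the orientation with which each of them picks up $[\delta_{k,l}]$. By comparison, the homomorphism reduction in the first step and the independence check in the last are routine.
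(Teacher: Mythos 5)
You take a genuinely different route from the paper, and its architecture is sound. The paper proves generation by a double induction over a word in the generators of $\PMCG{D_m}$, maintaining a \emph{standard position} of the star relative to all twisting curves; this bookkeeping (the \enquote{middle lines} of \cref{fig=mcg-init}) is forced on the paper because it evaluates each successive twist on the current, already-modified star. Your observation that $F(\psi)=\DiffS{m}(\bar{\alpha}_0,\psi\cdot\bar{\alpha}_0)$ is an honest homomorphism --- since elements of $\PMCG{D_m}$ fix $\partial D_m$ pointwise they act trivially on $H_1(D_m)$, so the crossed-homomorphism identity $F(\psi\psi')=F(\psi)+\psi_*F(\psi')$ degenerates to additivity --- collapses that induction entirely: one only ever evaluates twists on $\bar{\alpha}_0$ itself. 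You also supply the $\mathbb{Z}$-independence of $R_1\sqcup R_2$ via the off-diagonal coordinates, a point the statement (\enquote{freely generated}) needs but the paper's proof never checks. Both are real gains in economy.

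There is, however, a gap exactly at the step you flagged, and the justification you give there is not valid as stated. That both endpoints of an intermediate branch $\bar{\alpha}_0^j$ ($k<j<l$) lie outside $\delta_{k,l}$ only forces the \emph{algebraic} intersection number with $\delta_{k,l}$ to vanish; it does not let you isotope the curve off the branch. Whether disjointness can be achieved depends on which isotopy class you take for $\delta_{k,l}$. If the band of $\delta_{k,l}$ joining the neighbourhoods of $\partial^k D_m$ and $\partial^l D_m$ passes on the base-point side of the intermediate holes, then the arc of $\bar{\alpha}_0^j$ inside $\delta_{k,l}$ separates $\partial^k D_m$ from $\partial^l D_m$, each complementary disc contains a hole, there is no bigon, and the geometric intersection number is $2$: disjointness is impossible. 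If the band passes on the far side of the intermediate holes, disjointness does hold --- this is in fact the choice the paper makes implicitly when it asserts that $\bar{\alpha}_0$ is in standard position \emph{with no middle lines} --- but then you owe an argument that the twists along these particular curves, together with the $d_j$, still generate $\PMCG{D_m}$; they do, because conjugation by the rotation of the disc by $\pi$ (an automorphism of the normal subgroup $\PMCG{D_m}$) carries the standard generating set of \cref{thm=mcg-disc} onto this one, but that is an extra step, not a citation. Alternatively, keep the base-point-side curves and observe that the two crossings of each intermediate branch have opposite signs, so the twist splices two oppositely oriented copies of $[\delta_{k,l}]$ into $\bar{\alpha}_0^j$ and the contribution cancels in $H_1(D_m)$ --- this is precisely the paper's remark, visible in \cref{fig=mcg-pb-gen}, that the middle lines do not contribute to the homological difference. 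With either repair one gets $h^{-1}\circ F(a_{k,l})=\pm\bigl((x^k)^\ddagger\otimes(x^k+x^l)+(x^l)^\ddagger\otimes(x^k+x^l)\bigr)$, and your proof closes.
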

	For every $v `: V(`G)$, the bijection $M_{`q_v} : D_m "->" D_{W_v}$ induces a bijection $G_{m_v} `~ \vec{G}_{`G}^v$ given by
	\begin{equation}
		\label{eq=hom-model}
		{(x^k)}^{\ddagger} `(`*) x^l "|->" \vec{e}_{v,y}^{\: \ddagger} `(`*) z, \quad \text{where }`q_v(y) = k \text{ and } `q_v(z) = l
	\end{equation}
	From \cref{lem=hom-diff-model}, we get that
	\[\vec{G}_{`G}^v =
	\left<
		\vec{e}_{v,x}^{\: \ddagger} `(`*) x, \quad
		\vec{e}_{v,y}^{\: \ddagger} `(`*) (y + z) + \vec{e}_{v,z}^{\: \ddagger} `(`*) (y + z) \mid
	x,y,z `: W_v
	\right>\]
	This set of generators can be further simplified by subtracting the first generator from the second when $x = y$ and $x = z$. Therefore
	\[\vec{G}_{`G}^v =
	\left<
	\vec{e}_{v,x}^{\: \ddagger} `(`*) x, \quad
	\vec{e}_{v,y}^{\: \ddagger} `(`*) z + \vec{e}_{v,z}^{\: \ddagger} `(`*) y \mid
	x,y,z `: W_v
	\right>\]
	Repeating this for all vertices $v `: V(`G)$ gives the following generating set of $\vec{G}_{`G}$:
	\begin{equation}
		\label{eq=gs-pres-he}
		\vec{G}_{`G} =
		\left<
		\vec{e}_{v,x}^{\: \ddagger} `(`*) x, \quad
		\vec{e}_{v,y}^{\: \ddagger} `(`*) z + \vec{e}_{v,z}^{\: \ddagger} `(`*) y \mid
		\vec{e}_{v,x}, \vec{e}_{v,y}, \vec{e}_{v,z} `: \vec{C}_1(`G)
		\right>
	\end{equation}
	Finally, for every edge $e_{v,w}$, one has
	\[`c^\dagger_{`d}(\vec{e}_{v,w}^{\: \ddagger}) = - `c^\dagger_{`d}(\vec{e}_{w,v}^{\: \ddagger}) = `d_{v,w} `. e_{v,w}^{\ddagger}\]
	Therefore taking the image of \eqref{eq=gs-pres-he} by $`c^{\dagger}_{`d}$ gives exactly the set of generators of \cref{thm=gs-pres}, now lying in $\hom(C_1(`G),C_0(`G))$ as required.
\end{proof}
\begin{figure}[bh]
	\centering
	\begin{subfigure}{\linewidth}
		\centering
		\begin{tikzpicture}[scale=1,xscale=-1,font=\small]
	\def\r{.5};
	\def\x{2.5};
	\def\s{.2};

	\node at (-\x,0) {$D^k$};
	\node at (\x,0) {$D^l$};

	\begin{scope}[semithick]
		\draw[blue] (-\x,-\r) -- ++(0,-8*\s) node[below] {$\bar{\alpha}^k$};
		\draw[green] (\x,-\r) -- ++(0,-8*\s) node[below] {$\bar{\alpha}^l$};
		\draw[red] (0,\r+8*\s) -- (0,-\r-8*\s) node[below] {$\bar{\alpha}^j$};
	\end{scope}

	\node at (-\x,\r+8*\s+0) {$\delta_{k,l}$};

	\draw[dashed] (-\x,-\r-6*\s) arc [start angle=-90,end angle=-270,radius=\r+6*\s];
	\draw[dashed] (\x,-\r-6*\s) arc [start angle=-90,end angle=90,radius=\r+6*\s];

	\draw[dashed] (-\x,-\r-6*\s) -- (\x,-\r-6*\s);
	\draw[dashed] (-\x,+\r+6*\s) -- (\x,+\r+6*\s);

	\foreach \xi in {-\x,\x}
	{
		\draw[black,thick] (\xi,0) circle [radius=\r];
		\fill[black] (\xi,-\r+0) circle [radius=.07];
	};

\end{tikzpicture}
		\caption{Initial setting}
		\label{fig=mcg-init}
	\end{subfigure}
	\begin{subfigure}{\linewidth}
		\centering
		\begin{tikzpicture}[scale=1,xscale=-1,font=\small]
	\def\r{.5};
	\def\x{2.5};
	\def\s{.2};

	\begin{scope}[semithick]
		\draw[blue] (-\x,-\r) -- ++(0,-\s) node[coordinate] (bllt) {};
		\draw[green] (\x,-\r) -- ++(0,-\s) node[coordinate] (bjjt) {};

		\draw[blue] (-\x,-\r-5*\s) node[coordinate] (bllb) {} -- ++(0,-3*\s) node[below] {$\bar{\alpha}^k \cdot a_{k,l}$};
		\draw[green] (\x,-\r-5*\s) node[coordinate] (bjjb) {} -- ++(0,-3*\s) node[below] {$\bar{\alpha}^l \cdot a_{k,l}$};
		\draw[red] (0,-\r-5*\s) node[coordinate] (bkkb) {} -- ++(0,-3*\s) node[below] {$\bar{\alpha}^j \cdot a_{k,l}$};

		\draw[red] (\x,-\r-2*\s) node[coordinate] (bjkt) {} to[out=180,in=0] (-\x,\r+2*\s) node[coordinate] (tlkb) {};

		\draw[green] (bjjb) -- (-\x,-\r-3*\s) node[coordinate] (blj) {};
		\draw[red] (bkkb) -- (-\x,-\r-4*\s) node[coordinate] (blkb) {};
		\draw[blue] (bllt) -- (\x,-\r-3*\s) node[coordinate] (bjl) {};
		\draw[red] (\x,-\r-4*\s) node[coordinate] (bjkb) {} -- (-\x,-\r-2*\s) node[coordinate] (blkt) {};

		\draw[red] (0,\r+5*\s) node[coordinate] (tkkt) {} -- ++(0,+3*\s);

		\draw[red] (tkkt) -- (\x,\r+4*\s) node[coordinate] (tjkt) {};
		\draw[blue] (-\x,\r+5*\s) node[coordinate] (tll) {} -- (\x,\r+3*\s) node[coordinate] (tjl) {};
		\draw[red] (-\x,\r+4*\s) node[coordinate] (tlkt) {} -- (\x,\r+2*\s) node[coordinate] (tjkb) {};
		\draw[green] (-\x,\r+3*\s) node[coordinate] (tlj) {} -- (\x,\r+\s) node[coordinate] (tjj) {};

		\draw[blue] (bllb) arc [start angle=-90,end angle=-270,radius=\r+5*\s];
		\draw[red] (blkb) arc [start angle=-90,end angle=-270,radius=\r+4*\s];
		\draw[green] (blj) arc [start angle=-90,end angle=-270,radius=\r+3*\s];
		\draw[red] (blkt) arc [start angle=-90,end angle=-270,radius=\r+2*\s];

		\draw[blue] (bjl) arc [start angle=-90,end angle=90,radius=\r+3*\s];
		\draw[red] (bjkb) arc [start angle=-90,end angle=90,radius=\r+4*\s];
		\draw[green] (bjjt) arc [start angle=-90,end angle=90,radius=\r+\s];
		\draw[red] (bjkt) arc [start angle=-90,end angle=90,radius=\r+2*\s];
	\end{scope}

	\node at (-\x,0) {$D^k$};
	\node at (\x,0) {$D^l$};

	\node at (-\x,\r+8*\s) {$\delta_{k,l}$};

	\draw[dashed] (-\x,-\r-6*\s) arc [start angle=-90,end angle=-270,radius=\r+6*\s];
	\draw[dashed] (\x,-\r-6*\s) arc [start angle=-90,end angle=90,radius=\r+6*\s];

	\draw[dashed] (-\x,-\r-6*\s) -- (\x,-\r-6*\s);
	\draw[dashed] (-\x,\r+6*\s) -- (\x,\r+6*\s);

	\foreach \xi in {-\x,\x}
	{
		\draw[black,thick] (\xi,0) circle [radius=\r];
		\fill[black] (\xi,-\r) circle [radius=.07];
	};
\end{tikzpicture}
		\caption{Action of $a_{k,l}$}
		\label{fig=mcg-pb-gen}
	\end{subfigure}
	\caption{Action of $\mathbb{P}_m$ on a linear star}
	\label{fig=mcg-pb}
\end{figure}
\begin{figure}[bh]
	\centering
	\begin{subfigure}{.3\linewidth}
		\centering
		\begin{tikzpicture}[scale=.9,xscale=-1,font=\small]

	\def\r{.5};

	\draw[semithick,blue] (0,-\r) -- ++(0,-2.1*\r) node[below] {$\bar{\alpha}^j$};

	\node at (0,0) {$D^j$};

	\node at (-1.2+0,1.2) {$\delta_{j}$};

	\draw[dashed] (0,0) circle [radius=\r+.8];

	\draw[black,thick] (0,0) circle [radius=\r];
	\fill[black] (0,-\r) circle [radius=.07];

\end{tikzpicture}
		\caption{Initial setting}
		\label{fig=mcg-init-dt}
	\end{subfigure}
	\begin{subfigure}{.3\linewidth}
		\centering
		\newcommand\spiral{} 

\begin{tikzpicture}[scale=.9,xscale=-1,font=\small]
	\def\spiral[#1](#2)(#3:#4)(#5:#6)[#7]{
	\pgfmathsetmacro{\domain}{#4+#7*360}
	\pgfmathsetmacro{\growth}{180*(#6-#5)/(pi*(\domain-#3))}
	\draw [#1,
		shift={(#2)},
		domain=#3*pi/180:\domain*pi/180,
		variable=\t,
		smooth,
		samples=int(\domain/5)] plot ({\t r}: {#5+\growth*\t-\growth*#3*pi/180})
	}
	\def\r{.5};
	\def\x{5};

	\draw[semithick,blue] (0,-\r) -- ++(0,-.2);
	\draw[semithick,blue] (0,-\r-.6) -- ++(0,-.5) node[below] {$\bar{\alpha}^j \cdot d_{j}$};

	\node at (0,0) {$D^j$};

	\node at (-1.2,1.2) {$\delta_{j}$};

	\draw[dashed] (0,0) circle [radius=\r+.8];

	\draw[black,thick] (0,0) circle [radius=\r];
	\fill[black] (0,-\r) circle [radius=.07];

	\spiral[blue,thick](0,0)(-90:-90)(\r+.2:\r+.2)[1];

\end{tikzpicture}
		\caption{Action of $d_{j}$}
		\label{fig=mcg-dt-gen}
	\end{subfigure}
	\caption{Action of $\mathbb{Z}^m$ on a linear star}
	\label{fig=mcg-dt}
\end{figure}
\begin{proof}[Proof of \cref{lem=hom-diff-model}]
	By \cref{thm=mcg-disc}, every $`j `: \PMCG{D_{m}}$ is of the form $(`b,d)$, with
	\begin{align*}
		`b &= \prod_{m=1}^{r} a_{k_m,l_m} & d &= \sum_{m=1}^{s} d_{j_m}
	\end{align*}
	Each generator $a_{k,l}$ (resp. $d_j$) correspond to a full Dehn twist inside a curve $`d_{k,l}$ (resp. $`d_j$). We~say that a star $\bar{`a} `: \LS{m}$ is in \emph{standard position} if:
	\begin{enumerate}[(S1)]
		\item for all curves $`d_j$, the pair $(\bar{`a},`d_j)$ is isotopic to \Cref{fig=mcg-init-dt}.
		\item for all curves $`d_{k,l}$, the pair $(\bar{`a},`d_{k,l})$ is isotopic to \Cref{fig=mcg-init} with possibly several (or none) parallel \enquote{middle lines}, all of which are parts of $\bar{`a}$.
	\end{enumerate}
	We will prove by double induction on $r \geq 0$ and $s \geq 0$ that
	\begin{equation}
		\label{eq=diff-gen}
		\tag{$\mathrm{P}_{r,s}$}
		\begin{cases}
			h^{-1} `o \DiffS{m}(\bar{`a}_0, \bar{`a}_0 `. {(`b,d)}) `: R_1 \sqcup R_2\\
			\bar{`a}_0 `. {(`b,d)} \text{ is in standard position.}
		\end{cases}
	\end{equation}
	The standard star $\bar{`a}_0$ is clearly in standard position, with no middle lines. Now fix $r,s \geq 0$ and suppose that $\bar{`a} \vcentcolon= \bar{`a}_0 `. {(`b,d)}$ verifies \eqref{eq=diff-gen}.

	Let $1 \leq j \leq m$. Since $\bar{`a}$ is in standard position, the action of $d_j$ on $\bar{`a}$ is shown on \Cref{fig=mcg-dt}. By superimposing \Cref{fig=mcg-init-dt,fig=mcg-dt-gen}, one sees that
	\begin{equation}
		\label{eq=diff-dt}
		h^{-1} `o \DiffS{m}(\bar{`a}, \bar{`a} `. {(1,d_j)}) = {(x^j)}^{\ddagger} `(`*) x^j
	\end{equation}
	and therefore
	\[h^{-1} `o \DiffS{m}(\bar{`a}_0, \bar{`a} `. {(1,d_j)}) = h^{-1} \left(\DiffS{m}(\bar{`a}_0, \bar{`a}) + \DiffS{m}(\bar{`a}, \bar{`a} `. {(1,d_j)})\right) `: R_1 \sqcup R_2\]
	By retracting the curve $`d_j$ closer to $\partial^j D_m$ on \Cref{fig=mcg-dt-gen}, we can put $d_j `. \bar{`a}$ back on standard position with respect to all $`d_{j}$'s. By retracting the curves $`d_{j,k}$ for every $k \neq j$, we can also put $d_j `. \bar{`a}$ in standard position with respect to them, with one middle line added.

	Let $1 \leq k < l \leq m$. Since $\bar{`a}$ is in standard position, the action of $a_{k,l}$ on $\bar{`a}$ is shown on \Cref{fig=mcg-pb}. By superimposing \Cref{fig=mcg-init,fig=mcg-pb-gen}, one sees that
	\begin{equation}
		\label{eq=diff-pb}
		h^{-1} `o \DiffS{m}(\bar{`a}, \bar{`a} `. {(a_{k,l},0)}) = {(x^k)}^{\ddagger} `(`*) (x^k + x^l) + {(x^l)}^{\ddagger} `(`*) (x^k + x^l)
	\end{equation}
	and therefore
	\[h^{-1} `o \DiffS{m}(\bar{`a}_0, \bar{`a} `. {(a_{k,l},0)}) = h^{-1} `o \left(\DiffS{m}(\bar{`a}_0, \bar{`a}) + \DiffS{m}(\bar{`a}, \bar{`a} `. {(a_{k,l},0)})\right) `: R_1 \sqcup R_2\]
	In particular the middle lines do not contribute to the homological difference. By retracting the curves $`d_k$ and $`d_{l}$ on \Cref{fig=mcg-pb-gen}, we can put $\bar{`a} `. a_{k,l}$ back on standard position with respect to all~$`d_{j}$'s. Now let $1 \leq k' < l' \leq m$ be another pair and suppose that (say) $l = k'$. To put $\bar{`a} `. a_{k,l}$ back on standard position, we can retract $`d_{l,l'}$ closer to $\partial^l D_m$ on \Cref{fig=mcg-init}. This creates some new middle lines for $`d_{l,l'}$. Similarly, $`d_{l,k}$ itself can be retracted closer to $\partial^l D_m$ and $\partial^k D_m$, and keep the same middle lines.

	This achieves the induction, and proves that $G_m \subset \left< R_1 \sqcup R_2 \right>$. Conversely, applying \eqref{eq=diff-dt} and \eqref{eq=diff-pb} to $\bar{`a}_0$ for all $d_j$'s and all $a_{k,l}$'s shows that $R_1 \sqcup R_2 \subset G_m$.
\end{proof}

\subsection{Change of graph ordering}
\label{ssec=gs-trans}

By \cref{cor=gs-ordered-model} the graph stabiliser $\PI_{`G}$ does not depend as a group on the choice of a graph ordering $`W$. However, the original presentation of $\PI_{`G}$ given in \cref{def=gs} does involve a choice of $`W$, and it is this presentation that is used to define (and compute) the homology inclusion invariant $\mathcal{J}_{`W}$. \cref{prop=gs-trans-def} defines an explicit \emph{transition function} $T$ that describes the effect of changing $`W$ in the original presentation of $\PI_{`G}$.

We reuse notations from \cref{ssec=ord}. A graph ordering $`W$ on $`G$ is an element of the set
\[\mathcal{C}(`G) \vcentcolon= \prod_{v `: V(`G)} \Circ{W_v}\]
Consider the group of permutations on graph orderings given by:
\[\mathfrak{S}_{`G} \vcentcolon= \prod_{v `: V(`G)} \mathfrak{S}_{m_v}\]
The group $\mathfrak{S}_{`G}$ has a transitive right action on $\mathcal{C}(`G)$ by \cref{lem=circ-act}.

Fix a vertex $v `: V(`G)$. For $`s `: \mathfrak{S}_{m_v}$ acting on $W_v$, let $`s^v$ be the element of $\mathfrak{S}_{`G}$ defined as $`s$ on $W_v$ and as the identity on every other neighbour set. The group $\mathfrak{S}_{m_v}$ can be generated only by the permutations of type $`t_k \vcentcolon= (k,k+1)$ with the relations:
\begin{align}
	\label{eq=transp-rel}
	{(`t_k)}^2 &= \Id & `t_k `t_j &= `t_j `t_k \text{ if } \left|k-j\right| > 1 & {(`t_k `t_{k+1})}^3 &= \Id
\end{align}
Therefore the group $\mathfrak{S}_{`G}$ can be generated by the $`t_k^v$ for every $v `: V(`G)$ and $1 \leq k \leq m_v$.
\begin{proposition}
	\label{prop=gs-trans-def}
	The group homomorphism
	\[T: \mathfrak{S}_{`G} \longrightarrow \PI_{`G}\]
	given by, for every $v `: V(`G)$ and $1 \leq k \leq m_v$:
	\begin{equation}
		\label{eq=gs-trans}
		T(`t_k^v) = ({`z}^\dagger `(`*) `h) \left(`d_{v,y} `. e_{v,y}^{\ddagger} `(`*) x\right) \quad \text{ where } `q_v(x) = k,\: `q_v(y) = k+1
	\end{equation}
	verifies the following property: if $`W, `W' `: \mathcal{C}(`G)$ are such that $`W' = `W `. `S$ with $`S `: \mathfrak{S}_{`G}$, then for every $`f `: \hom(H_1(B_{\A}),\MH{`G})$ as in \cref{def=gs} and every $`g `: \OGE{`G}{`W}, `g' `: \OGE{`G}{`W'}$, one has:
	\[\left[`f `o `g_*'\right] = \left[`f `o `g_*\right] + T({`S})\]
	inside $\PI_{`G}$.
\end{proposition}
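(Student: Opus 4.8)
The plan is to read off $T$ on the generators $`t_k^v$ from a direct computation of the homological difference between a graphed embedding for $`W$ and one for $`W `. `t_k^v$, and then to obtain both the homomorphism property and the general transition formula for free from the defining quotient of $\PI_{`G}$. Fix once and for all an $`f `: \hom(H_1(B_{\A}),\MH{`G})$ with $(`f `o `g_*)_{|\MH{`G}} = \Id_{\MH{`G}}$. Since $(`g_*)_{|\MH{`G}}$ sends $\bar v$ to the fibre class $f_v$ for every embedding (\cref{prop=ge-diff-mer}), this condition is insensitive to the ordering, so the same $`f$ is admissible for every graph ordering. In particular, for each $`W'$ the class $[`f `o `g'_*] `: \PI_{`G}$ is independent of the chosen $`g' `: \OGE{`G}{`W'}$, by exactly the relation that is quotiented out in \cref{def=gs} for the ordering $`W'$.

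I first treat a single generator $`t_k^v$, proving that for any base ordering the passage to $`W `. `t_k^v$ changes $[`f `o `g_*]$ by $T(`t_k^v)$. Writing $x = `q_v^{-1}(k)$ and $y = `q_v^{-1}(k+1)$, the two orderings agree at every vertex except $v$, where the circular order is transposed on $x$ and $y$. Using the transitivity of \cref{prop=mcg-cstar} at the unchanged vertices, I pick $`g `: \OGE{`G}{`W}$ and $`g' `: \OGE{`G}{`W `. `t_k^v}$ whose stars coincide off $S_v$; at $v$, after flattening, $\bar{`a}_v$ and $\bar{`a}_v'$ are joined and related by the adjacent half-twist $`s_{k,k+1}$ that realises the transposition of the circular order (compare \cref{lem=mcg-cstar-lin}). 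The per-vertex bordism argument of \cref{prop=ge-diff-class} and \cref{lem=hom-vertex} is purely branch-wise and never uses that the two embeddings share a linearisation, so it applies verbatim here: the difference $[`f `o `g'_*] - [`f `o `g_*]$ equals the class of $\DiffH(`g',`g)$, and $\DiffE(`g',`g)$ is supported on $\vec W_v$. It remains to evaluate $\DiffS{W_v}(\bar{`a}_v',\bar{`a}_v)$, the half-twist analogue of the full-twist computation \eqref{eq=diff-pb}: a single adjacent half-twist leaves all branches but the two swapped ones unchanged and produces exactly one non-trivial difference loop encircling one neighbouring boundary component, giving $\DiffE(`g',`g)_{|\vec W_v} = \vec e_{v,y}^{\:\ddagger} `(`*) x$. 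Passing through \cref{def=ge-diff} and using $`c_{`d}^\dagger(\vec e_{v,y}^{\:\ddagger}) = `d_{v,y} `. e_{v,y}^{\ddagger}$ yields $[`f `o `g'_*] - [`f `o `g_*] = ({`z}^\dagger `(`*) `h)\big(`d_{v,y} `. e_{v,y}^{\ddagger} `(`*) x\big) = T(`t_k^v)$, which is the formula \eqref{eq=gs-trans}.

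For the relations, I exploit that any word $`t_{k_1}^{v_1} \cdots `t_{k_N}^{v_N}$ representing the identity of $\mathfrak{S}_{`G}$ produces a chain of orderings $`W = `W_0, `W_1, \dots, `W_N = `W$ that returns to $`W$. Choosing $`g_i `: \OGE{`G}{`W_i}$ and telescoping the single-generator equalities from the previous paragraph gives $\sum_i T(`t_{k_i}^{v_i}) = [`f `o (`g_N)_*] - [`f `o (`g_0)_*]$; as $`W_N = `W_0 = `W$, this is a difference of two same-ordering embeddings and therefore vanishes in $\PI_{`G}$ by \cref{def=gs} (equivalently \cref{thm=gs-diff}). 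Hence $T$ respects the relations \eqref{eq=transp-rel} --- in particular $`t_k^2 = \Id$ forces $2\,T(`t_k^v) = 0$, reflecting that applying the transposition twice restores the ordering $`W$ --- and so $T$ is a well-defined homomorphism. Finally, for arbitrary $`S `: \mathfrak{S}_{`G}$ one factors $`S$ into generators, interpolates orderings from $`W$ to $`W `. `S = `W'$, and telescopes the single-generator equalities to obtain $[`f `o `g'_*] - [`f `o `g_*] = T(`S)$; the independence of $[`f `o `g'_*]$ from the choice of $`g'$ noted above guarantees that the left-hand side, hence $T(`S)$, is independent of the chosen factorisation.

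The main obstacle is the geometric computation of $\DiffS{W_v}(\bar{`a}_v',\bar{`a}_v)$ for the order-changing half-twist. In contrast with \cref{lem=hom-diff-model}, the two stars are not ordered by the same linearisation, so one cannot simply invoke the full-twist picture of \cref{fig=mcg-pb}; instead one verifies, in the spirit of \cref{lem=mcg-cstar-lin}, that a single adjacent half-twist creates precisely one difference loop around a neighbouring boundary component, and then fixes its sign against the orientation convention of \Cref{fig=star-ord-circ} so as to recover the factor $`d_{v,y}$ (the opposite swap direction produces the other summand of the second generating relation of \cref{thm=gs-pres}, which agrees with the first in $\PI_{`G}$, so the value of $T(`t_k^v)$ is unambiguous). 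Once this loop is identified, the remaining passage through $\DiffE$, $\DiffH$ and $`c_{`d}^\dagger `(`*) `h$ is routine bookkeeping.
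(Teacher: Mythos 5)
Your proposal is correct in substance and its core computation is the same as the paper's: both arguments reduce to a single generator $\tau_k^v$, realise the change of ordering by the adjacent half-twist $\sigma_{k,k+1}$ acting on a flattened standard linear star at $v$ (\cref{lem=star-flat}, \cref{thm=mcg-disc}), read off the single difference loop $\vec{e}_{v,y}^{\:\ddagger}\otimes x$, and push it through the difference-map machinery of \cref{ssec=diff-maps} to arrive at \eqref{eq=gs-trans}. Where you genuinely diverge is in establishing that $T$ is a well-defined homomorphism. The paper does this \emph{first} and purely algebraically: it evaluates the chained sums along the Coxeter relations \eqref{eq=transp-rel} and kills them using the relations \ref{gs-gen-1} and \ref{gs-gen-2} of \cref{thm=gs-pres} (the involution relation is exactly a \ref{gs-gen-2} element; the braid relation is a six-term sum cancelling in pairs). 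You instead derive it \emph{after} the geometric step, by telescoping along any word representing the identity: the chain of orderings returns to its start, so the total difference is a difference of two classes attached to the same ordering, which vanishes by \cref{def=gs} --- here you correctly use that, by \cref{thm=gs-pres}, the subgroup being quotiented is the same for every ordering, so all these classes live in one group. Your route is more uniform (it dispatches all relations at once, including the braid relation, which the paper only describes as a ``similar elementary computation''), while the paper's route has the advantage of producing $T$ as a homomorphism independently of any geometry, before the transition property is invoked.

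One caveat, present in both proofs but slightly more exposed in yours: the value produced at each step of a chain is expressed through the pair $(x,y)$ occupying positions $k,k+1$ of the \emph{current} linearisation, so identifying every step value with one fixed element $T(\tau_k^v)$ tacitly uses that this class is insensitive to the source ordering. Note in particular that by \ref{gs-gen-2} the values attached to the two swap directions are \emph{opposite} in $\PI_{\Gamma}$, not equal, so your parenthetical claiming they ``agree'' is imprecise as stated: their agreement is equivalent to $2T(\tau_k^v)=0$, which is what the relation $\tau_k^2=\mathrm{Id}$ enforces once well-definedness is granted, not a formal consequence of \ref{gs-gen-2} alone. The paper makes exactly the same elision when it computes $T(\tau_k^2)$ as a chained two-term sum, so this is not a gap relative to the paper's own standard, but the wording should be corrected.
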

The kernel of $T$ contains the subgroup $\{(1 \cdots m_v)^v \mid v `: V(`G)\}$ generated by the circular permutations that leave any circular order $`W$ on $`G$ unchanged.
\begin{remark}
	From a computing perspective, \cref{prop=gs-trans-def} makes it possible to compute two homology inclusion values $\mathcal{J}_{`W}(\A)$ and $\mathcal{J}_{`W'}(\A')$ with different graph orderings and still compare them. If there is an ordered oriented equivalence between $\A$ and $\A'$, then the difference $\mathcal{J}_{`W'}(\A') - \mathcal{J}_{`W}(\A)$ would be equal to $T({`S})$ with $`W' = `W `. `S$.
\end{remark}

\begin{figure}[b]
	\centering
	\begin{subfigure}{.45\linewidth}
		\centering
		\begin{tikzpicture}[scale=1,xscale=-1,font=\small]
	\def\r{.5};
	\def\x{1.3};
	\def\s{.15};

	\begin{scope}[semithick]
		\draw[blue] (-\x,-\r) node[coordinate] (bll) {} -- ++(0,-8*\s) node[below] {$\bar{\alpha}_0^k$};
		\draw[green] (\x,-\r) node[coordinate] (bjj) {} -- ++(0,-8*\s) node[below] {$\bar{\alpha}_0^{k+1}$};
	\end{scope}

	\node at (-\x,0) {$D^k$};
	\node at (\x,0) {$D^{k+1}$};

	\node at (-\x,\r+8*\s) {$\delta_{k,k+1}$};

	\draw[dashed] (-\x,-\r-6*\s) arc [start angle=-90,end angle=-270,radius=\r+6*\s];
	\draw[dashed] (\x,-\r-6*\s) arc [start angle=-90,end angle=90,radius=\r+6*\s];

	\draw[dashed] (-\x,-\r-6*\s) -- (\x,-\r-6*\s);
	\draw[dashed] (-\x,\r+6*\s) -- (\x,\r+6*\s);

	\foreach \xi in {-\x,\x}
		{
			\draw[black,thick] (\xi,0) circle [radius=\r];
			\fill[black] (\xi,-\r) circle [radius=.07];
		};

\end{tikzpicture}
		\caption{Initial setting}
		\label{fig=mcg-init-transp}
	\end{subfigure}
	\begin{subfigure}{.45\linewidth}
		\centering
		\begin{tikzpicture}[scale=1,xscale=-1,font=\small]
	\def\r{.5};
	\def\x{1.3};
	\def\s{.15};

	\begin{scope}[semithick]
		\draw[green] (-\x,\r) -- ++(0,2*\s) node[coordinate] (tllt) {};
		\draw[blue] (\x,\r) -- ++(0,2*\s) node[coordinate] (tjjt) {};

		\draw[blue] (-\x,-\r-4*\s) node[coordinate] (bll) {} -- ++(0,-4*\s) node[below] {$\bar{\alpha}_0^k \cdot \sigma_{k,k+1}$};
		\draw[green] (\x,-\r-4*\s) node[coordinate] (bjj) {} -- ++(0,-4*\s) node[below] {$\bar{\alpha}_0^{k+1} \cdot \sigma_{k,k+1}$};

 		\draw[green] (bjj) -- (-\x,-\r-2*\s) node[coordinate] (blj) {};
 		\draw[blue] (-\x,\r+4*\s) node[coordinate] (tll) {} -- (\x,\r+2*\s) node[coordinate] (tjl) {};
 		\draw[blue] (bll) arc [start angle=-90,end angle=-270,radius=\r+4*\s];
 		\draw[green] (blj) arc [start angle=-90,end angle=-270,radius=\r+2*\s];
	\end{scope}

	\node at (-\x,0) {$D^k$};
	\node at (\x,0) {$D^{k+1}$};

	\node at (-\x,\r+8*\s) {$\delta_{k,k+1}$};

	\draw[dashed] (-\x,-\r-6*\s) arc [start angle=-90,end angle=-270,radius=\r+6*\s];
	\draw[dashed] (\x,-\r-6*\s) arc [start angle=-90,end angle=90,radius=\r+6*\s];

	\draw[dashed] (-\x,-\r-6*\s) -- (\x,-\r-6*\s);
	\draw[dashed] (-\x,\r+6*\s) -- (\x,\r+6*\s);

	\foreach \xi in {-\x,\x}
		{
			\draw[black,thick] (\xi,0) circle [radius=\r];
			\fill[black] (\xi,\r) circle [radius=.07];
		};

\end{tikzpicture}
		\caption{Action of $`s_{k,k+1}$}
		\label{fig=mcg-transp-gen}
	\end{subfigure}
	\caption{Action of $\mathbb{B}_m$ on the standard linear star $\bar{`a}_0$.}
	\label{fig=mcg-transp}
\end{figure}
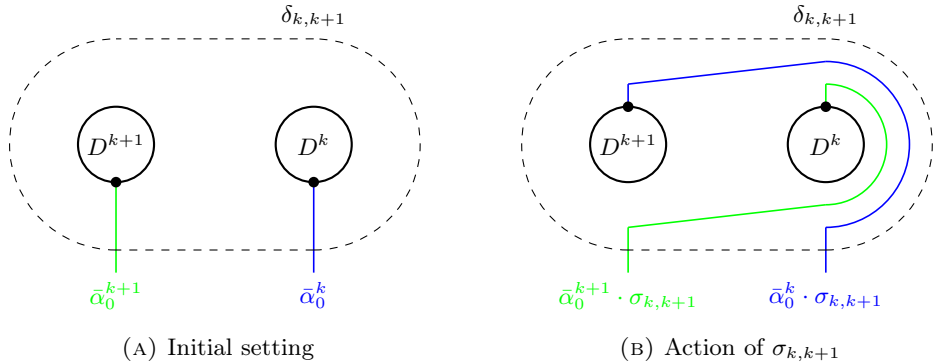

\begin{proof}
	We first check that $T$ respects the relations of the presentation of $\mathfrak{S}_{m_v}$ given in \eqref{eq=transp-rel}. The~second one is obviously respected. Write $`s = {(`t_k)}^2$. Then
	\[T(`s^v) = ({`z}^\dagger `(`*) `h) \left(`d_{v,y} `. e_{v,y}^{\ddagger} `(`*) x + `d_{v,x} `. e_{v,x}^{\ddagger} `(`*) y\right) = 0\]
	by the relation \ref{gs-gen-2} of $\PI_{`G}$. With $`t = {(`t_k `t_{k+1})}^3$, a similar elementary computation shows that $T({`t^v}) = 0$. Therefore the group homomorphism $T$ is well defined.

	It is enough to prove the proposition when $`S = `t_k^v$ for a fixed $v `: V(`G)$ and $1 \leq k \leq m_v$. Let $`w_v$ be the circular order corresponding to $v$ in $`W$ and fix a linearisation $`q_v$. By \cref{lem=star-flat}, we can suppose that $`g \cap S_v = \bar{`a}_v `: \OLS{W_v}{`q_v}$. By \cref{def=gs} of the graph stabiliser, the class $[`f `o `g_*] `: \PI_{`G}$ does not depend on the choice of $`g `: \OGE{`G}{`W}$, and therefore on the choice of $\bar{`a}_v `: \OLS{W_v}{`q_v}$. Let $M_{`q_v} : D_m "->" D_{W_v}$ be the homeomorphism that sends each boundary component $\partial D^j$ of $D_m$ to $\partial^w D_{W_v}$ with $`q_v(w) = j$. Then we can always suppose that $\bar{`a}_v = M_{`q_v}(\bar{`a}_0)$ where $\bar{`a}_0$ is the standard ordered star in $D_m$ shown on \Cref{fig=star-stand}.

	Consider now the braid $`s_{k,k+1} `: \mathbb{B}_m$ that permutes the strands $k$ and $k + 1$. By \cref{thm=mcg-disc}, there is an associated homeomorphism in $\MCG{D_m}$. Its action on $\bar{`a}_0$ is shown on \Cref{fig=mcg-transp}. We~thus obtain a new star $\bar{`a}_1 \vcentcolon= M_{`q_k}\left(\bar{`a}_0 `. `s_{k,k+1}\right)$ in $D_{W_v}$ which is now ordered by $`q_v `. (k,k+1)$, and a new graphed embedding $`g_1$ ordered by $`W_1 = `W `. (k,k+1)^v = `W'$. Therefore $\left[`f `o {(`g_1)}_*\right] =  [`f `o `g_*']$.

	Remember the difference maps from \cref{ssec=diff-maps}. Reusing the same tools from the proof of \cref{thm=gs-pres}, we write:
	\begin{align*}
		[`f `o `g_*'] - [`f `o `g_*] &= [`f `o ({(`g_1)}_* - `g_*)]\\
		&= \left({(`c_{`d} `o `z)}^\dagger `(`*) `h\right) \left(\DiffE(`g,`g_1)\right)\\
		&= \left({(`c_{`d} `o `z)}^\dagger `(`*) `h\right) \left(h_v^{-1} `o \DiffS{W_v}(\bar{`a},\bar{`a}_1)\right)
	\end{align*}
	Using $M_{`q_v}$ to go back to the model disc $D_{m_v}$, $h_v^{-1} `o \DiffS{W_v}(\bar{`a},\bar{`a}_1)$ is the image by \eqref{eq=hom-model} of $h^{-1} `o \DiffS{m}(\bar{`a}_0, \bar{`a}_0 `. `s_{k,k+1})$. By superimposing \Cref{fig=mcg-init-transp,fig=mcg-transp-gen}, we see that
	\[h^{-1} `o \DiffS{m}(\bar{`a}_0, \bar{`a}_0 `. `s_{k,k+1}) = {(x^{k+1})}^{\ddagger} `(`*) x^k\]
	Applying the \eqref{eq=hom-model} isomorphism as well as $`c_{`d}$ we get exactly the value of $T(`t_k^v)$ from~\eqref{eq=gs-trans}.
\end{proof}

\section{Loop-linking number}
\label{sec=lln}

The first homology groups of the exterior and boundary manifold of a line arrangement are determined by its combinatorics. However, the map ${i_{\A}^* : H_1(B_{\A}) "->" H_1(E_{\A})}$ is not necessarily combinatorial and contains topological information. One of the first invariants developed from $i_{\A}^*$ was the \emph{$\mathcal{I}$-invariant} from \cite{Florens2013}, which was then extended into the \emph{loop-linking number} introduced in \cite{Cadegan2018} and further developed in \cite{GuervilleBalle2022}. In this section we prove that the the homology inclusion extends again these two invariants. Since all three are derived from $i_{\A}^*$, the difference lies mainly on the involved algebraic structures which play a role similar to the graph stabiliser. The fact that the loop-linking number generalises the $\mathcal{I}$-invariant is already covered in \cite{GuervilleBalle2022}. We briefly restate the definitions of the loop-linking number reusing our own notations from \cref{sec=gs}.

Let $\A$ be a line arrangement. Recall from \cref{ssec=la} that $\widehat{`G} \vcentcolon= \widehat{`G}(C_{\A})$ is the \emph{full} incidence graph of $\A$. As in \cref{ssec=hom-incl-def}, consider the map induced by the inclusion on the first homology groups
\[i_{\A}^* : H_1(B_{\A},\mathbb{Z}) \longrightarrow H_1(E_{\A},\mathbb{Z})\]
Let $\widehat{`W}$ be a graph ordering on $\widehat{`G}$. The full incidence graph $\widehat{`G}$ gives a non-minimal graph structure of the boundary manifold $B_{\A}$. One can nevertheless define an embedding of the graph $`g : \widehat{`G} "`->" B_{\A}$ again as a union of stars, one for each bundle piece. Then all the results of \cref{ssec=bm-hom} stand for~$\widehat{`G}$, or any other graph structure of $B_{\A}$ for that matter, see \cite{Westlund1997}. \cref{lem=la-incl-V} stands as well, so for every graphed embedding $`g `: \OGE{\widehat{`G}}{\widehat{`W}}$ we have a map
\[i_{\A}^* `o `g_* :  H_1(\widehat{`G},\mathbb{Z}) \longrightarrow \MH{\widehat{`G}}\]
with $\MH{\widehat{`G}} `~ H_1(E_{\A},\mathbb{Z})$.

Let $T_{\widehat{`G}}$ be the subgroup of $C^0(\widehat{`G},\mathbb{Z}) `(`*) C_1(\widehat{`G},\mathbb{Z})$ generated by the following elements:
\begin{enumerate}[(TLG1),font=\normalfont]
	\item \label{tlg-gen-1} ${L'}^\ddagger `(`*) e_{P,L}$ for every edge $e_{P,L}$ and $L' `: W_P$.
	\item \label{tlg-gen-2} ${P'}^\ddagger `(`*) e_{P,L}$ for every edge $e_{P,L}$ and $P' `: W_L$.
\end{enumerate}

Let $G$ be an abelian group. For every free $\mathbb{Z}$-modules $A$ and $B$, we consider the natural pairings
\begin{gather*}
	`p_{A} : A^\dagger\, {`(`*)}_{\mathbb{Z}}\, G\, {`(`*)}_{\mathbb{Z}}\, A \longrightarrow G\\
	`p_{A,B} : \left(A^\dagger\, {`(`*)}_{\mathbb{Z}}\, G\, {`(`*)}_{\mathbb{Z}}\, B\right) `(`*) \left(B^\dagger\, {`(`*)}_{\mathbb{Z}}\, A\right) \longrightarrow G
\end{gather*}
where $`p_{A,B}$ is obtained by composing $`p_{A}$ with the natural pairing $B `(`*) B^\dagger "->" \mathbb{Z}$.
\begin{definition}
	\label{def=tlg}
	The \emph{tensor linking group} $\TLG{\widehat{`G}}{G}$ is the subgroup of $\hom(\MH{\widehat{`G}},G)\, {`(`*)}_{\mathbb{Z}}\, H_1(\widehat{`G}, \mathbb{Z})$ given by
	all elements $`f$ such that for every $`j `: T_{\widehat{`G}}$:
	\[`p_{C_0,C_1} \left((`h^\dagger `(`*) \Id_G `(`*) `z)(`f) `(`*) `j^\ddagger\right) = 0_G\qedhere\]
\end{definition}
Note that $`j^\ddagger$ designates the \emph{dual element} of $`j$ lying in the dual module $C^1(\widehat{`G}) `(`*) C_0(\widehat{`G})$, which is different from the \emph{dual homomorphism} $`j^\dagger$.

Now define
\[`J \vcentcolon= \Id `(`*) \left(i_{\A}^* `o `g_*\right) : \hom(\MH{\widehat{`G}}, G)\, {`(`*)}_{\mathbb{Z}}\, H_1(\widehat{`G}) \longrightarrow \hom(\MH{\widehat{`G}}, G) \, {`(`*)}_{\mathbb{Z}}\, \MH{\widehat{`G}}\]
\vspace{-\baselineskip}
\begin{definition}
	\label{def=lln}
	The \emph{loop-linking number} of $\A$ is the function
	\[\mathcal{L}_G(\A) : \TLG{\widehat{`G}}{G} \longrightarrow G\]
	defined as the restriction of $`p_{\MH{\widehat{`G}}} `o `J$ on $\TLG{\widehat{`G}}{G}$.
\end{definition}
The algebraic properties of $\TLG{\widehat{`G}}{G}$ ensure that $\mathcal{L}_G(\A)$ depends neither on the choice of the graphed embedding $`g$ nor on the choice of the graph ordering $`W$, see \cite[Section 3.3.1]{Cadegan2018}. Just~like the homology inclusion, the loop-linking number is an invariant of \emph{ordered and oriented} line arrangements, although this is not obvious from the definition.

We now establish that the homology inclusion extends the loop-linking number.
\begin{theorem}
	\label{thm=hom-incl-lln}
	Let $\A, \A' \subset \mathbb{CP}^2$ be two non-exceptional line arrangements with the same combinatorics $C$ and same graph $`G$. If~there exists a graph ordering $`W$ of $`G$ such that $\mathcal{J}_{`W}(\A) = \mathcal{J}_{`W}(\A')$ then necessarily $\mathcal{L}_G(\A) = \mathcal{L}_G(\A')$ for any abelian group $G$.
\end{theorem}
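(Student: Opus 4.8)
The plan is to realise the loop-linking number as the image of the homology inclusion under a single natural evaluation homomorphism, so that equality of the homology inclusions forces equality of the loop-linking numbers. Since $\A$ and $\A'$ share the combinatorics $C$ and the graph $`G$, their boundary manifolds coincide, $B_{\A} `~ B_{\A'} =: B$, and by \cref{prop=hom-ext} the exteriors satisfy $H_1(E_{\A}) `~ H_1(E_{\A'}) `~ \MH{`G}$ canonically through the line meridians. I would therefore fix a single graphed embedding $`g `: \OGE{`G}{`W}$ and set $\vartheta \vcentcolon= (i_{\A}^* - i_{\A'}^*) `o `g_*$, an element of $\hom(H_1(`G),\MH{`G})$ that can be read off either graph structure of $B$. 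By \cref{thm=gs-diff} the hypothesis $\mathcal{J}_{`W}(\A) = \mathcal{J}_{`W}(\A')$ says exactly that $\vartheta$ lies in the relation subgroup $\langle \DiffH(`g,`g') \mid `g,`g' `: \OGE{`G}{`W} \rangle$, which by \cref{thm=gs-pres} is the image under $`z^\dagger `(`*) `h$ of the subgroup of $\hom(C_1(`G),C_0(`G))$ generated by the two families (GS1) and (GS2). On the other side, since $`J = \Id `(`*) (i_{\A}^* `o `g_*)$ and the pairing $`p$ are both linear in $i_{\A}^* `o `g_*$, the difference $\mathcal{L}(\A) - \mathcal{L}(\A')$ is the functional $`f \mapsto `p\bigl((\Id `(`*) \vartheta)(`f)\bigr)$ on $\TLG{\widehat{`G}}$. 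It thus suffices to prove that this functional vanishes whenever $\vartheta$ lies in the relation subgroup.

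The next step is to pass between the reduced graph $`G$, on which $\mathcal{J}_{`W}$ is built, and the full graph $\widehat{`G}$, on which $\mathcal{L}$ is built. Here $\widehat{`G}$ is obtained from $`G$ by subdividing each double-point edge $e_{L,L'}$ into the path $L - P - L'$, so the inclusion of cycle spaces gives $H_1(`G) `~ H_1(\widehat{`G})$, while \cref{prop=hom-ext} applied to both graph structures of $B$ yields compatible isomorphisms $\MH{`G} `~ \MH{\widehat{`G}} `~ H_1(E_{\A})$, under which a double-point class satisfies $\overline{P} = \overline{L} + \overline{L'}$. Using \cref{lem=ord-rest} I would choose a graph ordering $\widehat{`W}$ on $\widehat{`G}$ restricting to $`W$, together with a graphed embedding on $\widehat{`G}$ extending $`g$; since $\mathcal{L}$ does not depend on these choices it may be computed with them, and then $\vartheta$ and its full-graph avatar correspond under subdivision.

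The heart of the argument is an adjunction that turns the loop-linking pairing into the defining pairing of \cref{def=tlg}. Writing the functional as the trace $\operatorname{tr}(\vartheta `o `f)$ of the endomorphism of $\MH{\widehat{`G}}$ built from $`f : \MH{\widehat{`G}} "->" H_1(\widehat{`G})$ and $\vartheta : H_1(\widehat{`G}) "->" \MH{\widehat{`G}}$, and using the identities $(`z^\dagger `(`*) `h)(\xi) = `h `o \xi `o `z$ and $(`h^\dagger `(`*) `z)(`f) = `z `o `f `o `h$ together with the cyclicity of the trace, one obtains $\operatorname{tr}\bigl(((`z^\dagger `(`*) `h)\xi) `o `f\bigr) = `p\bigl((\xi^{\top})^\ddagger, (`h^\dagger `(`*) `z)(`f)\bigr)$, where $\xi^{\top} `: \hom(C_0(\widehat{`G}),C_1(\widehat{`G}))$ is the transpose of $\xi$. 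It then remains to check that the transposes of the generators (GS1) and (GS2) land in the subgroup $T_{\widehat{`G}}$ of \cref{def=tlg}: the transpose $v^\ddagger `(`*) e_{v,x}$ of a (GS1) generator is exactly a (TLG1) or (TLG2) generator, since the endpoint $v$ of the edge is a neighbour of its other endpoint $x$, and the transpose of a (GS2) generator is a signed sum of such generators. As every $`f `: \TLG{\widehat{`G}}$ is by definition orthogonal to $T_{\widehat{`G}}$, the trace vanishes on the whole relation subgroup, giving $\mathcal{L}(\A) = \mathcal{L}(\A')$. Equivalently, these computations assemble into a well-defined homomorphism $\Phi : \PI_{`G} "->" \hom(\TLG{\widehat{`G}},\mathbb{Z})$ with $\Phi(\mathcal{J}_{`W}(\A)) = \mathcal{L}(\A)$.

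The main obstacle is the bookkeeping at the double points, where a single reduced edge $e_{L,L'}$ is replaced by the pair $e_{P,L}, e_{P,L'}$. One must verify that under the subdivision isomorphism $H_1(`G) `~ H_1(\widehat{`G})$ the generator matching above is preserved, and that the additional generators of $T_{\widehat{`G}}$ attached to a double-point vertex $P$ — those governed by the relation $\overline{P} = \overline{L} + \overline{L'}$ coming from the Euler number $`e_P = -1$ — are correctly absorbed. This is precisely the point at which the reduced and full descriptions of $i_{\A}^*$ must be reconciled, and where I expect the delicate part of the verification to lie; conceptually it reflects that both $\langle \DiffH \rangle$ and $\TLG{\widehat{`G}}$ descend from the same per-vertex star-difference subgroups computed in \cref{lem=hom-diff-model}.
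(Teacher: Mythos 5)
Your proposal is correct and follows essentially the same route as the paper: your check that the transposes of the (GS1)/(GS2) generators land in $T_{\widehat{\Gamma}}$ is exactly the paper's \cref{lem=gs-tlg} (stated there as $G_{\widehat{\Gamma}}^{\dagger} \subset T_{\widehat{\Gamma}}^{\ddagger}$), and your trace/adjunction step is the paper's concluding pairing computation. The reduced-to-full-graph reconciliation you flag as the delicate point is precisely what the paper isolates in its unnumbered lemma asserting a natural isomorphism $\PI_{\widehat{\Gamma}} \cong \PI_{\Gamma}$ under which the classes $\left[i_{\A}^{*} \circ \gamma_{*}\right]$ agree --- a statement the paper likewise does not prove in-text, deferring to the author's thesis.
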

The tensor linking group plays a very similar role for the loop-linking number than the graph stabiliser does for the homology inclusion. Indeed, both are used to express the map $i_{\A}^*$ in a way that does not depend on the choice of the graphed embedding. However, the tensor-linking group is defined using every possible graphed embeddings. On the other hand, the graph stabiliser quotients out the differences between the graphed embeddings \emph{with the same graph ordering} only. This gives a \enquote{bigger} quotient with more possible values for the invariant, as evidenced by \cref{lem=gs-tlg}, and is the main reason why the homology inclusion extracts a finer topological information.

The rest of the section is dedicated to the proof of \cref{thm=hom-incl-lln}.

The following lemma can be established through elementary computations on the presentation of the graph stabiliser given in \cref{thm=gs-pres}, see \cite[Section 1.6.4]{Rodau2023}.
\begin{lemma}
	There is a natural group isomorphism $\PI_{\widehat{`G}} `~ \PI_{`G}$. In particular if $\widehat{`W}$ is a graph ordering on $\widehat{`G}$ and $`W$ is its restriction on $`G$ (see \cref{lem=ord-rest}) then for every graphed embeddings $`g `: \OGE{`G}{`W}$ and $\widehat{`g} `: \OGE{\widehat{`G}}{\widehat{`W}}$ one has $[i_{\A}^* `o `g_*] = [i_{\A}^* `o \widehat{`g}_*] `: \PI_{\widehat{`G}}$.
\end{lemma}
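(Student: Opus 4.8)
The plan is to prove the isomorphism $\PI_{\widehat{`G}} `~ \PI_{`G}$ by comparing the explicit presentations furnished by \cref{thm=gs-pres}, and then to deduce the equality of the two homology inclusion classes from a geometric comparison of graphed embeddings. The governing structural observation is that $\widehat{`G}$ is obtained from $`G$ by subdividing each type-\eqref{edge-exc} edge $e_{L,L'}$ exactly once, inserting a degree-two vertex $P `: \mathcal{Q} \smallsetminus \mathcal{Q}^{>2}$ for the corresponding double point, while leaving all point-line edges $e_{P,L}$ with $P `: \mathcal{Q}^{>2}$ untouched. Throughout I would exploit that $\widehat{`G}$ presents the \emph{same} boundary manifold $B_{\A}$, only as a non-minimal graph structure.

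First I would set up the two natural isomorphisms underlying the ambient groups $\hom(H_1,\MH{})$. Since a single subdivision is a homotopy equivalence it induces $H^1(\widehat{`G}) `~ H^1(`G)$; concretely, because a degree-two vertex forces its two incident half-edges to carry opposite coefficients in every cycle, one has $`z^\dagger(e_{P,L}^\ddagger) = -`z^\dagger(e_{P,L'}^\ddagger)$, both corresponding to $\pm `z^\dagger(e_{L,L'}^\ddagger)$. On the meridian side, the analogue of \cref{thm=bm-struct} for the full structure gives Euler numbers $`e_P = -1$ at every singular point and $`e_L = 1 - \hat b(L)$ at a line, where $\hat b(L)$ counts \emph{all} singular points on $L$. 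Thus in $\MH{\widehat{`G}}$ the relation at a double point $P$ reads $\overline{P} = \overline{L} + \overline{L'}$; eliminating these redundant generators yields a natural isomorphism $\MH{\widehat{`G}} `~ \MH{`G}$ fixing the line classes, the consistency of the surviving line relations being exactly the identity $(1-\hat b(L)) + d(L) = 1 - b(L)$ with $d(L) = \hat b(L) - b(L)$ the number of double points on $L$. Together these produce a natural isomorphism $\hom(H_1(\widehat{`G}),\MH{\widehat{`G}}) `~ \hom(H_1(`G),\MH{`G})$.

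Next I would verify that this ambient isomorphism carries the relation subgroup of \cref{thm=gs-pres} for $\widehat{`G}$ onto that for $`G$. The relations attached to edges $e_{P,L}$ with $P `: \mathcal{Q}^{>2}$ are literally identical on both sides. For a double point $P$ with neighbours $L,L'$ I would check: the two \ref{gs-gen-1} relations $e_{P,L}^\ddagger `(`*) \overline{L}$ and $e_{P,L'}^\ddagger `(`*) \overline{L'}$ map (up to sign) to the pair $e_{L,L'}^\ddagger `(`*) \overline{L}$, $e_{L,L'}^\ddagger `(`*) \overline{L'}$ of \cref{thm=gs-pres} for $`G$; the two relations $e_{P,\bullet}^\ddagger `(`*) \overline{P}$ become $e_{L,L'}^\ddagger `(`*) (\overline{L}+\overline{L'})$, already in the span of the former; the unique \ref{gs-gen-2} relation at the degree-two vertex $P$ is a combination of these, hence redundant; and every \ref{gs-gen-2} relation at a line vertex $L$ involving an edge towards a double point reduces, modulo \ref{gs-gen-1}, to the corresponding \ref{gs-gen-2} relation of $`G$, since the extra term $e_{P,L}^\ddagger `(`*) \overline{L}$ produced by the substitution $\overline{P} = \overline{L}+\overline{L'}$ is itself a \ref{gs-gen-1} relation. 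This matching of generating sets establishes $\PI_{\widehat{`G}} `~ \PI_{`G}$ (and reproves \cref{cor=gs-ordered-model} for $\widehat{`G}$).

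Finally, since each class is independent of the chosen graphed embedding it suffices to exhibit one compatible pair. Given $\widehat{`g} `: \OGE{\widehat{`G}}{\widehat{`W}}$, each double-point piece $S_P$ is a circle bundle over an annulus, through which $\widehat{`g}$ runs as a single arc joining $\partial S_L$ to $\partial S_{L'}$; collapsing these annular "tube" pieces is an isotopy of $B_{\A}$ turning $\widehat{`g}$ into a graphed embedding $`g `: \OGE{`G}{`W}$ of the minimal structure. Under the subdivision identification $H_1(\widehat{`G}) `~ H_1(`G)$ a cycle and its image are carried by isotopic curves on $B_{\A}$, so after the identification $\MH{\widehat{`G}} `~ \MH{`G}$ the composites $i_{\A}^* `o `g_*$ and $i_{\A}^* `o \widehat{`g}_*$ are the \emph{same} element of the ambient hom-group, whence the same class in $\PI_{\widehat{`G}} `~ \PI_{`G}$. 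I expect the main technical obstacle to be the sign bookkeeping in the presentation comparison (the orientation $`d$, the dual elements, and the opposite signs of the two half-edges at a degree-two vertex), together with making the collapse of the annular pieces precise as an ambient isotopy of $B_{\A}$ compatible with the Waldhausen minimalisation of \cref{thm=mgs}.
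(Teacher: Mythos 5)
Your proposal is correct and takes essentially the same route as the paper: the paper gives no written proof of this lemma beyond asserting that it follows from elementary computations on the presentation of \cref{thm=gs-pres} (deferring the details to \cite[Section 1.6.4]{Rodau2023}), and your argument — the subdivision identification $H^1(\widehat{\Gamma})\cong H^1(\Gamma)$, the elimination of double-point generators via $\overline{P}=\overline{L}+\overline{L'}$ giving $\MH{\widehat{\Gamma}}\cong\MH{\Gamma}$, and the verification that this identification carries the (GS1)/(GS2) relation subgroup of $\widehat{\Gamma}$ onto that of $\Gamma$ — is precisely such a computation, with the sign ambiguities you flag absorbed by orienting the two subdivided half-edges compatibly with the original edge. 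Your closing geometric step (producing one compatible pair of graphed embeddings by isotoping $\widehat{\gamma}$ across the $T^{2}\times I$ double-point pieces of the non-minimal structure so that each such arc crosses the corresponding minimal-structure torus once, then invoking the independence of each class from the chosen embedding) correctly supplies the displayed equality, which the paper likewise leaves to the cited reference.
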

Let $A,B$ be two free abelian groups with fixed bases. For any submodule $H$ of $\hom(A,B) `~ A^\dagger `(`*) B$, we write $H^\ddagger = \{`f^\ddagger \mid `f `: H\} \subset {\hom(A,B)}^\dagger `~ B^\dagger `(`*) A$.

Let $G_{\widehat{`G}}$ be the subgroup of $\hom(C_1(\widehat{`G}),C_0(\widehat{`G}))$ generated by the elements of type \ref{gs-gen-1} and \ref{gs-gen-2} from \cref{thm=gs-pres}.
\begin{lemma}
	\label{lem=gs-tlg}
	$G_{\widehat{`G}} \subset T_{\widehat{`G}}^\ddagger$
\end{lemma}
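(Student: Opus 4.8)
The plan is to realise both $G_{\widehat{`G}}^\dagger$ and $T_{\widehat{`G}}^\ddagger$ as explicit submodules of one and the same ambient module, namely $C_0(\widehat{`G}) `(`*) C_1(\widehat{`G})^\dagger$, and then to verify the inclusion generator by generator. The whole argument is combinatorial once the two dualisation conventions have been unwound on pure tensors, so nothing topological enters beyond the shape of the edges of the full incidence graph $\widehat{`G}$.

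First I would record the effect of $\dagger$ and $\ddagger$ on the canonical basis tensors. Under the identification $\hom(A,B) `~ A^\dagger `(`*) B$, a basis tensor $a^\ddagger `(`*) b$ is the homomorphism $x \mapsto a^\ddagger(x) `. b$, and an elementary computation from the definitions of the dual homomorphism and the dual element gives
\[(a^\ddagger `(`*) b)^\dagger = b `(`*) a^\ddagger `: B `(`*) A^\dagger, \qquad (a^\ddagger `(`*) b)^\ddagger = a `(`*) b^\ddagger `: A `(`*) B^\dagger.\]
Applying the first formula with $A = C_1(\widehat{`G})$ and $B = C_0(\widehat{`G})$ places $G_{\widehat{`G}}^\dagger$ inside $C_0(\widehat{`G}) `(`*) C_1(\widehat{`G})^\dagger$, and applying the second with $A = C_0(\widehat{`G})$ and $B = C_1(\widehat{`G})$ places $T_{\widehat{`G}}^\ddagger$ inside the very same module. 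Concretely, $T_{\widehat{`G}}^\ddagger$ is then generated by all $L' `(`*) e_{P,L}^\ddagger$ with $L' `: W_P$ (from type (TLG1)) and all $P' `(`*) e_{P,L}^\ddagger$ with $P' `: W_L$ (from type (TLG2)); that is, by all $w `(`*) e_{P,L}^\ddagger$ with $w$ ranging over $W_P `U W_L$. On the other side, using \cref{thm=gs-pres}, $G_{\widehat{`G}}^\dagger$ is generated by $P `(`*) e_{P,L}^\ddagger$, $L `(`*) e_{P,L}^\ddagger$ and the images of the adjacent-edge relations.

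The conceptual crux is the remark that in the full graph $\widehat{`G}$ every edge has the shape $e_{P,L}$ with $P `: \mathcal{Q}$, $L `: \mathcal{L}$ and $P `: L$, so its two endpoints are mutually adjacent: $L `: W_P$ and $P `: W_L$. Hence each type (GS1) dual $P `(`*) e_{P,L}^\ddagger$ and $L `(`*) e_{P,L}^\ddagger$ is literally one of the listed generators of $T_{\widehat{`G}}^\ddagger$. For a type (GS2) generator I would split on the nature of the common vertex $v$ of the adjacent pair. If $v = P$ is a point, the two edges are $e_{P,L_1}$ and $e_{P,L_2}$ with $L_1,L_2 `: W_P$, and the $\dagger$-dual reads $L_2 `(`*) e_{P,L_1}^\ddagger + `d_{P,L_1}`d_{P,L_2} `. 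L_1 `(`*) e_{P,L_2}^\ddagger$; both summands have their line-factor in $W_P$, so both lie in $T_{\widehat{`G}}^\ddagger$ by the type (TLG1) generators. The case $v = L$ a line is symmetric, using the type (TLG2) generators with point-factors in $W_L$. Since the orientation signs $`d$ only scale generators by $`+1$, they never affect membership, and the inclusion $G_{\widehat{`G}}^\dagger \subset T_{\widehat{`G}}^\ddagger$ follows.

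The part I expect to require the most care, rather than being an obstacle per se, is keeping the two distinct dualisations straight: $\dagger$ is the \emph{dual homomorphism}, which transposes the tensor factors and carries $\hom(C_1(\widehat{`G}),C_0(\widehat{`G}))$ into $C_0(\widehat{`G}) `(`*) C_1(\widehat{`G})^\dagger$, whereas $\ddagger$ is the \emph{dual element}, which carries $\hom(C_0(\widehat{`G}),C_1(\widehat{`G}))$ into the same module. Once the pure-tensor formulas above are in place the matching of generating sets is immediate, and all of the actual content is carried by the observation that in $\widehat{`G}$ the endpoints of an edge are neighbours of one another — precisely what makes every (GS) generator a special case of a (TLG) generator.
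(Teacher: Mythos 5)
Your proposal is correct and takes essentially the same route as the paper's own proof: both realise $G_{\widehat{\Gamma}}^{\dagger}$ and $T_{\widehat{\Gamma}}^{\ddagger}$ as explicit submodules of the common ambient module $C_0(\widehat{\Gamma}) \otimes C_1(\widehat{\Gamma})^{\dagger}$ by dualising the generators on pure tensors, and then match them generator by generator, the key point in both arguments being that the endpoints of any edge $e_{P,L}$ of the full incidence graph are neighbours of one another (so $L \in W_P$ and $P \in W_L$), while the orientation signs $\pm 1$ never affect membership in the generated submodule. There is no gap.
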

\begin{proof}
	Rewriting the relations in the bases associated to the full incidence graph $\widehat{`G}$, we get that the submodule $G_{\widehat{`G}} \subset C^1(\widehat{`G}) `(`*) C_0(\widehat{`G})$ is generated by the following elements:
	\begin{enumerate}[(GS1'),font=\normalfont]
		\item \label{gs-dd-1} $e_{P,L}^\ddagger `(`*) L$ and $e_{P,L}^\ddagger `(`*) P$ for every edge $e_{P,L}$.
		\item \label{gs-dd-2} $e_{P,L'}^\ddagger `(`*) L + `d_{P,L}\,`d_{P,L'} `. e_{P,L}^{\ddagger} `(`*) {L'}$ for every $P `: \mathcal{Q}$ and $L,L' `: W_P$.\\
		$e_{L,P'}^\ddagger `(`*) P + `d_{L,P}\,`d_{L,P'} `. e_{L,P}^\ddagger `(`*) {P'}$ for every $L `: \mathcal{L}$ and $P,P' `: W_L$.
	\end{enumerate}
	Separately, the submodule $T_{\widehat{`G}}^\ddagger \subset C^1(\widehat{`G}) `(`*) C_0(\widehat{`G})$ is generated by the following elements:
	\begin{enumerate}[(TLG1'),font=\normalfont]
		\item \label{tlg-gen-d-1} $e_{P,L}^\ddagger `(`*) {L'}$ for every edge $e_{P,L}$ and $L' `: W_P$.
		\item \label{tlg-gen-d-2} $e_{P,L}^\ddagger `(`*) {P'}$ for  for every edge $e_{P,L}$ and $P' `: W_L$.
	\end{enumerate}
	Particularising \ref{tlg-gen-d-1} when $L' = L$ gives the first part of \ref{gs-dd-1}, and doing the same with \ref{tlg-gen-d-2} when $P' = P$ gives the second part. It is also clear that the first (resp. second) part of \ref{gs-dd-2} is generated by adding two elements of the first part of \ref{tlg-gen-d-1} (resp. \ref{tlg-gen-d-2}). Therefore~$G_{\widehat{`G}} \subset T_{\widehat{`G}}^\ddagger$.
\end{proof}
\begin{proof}[Proof of \cref{thm=hom-incl-lln}]
	Consider the submodule of $\hom(H_1(\widehat{`G}), \MH{\widehat{`G}})$ defined by
	\[D \vcentcolon= \left< i_{\A}^* `o `g_* - i_{\A'}^* `o `g_*' \mid `g,`g' `: \OGE{\widehat{`G}}{\widehat{`W}} \right>\]
	Now fix $d`: D$ and $`f `: \TLG{\widehat{`G}}{G}$. By \cref{def=lln} of the loop-linking number, we have:
	\begin{align*}
		\left(\mathcal{L}_G(\A) - \mathcal{L}_G(\A')\right)(`f) &= `p_{\MH{\widehat{`G}}} \left(d `o `f\right)\\
		&= `p_{\MH{\widehat{`G}}, H_1(\widehat{`G})} \left(`f `(`*) d\right)
	\end{align*}
	By assumption and by \cref{def=hom-incl} of the homology inclusion, $D$ projects to $\{0\}$ in $\PI_{\widehat{`G}}$. By~\cref{thm=gs-pres,lem=gs-tlg}, this means that there exists $`j `: T_{\widehat{`G}}$ such that $d = (`z^\dagger `(`*) `h)(`j^{\ddagger})$. Then:
	\begin{align*}
		\left(\mathcal{L}_G(\A) - \mathcal{L}_G(\A')\right)(`f) &= `p_{\MH{\widehat{`G}}, H_1(\widehat{`G})} \left(`f `(`*) (`z^\dagger `(`*) `h)(`j^{\ddagger})\right)\\
		&= `p_{C_0, C_1} \left((`h^\dagger `(`*) \Id_G `(`*) `z)(`f) `(`*) `j^\ddagger\right)
	\end{align*}
	which is equal to $0_G$ by \cref{def=tlg} of $T_{\widehat{`G}}$.
\end{proof}
\section{Computations}
\label{sec=calc}
The practical method for computing the homology inclusion of a line arrangement has been developed in collaboration with E.~Artal and B.~Guerville-Ballé and will be detailed in another publication. We give here a quick overview of this method before presenting its main results.

The computation is done in two main phases. The first is to compute an explicit presentation of the graph stabiliser $\PI_{`G}$, which is done in \cref{sec=gs}. The second is to compute the values of the morphism
\[i_{\A}^* `o `g_* : H_1(`G) \longrightarrow \MH{`G}\]
By \cref{def=hom-incl} of the invariant, the values can be computed for any graphed embedding $`g `: \OGE{`G}{`W}$, since the image of the morphism inside $\PI_{`G}$ does not depend on this choice. In practice this requires to explicitly construct the following embeddings all at the same time:
\[\begin{tikzcd}[cramped]
	`G \rar[hook,"`g"] & B_{\A} \rar[hook,"i_{\A}"] & E_{\A} \rar[hook] & \mathbb{CP}^2
\end{tikzcd}\]
The \emph{wiring diagram} of W.~Arvola \cite{Arvola1992} and the \emph{braid monodromy}, which is related to the Zariski-van Kampen method, are two slightly different but equivalent tools that allow to characterise the topology of a complex line arrangement in an abstract form. Both tools do in fact describe the embedding $i_{\A}$ up to isotopy in a way that allows to identify the graph structure of $B_{\A}$. We~devised methods to construct an explicit ordered graphed embedding using either one of these tools. These methods allow to keep control over the actual ordering of the embedding. This point is paramount since two homology inclusion values can be compared only if their respective graph ordering are known. We~also devised an algorithm describing the computation of the value of $i_{\A}^* `o `g_*$ from the raw data of the wiring diagram or braid monodromy. The full computation process is implemented using the \texttt{Sage}~\cite{sagemath} language.

Since the graph stabiliser is an abelian group of finite type, it admits a Smith normal form which can easily be determined with a computer. The values of the homology inclusion are given inside this basis.

We now give several examples of Zariski pairs which are identified by the homology inclusion.

\begin{example}[MacLane arrangements]
	The MacLane arrangements and their common combinatorics are presented at the end of \cref{ssec=la}. It is known (see \cite{Bjoerner1999}) that this constitutes the smallest possible combinatorics that does not admit a realisation in $\mathbb{RP}^2$. The automorphism group of the combinatorics is isomorphic to $\mathrm{GL}_2(\mathbb{F}_3)$. All automorphisms can be effectively realised as projective automorphisms of $\mathbb{CP}^2$, and those lying in the subgroup $\mathrm{SL}_2(\mathbb{F}_3)$ preserve the~order. We~now give the results of the computations of the graph stabiliser and the homology inclusion of $\mathcal{M}^+$~and~$\mathcal{M}^-$.

	The Smith normal form of the graph stabiliser is
	\[\PI_{`G} `~ \faktor{\mathbb{Z}}{3\mathbb{Z}} `* \mathbb{Z}^{35}\]
	The~two values of the homology inclusion in the corresponding basis are given in \Cref{fig=hom-incl-ml}.
	The~difference is a non-zero element of the torsion part of $\PI_{`G}$. The line arrangements $(\mathcal{M}^+,\mathcal{M}^-)$ therefore form an ordered oriented Zariski pair.
	\begin{figure}[hbt]
		\centering
		\setlength{\tabcolsep}{2pt}
		\begin{tabular}{l*{36}{c}}
			$\left[i_{\mathcal{M}^+}^* `o `g_{+}^*\right]$: & $\bar{0}$ & 0 & 1 & 2 & 0 & 0 & 0 & 1 & 0 & 1 & 0 & 1 & 2 & 0 & 1 & 0 & 1 & 1 & 0 & 1 & 1 & 1 & 0 & 0 & -1 & 1 & -1 & -1 & 0 & 1 & 0 & 0 & 0 & 0 & 1 & 0 \\[1em]
			$\left[i_{\mathcal{M}^-}^* `o `g_{-}^*\right]$: & $\bar{2}$ & 0 & 1 & 2 & 0 & 0 & 0 & 1 & 0 & 1 & 0 & 1 & 2 & 0 & 1 & 0 & 1 & 1 & 0 & 1 & 1 & 1 & 0 & 0 & -1 & 1 & -1 & -1 & 0 & 1 & 0 & 0 & 0 & 0 & 1 & 0 \\[1em]
			Difference:                                 & $\bar{2}$ & 0 & 0 & 0 & 0 & 0 & 0 & 0 & 0 & 0 & 0 & 0 & 0 & 0 & 0 & 0 & 0 & 0 & 0 & 0 & 0 & 0 & 0 & 0 & 0  & 0 & 0  & 0  & 0 & 0 & 0 & 0 & 0 & 0 & 0 & 0
		\end{tabular}
		\caption{Homology inclusion values of the MacLane arrangements}
		\label{fig=hom-incl-ml}
	\end{figure}
\end{example}
\begin{example}[Rybnikov quadruplet]
	This is the first Zariski pair of line arrangements identified by G.~Rybnikov in \cite{Rybnikov1998}. It consists of four conjugated line arrangements with $13$ lines $R_1, R_2$ and their complex conjugates $\overline{R_1}, \overline{R_2}$. Their common combinatorics has a non-trivial automorphism group. The Smith normal form of the graph stabiliser is:
	\[\PI_{`G} `~ {\left(\faktor{\mathbb{Z}}{3\mathbb{Z}}\right)}^2 `* \mathbb{Z}^{220}\]
	The homology inclusion values again differ only on the torsion part:
	\begin{align*}
		R_1 &: \left(\bar{2},\bar{2}\right) & \overline{R_1} &: \left(\bar{0},\bar{0}\right) & R_2 &: \left(\bar{0},\bar{2}\right) & \overline{R_2} &: \left(\bar{2},\bar{0}\right)
	\end{align*}
	This means that the four arrangements $(R_1, R_2, \overline{R_1}, \overline{R_2})$ form an ordered oriented Zariski quadruplet. Moreover, the two pairs $(R_1,R_2)$ and $(R_1,\overline{R_2})$ and their respective conjugates all form unoriented ordered Zariski pairs.
\end{example}
Both the MacLane ordered pair and the Rybnikov quadruplet are known examples that were already detected by the loop-linking number \cite{GuervilleBalle2022}. However, the next example is a new Zariski pair that in particular is \emph{not} distinguished by the loop-linking number.
\begin{example}[New Zariski quadruplet]
	Consider the polynomial
	\[P = X^4 + 2X^3 + 4X^2 + 3X + 1\]
	and the equations given by
	\begin{align*}
		L_0 & : 0 = z                                    & L_6    & : 0 = x                  \\
		L_1 & : 0 = `w^2 x - y - `w (`w+1) z             & L_7    & : 0 = x - z              \\
		L_2 & \omit\rlap{:	$0 = (3 `w^2 + 3 `w + 1) x + {(`w+ 1)}^2 y - (`w^3 + 5 `w^2 + 5 `w + 2) z$} & & \\
		L_3 & : 0 = `w(`w^2 + `w +1) x + y + `w(`w +1) z & L_8    & : 0 = y                   \\
		L_4 & : 0 = `w x + y                             & L_9    & : 0 = y - z               \\
		L_5 & : 0 = `w x + y - (`w + 1) z                & L_{10} & : 0 = y + `w(`w^2 + 2`w + 2) z
	\end{align*}
	where $`w = -\frac{1}{2} `+ \frac{1}{2} i \sqrt{5 `+ 2 \sqrt{5}}$ takes the values of the four roots of $P$. This defines four conjugated arrangements with $11$ lines $B_1,B_2,\overline{B_1},\overline{B_2}$ whose common ordered combinatorics is given by
	{%
		\begin{align*}
			&[[0, 1, 2], [0, 3], [0, 4, 5], [0, 6, 7], [0, 8, 9, 10],[1, 3, 6], [1, 4, 7], [1, 5, 8], \\
			&[1, 9], [1, 10],[2, 3, 5], [2, 4], [2, 6, 10], [2, 7], [2, 8], [2, 9],[3, 4, 9], [3, 7, 10], \\
			&[3, 8], [4, 6, 8], [4, 10], [5, 6], [5, 7, 9], [5, 10], [6, 9], [7, 8]]
	\end{align*}}
	The Smith normal form of the graph stabiliser is:
	\[\PI_{`G} `~ \faktor{\mathbb{Z}}{5\mathbb{Z}} `* \mathbb{Z}^{119}\]
	The values of the homology inclusion of the four arrangements are identical on the free part but differ on the torsion part:
	\begin{align*}
		B_1 &: \bar{1} & B_2 &: \bar{4} & \overline{B_1} &: \bar{3} & \overline{B_2} &: \bar{0}
	\end{align*}
	The automorphism group of the combinatorics is trivial, which means that the four arrangements form an unordered oriented Zariski quadruplet. The two pairs $(B_1,B_2)$ and $(B_1,\overline{B_2})$ and their respective conjugates are unordered unoriented (i.e. proper) Zariski pairs.
\end{example}
\begin{remark}
	For all the ordered Zariski pairs we obtain, the graph stabiliser $\PI_{`G}$ contains a torsion part (this is not always the case) and the difference of the homology inclusion values lies only in that torsion part. The loop-linking invariant had a similar behaviour in the sense that all Zariski pairs it could detect had a tensor-linking group $\TLG{\widehat{`G}}{G}$ which also was torsion.
\end{remark}
\printbibliography
\end{document}